\documentclass[a4paper,10pt]{amsart}

%
%


\usepackage[utf8x]{inputenc}
\usepackage{amsfonts}
\usepackage{amsmath}
\usepackage{amssymb}
\usepackage{graphicx}
\usepackage[all]{xy}
\usepackage{mathtools}
\usepackage{todonotes}

\usepackage[draft=false]{hyperref}  



\DeclareMathOperator{\Ker}{Ker}

\DeclareMathOperator{\id}{id}

\DeclareMathOperator{\End}{End}


\DeclareFontFamily{OT1}{pzc}{}
\DeclareFontShape{OT1}{pzc}{m}{it}{<-> s * pzcmi7t}{}
\DeclareMathAlphabet{\mathpzc}{OT1}{pzc}{m}{it}




\DeclareMathOperator{\FP}{FP}
\DeclareMathOperator{\F}{F}


\newcommand*{\RR}{\ensuremath{\mathbb{R}}}

\newcommand*{\ZZ}{\ensuremath{\mathbb{Z}}}
\newcommand*{\NN}{\ensuremath{\mathbb{N}}}


\newcommand*{\longtwoheadrightarrow}{\ensuremath{\relbar\joinrel\twoheadrightarrow}}



\usepackage{amsthm}
\newtheorem{Lemma}{Lemma}[section]
\newtheorem{Theorem}[Lemma]{Theorem}
\newtheorem{Cor}[Lemma]{Corollary}
\newtheorem{Prop}[Lemma]{Proposition}

\theoremstyle{definition}

\newtheorem{Question}[Lemma]{Question}

\theoremstyle{remark}
\newtheorem{Remark}[Lemma]{Remark}

%
%

\usepackage{tikz}
\usepackage{xfrac}
\newcommand{\llangle}{\langle\langle}
\newcommand{\rrangle}{\rangle\rangle}

\DeclareMathOperator{\Sym}{Sym}
\DeclareMathOperator{\Alt}{Alt}
\newcommand{\lex}{{\rm lex}}
\DeclareMathOperator{\Stab}{Stab}
\DeclareMathOperator{\Ends}{Ends}

\DeclareMathOperator{\supp}{supp}
\DeclareMathOperator{\Sim}{Sim}
\usetikzlibrary{arrows}
\tikzset{
  treenode/.style = {align=center, inner sep=0pt},
  arn_n/.style = {treenode, circle, black, draw=black, text width=1.5em},
}

%
%

\title[Quasi-automorphisms of $\mathcal{T}_{2,c}$]{Quasi-automorphisms of the infinite rooted $2$-edge-coloured binary tree}

\author{Brita E.A. Nucinkis}
\email{Brita.Nucinkis@rhul.ac.uk}
\address{Department of Mathematics, Royal Holloway, University of London, Egham, TW20 0EX}

\author{Simon St.~John-Green}
\email{Simon.StJG@gmail.com}
\address{Department of Mathematics, University of Southampton, SO17 1BJ, UK}

\date{\today}

\keywords{Thompson's Group, Finiteness Properties, Normal Subgroups, Bieri--Neumann--Strebel--Renz invariants}
\subjclass{20J05}

\begin{document}

\numberwithin{equation}{section}

\begin{abstract}
We study the group $QV$, the self-maps of the infinite $2$-edge coloured binary tree which preserve the edge and colour relations at cofinitely many locations.  We introduce related groups $QF$, $QT$, $\tilde{Q}T$, and $\tilde{Q}V$, prove that $QF$, $\tilde{Q}T$, and $\tilde{Q}V$ are of type $\F_\infty$, and calculate finite presentations for them.  We calculate the normal subgroup structure of all $5$ groups, the Bieri--Neumann--Strebel--Renz invariants of $QF$, and discuss the relationship of all $5$ groups with other generalisations of Thompson's groups.
\end{abstract}

\maketitle

\section{Introduction}

R.~Thompson's groups $F$, $T$, and $V$ were originally used to, among other things, construct finitely presented groups with unsolvable word problem \cite{Thompson-EmbeddingIntoFinitelyGeneratedSimpleGroups,McKenzieThompson-AnElementaryConstructionOfUnsolvableWordProblems}.  Brown and Geoghegan proved $F$ is $\FP_\infty$ \cite{BrownGeoghegan-AnInfiniteDimensionTFFPinftyGroup}, giving the first example of a torsion-free $\FP_\infty$ group with infinite cohomological dimension, and later Brown used an important new geometric technique to prove that $F$, $T$, and $V$ are all of type $\F_\infty$ \cite{Brown-FinitenessPropertiesOfGroups}.  For more background on Thompson's groups, see \cite{CannonFloydParry-IntroNotesOnThompsons}.

Recently many variations on Thompson's groups have been studied, and using techniques similar to Brown's, many of these have been proven to be of type $\F_\infty$.  These include Brin's groups $sV$ \cite{FluchMarschlerWitzelZaremsky-sVisFinfty}, generalisations of these \cite{MartinezPerezMatucciNucinkis-PresentationsForV}, and the braided Thompson's groups $BV$ and $BF$ \cite{BuxEtAl-BraidedThompsonAreFinfty} which were originally defined by Brin \cite{Brin-AlgebraOfStrandSplittingI}.

Let $\mathcal{T}_{2,c}$ denote the infinite binary $2$-edge-coloured tree and let $QV$ be the group of all bijections on the vertices of $\mathcal{T}_{2,c}$ which respect the edge and colour relations except for possibly at finitely many locations.  The group $QV$ was studied by Lehnert \cite{Lehnert-Thesis} who proved that $QV$ is co-context-free, i.e.~the co-word problem in $QV$ is context-free.  He also gave an embedding of $V$ into $QV$.  More recently, Bleak, Matucci, and Neunh\"offer gave an embedding of $QV$ into $V$ \cite{BleakMatucciNeunhoffer-EmbeddingsIntoVAndCocf}.  The group $QV$ was called $\operatorname{QAut}(\mathcal{T}_{2,c})$ by Bleak, Matucci, and Neunh\"offer, and was called both $\operatorname{QAut}(\mathcal{T}_{2,c})$ and $G$ by Lehnert.

We write $QV$ instead of $\operatorname{QAut}(\mathcal{T}_{2,c})$ in order to follow the convention of naming varations of Thompson's groups using a prefix (e.g.\ $BV$ for the braided Thompson's groups \cite{BuxSonkin-SomeRemarksOnBV} and $nV$ for the Brin--Thompson groups \cite{Brin-HigherDimensionalThompsonGroups}).  The $Q$ may be thought of as standing for ``quasi-automorphism''.  

There is a natural surjection $\pi: QV \longrightarrow V$ (Lemma \ref{lemma:pi is homomorphism}), and we denote by $QF$ the preimage $\pi^{-1}(F)$ and by $QT$ the preimage $\pi^{-1}(T)$.  The group $QF$ was studied by Lehnert, where it is called $G^0$ \cite[Definition 2.9]{Lehnert-Thesis}.  The surjection $\pi:QV \to V$ is not split (Proposition \ref{prop:QT extension doesnt split}), although it does split when restricted to $QF$ (Lemma \ref{lemma:QF SES splits}).   This situation is similar to that of the braided Thompson groups---$BF$ splits as an extension of $F$ by the infinite pure braid group $P_\infty$, whereas $BV$ is a non-split extension of $V$ by $P_\infty$ \cite[\S 1.2]{Zaremsky-NormalSubgroupsOfBraidedThompsonsGroups}.  There is no braided analogue of $T$.

The following theorem appeared in Lehnert's thesis, \cite[Satz 2.14, 2.30]{Lehnert-Thesis}, although his proof that $QF$ is of type $\FP_\infty$ uses Brown's criterion, a different approach from that used here. 

\theoremstyle{plain}\newtheorem*{CustomThmB}{Theorems \ref{theorem:QF presentation} and \ref{theorem:QF is Finfty}}
\begin{CustomThmB}
$QF$ is $\F_\infty$ and has presentation
 \[
  QF = \left\langle \sigma, \alpha, \beta \, \left\vert 
    \begin{array}{l}
     \sigma^2,\, [\sigma, \sigma^{\alpha^2}],\, (\sigma \sigma^\alpha)^3,\, \sigma\sigma^\alpha \sigma \sigma^{\alpha\beta^{-1}\alpha^{-1}}, \\\relax
     [\alpha\beta^{-1},\, \alpha^{-1}\beta\alpha],\, [\alpha\beta^{-1}, \alpha^{-2}\beta\alpha^2], \\\relax
      [\nu, \sigma] \text{ for all $\nu \in X$.}
    \end{array}
  \right.\right\rangle
 \]
 where
 \[
  X = \left\{\begin{array}{c}
\beta, \beta^\alpha, \alpha^3\beta^{-1}\alpha^{-2}, \alpha^{2}\beta^2\alpha^{-1}\beta^{-1}\alpha\beta^{-1}\alpha^{-2} \\
\alpha\beta^2\alpha^{-1}\beta^{-1}\alpha\beta^{-1}\alpha^{-1}, \alpha\beta\alpha^{-1}\beta^2\alpha^{-1}\beta^{-1}\alpha\beta^{-1}\alpha\beta^{-1}\alpha^{-1}                                                          
\end{array} \right\}.
 \]
\end{CustomThmB}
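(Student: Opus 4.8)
The plan is to exploit the split extension $QF \cong K \rtimes F$ provided by Lemma \ref{lemma:QF SES splits}, where $K = \ker\pi$ is the group of quasi-automorphisms acting trivially at infinity and the copy of $F = \langle\alpha,\beta\rangle$ is the image of the splitting. The first step is to identify $K$ explicitly. I expect it to be the finitary infinite symmetric group $S_\infty$, generated by the conjugates $\sigma_i := \sigma^{\alpha^i}$ behaving as the adjacent transpositions of a Coxeter system of type $A_\infty$: indeed $\sigma^2$, $(\sigma\sigma^\alpha)^3$ and $[\sigma,\sigma^{\alpha^2}]$ are precisely the three Coxeter relations in the $i=0$ case. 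Under this identification $\alpha$ acts on $K$ by the index shift $\sigma_i \mapsto \sigma_{i+1}$, while $\beta$ acts by a local permutation of the positions.

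For the presentation (Theorem \ref{theorem:QF presentation}) I would assemble the standard presentation of a split extension from presentations of the two factors together with the conjugation relations. The relations $[\alpha\beta^{-1},\alpha^{-1}\beta\alpha]$ and $[\alpha\beta^{-1},\alpha^{-2}\beta\alpha^2]$ are exactly Thompson's relations for $F$; the three Coxeter relations above present $K \cong S_\infty$, with every higher Coxeter relation obtained from them by conjugating with powers of $\alpha$ (the shift), which is why only the $i=0$ cases appear. What remains is to record the $\beta$-action on $K$: this is the role of $\sigma\sigma^\alpha\sigma\sigma^{\alpha\beta^{-1}\alpha^{-1}}$, which computes $\sigma^{\alpha\beta^{-1}\alpha^{-1}}$ as the product $\sigma\sigma^\alpha\sigma = s_0 s_1 s_0$, together with the family $[\nu,\sigma]$ for $\nu \in X$, which pins down exactly which elements of $F$ centralise $\sigma$. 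The principal combinatorial obstacle here is verifying that this finite set $X$ suffices, i.e.\ that every conjugation relation forced by the $F$-action on $S_\infty$ follows from the listed ones after applying the $\alpha$-shift.

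For the finiteness property (Theorem \ref{theorem:QF is Finfty}) the extension argument alone cannot succeed, since $K \cong S_\infty$ is not even finitely generated and so is very far from $\F_\infty$; this is presumably why a different approach from Lehnert's is needed. Instead I would construct a contractible Stein--Farley-type cube complex on which $QF$ acts by decorating the tree-pair diagrams computing $F$ with a permutation of their leaves, so that $K$ is absorbed into the decorations and every cell stabiliser is finite, with the action cocompact on each skeleton. To deduce $\F_\infty$ I would equip the complex with a discrete Morse function---for instance the number of carets---and analyse the descending links, showing they become increasingly highly connected. As in the braided Thompson case \cite{BuxEtAl-BraidedThompsonAreFinfty}, this connectivity estimate, where the $F$-combinatorics meets the symmetric-group decorations, is where I expect the real difficulty to lie. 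Once it is in place the same complex yields the presentation above by reading generators and relators off a $QF$-cocompact $2$-skeleton, tying the two theorems together.
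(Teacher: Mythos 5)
Your algebraic step contains a concrete error: the conjugates $\sigma_i = \sigma^{\alpha^i}$ do \emph{not} generate the kernel. They are the transpositions $\sigma_{0,\varepsilon}, \sigma_{\varepsilon,1}, \sigma_{1,11}, \ldots$ along the spine $\{0, \varepsilon, 1, 11, \ldots\}$, so they generate only the finitary symmetric group on that chain, a proper subgroup of $\Sym(\{0,1\}^*)$; consequently your claim that ``every higher Coxeter relation is obtained by conjugating with powers of $\alpha$'' fails, and an $A_\infty$-Coxeter presentation shifted by $\alpha$ cannot present the kernel. The paper instead presents $\Sym(\{0,1\}^*)$ on the \emph{full} set of transpositions $\sigma_{x,y}$, $x \lneq_{\lex} y$ (Lemma \ref{lemma:presentation of Sym01}) --- a generating set that is $F$-invariant, with $f^{-1}\sigma_{x,y}f = \sigma_{f(x),f(y)}$ --- then proves $F$ acts transitively on the ordered tuples $\Sigma_n$ (Lemma \ref{lemma:transitive action Sigman}), so that there is one orbit of generators and finitely many orbits of relators (indexed by $\Sigma_2, \Sigma_3, \Sigma_4$; the word $\sigma\sigma^\alpha\sigma\sigma^{\alpha\beta^{-1}\alpha^{-1}}$ is the orbit representative of the family $\sigma_{x,y}\sigma_{y,z}\sigma_{x,y}\sigma_{x,z}$, with $\sigma_{0,1} = \sigma^{\alpha\beta^{-1}\alpha^{-1}}$, not primarily a record of the $\beta$-action), computes the stabiliser of a generator to be $F \times F \times F$ with generating set $X$ (Lemma \ref{lemma:stab of F on sigma}), and feeds all of this into the semidirect-product machine of Lemma \ref{lemma:MPMNlemmaA.1}. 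The verification you flag as ``the principal combinatorial obstacle'' --- that $X$ suffices --- is precisely what that lemma, plus transitivity and the stabiliser computation, settles; in your set-up it cannot even be formulated, because your generating set for the kernel is not preserved by the $F$-action.

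For type $\F_\infty$ your diagnosis of why the extension argument fails is correct, but the route you then propose --- a Stein--Farley cube complex with finite stabilisers, a caret-counting Morse function, and ``increasingly highly connected'' descending links --- leaves its entire mathematical content (the connectivity estimate) unproven; as you note yourself, that is where the difficulty lies, and it is essentially Lehnert's Brown-criterion approach, which the paper explicitly avoids. The paper makes the opposite trade-off: it regards $\Sym(\{0,1\}^*)$ as a Coxeter group on all transpositions and lets $QF = \Sym(\{0,1\}^*) \rtimes F$ act on the Davis complex, which is contractible \emph{for free}, so no connectivity analysis is needed; the stabilisers are then infinite rather than finite, and the work shifts to showing that $F$ acts on the spherical subsets $\mathcal{S}_n$ with finitely many orbits and stabilisers of type $\F_\infty$ (Lemma \ref{lemma:stab of F on notquiteDeltan}, resting again on the $\Sigma_n$-transitivity and the $\prod F$ stabilisers), after which Proposition \ref{prop:G is Finfty} and Theorem \ref{thm:contractible finite type with Finfty stab gives Finfty} conclude. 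So for the presentation your plan needs repair at the level of the kernel's generating set, and for $\F_\infty$ it is a genuinely different program whose key lemma is missing, not a proof.
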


We write $\mathcal{T}_{2,c} \cup \{\zeta\}$ for the disjoint union of $\mathcal{T}_{2,c}$ and a single vertex labelled $\zeta$.  We write $\tilde{Q}V$ for the group of all bijections on the vertices of $\mathcal{T}_{2,c} \cup \{\zeta\}$ which respect the edge and colour relations except for possibly at finitely many locations.  Again, there is a surjection $\pi: \tilde{Q}V \to V$ and we write $\tilde{Q}T$ for the preimage $\pi^{-1}(T)$.  The groups $\tilde{Q}V$ and $\tilde{Q}T$ are easier to study than $QV$ and $QT$ because $\pi : \tilde{Q}V \to V$ splits (Lemma \ref{lemma:QAutX SES is split}). 

In Section \ref{section:finite presentations} we find finite presentations for $\tilde{Q}V$ and $\tilde{Q}T$ and in Section \ref{section:type Finfty} we show $\tilde{Q}V$, $\tilde{Q}T$ and $QF$ are of type $\F_\infty$.  

Section \ref{section:normal subgroups} contains a complete classification of the normal subgroups of $QF$, $QT$, $QV$, $\tilde{Q}T$, and $\tilde{Q}V$.  We find that the structure is very similar to that of $F$, $T$, and $V$.

\begin{figure}[ht]
\begin{tikzpicture}[->,>=stealth',level/.style={sibling distance = 5cm/#1,
  level distance = 1.5cm}] 
\node[arn_n] {$\varepsilon$}
    child [dashed]  { node [arn_n,solid] {0} 
      child [dashed]  { node [arn_n,solid] {00} 
        child [dashed]  { node [solid] {$\cdots$} }
        child [solid] { node [solid] {$\cdots$} }
      }
      child [solid]  { node [arn_n,solid] {01}
        child [dashed] { node [solid] {$\cdots$} }
        child [solid]{ node [solid] {$\cdots$} }
      }                         
    }
    child [solid] { node [arn_n,solid] {1}
      child [dashed] { node [arn_n,solid] {10} 
        child [dashed] { node [solid] {$\cdots$} }
        child [solid]{ node [solid] {$\cdots$} }
      }
      child [solid]{ node [arn_n,solid] {11}
        child [dashed] { node [solid] {$\cdots$} }
        child [solid]{ node [solid] {$\cdots$} }
      }                         
    }
; 
\end{tikzpicture}
 \caption{$\mathcal{T}_{2,c}$, the rooted $2$-edge-coloured infinite binary tree.}\label{figure:T2c}
\end{figure}

We identify the vertices of $\mathcal{T}_{2,c}$ with the set $\{0,1\}^*$ of finite length words on the alphabet $\{0,1\}$, denoting the empty word by $\varepsilon$ (see Figure \ref{figure:T2c}), and hence view $QV$ as acting on $\{0,1\}^*$.  Let $\Sym(\{0,1\}^*)$ denote the finite support permutation group on $\{0,1\}^*$ and $\Alt(\{0,1\}^*)$ the finite support alternating group on $\{0,1\}^*$.  Both these groups are normal in $QV$, $QT$, and $QF$.  Let $Z = \{0,1\}^* \cup \{ \zeta \} $, so that $Z$ can be identified with the set of vertices of $\mathcal{T}_{2,c} \cup \{\zeta\}$, then $\Sym(Z)$ and $\Alt(Z)$ have the obvious definition and are normal in $\tilde{Q}V$ and $\tilde{Q}T$.  Recall that $F / [F,F] \cong \ZZ \oplus \ZZ$, but since $T$ and $V$ are simple and non-abelian, $[T,T] \cong T$ and $[V,V] \cong V$ \cite[Theorems 4.1, 5.8, 6.9]{CannonFloydParry-IntroNotesOnThompsons}.

\theoremstyle{plain}\newtheorem*{CustomThmD}{Theorem \ref{theorem:alternative for normal subgroups}}
\begin{CustomThmD}[Normal subgroup structure]
~
\begin{enumerate}
 \item A non-trivial normal subgroup of $QF$ is either $\Alt(\{0,1\}^*)$, $\Sym(\{0,1\}^*)$, or contains 
 \[
 [QF,QF] = \Alt(\{0,1\}^*) \rtimes [F,F].
 \]
 \item A proper non-trivial normal subgroup of $\tilde{Q}T$ is either $\Alt(Z)$, $\Sym(Z)$, or 
 \[
 [\tilde{Q}T, \tilde{Q}T] = \Alt(Z) \rtimes T.
 \]
 \item A proper non-trivial normal subgroup of $\tilde{Q}V$ is either $\Alt(Z)$, $\Sym(Z)$, or 
 \[
 [\tilde{Q}V, \tilde{Q}V] = \Alt(Z) \rtimes V.
 \]
 \item A proper non-trivial normal subgroup of $QT$ is one of either $\Alt(\{0,1\}^*)$, $\Sym(\{0,1\}^*)$, or 
 \[
 [QT,QT] = (\Alt(Z) \rtimes T) \cap QT.
 \]  
 Moreover, $(\Alt(Z) \rtimes T) \cap QT$ is an extension of $T$ by $\Sym(\{0,1\}^*)$.
 \item A proper non-trivial normal subgroup of $QV$ is one of either $\Alt(\{0,1\}^*)$, $\Sym(\{0,1\}^*)$, or 
 \[
 [QV, QV] = (\Alt(Z) \rtimes V) \cap QV.
 \]  
 Moreover, $(\Alt(Z) \rtimes V) \cap QV$ is an extension of $V$ by $\Sym(\{0,1\}^*)$.
\end{enumerate}
\end{CustomThmD}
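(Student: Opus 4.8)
The plan is to treat all five groups by one common reduction and then do case-specific bookkeeping. Write $A$ for the relevant finitary alternating group ($\Alt(\{0,1\}^*)$ for $QF,QT,QV$ and $\Alt(Z)$ for $\tilde{Q}T,\tilde{Q}V$) and $S$ for the corresponding finitary symmetric group. The crucial first step is to show that every non-trivial normal subgroup $N$ of any of these groups contains $A$. For this I would intersect with $A$: since $A$ is normal in $G$, the subgroup $N\cap A$ is normal in $G$ and hence in $A$, and $A$ is simple (a finitary alternating group on an infinite set), so $N\cap A$ is trivial or all of $A$. If $N\cap A=1$ then $[N,A]\subseteq N\cap A=1$, because $[N,A]$ lies in both $N$ (as $N\trianglelefteq G$) and $A$ (as $A\trianglelefteq G$); hence $N$ centralises $A$. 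But $G$ acts faithfully on its vertex set and $A$ already acts highly transitively there, so the centraliser of $A$ in $G$ is trivial and $N=1$, a contradiction. Thus $A\subseteq N$, and the problem reduces to classifying normal subgroups of the quotients $G/A$.

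The second step is to identify these quotients. Since $\Aut(\ZZ/2)$ is trivial, the image of $S$ is central in $G/A$, so each $G/A$ is a central extension with kernel $S/A\cong\ZZ/2$ and quotient $F$, $T$, or $V$. For $\tilde{Q}V$ and $\tilde{Q}T$ the splitting of $\pi$ gives $G/A\cong\ZZ/2\times V$ (resp.\ $\ZZ/2\times T$); because $V$ and $T$ are simple and perfect, the only normal subgroups of $\ZZ/2\times V$ are the four obvious products, which pull back to $A$, $S$, $[G,G]$ and $G$. Discarding the improper one yields exactly the three listed subgroups, with $[\tilde{Q}V,\tilde{Q}V]=\Alt(Z)\rtimes V$ and $[\tilde{Q}T,\tilde{Q}T]=\Alt(Z)\rtimes T$. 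For $QF$ I would use $QF/A\cong\ZZ/2\times F$ together with the standard fact \cite{CannonFloydParry-IntroNotesOnThompsons} that every non-trivial normal subgroup of $F$ contains $[F,F]$: if the image of $N$ in $F$ is trivial then $N\in\{A,S\}$, and otherwise that image contains $[F,F]$, which should force $N\supseteq[QF,QF]=\Alt(\{0,1\}^*)\rtimes[F,F]$.

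For the non-split groups $QV$ and $QT$ the same scheme applies once the finitary part of the commutator subgroup is pinned down. Here I would work inside $\tilde{Q}V=\Sym(Z)\rtimes V$, using $QV\cong\Stab_{\tilde{Q}V}(\zeta)$, and identify $[QV,QV]$ with $[\tilde{Q}V,\tilde{Q}V]\cap QV=(\Alt(Z)\rtimes V)\cap QV$: the inclusion ``$\subseteq$'' is immediate, and for ``$\supseteq$'' I would note that both sides are extensions of $V$ with the same finitary kernel, and that both descend to the unique complement $V$ of the central $\ZZ/2$ in $QV/A\cong\ZZ/2\times V$ (unique because $V$ is perfect), whence they coincide. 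The finitary kernel itself, $[QV,QV]\cap\Sym(\{0,1\}^*)$, is governed by the restriction to $QV$ of the sign homomorphism $\varepsilon:\tilde{Q}V\to\ZZ/2$, $\sigma v\mapsto\sign_Z(\sigma)$; analysing $\varepsilon$ on $\Sym(\{0,1\}^*)$ determines the extension of $V$ by that finitary group, and the identical computation with $T$ in place of $V$ settles $QT$.

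The parity and mixed-commutator computations are where the real care is needed, and I expect the non-split cases to be the main obstacle. Even granting $A\subseteq N$, deducing $N\supseteq[G,G]$ in the branch $N\cap S=A$ requires a separate argument: for $QF$ I would take $f\in[F,F]$, lift it to $n=sf\in N$, and use that $n^2$ has even finitary part to get $f^2\in N\cap F$, so that $N\cap F$ is a non-trivial normal subgroup of $F$ and hence contains $[F,F]$; the analogous step for $QV,QT$ is absorbed into the four-subgroup lattice of $\ZZ/2\times V$ once that direct product decomposition is established despite the non-splitness of $\pi$. Throughout, the delicate point is keeping track of two different sign functions---parity on $\{0,1\}^*$ versus parity on $Z$---and of the fact that $\pi$ splits for $\tilde{Q}V,\tilde{Q}T,QF$ but not for $QV,QT$; it is exactly this discrepancy, mediated by the extra vertex $\zeta$, that distinguishes the finitary kernels appearing in the five statements.
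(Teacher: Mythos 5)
Your proposal is correct, and it takes a genuinely different route from the paper's. The paper never isolates your first step (every non-trivial normal subgroup contains the finitary alternating group); instead it splits on whether $N \le \Sym(\{0,1\}^*)$, and for $N \not\le \Sym(\{0,1\}^*)$ it manufactures a non-trivial element of $\iota F$ inside $N$ by a support argument (Lemmas \ref{lemma:buildup} and \ref{lemma:normal closure of element in iF}: given $\sigma f \in N$ one finds $h \in \iota F$ commuting with the finitary part $\sigma$ but not with $f$, so $[f,h] \in N \cap \iota F$), then invokes simplicity of $[F,F]$ together with a separately computed normal closure $\llangle [\iota F,\iota F]\rrangle_G = [G,G]$ (Proposition \ref{prop:normal closure of iF commutator}, whose proof produces a $3$-cycle by an explicit commutator calculation and settles the index-two ambiguities using the abelianisations obtained from the finite presentations, Corollaries \ref{cor:abelianisation QF}, \ref{cor:abelianisation wQT}, \ref{cor:abelianisation wQV}). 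Your reduction---$N \cap A \trianglelefteq G$, simplicity of the infinite finitary alternating group, and $[N,A] \le N \cap A = 1$ forcing $N$ into the trivial centraliser of $A$---replaces all of this by soft structure theory and is independent of Section \ref{section:finite presentations}; your $n^2$-parity trick (note you need torsion-freeness of $F$ there, to know $f^2 \neq 1$) substitutes for Lemma \ref{lemma:buildup} in the one case where the quotient is not simple, while for $T$ and $V$ the four-subgroup lattice of $C_2 \times Q$ does everything. What the paper's route buys is the slightly stronger generation statement $\llangle [\iota F,\iota F]\rrangle_G = [G,G]$; what yours buys is uniformity across all five groups and no dependence on the presentations.

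One caveat, which is a defect of the printed statement rather than of your argument: carried out, your sign-homomorphism analysis gives $[QV,QV] \cap \Sym(\{0,1\}^*) = \Alt(\{0,1\}^*)$, since a finitary permutation fixing $\zeta$ has the same parity on $Z$ as on $\{0,1\}^*$ and $(\Alt(Z) \rtimes V) \cap \Sym(Z) = \Alt(Z)$. So $(\Alt(Z)\rtimes V)\cap QV$ is an extension of $V$ by $\Alt(\{0,1\}^*)$, \emph{not} by $\Sym(\{0,1\}^*)$ as the ``Moreover'' clauses of items (4) and (5) assert; the same correction applies with $T$ in place of $V$. This agrees with the paper's own proof of Proposition \ref{prop:normal closure of iF commutator}(5), which twice derives ``extension of $V$ by $\Alt(\{0,1\}^*)$''---indeed, were the kernel all of $\Sym(\{0,1\}^*)$, then $[QV,QV]$ would be all of $QV$, contradicting $(QV)_{ab} \cong C_2$ (Corollary \ref{cor:abelianisations from normal subgroup structure}). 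So state the finitary kernels as $\Alt(\{0,1\}^*)$, and in the write-up make explicit the two routine identifications you leave implicit: that $\Alt = [\Sym,\Sym] \le [G,G]$ (so the four normal subgroups of $C_2 \times Q$ pull back to exactly the subgroups claimed), and that the central extension $QV/\Alt(\{0,1\}^*)$ of $V$ by $C_2$ really is a direct product, which follows from your computation that $[QV,QV]$ meets $\Sym(\{0,1\}^*)$ in $\Alt(\{0,1\}^*)$ and surjects onto $V$.
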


In Section \ref{section:BNSR invariants} we calculate the Bieri--Neumann--Strebel--Renz invariants, or Sigma invariants, of $QF$ (they are uninteresting for the groups $QT$, $QV$, $\tilde{Q}T$, and $\tilde{Q}V$ since they all have finite abelianisation).  They have previously been computed for $F$ by Bieri, Geoghegan, and Kochloukova \cite{BieriGeogheganKochloukova-SigmaInvariantsOfF}, and for $BF$ by Zaremsky \cite[\S 3]{Zaremsky-NormalSubgroupsOfBraidedThompsonsGroups}.  We also show that $QF$ is an ascending HNN extension 
$$QF \cong QF(1)\ast_{\theta,t},$$
where $QF \cong QF(1).$

We find that $\Sigma^i(QF) \cong \Sigma^i(F)$ for all $i$.  More precisely, we define $\pi^*\chi = \chi \circ \pi$ for any character $\chi$ of $F$ and prove that $\pi^*$ induces an isomorphism between the character spheres $S(F)$ and $S(QF)$.

Let $\chi_0:F \to \RR$ and $\chi_1:F \to \RR$ be the two characters given by $\chi_0(A) = -1$, $\chi_0(B) = 1$, and $\chi_0(A) = \chi_1(B) = 1$, where $A$ and $B$ are the standard generators of $F$.  

\theoremstyle{plain}\newtheorem*{CustomThmC}{Theorem \ref{theorem:BNSR invariants of QF}}
\begin{CustomThmC}[The Bieri--Neumann--Strebel--Renz invariants of $QF$]
The character sphere $S(QF)$ is isomorphic to $S^1$ and for any ring $R$,
\begin{enumerate}
 \item $\Sigma^1(QF, R) = \Sigma^1(QF) = S(QF) \setminus \{ [\pi^*\chi_0], [\pi^*\chi_1]\}$.
 \item $\Sigma^i(QF, R) = \Sigma^i(QF) = S(QF) \setminus \{ [a \pi^*\chi_0 + b \pi^*\chi_1] : a, b \ge 0 \}$ for all $i \ge 2$.
\end{enumerate}
\end{CustomThmC}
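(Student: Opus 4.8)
The plan is to first pin down the character sphere and then reduce the computation of the invariants to Bieri--Geoghegan--Kochloukova's calculation for $F$ \cite{BieriGeogheganKochloukova-SigmaInvariantsOfF}. For the \emph{character sphere}, I would show that every character $\chi\colon QF\to\RR$ vanishes on $\Sym(\{0,1\}^*)$: this group is generated by transpositions, which are torsion, so any homomorphism to the torsion-free group $\RR$ kills them (equivalently, $\Alt(\{0,1\}^*)\le[QF,QF]$ by Theorem \ref{theorem:alternative for normal subgroups} and $\Sym/\Alt$ is finite). Hence $\chi$ factors as $\chi=\pi^*\bar\chi$ for a unique character $\bar\chi$ of $F$, and since $\pi$ is onto, $\pi^*\colon\Hom(F,\RR)\to\Hom(QF,\RR)$ is an isomorphism. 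As $F^{\mathrm{ab}}\cong\ZZ^2$ this gives $S(QF)\cong S(F)\cong S^1$, with $[\pi^*\chi_0]$ and $[\pi^*\chi_1]$ the images of the two exceptional directions of $F$.

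The goal is then to prove that $[\pi^*\bar\chi]\in\Sigma^i(QF,R)$ if and only if $[\bar\chi]\in\Sigma^i(F,R)$, after which the stated formulas are exactly the $\pi^*$-pullbacks of the invariants in \cite{BieriGeogheganKochloukova-SigmaInvariantsOfF}. I would carry this out geometrically, mirroring Zaremsky's treatment of $BF$ \cite{Zaremsky-NormalSubgroupsOfBraidedThompsonsGroups}: let $X$ be the contractible complex on which $QF$ acts that is used in \S\ref{section:type Finfty} to prove $QF$ is of type $\F_\infty$, and use $\chi=\pi^*\bar\chi$ to define a Morse height function on $X$. By the Bieri--Renz reformulation, $[\chi]\in\Sigma^i$ is equivalent to the essential $(i-1)$-connectivity of the filtration of $X$ by the superlevel sets of the height function, which by Bestvina--Brady Morse theory is governed by the connectivity of the ascending and descending links.

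The heart of the argument is that the links controlling this filtration for $QF$ are obtained from the corresponding links for $F$ by joining with (or coning off against) complexes built from the finitary symmetric-group data on $\{0,1\}^*$, and these auxiliary complexes are highly connected. Consequently each relevant $QF$-link has the same essential connectivity as the matching $F$-link, so the $\chi$-filtration of $X$ is essentially $(i-1)$-connected precisely when the analogous $\bar\chi$-filtration for $F$ is; this yields $\Sigma^i(QF,R)=\pi^*\Sigma^i(F,R)$. Because the symmetric-group pieces are highly connected both homotopically and homologically, and the same coincidence holds for $F$ by \cite{BieriGeogheganKochloukova-SigmaInvariantsOfF}, the homotopical and $R$-homological invariants agree and are independent of $R$, giving $\Sigma^i(QF,R)=\Sigma^i(QF)$. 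Substituting the known $\Sigma^i(F)$ then produces parts (1) and (2).

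I expect the real work to be this link computation: identifying the descending links of $X$ explicitly, recognising the symmetric-group factors inside them, and proving those factors are sufficiently highly connected (with a connectivity bound growing in the relevant parameter) that they never lower the connectivity coming from the $F$-part. In particular one must verify that the exceptional directions $[\pi^*\chi_0]$ and $[\pi^*\chi_1]$ reproduce \emph{exactly} the obstruction present for $F$---neither more nor less---so that for $i\ge 2$ nothing beyond the cone $\{[a\pi^*\chi_0+b\pi^*\chi_1]:a,b\ge0\}$ is removed. This is where the argument could break, since a miscalculation of a single descending link would either spuriously enlarge the complement or fail to detect the genuine obstruction.
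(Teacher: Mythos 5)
Your first paragraph (every character of $QF$ kills $\Sym(\{0,1\}^*)$ since it is generated by torsion elements, so $\pi^*$ identifies $S(F)$ with $S(QF)\cong S^1$) is correct and matches the paper. After that, however, there is a genuine gap, and it is concrete: the complex you propose to do Morse theory on cannot support the argument. The complex from Section \ref{section:type Finfty} is the Davis complex $\mathcal{U}$ of the Coxeter group $\Sym(\{0,1\}^*)$, and by Lemma \ref{lemma:isotropy of semi-direct construction} its cell stabilisers have the form $(W_T)^{w^{-1}}\rtimes Q_T$ where $Q_T$ is an $F$-stabiliser, i.e.\ a conjugate of a product of copies of $F$ by Lemma \ref{lemma:stab of F on sigma}. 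The Bieri--Renz/Bestvina--Brady reformulation you invoke needs a height function $h$ with $h(gx)=h(x)+\chi(g)$, which forces $\chi$ to vanish on every cell stabiliser. It does not: the vertex $W_{\{\sigma_{0,\varepsilon}\}}$ is fixed by $\beta\in X_\sigma$, and $\pi^*\chi_1(\beta)=\chi_1(B)=1\neq 0$, so $h(x)=h(\beta x)=h(x)+1$ is a contradiction and the superlevel filtration is not even defined equivariantly. Relatedly, $\mathcal{U}$ contains no ``$F$-direction'' geometry at all ($F$ acts only by permuting Coxeter generators), so the claimed decomposition of ascending/descending links as joins of $F$-links with symmetric-group complexes has no meaning on this complex; the entire heart of your argument, which you yourself flag as ``the real work,'' is both unexecuted and, on the stated complex, unexecutable. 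A Morse-theoretic route is not hopeless---one would need a Stein--Farley-type complex for $QF$ whose cell stabilisers are \emph{finite} symmetric groups (on which every character automatically vanishes)---but that is a different construction from anything in the paper and would still require the full link analysis.

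For contrast, the paper's proof avoids Morse theory entirely and is considerably cheaper. The negative direction (that the listed classes are excluded) is immediate from Meinert's result (Lemma \ref{lemma:sigma inv under quotients}): since $\pi:QF\to F$ is a \emph{split} surjection, $[\pi^*\chi]\in\Sigma^i(QF,R)$ forces $[\chi]\in\Sigma^i(F,R)$, so $\Sigma^i(QF,R)\subseteq\pi^*\Sigma^i(F,R)$ with no geometry needed---your plan tries to extract this hard direction from essential non-connectivity of links, which is unnecessary. The positive direction for $i\ge 2$ uses the ascending HNN decomposition $QF=QF(1)\ast_{\theta,t}$ of Lemma \ref{lemma:QF is an HNN extension} together with Theorem \ref{theorem:BGK thms on Sigma} (the Bieri--Geoghegan--Kochloukova HNN criteria) and the $\nu$-symmetry of Remark \ref{remark:nu-symmetry} to place everything outside the cone $\{[a\pi^*\chi_0+b\pi^*\chi_1]:a,b\ge 0\}$ into $\Sigma^\infty(QF)$; and the remaining arc on the $S^1$ needed for $\Sigma^1$ is handled via Theorem \ref{theorem:class of normal subgroups via BNSR} by proving directly that $\Ker\chi$ is finitely generated, using transitivity of $\pi(\Ker\chi)$ on $\Sigma_2$ (Lemmas \ref{lemma:KerChi acts with finitely many orbits on Sigma2} and \ref{lemma:arbitrary chi0 and chi1 while fixing points}) to generate $\Sym(\{0,1\}^*)$ with finitely many orbits. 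If you want to salvage your approach, you would need to (i) build the cube-complex model with finite stabilisers, (ii) carry out the link computations including recovering the known obstructions for $F$, and (iii) separately handle the homological invariants over every $R$; the paper's algebraic route obtains all of this from cited results plus the two lemmas about $\Ker\chi$.
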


In Section \ref{section:relationship of QF to others} we study the relationship of the groups studied in this note to previously studied families of groups which generalise Thompson's groups and are known to be of type $\F_\infty$.  We show that the groups studied here cannot be proved to be of type $\F_\infty$ using  Farley--Hughes' study of Groups of Local Similarities \cite[Theorem 1.1]{FarleyHughes-FinitenessPropertiesOfSomeGroupsOfLocalSimilarities}, or Mart\'{\i}nez-P\'erez--Nucinkis' study of Automorphism Groups of Cantor Algebras \cite{MartinezNucinkis-GeneralizedThompsonGroups}. We also show that one cannot use  Thumann's study of Operad groups \cite{Thumann-FinitenessForOperadGroups} to determine whether $QV$ is of type $\F_\infty.$

\begin{Remark}[Notation]
A tree diagram representing an element of Thompson's group $V$ is a triple $(L, R, f)$ where $L$ and $R$ are rooted subtrees of the infinite binary rooted tree with equal numbers of leaves, and $f$ is a bijection from the leaves of $L$ to the leaves of $R$.  To represent an element of Thompson's group $F$ we require only the first two elements of the tuple---the bijection is understood to preserve the left-to-right ordering of the leaves.  For further background on tree diagrams see \cite[\S 2]{CannonFloydParry-IntroNotesOnThompsons}.
\end{Remark}

\section*{Acknowledgement}
The authors would like to thank the referees for helpful comments.

\section{Introducing \texorpdfstring{$QF$}{QF}, \texorpdfstring{$QT$}{QT}, \texorpdfstring{$QV$}{QV}, \texorpdfstring{$\tilde{Q}T$}{tQT}, and \texorpdfstring{$\tilde{Q}V$}{tQV}}

\subsection{The group \texorpdfstring{$QV$}{QV}}\label{subsection:QV}
As mentioned in the introduction the group $QV$ is the group of all bijections on the vertices of $\mathcal{T}_{2,c},$  the infinite binary $2$-edge-coloured tree, which respect the edge and colour relations except for possibly at finitely many locations. 
Given some $\tau \in QV$ and $a \in \{0,1\}^*$, let $s_a$ be the maximal suffix such that $a = x_a \cdot s_a$ and $\tau a = y_a \cdot s_a$ for some $x_a, y_a \in \{0,1\}^*$ (here the operation $\cdot$ is concatenation of words).  Bleak, Mattuci and Neunh\"offer prove that the set
\[
 \{(x_a, y_a) : a \in \{0,1\}^*\}
\]
is finite \cite[Claim 8]{BleakMatucciNeunhoffer-EmbeddingsIntoVAndCocf}, and hence that the subset
\[
 M_\tau = \{(x, y) : \text{There are infinitely many $a \in \{0,1\}^*$ with $(x_a, y_a) = (x,y) $}\}
\]
is finite also.  Moreover, they show that the sets 
\[
 \mathcal{L}_\tau =  \{x : (x,y) \in M_\tau\}
\]
and
\[
 \mathcal{R}_\tau = \{y : (x,y) \in M_\tau\}
\]
both form finite complete anti-chains for the poset $\{0,1\}^*$, where $x \le x^\prime$ if $x$ is a subword of $x^\prime$.  Write $b_\tau$ for the bijection 
\[
b_\tau : \mathcal{L}_\tau \longrightarrow \mathcal{R}_{\tau}.
\]
Note that restricting $\tau$ to $\mathcal{L}_\tau$ does not necessarily give the bijection $b_\tau$.  This is because if $\sigma$ is any finite permutation of $\{0,1\}^*$ then $b_{\sigma \circ \tau} = b_\tau$.

Let $L$ and $R$ be the binary trees with leaves $\mathcal{L}_\tau$ and $\mathcal{R}_\tau$ respectively, then $(L, R, b_\tau)$ is a tree-pair diagram for an element of $V$ which we denote by $v_\tau$.

In fact, $v_\tau$ can be viewed as the induced action on the space of ends of the simplicial tree $\mathcal{T}_{2, c}$, the set whose elements are enumerated by the right-infinite words $\{0,1\}^{\NN}$.  With its natural topology the space of ends is homeomorphic to a Cantor set with $v_\tau$ acting as a homeomorphism.  Explicitly, the action of $v_\tau$ on $\{0,1\}^*$ is
\begin{align*}
v_\tau : \{0,1\}^{\NN} &\longrightarrow \{ 0,1\}^{\NN} \\
x \cdot s &\longmapsto y \cdot s \text{ for $(x,y)$ in $M_\tau$.}
\end{align*}
This is well-defined since $\mathcal{L}_\tau$ is a complete anti-chain in the poset $\{0,1\}^*$.

\begin{Lemma}\label{lemma:pi is homomorphism}
 The map $\pi: QV \longrightarrow V$ sending $\tau \mapsto v_\tau$ is a group homomorphism.
\end{Lemma}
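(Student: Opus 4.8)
The plan is to show that $\pi$ respects composition, i.e.\ that $v_{\sigma\tau} = v_\sigma v_\tau$ for all $\sigma, \tau \in QV$, and that $\pi$ sends the identity to the identity; the latter is immediate since the identity quasi-automorphism has $M_{\id}$ consisting of the single pair $(\varepsilon, \varepsilon)$ (or equivalently acts trivially on all deep suffixes). The cleanest route to multiplicativity is to exploit the interpretation of $v_\tau$ as the induced action on the space of ends $\{0,1\}^{\NN}$, which is already established in the excerpt. Since $\pi$ is defined via this end-action, I would verify directly that the end-action is itself multiplicative.

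First I would make precise the claim that $v_\tau$ really is the map induced by $\tau$ on ends. The key observation is that for any $\tau \in QV$ and any right-infinite word $\xi \in \{0,1\}^{\NN}$, all sufficiently long prefixes $a$ of $\xi$ share a common pair $(x_a, y_a) = (x, y) \in M_\tau$; this is exactly what it means for $(x,y)$ to occur for infinitely many $a$ lying along a single end, and it follows from $\mathcal{L}_\tau$ being a complete anti-chain, so that $\xi$ has a unique prefix in $\mathcal{L}_\tau$. Thus, writing $\xi = x \cdot s$ with $x \in \mathcal{L}_\tau$, the quasi-automorphism $\tau$ eventually sends the truncations $x \cdot (\text{prefix of } s)$ to $y \cdot (\text{same prefix})$, so the \emph{induced} map on ends is precisely $x \cdot s \mapsto y \cdot s = v_\tau(\xi)$. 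In other words, $v_\tau$ is genuinely the boundary map $\partial\tau$ of $\tau$.

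Second, with this identification in hand, multiplicativity becomes transparent: for a composite $\sigma \tau$, on any given end $\xi$ both $\tau$ and then $\sigma$ eventually agree with honest tree-respecting shifts, so the induced boundary map of the composite is the composite of the boundary maps, $\partial(\sigma\tau) = \partial\sigma \circ \partial\tau$. Hence $v_{\sigma\tau} = v_\sigma v_\tau$ as homeomorphisms of the Cantor set, and therefore as elements of $V$ (an element of $V$ is determined by its action on ends, so equality of boundary homeomorphisms gives equality in $V$). Concretely, one takes $\xi = x \cdot s$ with $x \in \mathcal{L}_\tau$ and $v_\tau(\xi) = y \cdot s$ with $y \in \mathcal{R}_\tau$; then taking $s$ deep enough that $y \cdot (\text{prefix of } s)$ sits below an element of $\mathcal{L}_\sigma$ shows the composite applies $b_\sigma$ after $b_\tau$, which is exactly the tree-pair description of $v_\sigma v_\tau$.

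The main obstacle is the bookkeeping in the second step: the anti-chains $\mathcal{L}_\sigma, \mathcal{R}_\tau$ need not coincide, so to compare $b_\sigma \circ b_\tau$ with the bijection $b_{\sigma\tau}$ one must refine the trees $L, R$ by subdividing leaves until $\mathcal{R}_\tau$ is a common refinement compatible with $\mathcal{L}_\sigma$, and check that the resulting tree-pair diagram reduces to $(L', R', b_{\sigma\tau})$. Passing to the end-action circumvents having to manipulate tree-pair diagrams explicitly, since equality of two elements of $V$ can be checked on the dense set of ends, and this is why I would phrase the whole argument at the level of the induced boundary homeomorphisms rather than at the level of reduced tree diagrams.
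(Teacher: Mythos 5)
Your proposal is correct and takes essentially the same route as the paper: the paper also proves the lemma by passing to the induced action on the space of ends, observing that $\tau$ restricts to a simplicial map off a large enough finite rooted subtree and then invoking functoriality of $\Ends(-)$ for composites. Your hands-on verification via the anti-chain data $(\mathcal{L}_\tau, b_\tau)$ (including the explicit check that $v_\tau$ agrees with the boundary map, which the paper leaves implicit) replaces the appeal to functoriality but amounts to the same argument.
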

\begin{proof}
 For any topological space $X$ let $\Ends(X)$ denote the space of ends of $X$.  
 
 If $\tau \in QV$ then $\tau$ may not be a simplicial automorphism of $\mathcal{T}_{2,c}$, but there will always exist a rooted subtree $L \subseteq \mathcal{T}_{2,c}$ such that 
 \[
  \tau \vert_{\mathcal{T}_{2,c} \setminus L} : \mathcal{T}_{2,c} \setminus L \longrightarrow \mathcal{T}_{2,c} \setminus \tau(L)
 \]
 is a simplicial map.  Thus $\tau$ determines a map 
 \[
  \Ends(\tau) : \Ends(\mathcal{T}_{2,c}) \longrightarrow \Ends(\mathcal{T}_{2,c}).
 \]
This is well-defined---if we choose to remove a different rooted subtree $L^\prime$ instead then, since removing a compact set doesn't affect the ends of a space, we see that 
\[
 \mathcal{T}_{2,c} \setminus (L \cup L^\prime) \longrightarrow \mathcal{T}_{2,c} \setminus \tau(L\cup L^\prime)
\]
and hence 
\[
 \mathcal{T}_{2,c} \setminus (L^\prime) \longrightarrow \mathcal{T}_{2,c} \setminus \tau(L^\prime)
\]
induce the same map on $\Ends(\mathcal{T}_{2,c})$. 

If $\sigma$ is some other element of $QV$ then choosing a large enough finite rooted subtree $L$, both maps in the composition
\[
 \mathcal{T}_{2,c} \setminus{L} \stackrel{\tau}{\longmapsto} \mathcal{T}_{2,c} \setminus \tau({L}) \stackrel{\sigma}{\longmapsto} \mathcal{T}_{2,c} \setminus \sigma \circ \tau({L}) 
\]
are simplicial maps.  Now use that $\Ends(-)$ is a functor.
\end{proof}

The kernel of $\pi$ are those permutations of $\{0,1\}^*$ which are the identity on all but finitely many points, the finite support permutation group $\Sym(\{0,1\}^*)$.  Thus there is a group extension
\[
 1 \longrightarrow \Sym(\{0,1\}^*) \longrightarrow QV \longrightarrow V \longrightarrow 1.
\]
This extension does not split, see Proposition \ref{prop:QT extension doesnt split}.

\subsection{The group \texorpdfstring{$\tilde{Q}V$}{tQV}}
Recall that $\mathcal{T}_{2,c} \cup \{\zeta\}$ denotes the 2-edge-coloured tree $\mathcal{T}_{2,c}$ together with an isolated vertex $\zeta$, so the vertex set of $\mathcal{T}_{2,c} \cup \{\zeta\}$ is $Z =\{0, 1\}^* \cup \{ \zeta \}$.  There is a map $\pi$ from $\tilde{Q}V$ to $V$ defined as previously, whose kernel is $\Sym(Z)$.

Let $\le_{\lex}$ be the total order on $\{0,1\}^*$ defined by following rule:  Given a word $x,y \in \{0,1\}$ let $\tilde{x}, \tilde{y} \in \{0,\frac{1}{2} ,1\}^{\NN}$ be the words obtained by concatenating $x$ and $y$ respectively with the infinite word containing only the symbol $\frac{1}{2}$, now we say that $x \le_{\lex} y$ if and only if $\tilde{x}$ is smaller that $\tilde{y}$ in the lexicographical order where $0 \lneq \frac{1}{2} \lneq 1$.  For example, $0 0 \le_{\lex} 0$ but $01 \ge_{\lex} 0$.  
We extend the $\le_{\lex}$ order to $Z$ so that $\zeta$ is strictly larger than all elements of $\{0,1\}^*$.  

\begin{Remark}\label{remark:order preserving bijection b_tau}
If $T$ is any finite binary tree, there is an order preserving bijection 
\[
 b_T : \operatorname{nodes}(T) \cup \{\zeta\} \longrightarrow \operatorname{leaves}(T).
\]
Moreover, adding a caret to $T$ at any leaf doesn't affect the bijection on the remaining leaves. 
\end{Remark}

\begin{Lemma}\label{lemma:QAutX SES is split}
 The short exact sequence
 \[
 1 \longrightarrow \Sym(Z) \longrightarrow \tilde{Q}V \stackrel{\pi}{\longrightarrow} V \longrightarrow 1
\]
is split.
\end{Lemma}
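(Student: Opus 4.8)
The plan is to split the sequence by exhibiting an explicit set-theoretic section $s\colon V\to\tilde QV$ and checking it is a homomorphism with $\pi\circ s=\operatorname{id}_V$. Given $v\in V$, I would choose a tree-pair diagram $(L,R,f)$ representing it, so that $f\colon\operatorname{leaves}(L)\to\operatorname{leaves}(R)$ is a bijection, and define $s(v)\colon Z\to Z$ by two clauses according to the partition $Z=\big(\operatorname{nodes}(L)\cup\{\zeta\}\big)\sqcup\{\,\ell u:\ell\in\operatorname{leaves}(L),\,u\in\{0,1\}^*\,\}$. On the second piece, where a word lies at or below a leaf $\ell$ of $L$, set $s(v)(\ell u)=f(\ell)u$; this is the only choice compatible with $\pi$, being exactly the prefix substitution that induces $v$ on the ends. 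On the first piece, the finitely many interior nodes of $L$ together with $\zeta$, set $s(v)=b_R^{-1}\circ f\circ b_L$, using the order-preserving bijections of Remark \ref{remark:order preserving bijection b_tau} to route the interior vertices through the leaves.

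With this definition $s(v)$ is manifestly a bijection of $Z$ that agrees with a simplicial, colour-respecting prefix substitution off the finite tree $L$, so $s(v)\in\tilde QV$ and $\pi(s(v))=v$. The homomorphism property is then the easy part: to multiply $v$ and $w$ I would pick representatives sharing a middle tree, say $v=(L,M,f)$ and $w=(M,R,g)$, so that $wv=(L,R,g\circ f)$. On the interior-node piece the two factors compose as $(b_R^{-1}\,g\,b_M)(b_M^{-1}\,f\,b_L)=b_R^{-1}(g\circ f)b_L$, the middle $b_M$ cancelling, while on the lower piece the prefix substitutions compose to $\ell u\mapsto (g\circ f)(\ell)u$; both pieces agree with $s(wv)$, giving $s(wv)=s(w)s(v)$.

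The real content, and the step I expect to be the main obstacle, is well-definedness: that $s(v)$ is independent of the tree-pair. Since any two representatives have a common expansion, it suffices to check invariance under adding one matching caret, at a leaf $\ell$ of $L$ and its image $f(\ell)$ in $R$. The lower clause is clearly unaffected on the strict descendants of $\ell$, but the vertex $\ell$ itself changes status from a leaf (handled by prefix substitution, giving $\ell\mapsto f(\ell)$) to an interior node (handled by $b_{R'}^{-1}f'b_{L'}$), and one must verify these give the same value while the interior-node map is otherwise unchanged. This is exactly where the compatibility clause of Remark \ref{remark:order preserving bijection b_tau} enters. Interpreting $\le_{\lex}$ as the in-order traversal of the tree, the leaves and interior nodes strictly interleave and $b_T$ sends each interior node (and $\zeta$) to the nearest leaf below it, so that $b_T^{-1}(\lambda)\ge_{\lex}\lambda$ for every leaf $\lambda$. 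I would use this to pin down which new leaf $b_{L'}$ and $b_{R'}$ assign to the split vertices, confirm that the newly interior $\ell$ is sent to $f(\ell)$ as the old prefix substitution demanded, and that all other interior nodes keep their values. Granting this order-theoretic bookkeeping, $s$ is a well-defined splitting homomorphism and the sequence splits; as a sanity check, $s(v)$ fixes $\zeta$ precisely when $f$ preserves the largest leaf, which holds for all $v\in F$, recovering the splitting over $F$ of Lemma \ref{lemma:QF SES splits} and explaining why $\zeta$ must move for general $v$.
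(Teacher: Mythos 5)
Your proposal is correct and follows essentially the same route as the paper's proof: the identical two-clause definition of the section via $b_R^{-1}\circ f\circ b_L$ on $\operatorname{nodes}(L)\cup\{\zeta\}$ and prefix substitution below the leaves, well-definedness via common expansions and Remark \ref{remark:order preserving bijection b_tau}, and the homomorphism check with representatives sharing a middle tree so that $b_M$ cancels. Your extra order-theoretic bookkeeping for the single-caret case simply fills in the detail the paper delegates to that remark, and your closing observation about $\zeta$ matches Lemma \ref{lemma:QF SES splits}.
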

\noindent Using the computer algebra package GAP Bleak, Matucci, and Neunh\"offer provide the same splitting  \cite[Lemma 15]{BleakMatucciNeunhoffer-EmbeddingsIntoVAndCocf}.
\begin{proof}
 Let $(L, R, f)$ be a tree diagram for an element $v$ of $V$, we define the element $\iota(v)$ of $\tilde{Q}V$ by
 \begin{align*}
 \iota(v) : \{0,1\}^* \cup \{\zeta\} &\longrightarrow  \{0,1\}^* \cup \{\zeta\} \\
                x &\longmapsto \left\{\begin{array}{l l} b_R^{-1} \circ f \circ b_L(x) & \text{ if $x \in \operatorname{nodes}(L) \cup \{\zeta\}$,} \\
                                                f(x) & \text{ if $x \in \operatorname{leaves}(L)$}, \end{array}\right.
 \end{align*}
 and extend $\iota(v)$ onto all $\{0,1\}^*$ by having $\iota(v)$ preserve all edge and colour relations below $\operatorname{leaves}(L)$.
 
 If $(L^\prime, R^\prime, f^\prime)$ is an expansion of $(L, R, f)$ then $(L^\prime, R^\prime, f^\prime)$ determines the same element $\iota(v)$ (use Remark \ref{remark:order preserving bijection b_tau}).  Since any two tree diagrams representing the same element of $V$ have a common expansion this proves $\iota(v)$ is independent of choice of tree diagram.

 We claim that $\iota:v \mapsto \iota(v)$ is a group homomorphism.  Let $v$ and $w$ be two elements of $V$ represented by tree diagrams $(L, R, f)$ and $(R, S, g)$ (by performing expansions we may assume any two elements of $V$ are represented by tree pair diagrams in this form).  Thus the composition of the two elements of $V$ has tree pair diagram $(L, S, g \circ f)$.  Now one verifies that for $x \in \operatorname{nodes}(L) \cup \{\zeta\}$,
 \begin{align*}
  \iota(v) \circ \iota(w) (x) &= b_S^{-1} \circ g \circ b_R \circ b_R^{-1} \circ f \circ b_L(x) \\
  &= b_S^{-1} \circ g \circ f \circ b_L(x) \\
  &= \iota(v w) (x).
 \end{align*}
 Similarly, for $x \in \operatorname{leaves}(L)$, $\iota(v) \circ \iota(w)$ and $\iota(vw)$ agree.
\end{proof}

\begin{Remark}\label{remark:explicit description of alpha beta etc}
For an explicit description of this splitting for the generators of $V$, let $A$, $B$, $C$ and $D$ denote the generators of $V$ as denoted by $A$, $B$, $C$ and $\pi_0$ in \cite[Figures 16 and 17]{CannonFloydParry-IntroNotesOnThompsons}.  Then $\iota(A)$, $\iota(B)$, $\iota(C)$, and $\iota(D)$ are shown in Figures \ref{figure:ia}--\ref{figure:id}.
\end{Remark}

Hereafter we define $\alpha = \iota(A)$, $\beta = \iota(B)$, $\gamma = \iota(C)$, and $\delta = \iota(D)$.

\begin{figure}[ht]
\begin{tikzpicture}[->,>=stealth',level/.style={sibling distance = 2cm/#1,
  level distance = 1cm}] 
  \node [arn_n] at (0,0) {$\varepsilon$}
    child{ node {0} }
    child{ node [arn_n] {1}
       child{ node {10} }
       child{ node {11} }
    }; 
  \node [arn_n] at (2, 0) {$\zeta$};
  \draw (3,-1) -- (4,-1);
  \node [arn_n] at (6,0) {1}
    child{ node [arn_n] {$\varepsilon$} 
       child{ node {0} }
       child{ node {10} }
    }
    child{ node {11} };
 
   \node [arn_n] at (8, 0) {$\zeta$};
\end{tikzpicture}
\caption{Element $\alpha = \iota(A)$.}
\label{figure:ia}
\end{figure}

\begin{figure}[ht]
\begin{tikzpicture}[->,>=stealth',level/.style={sibling distance = 2cm/#1,
  level distance = 1cm}] 
  \node [arn_n] at (0,0) {$\varepsilon$}
    child{ node {0} }
    child{ node [arn_n] {1}
       child{ node {10} }
       child{ node [arn_n] {11} 
          child {node {110} }
          child {node {111} }
       }
    }; 
  \node [arn_n] at (2, 0) {$\zeta$};
  \draw (3,-1.5) -- (4,-1.5);
 
  \node [arn_n] at (6,0) {$\varepsilon$}
    child{ node {0} }
    child{ node [arn_n] {11}
       child{ node [arn_n] {1} 
          child {node {10} }
          child {node {110} }
          }
       child{ node {111} }
    }; 
 
   \node [arn_n] at (8, 0) {$\zeta$};
\end{tikzpicture}
\caption{Element $\beta = \iota(B)$.}
\label{figure:ib}
\end{figure}

\begin{figure}[ht]
\begin{tikzpicture}[->,>=stealth',level/.style={sibling distance = 2cm/#1,
  level distance = 1cm}] 
  \node [arn_n] at (0,0) {$\varepsilon$}
   child{ node {0} }
    child{ node [arn_n] {1}
       child{ node {10} }
       child{ node {11} }
    }; 
  \node [arn_n] at (2, 0) {$\zeta$};
  \draw (3,-1.5) -- (4,-1.5);
 
  \node [arn_n] at (6,0) {1}
    child{ node {10} }
    child{ node [arn_n] {$\zeta$}
       child{ node {11} }
       child{ node {0} }
    }; 
 
   \node [arn_n] at (8, 0) {$\varepsilon$};
\end{tikzpicture}
\caption{Element $\gamma = \iota(C)$.}
\label{figure:ic}
\end{figure}

\begin{figure}[ht]
\begin{tikzpicture}[->,>=stealth',level/.style={sibling distance = 2cm/#1,
  level distance = 1cm}] 
  \node [arn_n] at (0,0) {$\varepsilon$}
   child{ node {0} }
    child{ node [arn_n] {1}
       child{ node {10} }
       child{ node {11} }
    }; 
  \node [arn_n] at (2, 0) {$\zeta$};
  \draw (3,-1.5) -- (4,-1.5);
 
  \node [arn_n] at (6,0) {1}
    child{ node {10} }
    child{ node [arn_n] {$\varepsilon$}
       child{ node {0} }
       child{ node {11} }
    }; 
 
   \node [arn_n] at (8, 0) {$\zeta$};
\end{tikzpicture}
\caption{Element $\delta = \iota(D)$.}
\label{figure:id}
\end{figure}

\subsection{The group \texorpdfstring{$QF$}{QF}}\label{subsection:QF}

We define the group $QF$ to be the preimage $\pi^{-1}(F)$ of Thompson's group $F$ under $\pi:QV \rightarrow V$.  

\begin{Lemma}\label{lemma:QF SES splits}
 The short exact sequence 
\[
 1 \longrightarrow \Sym(\{0,1\}^*) \longrightarrow QF \stackrel{\pi}{\longrightarrow} F \longrightarrow 1
\]
is split.
\end{Lemma}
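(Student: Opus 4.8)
The plan is to produce the splitting $F \to QF$ by restricting the splitting $\iota : V \to \tilde{Q}V$ already constructed in Lemma \ref{lemma:QAutX SES is split}. The first step is to record how $QV$ sits inside $\tilde{Q}V$: since $\zeta$ is an isolated vertex carrying no edge or colour relations, any element of $\tilde{Q}V$ fixing $\zeta$ restricts to a bijection of $\{0,1\}^*$ respecting all but finitely many edge and colour relations, hence to an element of $QV$; conversely every element of $QV$ extends to $\tilde{Q}V$ by fixing $\zeta$. Because $\zeta$ contributes no ends, the two homomorphisms $\pi$ agree under this correspondence, so $\Stab_{\tilde{Q}V}(\zeta) \cong QV$ compatibly with $\pi$. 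It therefore suffices to show that $\iota(v)$ fixes $\zeta$ for every $v \in F$.

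This fixing is the heart of the argument. Let $(L,R,f)$ be a tree diagram for $v \in F$. By the convention for elements of $F$, the bijection $f : \operatorname{leaves}(L) \to \operatorname{leaves}(R)$ preserves the left-to-right (equivalently $\le_{\lex}$) ordering of leaves, so it sends the rightmost leaf of $L$ to the rightmost leaf of $R$. By Remark \ref{remark:order preserving bijection b_tau} the bijection $b_L$ is order preserving, and since $\zeta$ is by definition the largest element of $\operatorname{nodes}(L) \cup \{\zeta\}$, the value $b_L(\zeta)$ is the $\le_{\lex}$-largest (i.e.\ rightmost) leaf of $L$; symmetrically $b_R^{-1}$ carries the rightmost leaf of $R$ back to $\zeta$. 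Hence, reading off the definition of $\iota(v)$ on the node $\zeta$,
\[
\iota(v)(\zeta) = b_R^{-1} \circ f \circ b_L(\zeta) = \zeta.
\]

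It then follows that $\iota(v) \in \Stab_{\tilde{Q}V}(\zeta) \cong QV$ for all $v \in F$, and since $\iota$ splits $\pi : \tilde{Q}V \to V$ we have $\pi(\iota(v)) = v \in F$, so in fact $\iota(v) \in QF$. Thus $\iota|_F : F \to QF$ is a group homomorphism with $\pi \circ \iota|_F = \id_F$, which is the required splitting. I expect the only delicate point to be the order-preservation bookkeeping in the middle step: one must check that placing $\zeta$ as the maximum of $\le_{\lex}$ is precisely what forces $\zeta$ to be fixed for order-preserving $f$, and that this genuinely fails for the non-$F$ generators $\gamma, \delta$ (consistent with Figures \ref{figure:ic}--\ref{figure:id}, where $\zeta$ is moved). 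Everything else is formal.
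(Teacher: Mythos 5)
Your proof is correct and takes essentially the same route as the paper: the paper's proof also restricts the splitting $\iota$ of Lemma \ref{lemma:QAutX SES is split} and observes that an element $(L,R)$ of $F$ sends the right-hand leaf of $L$ to the right-hand leaf of $R$, so $\iota(f)$ fixes $\zeta$. Your extra steps---identifying $QV$ with the stabiliser of $\zeta$ in $\tilde{Q}V$ compatibly with $\pi$, and unwinding $b_L$, $b_R$ via the order-preservation of Remark \ref{remark:order preserving bijection b_tau}---just make explicit the bookkeeping the paper leaves implicit.
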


\begin{proof}
 Restrict the splitting map $\iota$ defined in Lemma \ref{lemma:QAutX SES is split} to $QF$.  Since elements $(L, R)$ of $F$ always map the right-hand leaf of $L$ to the right-hand leaf of $R$, we see that $\iota(f)$ fixes $\zeta$ for all $f \in F$.
\end{proof}

\begin{Remark}\label{remark:explicit description of iota for F}
If $(L, R)$ is a tree-diagram for an element $f$ of $F$, where $L$ has nodes $(x_1, \ldots, x_n)$ and $R$ has nodes $(y_1, \ldots, y_n)$ then, assuming $x_i \le_{\lex} x_{i+1}$ and $y_i \le_{\lex} y_{i+1}$ for all $1 \le i \le n-1$, $\iota(f)$ maps $x_i \mapsto y_i$.  The action on all other elements of $\{0,1\}^*$ is determined by $L$ and $R$---if $l_i$ is the i$^\text{th}$ leaf of $L$ and $r_i$ is the $i^\text{th}$ leaf of $R$ then $\iota(f)$ takes the full subtree with root $l_i$ to the full subtree with root $r_i$.  

In particular, for all $f \in F$, the image $\iota(f)$ preserves the $\le_{\lex}$ order on $\{0,1\}^*$.
\end{Remark}

\subsection{The group \texorpdfstring{$QT$}{QT}}
 
We define the group $QT$ to be the preimage $\pi^{-1}(T)$ of Thompson's group $T$ under $\pi:QV \longrightarrow V$.

Restricting the map $\pi$ to $QT$ gives a short exact sequence 
\[
 1 \longrightarrow \Sym(\{0,1\}^*) \longrightarrow QT \stackrel{\pi}{\longrightarrow} T \longrightarrow 1.
\]
 
In \cite[Lemma 2.10]{Lehnert-Thesis}, Lehnert proves that the splitting $\iota:F \to QF$ does not extend to an embedding of $T$ into $QV$.  Examining the proof, he in fact shows the stronger statement below.  Recall that $A$ and $B$ denote the standard generators of $F$.
\begin{Lemma}\label{lemma:standard F embedding doesnt extend}\cite[Lemma 2.10]{Lehnert-Thesis}
 Let $j$ be an embedding of $F$ into $QV$ such that $j(A) = \iota(A)$, then $j$ does not extend to an embedding of $T$ into $QV$.
\end{Lemma}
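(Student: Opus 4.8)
The plan is to argue by contradiction: assume $j$ extends to an embedding $\tilde{j}\colon T\to QV$, so that $\tilde{j}|_F=j$ and $\tilde{j}(A)=\iota(A)=\alpha$, and extract a contradiction from the torsion of $T$ together with the rigidity forced by the choice $\tilde{j}(A)=\alpha$. The first step is to push everything down to $V$. Composing with $\pi$ gives a homomorphism $p:=\pi\circ\tilde{j}\colon T\to V$ with $p(A)=\pi(\alpha)=A\neq 1$. Since $T$ is simple and non-abelian, any nontrivial homomorphism out of $T$ is injective, so $p$ is injective; equivalently $\tilde{j}(T)\cap\Sym(\{0,1\}^*)=1$, since this intersection is normal in the simple group $\tilde{j}(T)\cong T$ and cannot be everything as $\alpha$ has infinite order. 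Thus $\tilde{j}$ realises a splitting of the restricted extension $1\to\Sym(\{0,1\}^*)\to\pi^{-1}(p(T))\to p(T)\to 1$ over the subgroup $p(T)\le V$, subject to the pinned value $\tilde{j}(A)=\alpha$.

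Next I would isolate the finite part of this splitting by measuring it against the \emph{global} splitting $\iota\colon V\to\tilde{Q}V$ of Lemma~\ref{lemma:QAutX SES is split}, using the identification of $QV$ with the stabiliser $\Stab_{\tilde{Q}V}(\zeta)$. For $t\in T$ the element $\eta(t):=\tilde{j}(t)\,\iota(p(t))^{-1}$ lies in $\ker\pi=\Sym(Z)$, i.e.\ it is a finite-support permutation of $Z$, and $\eta$ is a $1$-cocycle for the conjugation action of $T$ on $\Sym(Z)$ through $\iota\circ p$. The hypothesis $\tilde{j}(A)=\alpha=\iota(A)=\iota(p(A))$ forces $\eta(A)=1$, hence $\eta\equiv 1$ on $\langle A\rangle$; and the requirement that $\tilde{j}(T)$ fix $\zeta$ translates into the constraint $\eta(t)\big(\iota(p(t))(\zeta)\big)=\zeta$ for all $t$. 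So the whole question becomes whether such a finite-support cocycle can exist.

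The crux is the torsion element $C$, which cycles the three subtrees rooted at $0$, $10$, $11$ and satisfies $C^3=1$ (this is why $\iota(C)$ fails to fix $\zeta$, in contrast with elements of $F$, cf.\ Lemma~\ref{lemma:QF SES splits}). I would first record that the relation $C^3=1$ \emph{alone} is not obstructive: defining a map on $\{0,1\}^*$ by the ends-rule of $C$ on the cofinite part and by the identity on the finitely many leftover nodes produces an order-$3$ element of $QV$ fixing $\zeta$ and lifting $C$, so a compatible lift of $C$ by itself does exist. The obstruction must therefore come from the defining relators of $T$ that couple $A$ and $C$. Here the special dynamics of the pinned value enter: by Remark~\ref{remark:explicit description of iota for F}, $\alpha=\iota(A)$ preserves the order $\le_{\lex}$ and in fact satisfies $\alpha(w)<_{\lex}w$ for every $w$, with no fixed node and with $\zeta$ as its unique fixed point; this rigidly pins the germ of $\tilde{j}(A)$ at the fixed end $0^\infty$. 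Feeding the coupled $A$--$C$ relators into the cocycle $\eta$, together with $\eta(A)=1$ and this germ information, should force $\eta(C)$ to disagree with the identity on infinitely many nodes clustering along the $C$-orbit of the fixed end, contradicting $\eta(C)\in\Sym(Z)$.

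The step I expect to be the main obstacle is exactly this last one: showing, from the mixed $A$--$C$ relations of $T$, that the finite-support correction demanded of $\tilde{j}(C)$ cannot simultaneously be finite, of order dividing $3$, and compatible with $\tilde{j}(A)=\alpha$. This is a concrete but delicate bookkeeping computation tracking how the rotation $C$ interacts with the strictly $\le_{\lex}$-decreasing germ of $\alpha$, and it is precisely here that the hypothesis $j(A)=\iota(A)$ (as opposed to an arbitrary lift of $A$) is indispensable. A secondary point to handle carefully is that $p$ need not be the standard inclusion; I would either reduce to the case $p=\mathrm{id}$ on the relevant subgroup or carry the germ argument through for a general embedding fixing $A$. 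Everything else above is formal scaffolding, and this coupled computation is the heart of Lehnert's original argument.
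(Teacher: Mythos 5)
You should know at the outset that the paper contains no proof of this lemma to compare against: it is quoted verbatim from Lehnert's thesis \cite[Lemma 2.10]{Lehnert-Thesis}, so your proposal has to stand entirely on its own. Its scaffolding is correct: passing to $\tilde{Q}V$ and identifying $QV$ with $\Stab_{\tilde{Q}V}(\zeta)$, noting $p = \pi\circ\tilde{j}$ is injective by simplicity of $T$, writing $\eta(t) = \tilde{j}(t)\,\iota(p(t))^{-1} \in \Sym(Z)$ as a cocycle with $\eta(A)=1$, and the observation that $C^3=1$ alone is no obstruction (the prefix-substitution lift of $C$, extended by the identity on $\{\varepsilon,1\}$, is an order-$3$ element of $QV$) are all sound, as is the dynamical claim that $\alpha(w)\lneq_{\lex}w$ for every $w\in\{0,1\}^*$ with $\zeta$ the unique fixed point. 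But the decisive step --- deriving a contradiction from the relators of $T$ --- is only asserted (``should force $\eta(C)$ to disagree with the identity on infinitely many nodes''), and you explicitly defer it as a ``bookkeeping computation.'' That computation is not routine residue; it is the entire content of the lemma, and your sketch gives no indication of how it would actually run.

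Two concrete obstacles show why this is a genuine gap rather than an omitted verification. First, $T$ has no relators involving only $A$ and $C$: in the presentation of Proposition \ref{prop:T presentation}, every relator coupling $\gamma$ to $\alpha$ also involves $\beta$ (e.g.\ $\gamma^{-1}\beta\alpha^{-1}\gamma\beta$ and $\alpha^{-1}\gamma^{-1}(\alpha^{-1}\gamma\beta)^2$), and neither $j(B)$ nor $\eta(B)$ is pinned by the hypothesis $j(A)=\iota(A)$. So the ``coupled $A$--$C$ relators'' you plan to feed into the cocycle cannot even be written down without first controlling the unknown $\eta(B)$, a problem your sketch never addresses. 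Second, as you note yourself, $p$ need not be the standard inclusion $T\hookrightarrow V$ --- only $p(A)=A$ is known --- so the germ argument at the fixed end must survive replacing $C$ by $p(C)$ for an arbitrary embedding $p$; you flag this but offer no reduction. For what it is worth, the way the paper deploys the lemma (in the proof of Proposition \ref{prop:QT extension doesnt split}, via Lemma \ref{lemma:splittings agree cofinitely} and a normalised tree-pair for $j(A)$ with leaf depths $k$ and $k+1$) indicates that Lehnert's argument is a direct finite incompatibility check at the level of tree-pair diagrams against every possible choice of $j(C)$, rather than a cocycle computation; either route may work, but on yours the contradiction still remains to be exhibited, and until it is, you have a framework, not a proof.
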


\begin{Lemma}\label{lemma:splittings agree cofinitely}
 Let $f$ be an element of $F$ and let $j: F \to QF$ be a splitting of $\pi : QF \to F$, then the actions of $\iota(f)$ and $j(f)$ on $\{0,1\}^*$ agree on cofinitely many points.
\end{Lemma}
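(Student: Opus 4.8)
The plan is to observe that $\iota$ and $j$ are two sections of the \emph{same} surjection $\pi : QF \to F$, so that $\iota(f)$ and $j(f)$ differ by an element of $\Ker\pi$, and then to invoke the identification of $\Ker\pi$ with the finite-support permutation group $\Sym(\{0,1\}^*)$ made in Section~\ref{subsection:QV}.

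First I would record that, since $\iota$ is the splitting of Lemma~\ref{lemma:QF SES splits} and $j$ is the given splitting, both satisfy $\pi \circ \iota = \id_F = \pi \circ j$; in particular $\pi(\iota(f)) = f = \pi(j(f))$. Setting $g = j(f) \circ \iota(f)^{-1}$ and using that $\pi$ is a homomorphism (Lemma~\ref{lemma:pi is homomorphism}), I get $\pi(g) = f f^{-1} = 1$, so $g \in \Ker\pi = \Sym(\{0,1\}^*)$. Thus $g$ is a permutation of $\{0,1\}^*$ with finite support $S := \supp(g)$.

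Next I would translate this into the desired statement about the two actions. From $g = j(f) \circ \iota(f)^{-1}$ one has $j(f)(a) = g(\iota(f)(a))$ for every $a \in \{0,1\}^*$, and the right-hand side equals $\iota(f)(a)$ whenever $\iota(f)(a) \notin S$. Since $\iota(f)$ is a bijection of $\{0,1\}^*$, the set $\iota(f)^{-1}(S)$ is finite, and for every $a$ outside it the actions of $\iota(f)$ and $j(f)$ coincide. This is exactly the claim.

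There is no substantial obstacle here: the statement is essentially a reformulation of the fact, established in Section~\ref{subsection:QV}, that $\Ker\pi$ is the group of finite-support permutations. The only points requiring minor care are fixing the composition convention (so that $g$ really measures the discrepancy between $\iota(f)$ and $j(f)$) and using that $\iota(f)$ is genuinely a bijection of $\{0,1\}^*$, which guarantees that the finite bad set $S$ pulls back to a finite set of points.
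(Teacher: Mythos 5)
Your proof is correct, and it takes a genuinely different (more modular) route than the paper's. You argue purely group-theoretically: since $\pi\circ\iota=\pi\circ j=\id_F$, the discrepancy $g=j(f)\circ\iota(f)^{-1}$ lies in $\Ker\pi=\Sym(\{0,1\}^*)$, hence has finite support, and $j(f)$ agrees with $\iota(f)$ off the finite set $\iota(f)^{-1}(\supp(g))$. The paper instead works directly with the Bleak--Matucci--Neunh\"offer prefix--suffix decomposition recalled in Section~\ref{subsection:QV}: every $\tau\in QV$ acts, outside a finite set, by the prefix substitution $a=x_a\cdot s_a\mapsto b_\tau(x_a)\cdot s_a$, and the data $b_\tau$ is completely determined by $\pi(\tau)$; since $\pi(\iota(f))=\pi(j(f))$, both elements implement the same prefix substitution cofinitely and hence agree cofinitely. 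The two arguments share the same mathematical core: the one nontrivial ingredient you quote as a black box, the inclusion $\Ker\pi\subseteq\Sym(\{0,1\}^*)$, is asserted in Section~\ref{subsection:QV} just after Lemma~\ref{lemma:pi is homomorphism} without detailed proof, and its justification is precisely the cofinite prefix-substitution analysis that the paper's proof deploys directly. What your version buys is economy and generality --- it uses nothing about $\iota$ and $j$ beyond $\pi\circ\iota=\pi\circ j$ (not even that $j$ is a homomorphism), so it shows that \emph{any} two elements of $QV$ with the same $\pi$-image agree cofinitely, and it transfers verbatim to the other surjections $\pi$ in the paper; what the paper's version buys is self-containedness, since it does not lean on the kernel identification as an external fact. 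Your attention to the composition convention and to pulling $\supp(g)$ back along the bijection $\iota(f)$ is exactly the right care to take.
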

\begin{proof}
Recall from Section \ref{subsection:QV} that given $\tau \in QV$, for all but finitely many $a \in \{0,1\}^*$ we have $a = x_a \cdot s_a$ for some $x_a \in \mathcal{L}_\tau$ and $\tau(a) = b_{\tau}(x_a) \cdot s_a$.  Since the tree pair diagram for $\pi(\tau)$ is $(L, R, b_{\tau})$, $b_\tau$ is determined completely by $\pi(\tau)$.  We conclude that $\iota(f)$ and $j(f)$ agree on all but finitely many elements of $\{0,1\}^*$, since by assumption $\pi \circ \iota(f) = \pi \circ j(f)$. 
\end{proof}

\begin{Prop}\label{prop:QT extension doesnt split}
 The group extension 
 \[
 1 \longrightarrow \Sym(\{0,1\}^*) \longrightarrow QT \stackrel{\pi}{\longrightarrow} T \longrightarrow 1.
\]
 is not split.
\end{Prop}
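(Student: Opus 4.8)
The plan is to argue by contradiction and reduce to Lehnert's obstruction, Lemma \ref{lemma:standard F embedding doesnt extend}. Suppose $s \colon T \to QT$ is a homomorphism with $\pi \circ s = \id_T$. A section is automatically injective and $QT \subseteq QV$, so $s$ is an embedding of $T$ into $QV$. Its restriction $j := s\vert_F$ takes values in $QF = \pi^{-1}(F)$ and satisfies $\pi \circ j = \id_F$, so $j$ is a splitting of $\pi \colon QF \to F$ which extends to the embedding $s$ of $T$. If I can arrange that $j(A) = \iota(A)$, then $j$ is precisely an embedding of $F$ into $QV$ of the type forbidden by Lemma \ref{lemma:standard F embedding doesnt extend}, yielding the contradiction.

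The only freedom available for this adjustment is conjugation by finite permutations: for any $\tau \in \Sym(\{0,1\}^*)$ the map $s^\tau := \tau s \tau^{-1}$ again satisfies $\pi \circ s^\tau = \id_T$, because $\pi(\tau) = 1$, and $(s^\tau)\vert_F = \tau j \tau^{-1}$. Hence it suffices to produce $\tau \in \Sym(\{0,1\}^*)$ with $\tau\, j(A)\, \tau^{-1} = \iota(A)$. By Lemma \ref{lemma:splittings agree cofinitely}, $j(A)$ and $\iota(A)$ already agree on cofinitely many points; equivalently $j(A)\,\iota(A)^{-1} \in \Sym(\{0,1\}^*)$, so the two elements have the same germ at infinity. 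The task is to upgrade this cofinite agreement to an honest finitely supported conjugacy.

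I would carry this out through the cycle structure of $\iota(A) = \alpha$ on $\{0,1\}^*$. Using the explicit description of $\iota$ on $F$ (Remark \ref{remark:explicit description of iota for F}), one checks that $\alpha$ has no periodic points and that every orbit is bi-infinite, so $\alpha$ is a disjoint union of countably many $\ZZ$-cycles. Two permutations of a countable set that agree off a finite set and have the same cycle type are conjugate by a finitely supported permutation, so the whole argument reduces to verifying that $j(A)$ has this same cycle type, and in particular that $j(A)$ has no finite cycles. Granting this, the desired $\tau$ exists, and replacing $s$ by $s^\tau$ produces an embedding of $T$ into $QV$ whose restriction to $F$ sends $A$ to $\iota(A)$, contradicting Lemma \ref{lemma:standard F embedding doesnt extend}.

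The main obstacle is exactly this last point. A generic finitely supported perturbation of $\alpha$ can create finite cycles; for instance, re-routing a single bi-infinite cycle through one transposition splits off a finite cycle. Thus cofinite agreement by itself does \emph{not} force $j(A)$ to be conjugate to $\alpha$, and I expect to need the full strength of $s$ being a homomorphism defined on all of $T$, rather than merely the cofinite data of Lemma \ref{lemma:splittings agree cofinitely}, to exclude finite cycles in $j(A)$ and thereby secure the conjugacy. Pinning down this no-finite-cycles statement is where the real work lies; everything else is the formal reduction above.
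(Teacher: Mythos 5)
Your formal reduction (restrict a hypothetical splitting $s\colon T \to QT$ to $F$, conjugate by a finite permutation to force $j(A) = \iota(A)$, then invoke Lemma \ref{lemma:standard F embedding doesnt extend}) is sound in outline, but it rests on two missing pieces, only one of which you acknowledge. The acknowledged one is the exclusion of finite cycles in $j(A)$, for which you give no argument. The unacknowledged one is worse: your auxiliary claim that two permutations of a countable set which agree off a finite set and have the same cycle type are conjugate by a \emph{finitely supported} permutation is false. Take $g$ acting on $\ZZ \sqcup \ZZ'$ as the shift on each copy (identity elsewhere) and $h = (1\;1')\,g$. Then $h$ agrees with $g$ off $\{0,0'\}$ and again consists of exactly two $\ZZ$-cycles --- the transposition merely re-splices the two orbits, gluing the backward half of each to the forward half of the other --- yet no finitely supported $\tau$ satisfies $\tau g \tau^{-1} = h$: such a $\tau$ would have to carry the $g$-orbit $\ZZ$ onto an $h$-orbit, while each $h$-orbit has infinite symmetric difference with $\ZZ$. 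Since $j(A)\,\iota(A)^{-1}$ being finitely supported is exactly this situation, ruling out finite cycles in $j(A)$ would still not give you the conjugacy; the invariant you must match is the pairing of orbit-ends, not the cycle type, and controlling that end-pairing for an arbitrary splitting is at least as hard as the no-finite-cycles statement you defer.

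The paper's proof shows both difficulties can be bypassed: it never normalises $j(A)$ to equal $\iota(A)$. From Lemma \ref{lemma:splittings agree cofinitely} it extracts only the coarse combinatorial shape of $j(A)$ --- after adding carets, a tree-pair diagram $(L,R,f)$ and an integer $k$ with every leaf of $L$ under $0$ of depth $k$ and under $1$ of depth $k+1$, the depths exchanged in $R$ --- and then runs the counting argument of Lehnert's Lemma \ref{lemma:standard F embedding doesnt extend} directly against \emph{every} possible choice of $j(C)$, tolerating the finite discrepancy between $j(A)$ and $\iota(A)$ rather than trying to conjugate it away. To salvage your route you would need to prove both that $j(A)$ has no finite cycles and that its orbit-end pairing agrees with that of $\alpha$, using the relations of $T$; as it stands, the proposal reduces the proposition to two unproven claims, one of whose supporting lemmas is false.
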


\begin{proof}
 Let $j: T \to QT$ be a splitting of $\pi$, by Lemma \ref{lemma:splittings agree cofinitely} we have that $j(A)$ and $\iota(A)$ agree on cofinitely many points. Hence, after possibly extending via adding carets, we can find a tree-pair diagram $(L,R ,f)$ and positive integer $k$ such that in $L$ every leaf under $0$ has length exactly $k$, and every leaf under $1$ has length exactly $k+1$ and similarly in $R$ every leaf under $0$ has length exactly $k+1$, and every leaf under $1$ has length exactly $k$.

 Following the argument of the proof of Lemma \ref{lemma:standard F embedding doesnt extend}, we see that this is incompatible with every possible choice of $j(C)$.
\end{proof}

\section{Finite presentations}\label{section:finite presentations}

In this section we compute finite presentations for the groups $QF$, $\tilde{Q}T$, and $\tilde{Q}V$.  We use a method for finding a finite presentation for a semi-direct product $N \rtimes Q$, whereby we find a presentation for $N$ such that $Q$ acts by permutations and with finitely many orbits on both the generating set and the relating set.  Then as long as $Q$ is finitely presented, and the $Q$-stabilisers of the generating set are finitely generated, one can write down a finite presentation for $N \rtimes Q$ \cite[Lemma A.1]{MartinezPerezMatucciNucinkis-PresentationsForV}.  This technique predates the referenced paper of Mart\'{\i}nez-P\'erez, Matucci, and Nucinkis, but we are not aware of anywhere else where the proof is given. 

We apply this method to $QF$, $\tilde{Q}T$ and $\tilde{Q}V$ using their decompositions as semi-direct products with kernel an infinite symmetric group and quotient $F$, $T$, and $V$ respectively.  

Let $N = \langle S_N \vert R_N \rangle$ and $Q = \langle S_Q \vert R_Q \rangle$ be groups such that $Q$ acts on $S_N$ by permutations and the action induces an action on $R_N$ with $Q(R_N) = R_N$.  Let $S_0$ be a set of representatives for the $Q$-orbits in $S_N$ and $R_0$ a set of representatives for the $Q$-orbits in $R_N$.  
 
For any set $X$, we write $X^*$ for the finite length words of $X$ and we write $q^{-1} s q$ to denote the action of $q \in S_Q$ on $s \in S_0$.
Let $r \in R_0$, then $r$ is a word in $S_N^*$ and hence can be expressed as a word in $(S_0 \cup S_Q)^*$ using the conjugation notation just defined.  Denote the set of these words, one for each $r \in R_0$, by $\widehat{R}_0$.

\begin{Lemma}[{\cite[Lemma A.1]{MartinezPerezMatucciNucinkis-PresentationsForV}}]\label{lemma:MPMNlemmaA.1}
 With the notation above,
 \[
  N \rtimes Q \cong \langle S_0, S_Q \vert \widehat{R}_0, R_Q, [x, s] \text{ for all $s \in S_0$ and all $x \in X_s$} \rangle,
 \]
where $X_s$ is a generating set for $\Stab_Q(s)$ and the semi-direct product is built using the given action of $Q$ on $N$.
\end{Lemma}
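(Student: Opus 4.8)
The plan is to prove the presentation is correct by the standard two-sided comparison: build a surjection from the abstractly presented group onto $N \rtimes Q$ and then produce an inverse on the kernel. Write $G = \langle S_0, S_Q \mid \widehat{R}_0, R_Q, [x,s] \rangle$ for the group presented on the right-hand side. First I would define a homomorphism $\phi \colon G \to N \rtimes Q$ sending each $s \in S_0$ to $s \in N$ and each $q \in S_Q$ to $q \in Q$. Well-definedness amounts to checking the four families of relators die in $N \rtimes Q$: the relators $R_Q$ hold since $Q = \langle S_Q \mid R_Q\rangle$; each commutator $[x,s]$ vanishes because $x \in \Stab_Q(s)$ acts trivially on $s$, so the elements $x$ and $s$ commute in $N\rtimes Q$; and each rewritten word in $\widehat{R}_0$ evaluates to the original $r_0 \in R_N$, precisely because the conjugation $q^{-1} s q$ inside $N \rtimes Q$ realises the prescribed action of $q$ on $s$, so $\widehat{r}_0$ maps to a relator of $N$. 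Surjectivity is then clear: the image contains $Q$ and all conjugates $q^{-1} s q$, which run through the whole of $S_N$ as $q$ ranges over $Q$, hence generate $N$.

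Next I would record the internal semidirect-product structure of $G$. The assignment $s \mapsto 1$ for $s \in S_0$ and $q \mapsto q$ for $q \in S_Q$ extends to a retraction $\rho \colon G \to Q$ (each relator again maps to a relator of $Q$ or to $1$), which is split by the inclusion of $S_Q$. Hence $G = \bar{N} \rtimes Q$, where $\bar{N} = \Ker \rho$ is the normal closure of $S_0$ in $G$; moreover $\rho$ agrees with the projection $N \rtimes Q \to Q$ composed with $\phi$, so $\phi(\bar{N}) \subseteq N$. It therefore suffices to show that $\phi$ restricts to an isomorphism from $\bar{N}$ onto $N$.

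The \emph{heart of the matter}, and the step I expect to be the main obstacle, is the construction of an inverse $\eta \colon N \to \bar{N}$ on generators. For each $s' \in S_N$ I would choose $q \in Q$ and $s \in S_0$ with $s'$ equal to the action $q^{-1} s q$, and set $\eta(s') = q^{-1} s q$ as an element of $G$. The delicate point is independence of this choice: if $(q', s)$ has the same image, then $q(q')^{-1} \in \Stab_Q(s)$, and since $\Stab_Q(s) = \langle X_s \rangle$ with every generator commuting with $s$ in $G$ by the relations $[x,s]$, the element $q^{-1} s q$ is unchanged — this is exactly where the stabiliser relations are used. To see $\eta$ is a homomorphism I would check it kills $R_N = Q \cdot R_0$: applying $\eta$ letterwise to $r_0 \in R_0$ reproduces the word $\widehat{r}_0$, a relator of $G$, and applying it to a translate $q \cdot r_0$ gives $q^{-1} \eta(r_0) q = 1$, so $\eta$ is automatically $Q$-equivariant. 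By construction $\phi \circ \eta$ is the inclusion $N \hookrightarrow N \rtimes Q$, and the image $\eta(N)$ is generated by all $Q$-conjugates of $S_0$; being a subgroup containing $S_0$ and normalised by $S_Q$, it is normal in $G$ and hence equals $\bar{N}$. Thus $\eta$ is a surjection onto $\bar{N}$ that sections $\phi|_{\bar{N}}$, which forces $\phi|_{\bar{N}}$ to be an isomorphism and completes the proof.
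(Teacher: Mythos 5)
Your proof is correct, and it is essentially the standard argument for this statement: a surjection $\phi$ from the presented group $G$ onto $N \rtimes Q$ via von Dyck, a retraction $\rho$ exhibiting $G = \bar{N} \rtimes Q$ with $\bar{N}$ the normal closure of $S_0$, and the map $\eta \colon N \to \bar{N}$ whose well-definedness is exactly where the stabiliser relations $[x,s]$ are needed. Note that this paper does not actually prove the lemma---it is quoted from \cite[Lemma A.1]{MartinezPerezMatucciNucinkis-PresentationsForV}, and the authors remark that the technique predates even that reference---so your write-up supplies the argument rather than paralleling one in the text; the only step you leave implicit is that injectivity of $\phi$ on all of $G$ follows from injectivity on $\bar{N}$, which is immediate since $\rho = \pi \circ \phi$ (with $\pi \colon N \rtimes Q \to Q$ the projection) gives $\Ker \phi \subseteq \Ker \rho = \bar{N}$.
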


\subsection{A finite presentation for \texorpdfstring{$QF$}{QF}}\label{subsection:finite presentation for QF}
 
\begin{Lemma}\label{lemma:transitive action Sigman}
 $F$ acts transitively on 
 \[
  \Sigma_n = \left\{ (x_1, \ldots, x_n) \in \prod_1^n \{0,1\}^* : x_1 \lneq_{\lex} x_2 \lneq_{\lex} \cdots \lneq_{\lex} x_n\right\}
 \]
for all $n \in \NN$, where the action is via the splitting $\iota:F \longrightarrow QF$, the inclusion $QF \longrightarrow QV$, and the usual action of $QV$ on $\{0,1\}^*$.
\end{Lemma}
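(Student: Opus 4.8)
The plan is to build, for arbitrary tuples $(x_1,\dots,x_n)$ and $(y_1,\dots,y_n)$ in $\Sigma_n$, an explicit tree-pair diagram $(L,R)$ whose image $\iota((L,R)) \in QF$ carries each $x_i$ to $y_i$. The key observation, which is immediate from Remark~\ref{remark:explicit description of iota for F}, is that $\iota((L,R))$ sends the $i$-th internal node of $L$ (in the $\le_{\lex}$ order) to the $i$-th internal node of $R$. Thus it suffices to produce finite binary trees $L$ and $R$ with equal numbers of leaves such that each $x_i$ is an internal node of $L$, each $y_i$ is an internal node of $R$, and the rank of $x_i$ among the internal nodes of $L$ equals the rank of $y_i$ among the internal nodes of $R$ for every $i$.

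First I would arrange for every entry to be an internal node: taking $L$ to be the complete binary tree of depth $1 + \max_i |x_i|$ realises each $x_i$ as an internal node, and similarly for $R$ and the $y_i$. Passing to internal nodes sidesteps any obstruction coming from prefix relations among the $x_i$, since any vertex can be made internal by expanding below it, whereas it can be made a leaf only when it is not a prefix of another entry. It then remains to reconcile the two rank functions. The mechanism for this is caret expansion: adding a caret at a leaf $\ell$ turns $\ell$ into a new internal node, raising by $1$ the rank of every internal node that is $>_{\lex}\ell$ while fixing the ranks of internal nodes $<_{\lex}\ell$. In particular, inserting a caret at a leaf lying strictly between $x_{i-1}$ and $x_i$ raises the ranks of $x_i,\dots,x_n$ by $1$ and fixes those of $x_1,\dots,x_{i-1}$; moreover the two newly created leaves again lie in the same open interval, so the move can be iterated.

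Writing $r_1 < \dots < r_n$ for the initial ranks of the $x_i$ in $L$ and $s_1 < \dots < s_n$ for those of the $y_i$ in $R$, I would fix common targets $k_i = Ti$ with $T$ exceeding all consecutive gaps $r_i - r_{i-1}$ and $s_i - s_{i-1}$ (setting $r_0 = s_0 = 0$) as well as the total number of internal nodes in either tree. Then $k_i - r_i$ and $k_i - s_i$ are nonnegative and nondecreasing in $i$, so the required increases are achieved by inserting, for each $i$, the appropriate number of carets in a leaf-interval lying in the gap below $x_i$ and above $x_{i-1}$ (respectively below $y_i$ and above $y_{i-1}$); such a leaf is always available because the in-order traversal of a binary tree interleaves leaves and internal nodes, so two distinct internal nodes are never adjacent. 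Finally, inserting further carets to the right of $x_n$ and of $y_n$ alters no rank and lets me equalise the total numbers of internal nodes, equivalently of leaves, so that $(L,R)$ is a genuine tree-pair diagram for an element of $F$. By construction $x_i$ and $y_i$ are each the $k_i$-th internal node of their tree, whence $\iota((L,R))(x_i) = y_i$.

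The main obstacle is the bookkeeping in this last step: one must check that every gap indeed contains a leaf available for expansion and that caret insertions in distinct gaps affect the ranks independently and monotonically. Both facts become transparent once one identifies $(\{0,1\}^*, \le_{\lex})$ with the dyadic rationals in $(0,1)$ (a word corresponding to the midpoint of its dyadic interval) and $\iota(F)$ with the standard piecewise-linear action of $F$; from that viewpoint the statement is the familiar fact that $F$ acts transitively on strictly increasing $n$-tuples of dyadic rationals in $(0,1)$. I would nevertheless present the tree-pair construction above, since it keeps the argument entirely within the combinatorial framework already set up in the paper.
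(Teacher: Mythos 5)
Your proof is correct and takes essentially the same approach as the paper's: both reduce the lemma to Remark \ref{remark:explicit description of iota for F} and construct a tree pair $(L,R)$ whose internal-node ranks match by inserting carets into the lex-gaps, the paper merely routing through the fixed base tuple $(0^{n-1},\dots,0,\varepsilon)$ and matching the gap counts $d_L(y_i,y_{i+1})=d_R(x_i,x_{i+1})$ instead of absolute ranks. Your two-sided bookkeeping (complete trees to make every $x_i$ internal, targets $k_i=Ti$, right-end padding to equalise leaf counts) is a sound, if slightly heavier, implementation of the same idea.
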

\begin{proof}
 For $i \ge 1$, let $0^i$ be the following element of $\{0,1\}^*$
 \[
 0^i = \overbrace{0\cdots 0,}^\text{$i$-times,}
 \]  
 and let $0^0 = \varepsilon$.  For $1 \le i \le n$, let $y_i = 0^{n-i}$.
 
 For any $(x_1, \ldots, x_n) \in \Sigma_n$, we build an element of $\gamma$ of $F$ such that
 \[
  \iota(\gamma) : ( y_1, y_2, \ldots, y_n) \mapsto (x_1, \ldots, x_n)
 \]
 Let $R$ be the smallest rooted subtree of $\mathcal{T}_{2,c}$ containing all the $x_i$ as nodes and let $L_0$ be the smallest subtree of $\mathcal{T}_{2,c}$ containing all the $y_i$ as nodes.
 
 For any two nodes $z_1 \lneq_{\lex} z_2$ in a finite rooted subtree $T$ of $\mathcal{T}_{2,c}$ we define
 \[
 d_T(z_1, z_2) = \lvert \{ z \in \operatorname{nodes} (T) : z_1 \lneq_{\lex} z \lneq_{\lex} z_2 \}\rvert
 \] 
 \[
 d_T(z_1, *) = \lvert \{ z \in \operatorname{nodes} (T) : z_1 \lneq_{\lex} z \}\rvert
 \]
 \[
 d_T(*, z_2) = \lvert \{ z \in \operatorname{nodes} (T) : z \lneq_{\lex} z_2 \}\rvert.
 \]
 Thus $d_{L_0}(*, y_0) = d_{L_0}(y_i, y_{i+1}) = d_{L_0}(y_n, *) = 0$ for all $0 \le i \le n-1$.
 
 Expand $L_0$ to $L$ by adding carets until $d_L(y_i, y_{i+1}) = d_R(x_i, x_{i+1})$ for all $i$.  This is possible because adding a caret at a leaf between $y_i$ and $y_{i+1}$ increases $d_L(y_{i}, y_{i+1})$ by $1$.  Now use the description of Remark \ref{remark:explicit description of iota for F} to check that the element $(L, R)$ maps $(y_1, \ldots, y_n)$ onto $(x_1, \ldots, x_n)$.
\end{proof}

\begin{Lemma}\label{lemma:stab of F on sigma}
~
\begin{enumerate}
 \item The $F$-stabiliser of any element in $\Sigma_n$ is isomorphic to the group $\prod_1^{n+1} F$.

\item The $F$-stabiliser of $(0, \varepsilon) \in \Sigma_2$ has generating set
\[
\operatorname{Stab}_{\Sigma_2}((0 , \varepsilon)) = \left\langle \begin{array}{c}
\beta, \beta^\alpha, \alpha^3\beta^{-1}\alpha^{-2}, \alpha^{2}\beta^2\alpha^{-1}\beta^{-1}\alpha\beta^{-1}\alpha^{-2} \\
\alpha\beta^2\alpha^{-1}\beta^{-1}\alpha\beta^{-1}\alpha^{-1}, \alpha\beta\alpha^{-1}\beta^2\alpha^{-1}\beta^{-1}\alpha\beta^{-1}\alpha\beta^{-1}\alpha^{-1}                                                          
\end{array}\right\rangle.
\]
\end{enumerate}
\end{Lemma}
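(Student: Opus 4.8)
The plan is to recognise the $\iota$-action of $F$ on the nodes $\{0,1\}^*$ as the familiar action of Thompson's group $F$ on the dyadic rationals. Concretely, I would first set up the order-preserving bijection sending a node $x$ of length $k$ to the midpoint $m_x = (2\ell+1)/2^{k+1}$ of the standard dyadic interval $I_x \subseteq [0,1]$ that it labels. Every dyadic rational in $(0,1)$ arises exactly once this way, and $x \le_{\lex} y$ if and only if $m_x \le m_y$, so this identifies the poset $(\{0,1\}^*, \le_{\lex})$ with the dyadic rationals of $(0,1)$ in their usual order. The content of Remark \ref{remark:explicit description of iota for F} is then exactly that, under this identification, $\iota(f)$ acts as $f$ does when $F$ is viewed as the group of orientation-preserving dyadic PL homeomorphisms of $[0,1]$: an internal node of a domain tree is an interior breakpoint of the associated subdivision, and $f$ carries these to the interior breakpoints of the range subdivision in order. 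Under this dictionary $\Sigma_n$ becomes the set of strictly increasing $n$-tuples of dyadic rationals in $(0,1)$.

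For part (1), I would use this to compute the stabiliser directly; transitivity from Lemma \ref{lemma:transitive action Sigman} already shows all such stabilisers are conjugate, so it also suffices to treat one convenient tuple. If $p_1 < \cdots < p_n$ are the dyadic points corresponding to $x_1 <_{\lex} \cdots <_{\lex} x_n$ and $g \in F$ fixes each $p_i$, then, being an orientation-preserving homeomorphism, $g$ preserves each of the $n+1$ intervals $[p_i, p_{i+1}]$ (with $p_0 = 0$, $p_{n+1} = 1$) and restricts there to a dyadic PL homeomorphism fixing the endpoints. Since all the $p_i$ are dyadic, the affine rescaling $[p_i,p_{i+1}] \to [0,1]$ conjugates this restriction into an element of $F$, and the $n+1$ restrictions may be chosen independently; gluing them back recovers $g$. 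This exhibits the restriction map as an isomorphism $\Stab \cong \prod_1^{n+1} F$.

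For part (2) I would specialise to $(0,\varepsilon)$, which corresponds to the pair $1/4 < 1/2$, so the three intervals are $[0,1/4]$, $[1/4,1/2]$, $[1/2,1]$ and the stabiliser is the internal direct product of the three copies of $F$ supported on these intervals. I would then show the six listed words split into three pairs, one supported on each interval, each pair realising a generating pair of $F$ under the rescaling isomorphism of that factor. The pair $\beta,\beta^\alpha$ is immediate: $\beta = \iota(B)$ is supported on $[1/2,1]$ and $\beta^\alpha$ on $[3/4,1]$, so under $[1/2,1]\to[0,1]$ they become standard generators of $F$; this is just the copy of $F$ supported on $[1/2,1]$ (compare the description of $QF(1)$ preceding Lemma \ref{lemma:QF is an HNN extension}). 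The remaining four words I would check are supported on $[1/4,1/2]$ (two of them) and on $[0,1/4]$ (two of them), each pair again realising a generating pair of the relevant factor. Once this is verified, part (1) guarantees that these six elements generate the whole stabiliser.

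The main obstacle is the explicit verification in part (2): confirming, for each of the four longer words, both that it lies in the stabiliser (equivalently, that it fixes $1/4$ and $1/2$ and is trivial off the claimed subinterval) and that within its pair it realises the standard generating pair of that $F$-factor. I would carry this out by computing the tree-pair diagram, equivalently the PL graph, of each word from those of $\alpha$ and $\beta$, reading off its support and its behaviour on the relevant subinterval. Part (1) is precisely what reduces the whole of (2) to this finite bookkeeping, since it removes any need to analyse relations among the three factors.
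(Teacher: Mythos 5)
Your proposal is correct, and it is in substance the paper's own proof transported through the standard dictionary between the node model and the dyadic PL model of $F$. The paper's proof of Lemma \ref{lemma:stab of F on sigma} stays inside $\{0,1\}^*$: using that $\iota(f)$ preserves $\le_{\lex}$ (Remark \ref{remark:explicit description of iota for F}), it identifies the stabiliser of the convenient tuple $(y_1,\dots,y_n)$, $y_i = 0^{n-i}$, with the elements setwise preserving the $n+1$ complementary subtrees (those under $00$, $01$, and $1$ when $n=2$), each such subtree carrying a commuting copy of $F$; it then settles part (2) by writing the evident tree-diagram generators of each factor as words in $\alpha,\beta$ via the method the paper cites from Belk's thesis. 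Your midpoint dictionary converts this essentially verbatim: the three subtrees become the intervals $[0,1/4]$, $[1/4,1/2]$, $[1/2,1]$, lex-preservation becomes orientation-preservation, and the subtree decomposition becomes restriction and gluing of PL homeomorphisms; the finite bookkeeping left in part (2) is the same computation in either language. What your route buys is that restriction, gluing, and the identification of each factor with $F$ are standard facts in the PL picture rather than ad hoc assertions about subtree-preserving elements; what it costs is the verification of the dictionary itself, which you rightly locate in Remark \ref{remark:explicit description of iota for F}.

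One step needs repair as literally stated: for an arbitrary tuple, the affine rescaling $[p_i,p_{i+1}]\to[0,1]$ conjugates the restricted group into $F$ only when $[p_i,p_{i+1}]$ is a standard dyadic interval, i.e.\ has power-of-two length. For instance, the affine identification of $[0,3/4]$ with $[0,1]$ sends the dyadic point $1/2$ to $2/3$, so it does not carry dyadic PL homeomorphisms to dyadic PL homeomorphisms; the group of dyadic PL homeomorphisms of such an interval fixing the endpoints is still isomorphic to $F$, but via a dyadic PL (not affine) identification. In your argument this is harmless, because, as you note, transitivity (Lemma \ref{lemma:transitive action Sigman}) reduces part (1) to the tuple $(0^{n-1},\dots,0,\varepsilon)$, whose complementary intervals $[0,2^{-n}],[2^{-n},2^{-n+1}],\dots,[1/2,1]$ are all standard dyadic, and in part (2) the intervals $[0,1/4]$, $[1/4,1/2]$, $[1/2,1]$ are standard dyadic as well, so there the affine rescalings do land in $F$ and the rest of your verification goes through.
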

We use the method in \cite[\S 1]{Belk-Thesis} to translate from tree diagrams to words in the generators $\alpha$ and $\beta$.
\begin{proof}
 Recall that $y_i = 0^{n-i}$ for $0 \le i \le n-1$ and $y^n = \varepsilon$.  For any $x \in \{0,1\}^*$, let $x\cdot \{0,1\}^*$ denote all finite words beginning with $x$.  We calculate the stabiliser of $(y_1, \ldots, y_n) \in \Sigma_n$, since $F$ acts transitively on $\Sigma_n$ this determines the stabilisers of all elements of $\Sigma_n$ up to conjugacy.
 
 We claim the stabiliser of $(y_1, \ldots, y_n)$ is exactly those elements of $F$ for which $\iota(F)$ preserves setwise $y_0 \cdot \{0,1\}^*$ and $y_i \cdot 1 \cdot \{0,1\}^*$ for all $1 \le i \le n$.  
 
 If $f \in F$ and $\iota(f)$ maps some $x \in y_0 \cdot \{0,1\}^*$ to an element in $y_i \cdot 1 \cdot \{0,1\}^*$ for some $1 \le i \le n$ then, since $\iota(f)$ preserves the $\lneq_{\lex}$ ordering, it is impossible for $\iota(f)$ to map $y_1$ to $y_1$ as required.  Similarly, again because $\iota(f)$ respects the $\lneq_{\lex}$ ordering, any $\iota(f)$ setwise preserving these sets will necessarily map $y_1$ to $y_1$.  Repeating this argument for elements in $y_j \cdot 1 \cdot \{0,1\}^*$ completes the proof of $(1)$.
 
 Rephrasing the above, the stabiliser of $(0, \varepsilon)$ is exactly those elements of $F$ which preserve setwise the setwise the subtrees under $00$, $01$, and $1$.
 Those elements preserving, for example the subtree under $00$, and acting trivially elsewhere give a copy of $F$ on that interval.  Since we have three such intervals, the stabiliser is $F \times F \times F$.

 Generators of the elements setwise preserving the subtree under $1$ and acting trivially elsewhere are  $\{\beta, \beta^\alpha \}$ where $\alpha = \iota(A)$ and $\beta = \iota(B)$.
 Generators of the elements setwise preserving the subtree under $00$ and acting trivially elsewhere are $\alpha^3\beta^{-1}\alpha^{-2} $ and $\alpha^{2}\beta^2\alpha^{-1}\beta^{-1}\alpha\beta^{-1}\alpha^{-2}$.  
 Generators of the elements setwise preserving the subtree under $01$ and acting trivially elsewhere are $\alpha\beta^2\alpha^{-1}\beta^{-1}\alpha\beta^{-1}\alpha^{-1}$ and $\alpha\beta\alpha^{-1}\beta^2\alpha^{-1}\beta^{-1}\alpha\beta^{-1}\alpha\beta^{-1}\alpha^{-1} $.
\end{proof}

\begin{Lemma}\label{lemma:presentation of Sym01}
The finite support symmetric group on $\Sym(\{0,1\}^*)$ has presentation
 \[
  \Sym(\{0,1\}^*) = \left\langle \sigma_{x,y} \,
  \begin{array}{c}
  \forall x ,y \in \{0,1\}^* \\ x \lneq_{\lex} y 
  \end{array}
   \,\left\vert 
  \begin{array}{ll}
    \sigma_{x,y}^2 & \forall (x,y) \in \Sigma_2 \\\relax
    [\sigma_{x,y},\sigma_{z,w}] & \forall (x, y, z, w) \in \Sigma_4 \\
    (\sigma_{x,y}\sigma_{y,z})^3 & \forall (x, y, z) \in \Sigma_3 \\\relax
    \sigma_{x,y}\sigma_{y,z}\sigma_{x,y}\sigma_{x,z} & \forall (x, y, z) \in \Sigma_3
  \end{array}
 \right.\right\rangle
 \]
 where $\sigma_{x,y}$ denotes the transposition of $x$ and $y$.
\end{Lemma}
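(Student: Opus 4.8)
This is the classical \emph{transposition presentation} of a finitary symmetric group, so the plan is to identify the abstract group $G$ defined by the displayed presentation with $H = \Sym(\{0,1\}^*)$, where $\Sigma_n$ is as in Lemma~\ref{lemma:transitive action Sigman}. First I would build the obvious candidate isomorphism by sending each generator $\sigma_{x,y}$ to the transposition $(x\ y)\in H$. To see this extends to a well-defined homomorphism $\phi\colon G\to H$ I would check that the four families of relators hold in $H$, which is a routine verification of permutation identities: $\sigma_{x,y}^2=1$; disjoint transpositions commute, which accounts for the $\Sigma_4$ relators; $(x\ y)(y\ z)=(x\ y\ z)$ is a $3$-cycle, which accounts for the $\Sigma_3$ braid relators $(\sigma_{x,y}\sigma_{y,z})^3$; and $(x\ y)(y\ z)(x\ y)=(x\ z)$, which (using the involution relators) accounts for the last $\Sigma_3$ relators $\sigma_{x,y}\sigma_{y,z}\sigma_{x,y}\sigma_{x,z}$. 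Since transpositions generate $H$, the map $\phi$ is surjective, and the whole content is to prove it injective.

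For injectivity I would exploit that every relator involves at most four of the letters $\sigma_{x,y}$, so the presentation is \emph{local}: writing $S_F$ and $R_F$ for the generators and relators supported on a finite subset $F\subseteq\{0,1\}^*$, the pairs $(S_F,R_F)$ form a directed system under inclusion whose union is the entire presentation. A standard colimit argument for presentations then gives $G\cong\varinjlim_F \tilde G_F$, where $\tilde G_F=\langle S_F\mid R_F\rangle$. On the other side, $H$ is the directed union $\bigcup_F\Sym(F)$ of the finite symmetric groups on its finite subsets, and $\phi$ is the colimit of the maps $\phi_F\colon \tilde G_F\to\Sym(F)$ sending $\sigma_{x,y}\mapsto(x\ y)$. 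Hence it suffices to show each $\phi_F$ is an isomorphism; as $\phi_F$ is already surjective, I only need the bound $|\tilde G_F|\le n!$ where $n=|F|$.

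For the finite case I would write $F=\{a_1<_{\lex}\cdots<_{\lex}a_n\}$ and set $s_i=\sigma_{a_i,a_{i+1}}$. Applying the relator $\sigma_{x,y}\sigma_{y,z}\sigma_{x,y}\sigma_{x,z}$ with $x=a_i$, $y=a_{i+1}$, $z=a_j$, together with the involution relators, an induction on $j-i$ expresses every generator $\sigma_{a_i,a_j}$ as a word in $s_i,\dots,s_{j-1}$, so $\tilde G_F$ is generated by $s_1,\dots,s_{n-1}$. The involution relators give $s_i^2=1$, the $\Sigma_3$ braid relators give $(s_is_{i+1})^3=1$, and the $\Sigma_4$ commutator relators give $(s_is_j)^2=1$ for $|i-j|\ge 2$; these are precisely the Coxeter relations, so there is a surjection onto $\tilde G_F$ from $\langle s_1,\dots,s_{n-1}\mid s_i^2,\ (s_is_{i+1})^3,\ (s_is_j)^2\ \text{for}\ |i-j|\ge 2\rangle\cong S_n$. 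This yields $|\tilde G_F|\le n!$ and finishes the proof. I expect the main obstacle to be not any single computation but the bookkeeping behind the identification $G\cong\varinjlim_F\tilde G_F$ — that is, verifying carefully that $G$ imposes no relation among the letters of $S_F$ beyond those already in $R_F$ — with the recognition of the Coxeter presentation of $S_n$ being the mathematical heart of the argument.
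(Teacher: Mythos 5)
Your proof is correct, and its overall architecture --- settle the finite case, then pass to the direct limit over finite supports --- matches the paper's; the genuine difference is the direction in which the finite case is run. The paper starts from the Coxeter presentation of $\Sym(\{x_1,\ldots,x_n\})$ on adjacent transpositions (citing Kassel--Turaev) and \emph{Tietze-expands} it, adjoining the generators $\sigma_{x_i,x_j}$ one distance at a time via the relator $\sigma_{x,y}\sigma_{y,z}\sigma_{x,y}\sigma_{x,z}$ until it reaches exactly the restriction of the displayed presentation to a finite support; the infinite case is then handled by hand (a word representing the identity has finite support, hence is a consequence of relators in some $\Sym(\{x_1,\ldots,x_m\})$, and every relator visibly holds in the group), which is your colimit argument made explicit. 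You instead \emph{contract} the finite presentation: the same relator, read as $\sigma_{x,z}=\sigma_{x,y}\sigma_{y,z}\sigma_{x,y}$, eliminates the non-adjacent generators by induction, the remaining relations among the $s_i$ are the Coxeter relations, and the sandwich $n!=\lvert S_n\rvert \ge \lvert \tilde G_F\rvert \ge \lvert \Sym(F)\rvert = n!$ forces $\phi_F$ to be an isomorphism. The two finite arguments are mirror images: yours is slightly slicker because it never has to verify that the Tietze-expanded relator list is complete, but it leans on finiteness (the order count), whereas the paper's expansion is constructive and would survive in settings where counting is unavailable. One bookkeeping point you rightly flag and should make explicit: in the identification $G\cong\varinjlim_F\tilde G_F$, a word in $S_F^*$ that dies in $G$ need only die in $\tilde G_{F'}$ for some larger finite $F'$, since the conjugators in the expression as a product of conjugates of relators may involve letters outside $F$; this is harmless for the colimit statement (and is precisely where the paper's hands-on argument quietly enlarges $\{x_1,\ldots,x_m\}$), but a proof that each transition map is compatible with the $\phi_F$ is what licenses passing the isomorphism to the limit.
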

\begin{proof}
 Let $x_1, \ldots, x_n$ be some collection of elements of $\{0,1\}^*$ with $x_i \lneq_{\lex} x_{i+1}$ for all $1 \le i \le n-1$.  The finite symmetric group on $\{x_1, \ldots, x_n\}$ has a well-known presentation
 \[
  \Sym(\{x_1, \ldots, x_n\}) = \left\langle \sigma_{x_1, x_2},\ldots, \sigma_{x_{n-1}, x_n}  \left\vert 
     \begin{array}{l}\sigma_{x_i, x_{i+1}}^2, (\sigma_{x_i, x_{i+1}}\sigma_{x_{i+1}, x_{i+2}})^3, \\\relax
         [\sigma_{x_i, x_{i+1}}, \sigma_{x_j, x_{j+1}}] \text{ $\forall \, \lvert i - j \rvert \ge 2$,}
     \end{array}
  \right\rangle\right.
 \]
where $\sigma_{x_i, x_j}$ denotes the transposition of $x_i$ and $x_j$.  We expand this presentation using Tietze transformations.  First add new generators $\sigma_{x_i, x_{i+2}}$ for each suitable $1 \le i \le n-2$, each such generator can be added with the relator 
\[
\sigma_{x_i, x_{i+1}} \sigma_{x_{i+1}, x_{i+2}} \sigma_{x_i, x_{i+1}}\sigma_{x_i, x_{i+2}} 
\]
Next add new generators $\sigma_{x_i, x_{i+3}}$, each can be added with the relator
\[
\sigma_{x_i, x_{i+2}} \sigma_{x_{i+2}, x_{i+3}} \sigma_{x_i, x_{i+2}}\sigma_{x_i, x_{i+3}} 
\]
and so on adding $\sigma_{x_i, x_{i+j}}$ for all $i$ and increasing $j$, until all the required generators are present.  Finally we can add all relators
\[
\sigma_{x_i,x_j}\sigma_{x_j,x_k}\sigma_{x_i,x_j}\sigma_{x_i,x_k} \forall i \lneq j \lneq k
\]
which are not already present.  We've obtained the presentation
 \[
 \Sym(\{x_1, \ldots, x_n\}) = \left\langle \sigma_{x_i, x_j} \text{ $\forall\, i \lneq j$}  \left\vert 
     \begin{array}{l}\sigma_{x_i, x_j}^2 \text{ $\forall\,i \lneq j$}, \\
     (\sigma_{x_i, x_{j}}\sigma_{x_{j}, x_{k}})^3 \text{ $\forall\,i\lneq j \lneq k$}, \\\relax
         [\sigma_{x_i, x_{j}}, \sigma_{x_k, x_{l}}] \text{ $\forall\, i \lneq j \lneq k \lneq l$} \\\relax
         \sigma_{x_i, x_j}\sigma_{x_j, x_k}\sigma_{x_i, x_j}\sigma_{x_i, x_k} \text{ $\forall\,i \lneq j \lneq k$.}
     \end{array}
  \right\rangle\right.
 \]
 
It is now easy to check that this can be extended to the entire finite support symmetric group $\Sym(\{0,1\}^*).$
\end{proof}

\begin{Prop}[{\cite[\S 3]{CannonFloydParry-IntroNotesOnThompsons}}]\label{prop:F presentation}
Thompson's group $F$ has presentation
\[
 F = \langle \alpha, \beta \,\vert\, [\alpha\beta^{-1}, \alpha^{-1}\beta\alpha], [\alpha\beta^{-1}, \alpha^{-2}\beta\alpha^2]\rangle.
\]
\end{Prop}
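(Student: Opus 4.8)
The plan is to deduce this from the standard \emph{infinite} presentation of $F$ by a sequence of Tietze transformations, the combinatorial heart being a redundancy argument that collapses infinitely many relations down to the two displayed commutators. Throughout I identify the given generators with the usual ones via $\alpha = x_0$ and $\beta = x_1$, where $x_n$ denotes the standard $n$-th generator of $F$.

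First I would take as a starting point the classical presentation
\[
 F \cong \langle x_0, x_1, x_2, \ldots \mid x_k^{-1} x_n x_k = x_{n+1} \text{ for } 0 \le k < n \rangle,
\]
citing \cite{CannonFloydParry-IntroNotesOnThompsons} (or re-deriving it from the action of $F$ on dyadic subdivisions of $[0,1]$). The relations with $k=0$ read $x_{n+1} = x_0^{-1} x_n x_0$, so for $n \ge 2$ each $x_n$ is the word $x_0^{-(n-1)} x_1 x_0^{n-1}$; a Tietze transformation eliminates all of these higher generators, leaving a presentation on $\alpha, \beta$ whose relators are exactly the remaining relations (those with $k \ge 1$), rewritten under these substitutions. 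A short rearrangement then matches the first two of these to the displayed commutators: the relation $x_1^{-1} x_2 x_1 = x_3$ is equivalent to $x_0 x_1^{-1}$ commuting with $x_2 = x_0^{-1}x_1x_0$, i.e.\ to $[\alpha\beta^{-1}, \alpha^{-1}\beta\alpha]$, and likewise $x_1^{-1} x_3 x_1 = x_4$ corresponds to $[\alpha\beta^{-1}, \alpha^{-2}\beta\alpha^2]$.

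The main obstacle is the \emph{redundancy}: I must show that these two relations, together with the definitional substitutions, imply every relation $x_k^{-1} x_n x_k = x_{n+1}$ with $1 \le k < n$. I would organise the argument by the gap $g = n-k \ge 1$, using the crucial observation that $x_0^{-1} x_m x_0 = x_{m+1}$ holds \emph{identically} after substitution (since $x_{m+1}$ is defined as $x_0^{-1} x_m x_0$), so that conjugation by a power of $x_0$ raises every index by the same amount and sends relations to relations. Conjugating the two displayed relations by powers of $x_0$ therefore produces the gap-$1$ relation $x_k^{-1} x_{k+1} x_k = x_{k+2}$ and the gap-$2$ relation $x_k^{-1} x_{k+2} x_k = x_{k+3}$ at every level $k \ge 1$, while the levels $k=0$ are free identities. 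With all gap-$1$ and gap-$2$ relations in hand, I would then induct on $g$: for $g \ge 3$ one rewrites $x_{k+g} = x_{k+1}^{-1} x_{k+g-1} x_{k+1}$ (a gap-$(g-2)$ relation), uses the gap-$1$ identity $x_{k+1} x_k = x_k x_{k+2}$ to slide the outer letters inward, and finally applies the gap-$(g-1)$ and gap-$(g-2)$ relations to collapse the expression to $x_{k+g+1}$.

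The only delicate point beyond the above is bookkeeping: at each step one must verify that only the two displayed relations (and free substitutions of the eliminated generators) are invoked, so that no circularity enters the induction. Once this is confirmed, the relator set of the Tietze-reduced presentation has the same normal closure as $\{[\alpha\beta^{-1}, \alpha^{-1}\beta\alpha],\, [\alpha\beta^{-1}, \alpha^{-2}\beta\alpha^2]\}$, so the two presentations define the same group; since both commutators evidently hold in $F$, this identifies the displayed finite presentation as a presentation of $F$.
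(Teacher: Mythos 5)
Your argument is correct, and it is essentially the standard derivation: the paper itself gives no proof of this proposition (it is quoted from Cannon--Floyd--Parry, \S 3), and your Tietze-transformation argument --- eliminating $x_n = x_0^{-(n-1)}x_1x_0^{n-1}$, identifying the two surviving relators with $[\alpha\beta^{-1},\alpha^{-1}\beta\alpha]$ and $[\alpha\beta^{-1},\alpha^{-2}\beta\alpha^2]$, and recovering the relations $x_k^{-1}x_nx_k = x_{n+1}$ for $k\ge 1$ by conjugating with powers of $x_0$ and inducting on the gap $n-k$ --- is exactly the route taken in that reference. I verified the induction step: with $g = n-k \ge 3$, your chain $x_k^{-1}x_{k+g}x_k = x_k^{-1}x_{k+1}^{-1}x_{k+g-1}x_{k+1}x_k = x_{k+2}^{-1}(x_k^{-1}x_{k+g-1}x_k)x_{k+2} = x_{k+2}^{-1}x_{k+g}x_{k+2} = x_{k+g+1}$ uses only gaps $1$, $g-2$, and $g-1$, so no circularity enters.
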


\begin{Theorem}\label{theorem:QF presentation}
$QF$ has presentation
 \[
  QF = \left\langle \sigma, \alpha, \beta \, \left\vert 
    \begin{array}{l}
     \sigma^2,\, [\sigma, \sigma^{\alpha^2}],\, (\sigma \sigma^\alpha)^3,\, \sigma\sigma^\alpha \sigma \sigma^{\alpha\beta^{-1}\alpha^{-1}}, \\\relax
     [\alpha\beta^{-1},\, \alpha^{-1}\beta\alpha],\, [\alpha\beta^{-1}, \alpha^{-2}\beta\alpha^2], \\\relax
      [\nu, \sigma] \text{ for all $\nu \in X$.}
    \end{array}
  \right.\right\rangle
 \]
 where
 \[
  X = \left\{\begin{array}{c}
\beta, \beta^\alpha, \alpha^3\beta^{-1}\alpha^{-2}, \alpha^{2}\beta^2\alpha^{-1}\beta^{-1}\alpha\beta^{-1}\alpha^{-2} \\
\alpha\beta^2\alpha^{-1}\beta^{-1}\alpha\beta^{-1}\alpha^{-1}, \alpha\beta\alpha^{-1}\beta^2\alpha^{-1}\beta^{-1}\alpha\beta^{-1}\alpha\beta^{-1}\alpha^{-1}                                                          
\end{array} \right\}
 \]
 and $\sigma = \sigma_{0, \varepsilon}$.
\end{Theorem}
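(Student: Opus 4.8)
The plan is to realise $QF$ as the internal semidirect product $\Sym(\{0,1\}^*) \rtimes F$ afforded by the splitting of Lemma~\ref{lemma:QF SES splits}, and then to feed the presentation of $\Sym(\{0,1\}^*)$ from Lemma~\ref{lemma:presentation of Sym01} and the presentation of $F$ from Proposition~\ref{prop:F presentation} into the general machine of Lemma~\ref{lemma:MPMNlemmaA.1}. Everything then reduces to verifying the three hypotheses of that lemma for $N = \Sym(\{0,1\}^*)$ and $Q = F$: that $F$ permutes the generating set $\{\sigma_{x,y}\}$ with finitely many orbits, that it permutes the four families of relators with finitely many orbits, and that the $F$-stabiliser of each orbit representative among the generators is finitely generated.

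First I would record the action. A generator $\sigma_{x,y}$ is indexed by a pair $(x,y) \in \Sigma_2$, and conjugation by $\iota(f)$ carries $\sigma_{x,y}$ to the transposition of the pair $\iota(f)^{-1}\{x,y\}$, which again lies in $\Sigma_2$ since $\iota(f)$ preserves the $\le_{\lex}$ order by Remark~\ref{remark:explicit description of iota for F}. By Lemma~\ref{lemma:transitive action Sigman} with $n=2$ this action is transitive, so there is a single orbit of generators and I take the representative $\sigma = \sigma_{0,\varepsilon}$, giving $S_0 = \{\sigma\}$. The same lemma with $n = 2,3,4$ shows that each of the four relator families of Lemma~\ref{lemma:presentation of Sym01}, indexed respectively by $\Sigma_2$, $\Sigma_4$, $\Sigma_3$, and $\Sigma_3$, forms a single $F$-orbit, so $R_0$ consists of four relators.

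Next I would identify $\Stab_F(\sigma)$. Because $\iota(f)$ preserves $\le_{\lex}$ it cannot interchange $0 \lneq_{\lex} \varepsilon$, so conjugation fixes $\sigma_{0,\varepsilon}$ precisely when $\iota(f)$ fixes both $0$ and $\varepsilon$; hence $\Stab_F(\sigma) = \Stab_F((0,\varepsilon))$, whose generating set $X$ is computed in Lemma~\ref{lemma:stab of F on sigma}(2). This supplies the relators $[\nu,\sigma]$ for $\nu \in X$. It then remains to rewrite the four chosen relator representatives in the $(S_0 \cup S_Q)^*$ notation of Lemma~\ref{lemma:MPMNlemmaA.1}, that is, to express each transposition occurring in them as a conjugate $\sigma^w$ of $\sigma$ by an explicit word $w$ in $\alpha = \iota(A)$ and $\beta = \iota(B)$. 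Computing the action of $\alpha$ and $\beta$ on the relevant vertices yields $\sigma^\alpha = \sigma_{\varepsilon,1}$, $\sigma^{\alpha^2} = \sigma_{1,11}$, and $\sigma^{\alpha\beta^{-1}\alpha^{-1}} = \sigma_{0,1}$; substituting these into the square, commutator, braid, and $\sigma_{x,y}\sigma_{y,z}\sigma_{x,y}\sigma_{x,z}$ relator representatives produces exactly $\sigma^2$, $[\sigma,\sigma^{\alpha^2}]$, $(\sigma\sigma^\alpha)^3$, and $\sigma\sigma^\alpha\sigma\sigma^{\alpha\beta^{-1}\alpha^{-1}}$. Assembling $\widehat{R}_0$ with the two relators $R_Q$ of $F$ from Proposition~\ref{prop:F presentation} and the commutators $[\nu,\sigma]$ delivers the claimed presentation.

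The main obstacle is this final rewriting step rather than the orbit count, which is immediate from transitivity. Concretely, one must pin down the $\le_{\lex}$-ordered action of $\alpha$ and $\beta$ on $\{0,1\}^*$ and then locate, for the fourth relator, the conjugating word $w = \alpha\beta^{-1}\alpha^{-1}$ realising $\sigma_{x,z} = \sigma_{0,1}$ as $\sigma^w$. This is where the orientation conventions for the generators of $F$ and the precise behaviour of $\iota$ on nodes versus leaves (Remark~\ref{remark:explicit description of iota for F}) must be handled with care, since an inverted convention for $\beta$ sends $1$ to the wrong vertex and breaks the relation; once the action is fixed, the remaining verifications are routine.
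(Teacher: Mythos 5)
Your proposal is correct and follows essentially the same route as the paper: both decompose $QF$ as $\Sym(\{0,1\}^*)\rtimes F$ via Lemma \ref{lemma:QF SES splits}, feed the presentations of Lemma \ref{lemma:presentation of Sym01} and Proposition \ref{prop:F presentation} into Lemma \ref{lemma:MPMNlemmaA.1}, use Lemma \ref{lemma:transitive action Sigman} for transitivity on generators and relator families, Lemma \ref{lemma:stab of F on sigma} for the stabiliser generating set $X$, and the same identifications $\sigma^{\alpha}=\sigma_{\varepsilon,1}$, $\sigma^{\alpha^2}=\sigma_{1,11}$, $\sigma^{\alpha\beta^{-1}\alpha^{-1}}=\sigma_{0,1}$. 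Your count of four relator orbits (indexed by $\Sigma_2$, $\Sigma_4$, $\Sigma_3$, $\Sigma_3$) matches the four representatives the paper actually takes, and is if anything more careful than the paper's parenthetical claim of ``three $F$-orbits'', which appears to be a slip.
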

\begin{proof}
We apply Lemma \ref{lemma:MPMNlemmaA.1} to the description of $QF$ as the semi-direct product $\Sym(\{0,1\}^*) \rtimes F$ (Lemma \ref{lemma:QF SES splits}), using the presentation of $\Sym(\{0,1\}^*)$ given in Lemma \ref{lemma:presentation of Sym01}, and the presentation of $F$ given in Proposition \ref{prop:F presentation}.  

Let $f \in F$, then necessarily $f(x) \lneq_{\lex} f(y)$ (see Remark \ref{remark:explicit description of iota for F}) and $f^{-1}\sigma_{x,y}f = \sigma_{f(x), f(y)}$.  Hence by Lemma \ref{lemma:transitive action Sigman}, $F$ acts transitively on the generating set of $\Sym(\{0,1\}^*)$ and acts with three $F$-orbits on the relating set.

Let $\sigma = \sigma_{0,\varepsilon}$, this is a representative of the generating set.  We take the representatives of the relating set,
 \[
  \{ \sigma^2,\, [\sigma, \sigma_{1,11}] ,\, (\sigma \sigma_{\varepsilon,1})^3,\, \sigma\sigma_{\varepsilon,1}\sigma\sigma_{0,1} \}.
 \]
Let $\alpha$ and $\beta$ be the generators of $F$ from Proposition \ref{prop:F presentation}.  We can express the transpositions appearing in the relating set as  
 \[
  \sigma_{1,11} = \sigma^{\alpha^2}
 \]
 \[
  \sigma_{\varepsilon, 1} = \sigma^\alpha
 \]
 \[
  \sigma_{0,1} = \sigma^{\alpha\beta^{-1}\alpha^{-1}}
 \]

Since the stabiliser of $\sigma_{x,y}$ for some $x \lneq_{\lex} y$ is equal to the stabiliser of the pair $(x,y)$ in $\Sigma_2$, Lemma \ref{lemma:stab of F on sigma} gives that the stabiliser of the $F$-action on the generators of $\Sym(\{0,1\}^*)$ is a copy of $F \times F \times F$, generated by the set
\[
X = \left\{ \begin{array}{c}
\beta, \beta^\alpha, \alpha^3\beta^{-1}\alpha^{-2}, \alpha^{2}\beta^2\alpha^{-1}\beta^{-1}\alpha\beta^{-1}\alpha^{-2} \\
\alpha\beta^2\alpha^{-1}\beta^{-1}\alpha\beta^{-1}\alpha^{-1}, \alpha\beta\alpha^{-1}\beta^2\alpha^{-1}\beta^{-1}\alpha\beta^{-1}\alpha\beta^{-1}\alpha^{-1}                                                          
\end{array} \right\}.
\]

In summary, with the notation of Lemma \ref{lemma:MPMNlemmaA.1}, $X_\sigma = X$ is as above, $S_0 = \{\sigma\}$, $S_Q = \{\alpha, \beta\}$, 
\[
\widehat{R}_0 = \{\sigma^2,\, [\sigma,\sigma^{\alpha^2}],\, (\sigma\sigma^\alpha)^3, \, \sigma\sigma^\alpha \sigma \sigma^{\alpha\beta^{-1}\alpha^{-1}} \},
\]
and
\[
 R_Q = \{[\alpha\beta^{-1},\, \alpha^{-1}\beta\alpha],\,  [\alpha\beta^{-1}, \alpha^{-2}\beta\alpha^2] \}.
\]
\end{proof}

\begin{Cor}\label{cor:abelianisation QF}
 \[QF/[QF, QF] \cong \ZZ \oplus \ZZ \oplus C_2\]
\end{Cor}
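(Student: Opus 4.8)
The plan is to read off the abelianisation directly from the finite presentation of $QF$ established in Theorem \ref{theorem:QF presentation}. Passing to the abelianisation of a presentation means taking the free abelian group on the generating set and imposing each relator additively, with the crucial simplification that every relator which is a commutator becomes trivial. Since all but three of the relators in Theorem \ref{theorem:QF presentation} are commutators, most of the work disappears immediately.

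First I would work in the free abelian group $\ZZ^3$ on the three generators $\sigma$, $\alpha$, $\beta$, and record that conjugation is invisible after abelianising, so that $\sigma^\alpha$, $\sigma^{\alpha^2}$ and $\sigma^{\alpha\beta^{-1}\alpha^{-1}}$ all become simply $\sigma$. The relator $[\sigma,\sigma^{\alpha^2}]$, the two relators $[\alpha\beta^{-1},\alpha^{-1}\beta\alpha]$ and $[\alpha\beta^{-1},\alpha^{-2}\beta\alpha^2]$ coming from the presentation of $F$, and all the relators $[\nu,\sigma]$ with $\nu\in X$ are commutators and hence die. The relator $\sigma^2$ contributes the single surviving relation $2\sigma=0$.

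Next I would verify that the two remaining relators impose nothing new. The relator $(\sigma\sigma^\alpha)^3$ abelianises to $6\sigma=0$, and the relator $\sigma\sigma^\alpha\sigma\sigma^{\alpha\beta^{-1}\alpha^{-1}}$ abelianises to $4\sigma=0$; in each case the $\alpha$- and $\beta$-exponents of the conjugating words cancel, leaving only an even multiple of $\sigma$, which is already forced to vanish by $2\sigma=0$. Consequently the only constraint on $\ZZ^3$ is $2\sigma=0$, and I conclude that $QF/[QF,QF]\cong \ZZ\oplus\ZZ\oplus C_2$, with the two free summands generated by the images of $\alpha$ and $\beta$ and the $C_2$ generated by the image of $\sigma$.

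The computation is essentially mechanical, so there is no serious obstacle. The one point meriting a moment's care is confirming that the two non-commutator, non-square relators genuinely reduce to multiples of $\sigma$ that are consequences of $2\sigma=0$, rather than secretly constraining $\alpha$ or $\beta$; this is immediate once one observes that their total $\alpha$- and $\beta$-exponents both vanish.
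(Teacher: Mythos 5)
Your proposal is correct and is exactly the paper's argument: the paper's proof reads ``Abelianising the relators we find that the only surviving relation is $\sigma^2$,'' and your computation simply spells out the details left implicit there, namely that conjugates collapse, all commutator relators die, and $(\sigma\sigma^\alpha)^3$ and $\sigma\sigma^\alpha\sigma\sigma^{\alpha\beta^{-1}\alpha^{-1}}$ abelianise to $6\sigma=0$ and $4\sigma=0$, both consequences of $2\sigma=0$.
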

\begin{proof}
Abelianising the relators we find that the only surviving relation is $\sigma^2$.
\end{proof}

\subsection{A finite presentation for \texorpdfstring{$\tilde{Q}T$}{tQT}}

Recall that $\tilde{Q}T$ denotes the preimage of $T$ under $\pi : \tilde{Q}V \longrightarrow V$.  In this section we compute a finite presentation for $\tilde{Q}T$.

Let $[-,-,-]_{\lex}$ be the cyclic order induced by the $\le_{\lex}$ ordering, so $[x,y,z]_{\lex}$ if and only if $x \le_{\lex} y \le_{\lex} z$ or $y \le_{\lex} z \le_{\lex} x$ or $z \le_{\lex} x \le_{\lex} y$.

The next lemma is an analogue of Lemma \ref{lemma:transitive action Sigman}.

\begin{Lemma}\label{lemma:transitive action Lambdan}
 Let $Z = \{0,1\}^* \cup \{\zeta\}$.  $T$ acts transitively on 
 \[
  \Lambda_n = \left\{ (x_1, \ldots, x_n) \in \prod_1^n Z : [x_1,x_2, \ldots, x_n]_{\lex},\, x_i \neq x_j \text{ for all $i \neq j$} \right\}
 \]
for all $n \in \NN$, where the action is via the splitting $\iota:T \longrightarrow \tilde{Q}T$, the inclusion $\tilde{Q}T \longrightarrow \tilde{Q}V$, and the usual action of $\tilde{Q}V$ on $Z$.
\end{Lemma}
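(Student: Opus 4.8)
The plan is to mimic the proof of Lemma \ref{lemma:transitive action Sigman}, replacing the linear order by the cyclic order and keeping careful track of the distinguished vertex $\zeta$. As there, it suffices to fix one base tuple and show that every element of $\Lambda_n$ is its image under some $\iota(t)$ with $t \in T$. A convenient base tuple is $(y_1, \ldots, y_n)$ with $y_i = 0^{n-i}$ for $1 \le i \le n-1$ and $y_n = \zeta$; since $0^{n-1} \lneq_{\lex} \cdots \lneq_{\lex} 0 \lneq_{\lex} \zeta$ this is even linearly increasing, hence lies in $\Lambda_n$. The whole argument rests on the formula from Lemma \ref{lemma:QAutX SES is split}: if $t \in T$ is represented by a tree diagram $(L, R, f)$, then on $\operatorname{nodes}(L) \cup \{\zeta\}$ the map $\iota(t)$ equals $b_R^{-1} \circ f \circ b_L$, where $b_L, b_R$ are the order-preserving bijections of Remark \ref{remark:order preserving bijection b_tau}.

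Given a target $(x_1, \ldots, x_n) \in \Lambda_n$ (with at most one coordinate equal to $\zeta$, since the $x_i$ are distinct), I would first choose $R$ to be a finite rooted subtree having every $x_i \ne \zeta$ among its nodes, and $L$ a finite rooted subtree having $y_1, \ldots, y_{n-1}$ among its nodes, so that $y_i \in \operatorname{nodes}(L)\cup\{\zeta\}$ and $x_i \in \operatorname{nodes}(R)\cup\{\zeta\}$ for all $i$. Setting $\ell_i = b_L(y_i)$ and $r_i = b_R(x_i)$, what is needed is a cyclic-order-preserving leaf bijection $f$ with $f(\ell_i) = r_i$ for every $i$; for then $\iota(t)(y_i) = b_R^{-1}(f(b_L(y_i))) = b_R^{-1}(r_i) = x_i$. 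Because $b_L$ and $b_R$ are order-preserving and both $(y_i)$ and $(x_i)$ lie in cyclic order, the leaves $\ell_1, \ldots, \ell_n$ and $r_1, \ldots, r_n$ occur in the same cyclic order, so the partial correspondence $\ell_i \mapsto r_i$ is itself cyclic-order-preserving.

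The remaining work is the cyclic analogue of the caret/$d$-distance balancing in Lemma \ref{lemma:transitive action Sigman}: the partial correspondence extends to a genuine leaf bijection exactly when, for each cyclic gap, the number of leaves of $L$ strictly between $\ell_i$ and $\ell_{i+1}$ equals the number of leaves of $R$ strictly between $r_i$ and $r_{i+1}$ (indices read mod $n$). Adding a caret at a leaf inside a gap increases that gap's count by one without disturbing the already-fixed leaves (Remark \ref{remark:order preserving bijection b_tau}), so I would expand $L$ and $R$ gap by gap until all $n$ cyclic gaps agree; this automatically equalises the total leaf counts. The resulting $f$ is orientation-preserving, so $t = (L, R, f) \in T$, and by construction $\iota(t)$ carries the base tuple to $(x_1, \ldots, x_n)$, establishing transitivity.

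I expect the main obstacle to be bookkeeping rather than conceptual: checking that the $n$ cyclic gaps can be balanced simultaneously (no gap requires a negative adjustment, since carets only add leaves and the gaps are independent), and confirming that the cyclic order is genuinely respected once $\zeta$ — the global maximum, so that $b_L(\zeta)$ and $b_R(\zeta)$ are the rightmost leaves — wraps the linear order around. As a sanity check, or an alternative route, one may instead observe that $\iota(F) = \Stab_T(\zeta)$ (an element of $T$ fixes $\zeta$ iff its leaf bijection fixes the rightmost leaf, i.e.\ iff it lies in $F$), move one coordinate to $\zeta$ by a single rotation, and then invoke the transitivity of $F$ on $\Sigma_{n-1}$ from Lemma \ref{lemma:transitive action Sigman}.
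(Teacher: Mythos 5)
Your closing ``alternative route'' is, up to direction, exactly the paper's proof: the paper takes the smallest tree $L$ containing the $x_i$ as nodes, applies a rotation $(L,L,f)\in T$ putting the tuple into $\lneq_{\lex}$-increasing position, and then invokes Lemma \ref{lemma:transitive action Sigman}; the one wrinkle, when the rotated tuple ends in $\zeta$, is handled by a further rotation (on $L$ with a caret added at the leftmost leaf) pushing $\zeta$ off, whereas you would instead rotate $\zeta$ \emph{into} the last slot and apply Lemma \ref{lemma:transitive action Sigman} to the first $n-1$ coordinates, using that $\iota(F)$ is precisely the part of $\iota(T)$ fixing $\zeta$. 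Either variant is correct, and both reduce cyclic transitivity to linear transitivity by a single rotation.

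Your primary, direct construction is a genuinely different route (a cyclic analogue of the proof of Lemma \ref{lemma:transitive action Sigman} rather than a reduction to it), and it is essentially sound, but one step fails as stated: when the cyclic gap between consecutive marked leaves $\ell_i$, $\ell_{i+1}$ is empty on the deficient side, there is no leaf \emph{inside} the gap at which to add a caret, so your balancing mechanism cannot start; and marked leaves can certainly be adjacent. This is precisely where transporting Lemma \ref{lemma:transitive action Sigman} from node-gaps to leaf-gaps loses something: in the $\le_{\lex}$ (in-order) ordering of the vertices of a finite binary tree, leaves and internal nodes strictly alternate, so between two lex-consecutive \emph{nodes} there is always a leaf to subdivide --- which is why the paper's $d_T$-balancing never gets stuck --- while between two lex-consecutive \emph{leaves} there is none. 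The same alternation supplies the repair: it shows that $b_T$ sends each element of $\operatorname{nodes}(T)\cup\{\zeta\}$ to the largest leaf below it, from which one computes that adding a caret at the marked leaf $\ell_{i+1}$ itself replaces the marked leaf by $\ell_{i+1}1$ and deposits the new leaf $\ell_{i+1}0$ strictly inside gap $i$, disturbing no other marked leaf or gap. With that one extra observation your gap-by-gap balancing (your parenthetical about negative adjustments guards against a different, unproblematic worry) goes through and yields a complete, self-contained proof.
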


\begin{proof}
As in Lemma \ref{lemma:transitive action Sigman}, let $y_i = 0^{n-i}$ for all $1 \le i \le n$.  We build an element $\gamma$ of $T$ such that $\iota(\gamma)$ maps an arbitrary element $(x_1, \ldots, x_n) \in \Lambda_n$ onto $(y_1, \ldots, y_n)$.

Let $L$ be the smallest rooted subtree of $\mathcal{T}_{2,c}$ such that $L$ contains all the $x_i$ as nodes.
Let $f$ be the cyclic permutation of the leaves of $L$ such that the element $t$ of $T$ represented by $(L,L,f)$ satisfies $\iota(t)x_i \le \iota(t)x_{i+1}$ for all $i$.  

If $\iota(t) x_n \neq \zeta$ then, via Lemma \ref{lemma:transitive action Sigman}, there exists $f \in F$ such that $\iota(ft) x_i = y_i$ for all $i$.  

Assume $\iota(t) x_n = \zeta$.  Let $L^\prime$ be formed from $L$ by adding a caret on the left-hand-most leaf, and let $f^\prime$ be the cyclic permutation of the leaves of $L^\prime$ which sends each leaf to it's immediate left-hand neighbour.  Let $t^\prime$ be the element of $T$ represented by $(L^\prime, L^\prime, f^\prime)$, then $\iota(t^\prime t)x_i \lneq_{\lex} \iota(t^\prime t) x_{i+1}$, and $\iota(t^\prime t) x_n \neq \zeta$.  Once again we may use Lemma \ref{lemma:transitive action Sigman}.
\end{proof}

\begin{Lemma}\label{lemma:stab of T on Lambda2}
~
\begin{enumerate}
 \item The $T$-stabiliser of any element in $\Lambda_n$ is isomorphic to $\prod_1^n F$.
 \item The $T$-stabiliser of $(\varepsilon, \zeta) \in \Lambda_2$ has generating set
\[
\operatorname{Stab}((\varepsilon, \zeta)) = \langle \beta, \beta^\alpha, \alpha^2\beta^{-1}\alpha^{-1}, \alpha\beta^2\alpha^{-1}\beta^{-1}\alpha\beta^{-1}\alpha^{-1} \rangle.
\]
\end{enumerate}
\end{Lemma}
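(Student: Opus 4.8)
The plan is to imitate the proof of Lemma~\ref{lemma:stab of F on sigma}, replacing the total order $\le_{\lex}$ by the cyclic order $[-,-,-]_{\lex}$ and using transitivity of the $T$--action established in Lemma~\ref{lemma:transitive action Lambdan}. The only genuinely new ingredient is the bookkeeping of the extra point $\zeta$, and this is exactly what changes the number of factors from $n+1$, as in the $F$--case, to $n$.

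For part~(1), transitivity implies that all stabilisers are conjugate, so it suffices to compute the stabiliser of one convenient tuple; I would take a representative having $\zeta$ as a coordinate, which exists in $\Lambda_n$ and is permissible by Lemma~\ref{lemma:transitive action Lambdan}. The crucial point is that $\iota(t)$ preserves the cyclic order on $Z$ for every $t\in T$, which is built into the splitting $\iota$ through Remark~\ref{remark:order preserving bijection b_tau}. Hence an element of the stabiliser fixes each of the $n$ marked points and therefore preserves setwise each of the $n$ cyclic arcs into which they cut $Z$; conversely any element fixing the marked points and preserving these arcs lies in the stabiliser. On each arc the induced map is an order--preserving self--bijection fixing the two endpoints, i.e.\ a copy of $F$, and the maps on distinct arcs may be prescribed independently, giving $\Stab\cong\prod_1^n F$. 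The contrast with Lemma~\ref{lemma:stab of F on sigma} is then transparent: there $\zeta$ is fixed by all of $\iota(F)$ (Lemma~\ref{lemma:QF SES splits}) and so opens the circle $Z$ into a line, contributing one extra cut and thus $n+1$ intervals, whereas here $T$ moves $\zeta$ and the $n$ points alone cut a genuine circle into $n$ arcs.

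For part~(2) I would use the representative $(\varepsilon,\zeta)$, for which both marked points are fixed and the two arcs are precisely the subtree under $0$ and the subtree under $1$; by part~(1) the stabiliser is $F\times F$ with one factor acting on each. For the factor supported on the subtree under $1$ I would reuse the generators $\beta,\beta^\alpha$ already found in Lemma~\ref{lemma:stab of F on sigma}. For the factor supported on the subtree under $0$ I would write down the tree--pair diagrams of the two standard generators of $F$ rooted at the node $0$ and translate them into words in $\alpha$ and $\beta$ by the method of \cite[\S 1]{Belk-Thesis}, obtaining $\alpha^2\beta^{-1}\alpha^{-1}$ and $\alpha\beta^2\alpha^{-1}\beta^{-1}\alpha\beta^{-1}\alpha^{-1}$. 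Since all four elements lie in $\iota(F)$, commute across the two disjoint subtrees, and each pair generates the full copy of $F$ on its arc, part~(1) guarantees that together they generate the whole stabiliser.

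The main obstacle is the explicit computation in part~(2): producing the correct tree--pair diagrams for the $F$--generators supported on the subtree under $0$ and translating them, without error, into the two stated words. Part~(1) is essentially formal once cyclic--order preservation is in hand, and the single delicate point there, the location of $\zeta$, is sidestepped by choosing a representative in which $\zeta$ is one of the fixed coordinates, so that no arc contains it in its interior.
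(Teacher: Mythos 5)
Your proposal is correct and takes essentially the same route as the paper: there, too, transitivity is used to reduce to a representative ending in $\zeta$, namely $(y_1,\ldots,y_{n-1},\zeta)$, whose $T$-stabiliser is observed to be exactly the $F$-stabiliser of $(y_1,\ldots,y_{n-1})$ and hence $\prod_1^n F$ by Lemma \ref{lemma:stab of F on sigma}, while part (2) is settled by exhibiting $\langle\beta,\beta^\alpha\rangle$ for the subtree under $1$ and $\langle\alpha^2\beta^{-1}\alpha^{-1},\,\alpha\beta^2\alpha^{-1}\beta^{-1}\alpha\beta^{-1}\alpha^{-1}\rangle$ for the subtree under $0$, computed via the method of \cite[\S 1]{Belk-Thesis} just as you propose. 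Your cyclic-arc discussion simply spells out the one-line reduction the paper leaves implicit (an element of $\iota(T)$ fixing $\zeta$ preserves the linear order, so lies in $\iota(F)$), which is a presentational rather than a substantive difference.
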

\begin{proof}
 The $T$-stabiliser of $(y_1, \ldots, y_{n-1}, \zeta)$ is exactly the $F$-stabiliser of $(y_1, \ldots, y_{n-1})$, which is isomorphic to $\prod_1^n F$ by Lemma \ref{lemma:stab of F on sigma}, this proves part (1). 

Finally, the stabiliser of the subtree with root $1$ is $\langle \beta, \beta^\alpha \rangle$ and that of the subtree with root $0$ is $\langle \alpha^2\beta^{-1}\alpha^{-1}, \alpha\beta^2\alpha^{-1}\beta^{-1}\alpha\beta^{-1}\alpha^{-1}  \rangle$. 
\end{proof}

The proof of the next lemma is identical to that of Lemma \ref{lemma:presentation of Sym01}.
\begin{Lemma}\label{lemma:presentation of SymZ}
The finite support symmetric group on $\Sym(Z)$ has presentation
 \[
  \Sym(Z) = \left\langle \sigma_{x,y} \,
  \begin{array}{c}
  \forall x ,y \in \Lambda_2
  \end{array}
   \,\left\vert 
  \begin{array}{ll}
    \sigma_{x,y}^2 & \forall (x, y) \in \Lambda_2 \\\relax
    [\sigma_{x,y},\sigma_{z,w}] & \forall (x, y, z, w) \in \Lambda_4 \\
    (\sigma_{x,y}\sigma_{y,w})^3 & \forall (x, y, z) \in \Lambda_3 \\
    \sigma_{x,y}\sigma_{y,z}\sigma_{x,y}\sigma_{x,z} & \forall (x, y, z) \in \Lambda_3 \\
  \end{array}
 \right.\right\rangle
 \]
\end{Lemma}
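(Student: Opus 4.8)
The plan is to repeat the argument of Lemma~\ref{lemma:presentation of Sym01} essentially verbatim, the only input being that $Z = \{0,1\}^* \cup \{\zeta\}$ is a countably infinite totally ordered set under the extended order $\le_{\lex}$ (with $\zeta$ as greatest element). Nothing in the proof of Lemma~\ref{lemma:presentation of Sym01} used any feature of $\{0,1\}^*$ beyond its being a countable linearly ordered set, so the same steps apply with $\{0,1\}^*$ replaced throughout by $Z$.

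First I would treat a finite subset $x_1 \lneq_{\lex} \cdots \lneq_{\lex} x_n$ of $Z$. The finite symmetric group $\Sym(\{x_1,\ldots,x_n\})$ has the standard Coxeter presentation on the adjacent transpositions $\sigma_{x_i,x_{i+1}}$ \cite{KasselTuraev}. Exactly as before, I would use Tietze transformations to introduce every transposition $\sigma_{x_i,x_j}$ as a generator, adding them one ``diagonal'' at a time via the four-term relations $\sigma_{x_i,x_j}\sigma_{x_j,x_k}\sigma_{x_i,x_j}\sigma_{x_i,x_k}$, and then insert all remaining relations of the four listed types. This yields a finite presentation of $\Sym(\{x_1,\ldots,x_n\})$ whose generators are all transpositions and whose relators are the squares, the disjoint-support commutators, the braid relations $(\sigma_{x,y}\sigma_{y,z})^3$, and the four-term relations.

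Next I would pass to $\Sym(Z)$ by the same compactness argument. Any element, and any relation, involves only finitely many points of $Z$ and hence lies in $\Sym(\{x_1,\ldots,x_m\})$ for a large enough finite subset; so a word in the transpositions representing the identity can be rewritten as a product of conjugates of relators from the finite presentation, each of which is one of the listed relators for $\Sym(Z)$. Conversely each listed relator clearly holds in $\Sym(Z)$, being a relation among transpositions. This establishes the presentation.

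The only genuine difference from Lemma~\ref{lemma:presentation of Sym01} is bookkeeping about the indexing: here generators are indexed by $\Lambda_2$ and relators by $\Lambda_3,\Lambda_4$ (the cyclic-order sets) rather than by the linearly ordered $\Sigma_n$, and this is the one point I would check carefully. For pairs the cyclic condition is vacuous, so $\Lambda_2$ records every transposition (now each appearing as both $\sigma_{x,y}$ and $\sigma_{y,x}$, which denote the same element); and for triples and quadruples each of the four relation types is independent of the order of its arguments and valid among transpositions, while the inclusion of the linearly ordered tuples into the cyclically ordered ones guarantees that the relations already used in Lemma~\ref{lemma:presentation of Sym01} are still present. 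Thus the cyclically indexed relating set is both valid and sufficient, and there is no real obstacle beyond this order bookkeeping; the cyclic indexing is chosen only so that $T$ permutes the generators and relators, as needed for the subsequent application of Lemma~\ref{lemma:MPMNlemmaA.1}.
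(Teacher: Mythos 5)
Your proof is correct and takes exactly the paper's approach: the paper proves this lemma with the one-line remark that the argument is identical to that of Lemma~\ref{lemma:presentation of Sym01}, which is precisely the adaptation you carry out, replacing $\{0,1\}^*$ by the totally ordered countable set $Z$. Your closing care about the $\Lambda_n$ indexing (each transposition appearing as both $\sigma_{x,y}$ and $\sigma_{y,x}$, and the cyclic indexing being chosen so that $T$ permutes generators and relators) is a point the paper leaves implicit, and you resolve it correctly.
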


\begin{Prop}[{\cite[\S 5]{CannonFloydParry-IntroNotesOnThompsons}}]\label{prop:T presentation}
Thompson's group $T$ has presentation 
\[
 T = \left\langle \alpha, \beta, \gamma \left\vert \begin{array}{l} [\alpha\beta^{-1},\, \alpha^{-1}\beta\alpha],\,
      [\alpha\beta^{-1}, \alpha^{-2}\beta\alpha^2], \\\relax
      \gamma^{-1}\beta\alpha^{-1}\gamma\beta, \alpha^{-1}\beta^{-1}\alpha\beta^{-1}\gamma^{-1}\alpha\beta\alpha^{-2}\gamma\beta^2, \\\relax
      \alpha^{-1}\gamma^{-1}(\alpha^{-1}\gamma\beta)^2, \gamma^3,  \end{array} \right.\right\rangle
\] 
\end{Prop}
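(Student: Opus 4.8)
This proposition is quoted verbatim from Cannon--Floyd--Parry \cite[\S 5]{CannonFloydParry-IntroNotesOnThompsons}, so strictly speaking the proof is the citation; I sketch instead how one would verify it from scratch. Write $G$ for the group abstractly presented on the right-hand side, and let $A, B, C$ be the standard generators of $T$, so that $\alpha, \beta, \gamma$ are meant to correspond to $A, B, C$ as in Remark \ref{remark:explicit description of alpha beta etc}. The plan is the usual two-step argument for a group presentation: first build a surjection $\phi \colon G \to T$ by checking that every relator is trivial in $T$, and then prove that $\phi$ is \emph{injective}.

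The surjection is the easy half. Because $\langle A, B\rangle \cong F \le T$, the first two relators $[\alpha\beta^{-1}, \alpha^{-1}\beta\alpha]$ and $[\alpha\beta^{-1}, \alpha^{-2}\beta\alpha^2]$ hold automatically by Proposition \ref{prop:F presentation}. The relator $\gamma^3$ records that $C$ is the order-three rotation depicted in Figure \ref{figure:ic}, and the two remaining mixed relators are verified by composing the corresponding tree-pair diagrams and confirming that the result is the identity; this is a finite, mechanical check. Since $A, B, C$ generate $T$, the map $\phi$ is onto.

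The \emph{injectivity} is the genuine obstacle, and it is the part for which CFP's combinatorics is really needed. The cleanest route is a Tietze-transformation argument: one first presents $T$ on an infinite generating set read directly off tree-pair diagrams --- the $F$-generators $x_0, x_1, \dots$ together with rotation generators $c_1, c_2, \dots$ --- where each relator expresses how a single expansion or reduction of a diagram interacts with the generators, and one then eliminates all but $\alpha, \beta, \gamma$ to arrive at the stated finite presentation. Validating this requires showing that the infinite presentation really does present $T$, which rests on a normal-form theorem: every element of $T$ has a unique reduced tree-pair representative, and the relators suffice to rewrite any word into the corresponding canonical form. Establishing termination and uniqueness of this rewriting is the bulk of the work and the true difficulty; it is precisely the bookkeeping carried out in \cite[\S 5]{CannonFloydParry-IntroNotesOnThompsons}, which is why it is preferable to quote the result rather than reprove it here.
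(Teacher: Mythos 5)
Your proposal is correct and matches the paper exactly: the paper offers no proof of Proposition \ref{prop:T presentation} beyond the citation to \cite[\S 5]{CannonFloydParry-IntroNotesOnThompsons}, which is precisely how you treat it. Your sketch of the verification (surjectivity by checking the relators in $T$, injectivity via the infinite presentation on the $x_n$ and $c_n$ generators, normal forms for tree-pair representatives, and Tietze reduction to $\alpha$, $\beta$, $\gamma$) is an accurate summary of the argument actually carried out in the cited source, so there is nothing to correct.
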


\begin{Theorem}\label{theorem:tQT presentation}
$\tilde{Q}T$ has presentation
 \[
  \tilde{Q}T = \left\langle \sigma, \alpha, \beta, \gamma \, \left\vert 
    \begin{array}{l}
      \sigma^2,\, [\sigma, \sigma^{\alpha^2}],\, (\sigma \sigma^\alpha)^3,\, \sigma\sigma^\alpha \sigma \sigma^{\alpha\beta^{-1}\alpha^{-1}}, \\\relax
      [\alpha\beta^{-1},\, \alpha^{-1}\beta\alpha],\, [\alpha\beta^{-1}, \alpha^{-2}\beta\alpha^2], \\\relax
      \gamma^{-1}\beta\alpha^{-1}\gamma\beta, \alpha^{-1}\beta^{-1}\alpha\beta^{-1}\gamma^{-1}\alpha\beta\alpha^{-2}\gamma\beta^2, \\\relax
      \alpha^{-1}\gamma^{-1}(\alpha^{-1}\gamma\beta)^2, \gamma^3,  \\\relax
      [\nu, \sigma] \text{ for all $\nu \in X$.}
    \end{array}
  \right.\right\rangle
 \]
 where
 \[
  X = \{ \beta, \beta^\alpha, \alpha^2\beta^{-1}\alpha^{-1}, \alpha\beta^2\alpha^{-1}\beta^{-1}\alpha\beta^{-1}\alpha^{-1} \}
 \]
 and $\sigma = \sigma_{0, \varepsilon}$.
\end{Theorem}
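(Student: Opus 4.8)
The plan is to follow the template of the proof of Theorem~\ref{theorem:QF presentation} essentially verbatim, replacing $\{0,1\}^*$ by $Z$, the poset $\Sigma_n$ by the cyclically-ordered set $\Lambda_n$, and $F$ by $T$. First I would record the semidirect product structure: since $\pi\circ\iota=\id_V$ we have $\iota(T)\subseteq\pi^{-1}(T)=\tilde{Q}T$, so restricting the splitting $\iota$ of Lemma~\ref{lemma:QAutX SES is split} exhibits $\tilde{Q}T$ as $\Sym(Z)\rtimes T$. This is exactly the setting of Lemma~\ref{lemma:MPMNlemmaA.1}, applied with $N=\Sym(Z)$ presented as in Lemma~\ref{lemma:presentation of SymZ} and $Q=T$ presented as in Proposition~\ref{prop:T presentation}, so that $S_Q=\{\alpha,\beta,\gamma\}$ and $R_Q$ is the set of six defining relators of $T$.

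Next I would verify the orbit hypotheses of Lemma~\ref{lemma:MPMNlemmaA.1}. The conjugation action of $t\in T$ on the generators is $t^{-1}\sigma_{x,y}t=\sigma_{\iota(t)x,\,\iota(t)y}$, and because $\iota(t)$ preserves the cyclic $\le_{\lex}$ order on $Z$ it permutes each of the four relator families of Lemma~\ref{lemma:presentation of SymZ} (indexed by $\Lambda_2$, $\Lambda_4$, $\Lambda_3$, $\Lambda_3$) among themselves, so $T(R_N)=R_N$. By Lemma~\ref{lemma:transitive action Lambdan} the action on each $\Lambda_n$ is transitive, hence the generators form a single $T$-orbit and the relators fall into exactly four $T$-orbits.

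I would then choose a representative $\sigma=\sigma_{0,\varepsilon}$ of the generating set together with relator representatives supported on nodes of $\{0,1\}^*$ only. Since $\iota|_F$ is unchanged and $\alpha,\beta\in\iota(F)\subseteq\iota(T)$, the identifications $\sigma_{1,11}=\sigma^{\alpha^2}$, $\sigma_{\varepsilon,1}=\sigma^{\alpha}$, and $\sigma_{0,1}=\sigma^{\alpha\beta^{-1}\alpha^{-1}}$ transfer directly from the proof of Theorem~\ref{theorem:QF presentation}, producing the same $\widehat{R}_0=\{\sigma^2,\,[\sigma,\sigma^{\alpha^2}],\,(\sigma\sigma^{\alpha})^3,\,\sigma\sigma^{\alpha}\sigma\sigma^{\alpha\beta^{-1}\alpha^{-1}}\}$. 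For the stabiliser $X_\sigma=\Stab_T(\sigma)$, Lemma~\ref{lemma:stab of T on Lambda2} supplies a copy of $F\times F$ together with the explicit generating set $X$; transitivity of the $T$-action lets me transport this to whichever orbit representative is in play. Feeding $S_0=\{\sigma\}$, $S_Q$, $\widehat{R}_0$, $R_Q$, and the commutators $[\nu,\sigma]$ for $\nu\in X$ into the conclusion of Lemma~\ref{lemma:MPMNlemmaA.1} then yields precisely the asserted presentation.

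I expect the genuinely new difficulty, absent in the $QF$ case, to lie in the stabiliser step. Because $T$ preserves only the \emph{cyclic} order on $Z$, an element of $T$ could in principle interchange the two points underlying a transposition, so a priori $\Stab_T(\sigma_{x,y})$---the stabiliser of the \emph{generator}, which is what Lemma~\ref{lemma:MPMNlemmaA.1} demands---might strictly contain the stabiliser of the ordered pair computed in Lemma~\ref{lemma:stab of T on Lambda2}. The main thing to check is therefore that no element of $T$ realises this swap on the chosen representative, so that the generator stabiliser really is the order-preserving $F\times F$ with generating set $X$ and no extra ``flip'' generator appears; once this is settled the remainder is bookkeeping identical to the $F$ case.
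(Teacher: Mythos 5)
Your overall route is exactly the paper's: its proof of this theorem is literally ``similar to that of Theorem \ref{theorem:QF presentation}, except using Lemma \ref{lemma:presentation of SymZ} and Proposition \ref{prop:T presentation}'', and your orbit counts, relator representatives, and appeal to Lemmas \ref{lemma:transitive action Lambdan} and \ref{lemma:stab of T on Lambda2} all match that template. The problem is the final step, which you correctly isolate as the crux but then resolve the wrong way: the swap you hope to rule out does exist. Let $\tau \in T$ be the element with tree pair $(L, L, f)$ where $L$ is the single caret and $f$ interchanges the two leaves (as a circle map, rotation by $1/2$; this leaf permutation is cyclic, so $\tau \in T$). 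By the definition of $\iota$ in Lemma \ref{lemma:QAutX SES is split}, $b_L$ sends $\varepsilon \mapsto 0$ and $\zeta \mapsto 1$, so $\iota(\tau)$ interchanges $\varepsilon$ and $\zeta$ (and the subtrees under $0$ and $1$). Since $T$ acts transitively on $\Lambda_2$ (Lemma \ref{lemma:transitive action Lambdan}), a conjugate of $\tau$ swaps the two entries of \emph{any} pair, in particular of your chosen representative. Hence $\Stab_T(\sigma_{x,y})$, the stabiliser of the unordered pair that Lemma \ref{lemma:MPMNlemmaA.1} demands, strictly contains the ordered-pair stabiliser of Lemma \ref{lemma:stab of T on Lambda2}: for $\{\varepsilon,\zeta\}$ it is $(F \times F) \rtimes C_2$, with $\tau$ exchanging the two $F$ factors. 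This is a genuine divergence from the $QF$ case, where $\iota(F)$ preserves the linear order $\le_{\lex}$ and no swap can occur.

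Consequently your plan as written does not discharge the hypotheses of Lemma \ref{lemma:MPMNlemmaA.1}: the set $X$ generates only an index-two subgroup of $\Stab_T(\sigma)$, and the relation $[\tau', \sigma] = 1$ for a swapping word $\tau'$ --- which does hold in $\tilde{Q}T$, since $\tau'^{-1}\sigma_{x,y}\tau' = \sigma_{y,x} = \sigma_{x,y}$ --- is an \emph{input} the lemma requires, not something you may omit; you must either adjoin to $X$ a word in $\alpha, \beta, \gamma$ realising the swap of the chosen pair, or prove the corresponding relator redundant (neither is immediate: in small toy models the orientation-restricted Coxeter relators of Lemma \ref{lemma:presentation of SymZ} do not force $\sigma_{x,y} = \sigma_{y,x}$). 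The mechanism the paper itself deploys at the analogous spot for $\tilde{Q}V$ is instructive: there the generators of $\Sym(Z)$ are indexed by \emph{ordered} pairs with explicit identification relators $\sigma_{x,y} = \sigma_{y,x}$ (Lemma \ref{lemma:presentation of SymZ with duplicated generators}), which surface in Theorem \ref{theorem:QAutX presentation} as the extra relator $\sigma\sigma^{\alpha\delta\alpha^{-1}}$, $\alpha\delta\alpha^{-1}$ being a swapping element, and the stabiliser that then matters is the ordered-pair one. Adapting one of these two devices to $T$ --- and, while you are at it, aligning the representative with the stabiliser (your $\sigma = \sigma_{0,\varepsilon}$ is paired with the generating set for $\Stab_T((\varepsilon,\zeta))$; either conjugate $X$ or take $\sigma = \sigma_{\varepsilon,\zeta}$) --- is the substantive work here; calling the remainder bookkeeping identical to the $F$ case understates it, since this is precisely the point where the cyclic order differs from the linear one.
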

\begin{proof}
 The proof is similar to that of Theorem \ref{theorem:QF presentation}, except using Lemma \ref{lemma:presentation of SymZ} and Proposition \ref{prop:T presentation}.
\end{proof}

\begin{Question}
 Is $QT$ finitely presented?
\end{Question}

\begin{Cor}\label{cor:abelianisation wQT}
 \[
 \tilde{Q}T/[\tilde{Q}T,\tilde{Q}T] \cong C_2 
 \]
\end{Cor}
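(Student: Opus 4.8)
The plan is to mimic Corollary \ref{cor:abelianisation QF}: abelianise the finite presentation of $\tilde{Q}T$ obtained in Theorem \ref{theorem:tQT presentation} and read off the surviving relations. Passing to the abelianisation, every commutator relator becomes trivial and every conjugate $s^{g}$ is identified with $s$, so the relators $[\sigma,\sigma^{\alpha^2}]$, $[\alpha\beta^{-1},\alpha^{-1}\beta\alpha]$, $[\alpha\beta^{-1},\alpha^{-2}\beta\alpha^2]$, and $[\nu,\sigma]$ (for $\nu\in X$) all vanish. Writing the group additively, $\sigma^2$ becomes $2\sigma=0$, while $(\sigma\sigma^\alpha)^3$ and $\sigma\sigma^\alpha\sigma\sigma^{\alpha\beta^{-1}\alpha^{-1}}$ become $6\sigma=0$ and $4\sigma=0$, both consequences of $2\sigma=0$. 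Thus the $\sigma$-relators contribute exactly a factor of $C_2$.

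The remaining relators are the four relators defining $T$ (Proposition \ref{prop:T presentation}). The cleanest way to handle these is to observe that $T$ is simple and non-abelian, hence perfect, so $T/[T,T]=0$; since $\alpha,\beta,\gamma$ generate $T$, their images in the abelianisation of $\tilde{Q}T$ must all vanish. Concretely, abelianising $\gamma^{-1}\beta\alpha^{-1}\gamma\beta$, $\alpha^{-1}\beta^{-1}\alpha\beta^{-1}\gamma^{-1}\alpha\beta\alpha^{-2}\gamma\beta^2$, and $\alpha^{-1}\gamma^{-1}(\alpha^{-1}\gamma\beta)^2$ gives the linear relations $\alpha=2\beta$, $\alpha=\beta$, and $\gamma=3\alpha-2\beta$, which together force $\alpha=\beta=\gamma=0$. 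Hence the only generator surviving is $\sigma$, subject only to $2\sigma=0$, giving $\tilde{Q}T/[\tilde{Q}T,\tilde{Q}T]\cong C_2$.

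There is essentially no obstacle here, since the computation is forced by the perfectness of $T$; the only mild care needed is to confirm that the four $\sigma$-relators collapse to $2\sigma=0$ rather than to a smaller quotient. Alternatively, one could argue structurally from the splitting $\tilde{Q}T\cong\Sym(Z)\rtimes T$ of Lemma \ref{lemma:QAutX SES is split}: the abelianisation of a semidirect product $N\rtimes Q$ is the $Q$-coinvariants of $N^{\mathrm{ab}}$ together with $Q^{\mathrm{ab}}$. Here $Q^{\mathrm{ab}}=T^{\mathrm{ab}}=0$ and $\Sym(Z)^{\mathrm{ab}}\cong C_2$ via the sign homomorphism, on which $T$ necessarily acts trivially (as $\Aut(C_2)$ is trivial), so the coinvariants are again $C_2$.
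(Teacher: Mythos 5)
Your proof is correct and its main argument is exactly the paper's: abelianise the presentation of Theorem \ref{theorem:tQT presentation} and observe that only $\sigma$ survives, subject to $2\sigma=0$ (the paper's proof is the one-line version of your explicit computation, and your linear relations $\alpha=2\beta$, $\alpha=\beta$, $\gamma=3\alpha-2\beta$ check out). Your alternative structural argument via $(\tilde{Q}T)^{\mathrm{ab}} \cong (\Sym(Z)^{\mathrm{ab}})_T \oplus T^{\mathrm{ab}}$ is also valid and is a nice independent check, since it bypasses the finite presentation entirely, using only the splitting of Lemma \ref{lemma:QAutX SES is split}, the sign map on $\Sym(Z)$, and perfectness of $T$.
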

\begin{proof}
 Abelianising the relators kills the generators $\alpha$, $\beta$, and $\gamma$, leaving only the generator $\sigma$ and the relation $\sigma^2$.
\end{proof}

In Corollary \ref{cor:abelianisations from normal subgroup structure} we show that the abelianisation of $QT$ is also isomorphic to the cyclic group of order $2$.

\subsection{A finite presentation for \texorpdfstring{$\tilde{Q}V$}{tQV}}\label{subsection:finite presentation for QAutX}

In this section we compute a finite presentation for $\tilde{Q}V$.  Recall that $Z = \{0,1\}^* \cup \{\zeta\}$.  We also define
 \[
\beta_n = \alpha^{-(n-1)}\beta\alpha^{n-1} \text{ for all }n \ge 1,
 \]
\[
\gamma_n = \alpha^{-(n-1)}\gamma\beta^{n-1} \text{ for all }n \ge 1,
 \]
\[
\delta_1 = \gamma_2^{-1}\delta\gamma_2,
 \]
\[
\delta_n = \alpha^{-(n-1)}\delta_1 \alpha^{n-1} \text{for all }n \ge 2.
 \]
These definitions will allow us to express the presentation of $\tilde{Q}V$ in a simpler form.  The same definitions appear in \cite[p.13,p.16]{CannonFloydParry-IntroNotesOnThompsons}, where they are called $X_n$, $C_n$, and $\pi_n$ respectively.
 
\begin{Lemma}\label{lemma:transitive action Deltan}
 $V$ acts transitively on 
 \[
  \Delta_n = \left\{(x_1, \ldots, x_n) \in \prod_1^n Z : x_i \neq x_j \text{ for all $i \neq j$} \right\},
 \]
for all $n \in \NN$, where the action is via the splitting $\iota:V \longrightarrow \tilde{Q}V$ and the usual action of $\tilde{Q}V$ on $Z$.
\end{Lemma}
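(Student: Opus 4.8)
The plan is to mirror the proofs of Lemmas \ref{lemma:transitive action Sigman} and \ref{lemma:transitive action Lambdan}, exhibiting for each tuple a single element of $V$ carrying it to a fixed standard tuple; the crucial simplification for $V$ is that a tree diagram $(L,R,f)$ is allowed to use an \emph{arbitrary} bijection $f$ between the leaves of $L$ and the leaves of $R$, so no ordering or cyclic-ordering condition has to be arranged. Concretely, I set $y_i = 0^{n-i}$ for $1 \le i \le n$ as before; these are distinct elements of $\{0,1\}^*$, so $(y_1, \ldots, y_n) \in \Delta_n$, and by composition it suffices to send an arbitrary $(x_1, \ldots, x_n) \in \Delta_n$ to $(y_1, \ldots, y_n)$.

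First I would reduce everything to the node side of the splitting formula of Lemma \ref{lemma:QAutX SES is split}. Recall that for $v$ with diagram $(L,R,f)$ the map $\iota(v)$ sends $\operatorname{nodes}(L) \cup \{\zeta\}$ bijectively onto $\operatorname{nodes}(R) \cup \{\zeta\}$ via $b_R^{-1} \circ f \circ b_L$. So I would choose $L$ so that every $x_i$ lying in $\{0,1\}^*$ is an \emph{internal} node of $L$ (the value $\zeta$, if it occurs among the $x_i$, already lies in the domain $\operatorname{nodes}(L) \cup \{\zeta\}$), and choose $R$ so that every $y_i$ is an internal node of $R$. Any finite set of distinct words can be realised as the internal nodes of a single finite rooted binary tree: take the smallest such tree containing them as vertices and then add a caret at each of the finitely many that are still leaves, which does not disturb the others, since a word that is a proper prefix of another node is automatically internal.

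Next, since $\operatorname{nodes}(T) \cup \{\zeta\}$ and $\operatorname{leaves}(T)$ have equal cardinality for every finite tree $T$ (Remark \ref{remark:order preserving bijection b_tau}), I would add further carets to whichever of $L$, $R$ has fewer leaves until $\operatorname{leaves}(L)$ and $\operatorname{leaves}(R)$ have the same size; by Remark \ref{remark:order preserving bijection b_tau} such expansions do not alter $b_L$, $b_R$ on the nodes already present. I then define the bijection $f : \operatorname{leaves}(L) \to \operatorname{leaves}(R)$ by setting $f(b_L(x_i)) = b_R(y_i)$ for each $i$ and extending it to a bijection on the remaining leaves arbitrarily; this is legitimate precisely because $V$ imposes no constraint on $f$. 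The element $v \in V$ represented by $(L, R, f)$ then satisfies $\iota(v)(x_i) = b_R^{-1}(f(b_L(x_i))) = y_i$ for all $i$, which gives transitivity.

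The only points needing care are bookkeeping ones rather than genuine obstacles: checking that $\zeta$ is handled uniformly with the other coordinates (it sits on the node side of the splitting, which is the reason I push all coordinates there), and verifying that the caret additions used to equalise leaf counts really leave the relevant values of $b_L$ and $b_R$ unchanged, which is exactly the content of Remark \ref{remark:order preserving bijection b_tau}. Unlike the $F$ and $T$ cases, there is no need to balance the counting functions $d_T$, since the freedom in the choice of $f$ absorbs all of that work.
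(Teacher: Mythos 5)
Your construction is correct, but it takes a genuinely different route from the paper's. The paper proves the lemma by a short reduction: it takes $L$ to be the smallest subtree containing the $x_i$ as nodes, uses an element of the form $(L,L,f)$ to sort the tuple into $\le_{\lex}$-increasing order, and then invokes Lemma \ref{lemma:transitive action Lambdan} for the $T$-action on $\Lambda_n$, which in turn rests on Lemma \ref{lemma:transitive action Sigman} and its $d_T$-balancing argument for $F$. You instead give a self-contained direct construction: realise the $x_i$ as internal nodes of $L$ and the $y_i$ as internal nodes of $R$, equalise leaf counts by adding carets, and prescribe the leaf bijection by $f(b_L(x_i)) = b_R(y_i)$, so that the splitting formula of Lemma \ref{lemma:QAutX SES is split} gives $\iota(v)(x_i) = y_i$ immediately; your handling of $\zeta$ (it sits in the domain $\operatorname{nodes}(L) \cup \{\zeta\}$ and needs no special treatment, while the targets $y_i = 0^{n-i}$ never equal $\zeta$) is sound. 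What the paper's route buys is economy, reusing two lemmas it needs anyway for the presentations of $QF$ and $\tilde{Q}T$; what yours buys is independence from the $F$ and $T$ cases and a transparent explanation of why the $V$-case is the easy one: since the bijection $f$ in a tree diagram for $V$ is unconstrained, no order, cyclic-order, or counting conditions need arranging. One small simplification of your own argument: the care you take that caret additions ``leave the relevant values of $b_L$ and $b_R$ unchanged'' is not actually needed, because you define $f$ only after the final trees are fixed; all you require is that adding carets preserves the internal-node status of the $x_i$ and $y_i$ and that $\lvert \operatorname{nodes}(T) \cup \{\zeta\} \rvert = \lvert \operatorname{leaves}(T) \rvert$ for every finite tree $T$, which is exactly Remark \ref{remark:order preserving bijection b_tau}.
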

\begin{proof}
Let $(x_1, \ldots, x_n) \in \Delta_n$ and let $L$ be the smallest subtree of $\mathcal{T}_{2,c}$ containing all the $x_i$ as nodes.  Choose a bijection $f$ on the leaves of $L$ such that the element $\gamma$ of $V$ represented by $(L, L, f)$ has $\iota(f)x_i \le_{\lex} \iota(f) x_{i+1}$.  Now use Lemma \ref{lemma:transitive action Lambdan}.
\end{proof}

\begin{Lemma}\label{lemma:stab of V on prodZ}
The $V$-stabiliser of $(\varepsilon, \zeta) \in \Delta_2$ has generating set
\[
\operatorname{Stab}((\varepsilon, \zeta)) = \langle \alpha^\prime, \beta^\prime, \gamma^\prime, \delta^\prime, \lambda, \mu \rangle,
\]
where 
\begin{align*}
 \alpha^\prime &=(\alpha^2)  (\beta^{-1}\gamma^{-1}\alpha\delta\alpha^{-1}\gamma\beta)  (\alpha^{-1}\beta^{-1}), \\
 \beta^\prime &= (\alpha\beta_2^2)(\beta_3^{-1}\beta_2^{-1}\alpha^{-1}), \\
 \gamma^\prime &= (\alpha\beta_2)(\delta\delta_2\delta_1\delta)(\beta_2^{-1}\alpha^{-1}), \\
 \delta^\prime &=   (\alpha\beta_2) (\delta_1\delta_0\delta_1)(\beta_2^{-1}\alpha^{-1}), \\
 \lambda &= \alpha^2\beta^{-1}\alpha^{-1}, \\
 \mu &= \beta.
\end{align*}
See Figures \ref{figure:a prime}--\ref{figure:f} for representatives of the elements $\alpha^\prime$, $\beta^\prime$, $\gamma^\prime$, $\delta^\prime$, $\lambda$, and $\mu$ as tree diagrams.

Between them, the elements $\alpha^\prime$, $\beta^\prime$, $\gamma^\prime$, and $\delta^\prime$ generate the subgroup $V^\prime$ of $V$ which fixes the subtrees under $01$ and $11$.
\end{Lemma}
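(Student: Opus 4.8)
The plan is to compute the $V$-stabiliser of the pair $(\varepsilon, \zeta) \in \Delta_2$ by identifying it with a concrete subgroup of $V$ and then exhibiting an explicit generating set. The key observation is that, because $\iota(v)$ acts on $Z$ via the bijection $b_L$ between $\operatorname{nodes}(L) \cup \{\zeta\}$ and $\operatorname{leaves}(L)$ (Remark \ref{remark:order preserving bijection b_tau}), fixing the pair $(\varepsilon, \zeta)$ imposes two conditions: $\iota(v)$ must fix $\zeta$, and it must fix the node $\varepsilon$. Fixing $\zeta$ is exactly the condition that restricts $\tilde{Q}V$ down to something behaving like $QV$ on $\{0,1\}^*$; combined with fixing $\varepsilon$ (the root), what remains is the subgroup of $V$ whose action does not mix the two subtrees below the root with each other in a way that would move $\varepsilon$. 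First I would make precise which subgroup of $V$ this is, using the tree-diagram description and the explicit action of $\iota$ recorded in Remark \ref{remark:explicit description of iota for F} and its analogues.

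Next I would verify that the six listed elements $\alpha', \beta', \gamma', \delta', \lambda, \mu$ all genuinely lie in $\operatorname{Stab}((\varepsilon,\zeta))$. This is most cleanly done via the tree-diagram representatives promised in Figures \ref{figure:a prime}--\ref{figure:f}: for each representative $(L,L',f)$ one reads off, using Remark \ref{remark:order preserving bijection b_tau}, that the induced $b$-bijection sends $\zeta \mapsto \zeta$ and $\varepsilon \mapsto \varepsilon$. The words in $\alpha, \beta, \gamma, \delta$ given in the statement are then just translations of these tree diagrams into the standard generators, following the method of \cite[\S 1]{Belk-Thesis} as used in Lemma \ref{lemma:stab of F on sigma}.

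For the reverse inclusion — that these six elements actually generate the whole stabiliser — I would argue structurally. The stabiliser decomposes according to how $V$ is allowed to act once $\varepsilon$ and $\zeta$ are pinned down. Concretely, the elements $\alpha', \beta', \gamma', \delta'$ should be shown to generate the copy $V'$ of $V$ acting on the subtrees below $01$ and $11$ (as asserted in the final sentence of the statement), which one checks by comparing with the standard generating set $A, B, C, D$ of $V$ transplanted onto that region; the remaining two elements $\lambda$ and $\mu$ account for the independent $F$-actions on the subtrees under $0$ and under $1$ that are forced to preserve the $\le_{\lex}$ order near the fixed points. I would assemble these into a full generating set by a counting or normal-form argument showing any stabilising element factors as a product of pieces of these types.

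The main obstacle I expect is the reverse inclusion, specifically proving that the stated six elements suffice rather than merely lie in the stabiliser. Confirming membership is a routine, if tedious, unwinding of tree diagrams, but showing generation requires understanding the precise group-theoretic structure of $\operatorname{Stab}((\varepsilon,\zeta))$ — in particular pinning down the claimed subgroup $V'$ and how it interacts with the $F$-type factors. The cleanest route is probably to identify the stabiliser abstractly first (e.g.\ as a product of copies of $F$ together with a copy of $V$ acting on a distinguished sub-collection of subtrees), and only then exhibit the six words as a generating set for that identified group; this reduces the problem to the already-established generating results for $F$ and $V$.
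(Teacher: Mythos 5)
There is a genuine gap in your plan for the reverse inclusion, and it stems from a wrong structural guess. You propose to ``identify the stabiliser abstractly first (e.g.\ as a product of copies of $F$ together with a copy of $V$ \ldots)'', with $\lambda$ and $\mu$ accounting for ``independent $F$-actions \ldots forced to preserve the $\le_{\lex}$ order near the fixed points''. But $\Stab_V((\varepsilon,\zeta))$ has no such direct-product decomposition, and no order-preservation constraint survives in the $V$-setting: an element of the stabiliser may permute leaves arbitrarily subject only to two positional conditions, namely that the leaf $b_L(\varepsilon)$ is sent to $b_R(\varepsilon)$ and $b_L(\zeta)$ to $b_R(\zeta)$. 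The intuition you are importing from Lemma \ref{lemma:stab of F on sigma} (where stabilisers really are products of copies of $F$, precisely because elements of $\iota F$ preserve $\le_{\lex}$) does not carry over. In fact the paper later proves (Section \ref{subsection:Stab is Finfty}, for $n=2$) that $\Stab_V((\varepsilon,\zeta))$ is an iterated \emph{ascending HNN-extension} of $V' \cong V$ with stable letters $\lambda$ and $\mu$; conjugation by $\lambda$ maps $V'$ properly into itself, so $\lambda,\mu$ neither commute with $V'$ nor generate complementary $F$-factors, and any argument predicated on the product structure you describe would fail. (Relatedly, note the slip: $V'$ \emph{fixes} the subtrees under $01$ and $11$ and acts on their complement; it is not ``the copy of $V$ acting on the subtrees below $01$ and $11$''.)

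What the paper does instead, and what your proposal is missing, is a concrete normalisation device: Lemma \ref{lemma:conj with lambdamu}. Define $l_\varepsilon(T)$ and $l_\zeta(T)$ to be the word-lengths of the leaves $b_T(\varepsilon)$ and $b_T(\zeta)$; one computes that multiplying by powers of $\lambda$ (resp.\ $\mu$) on the left or right adjusts $l_\varepsilon$ (resp.\ $l_\zeta$) of the source or target tree. Given any $v$ in the stabiliser with representative $(L,R,f)$, one chooses non-negative integers $a,b,c,d$ so that $\lambda^{-a}\mu^{-b}v\lambda^c\mu^d$ admits a representative with $l_\varepsilon(L)=l_\varepsilon(R)=l_\zeta(L)=l_\zeta(R)=2$; such an element fixes the subtrees under $01$ and $11$, hence lies in $V'$, which is generated by $\alpha',\beta',\gamma',\delta'$ since these are the images of $A,B,C,D$ under an explicit injection $\varphi\colon V\to V$ with image $V'$. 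Your membership-verification step and the translation into words via Belk's method match the paper, but without something playing the role of Lemma \ref{lemma:conj with lambdamu} your ``counting or normal-form argument'' has no purchase, and the abstract identification you suggest carrying it out against is false.
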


The method of finding expressions in the generators $\alpha, \beta, \gamma, \delta$ is adapted from that in \cite[\S 1]{Belk-Thesis}, and the applet \cite{Kogan-nVtreesApplet} was used for checking these calculations.

\begin{proof}
Let $T$ be a finite rooted subtree of $\mathcal{T}_{2,c}$ with at least two leaves and let $S$ be the rooted subtree with leaves $00$, $01$, $10$, and $11$.  We denote by $T^\prime$ the subtree obtained by taking the subtree of $T$ with root $0$ and glueing $0$ to the node $00$ of $S$, similarly we take the subtree of $T$ with root $1$ and glue it to the leaf $10$ of $S$.

If $f : \operatorname{leaves}(L) \rightarrow \operatorname{leaves}(R)$ is a bijection between the leaves of finite rooted subtrees of $\mathcal{T}_{2,c}$ then we write $f^\prime : \operatorname{leaves}(L^\prime) \rightarrow \operatorname{leaves}(R^\prime)$ for the corresponding bijection (which fixes the leaves $01$ and $11$).  

The homomorphism 
\begin{align*}
\varphi:  V &\longrightarrow V \\
(L, R, f) &\longmapsto (L^\prime, R^\prime, f^\prime)
\end{align*}
is an injection whose image is $V^\prime$.

Let $\alpha^\prime = \varphi(A)$, $\beta^\prime = \varphi(B)$, $\gamma^\prime = \varphi(C)$, and $\delta^\prime = \varphi(D)$ and let $\lambda$ and $\mu$ be the elements shown in Figures \ref{figure:e} and \ref{figure:f}.  By construction, $\alpha^\prime$, $\beta^\prime$, $\gamma^\prime$, and $\delta^\prime$ generate $V^\prime$.

The elements $\lambda$, $\mu$, and the group $V^\prime$ stabilise $\varepsilon$ and $\zeta$, we claim that they also generate $\Stab_V((\varepsilon, \zeta))$.  
Let $\tau \in V$ be an element of the stabiliser represented by $(L, R, f)$, via Lemma \ref{lemma:conj with lambdamu} we may assume that $\tau$ fixes the subtrees under $01$ and $11$, and thus is an element of $V^\prime$.

Finally, we calculate $\alpha^\prime$, $\beta^\prime$, $\gamma^\prime$, and $\delta^\prime$.  Each is a product of three elements of $V$: the first is an element of $F$ which maps the tree $L$ to a right-vine (a tree formed from the trivial tree by adding carets to the right-hand most leaf only) , the second is the necessary permutation of the leaves on the right vine, and the third is the element of $F$ which maps the right-vine to the tree $R$.  The outcome of these calculations is as shown in the statement of the lemma.  Note that the word length of these elements may not be minimal.
\end{proof}

\begin{figure}[ht]
\begin{tikzpicture}[->,>=stealth',level/.style={sibling distance = 2cm/#1,
  level distance = 1cm}] 
  \node [arn_n] at (0,0) {$\varepsilon$}
   child{ node [arn_n] {} 
       child{ node {00} }
       child{ node {01} }
    }
    child{ node [arn_n] {}
       child{ node [arn_n] {} 
          child{ node {100} }
          child{ node {101} }
       }
       child{ node {11} }
    }; 
  \node [arn_n] at (2, 0) {$\zeta$};
  \draw (3,-1.5) -- (4,-1.5);
 
  \node [arn_n] at (6,0) {$\varepsilon$}
    child{ node [arn_n] {} 
       child{ node [arn_n] {} 
         child{ node {00} }
         child{ node {100} }
       }
       child{ node {01} }
    }
    child{ node [arn_n] {}
       child{ node {101} }
       child{ node {11} }
    }; 
 
   \node [arn_n] at (8, 0) {$\zeta$};
\end{tikzpicture}
 \caption{Element $\alpha^\prime = (\alpha^2)  (\beta^{-1}\gamma^{-1}\alpha\delta\alpha^{-1}\gamma\beta)  (\alpha^{-1}\beta^{-1})$ from Lemma \ref{lemma:stab of V on prodZ}.}
\label{figure:a prime}
\end{figure}

\begin{figure}[ht]
\begin{tikzpicture}[->,>=stealth',level/.style={sibling distance = 3.5cm/#1,
  level distance = 1cm}] 
  \node [arn_n] at (0,0) {$\varepsilon$}
   child{ node [arn_n] {} 
       child{ node {00} }
       child{ node {01} }
    }
    child{ node [arn_n] {}
       child{ node [arn_n] {} 
          child{ node {100} }
          child{ node [arn_n] {} 
              child { node {1010} }
              child { node {1011} }
          }
       }
       child{ node {11} }
    }; 
  \node [arn_n] at (2, 0) {$\zeta$};
  \draw (3,-1.5) -- (4,-1.5);
 
  \node [arn_n] at (7,0) {$\varepsilon$}
   child{ node [arn_n] {} 
       child{ node {00} }
       child{ node {01} }
    }
    child{ node [arn_n] {}
       child{ node [arn_n] {} 
          child{ node [arn_n] {} 
              child { node {100} }
              child { node {1010} }
          }
          child{ node {1011} }
       }
       child{ node {11} }
    }; 
    
   \node [arn_n] at (9, 0) {$\zeta$};
\end{tikzpicture}
 \caption{Element $\beta^\prime = (\alpha\beta_2^2)(\beta_3^{-1}\beta_2^{-1}\alpha^{-1})$ from Lemma \ref{lemma:stab of V on prodZ}.}
\label{figure:b prime}
\end{figure}

\begin{figure}[ht]
\begin{tikzpicture}[->,>=stealth',level/.style={sibling distance = 2cm/#1,
  level distance = 1cm}] 
  \node [arn_n] at (0,0) {$\varepsilon$}
   child{ node [arn_n] {} 
       child{ node {00} }
       child{ node {01} }
    }
    child{ node [arn_n] {}
       child{ node [arn_n] {} 
          child{ node {100} }
          child{ node {101} }
       }
       child{ node {11} }
    }; 
  \node [arn_n] at (2, 0) {$\zeta$};
  \draw (3,-1.5) -- (4,-1.5);
 
  \node [arn_n] at (6,0) {$\varepsilon$}
   child{ node [arn_n] {} 
       child{ node {100} }
       child{ node {01} }
    }
    child{ node [arn_n] {}
       child{ node [arn_n] {} 
          child{ node {101} }
          child{ node {00} }
       }
       child{ node {11} }
    }; 
    
   \node [arn_n] at (8, 0) {$\zeta$};
\end{tikzpicture}
\caption{Element $\gamma^\prime = (\alpha\beta_2)(\delta\delta_2\delta_1\delta)(\beta_2^{-1}\alpha^{-1})$ from Lemma \ref{lemma:stab of V on prodZ}.}
\label{figure:c prime}
\end{figure}

\begin{figure}[ht]
\begin{tikzpicture}[->,>=stealth',level/.style={sibling distance = 2cm/#1,
  level distance = 1cm}] 
  \node [arn_n] at (0,0) {$\varepsilon$}
   child{ node [arn_n] {} 
       child{ node {00} }
       child{ node {01} }
    }
    child{ node [arn_n] {}
       child{ node [arn_n] {} 
          child{ node {100} }
          child{ node {101} }
       }
       child{ node {11} }
    }; 
  \node [arn_n] at (2, 0) {$\zeta$};
  \draw (3,-1.5) -- (4,-1.5);
 
  \node [arn_n] at (6,0) {$\varepsilon$}
   child{ node [arn_n] {} 
       child{ node {100} }
       child{ node {01} }
    }
    child{ node [arn_n] {}
       child{ node [arn_n] {} 
          child{ node {00} }
          child{ node {101} }
       }
       child{ node {11} }
    }; 
    
   \node [arn_n] at (8, 0) {$\zeta$};
\end{tikzpicture}
 \caption{Element $\delta^\prime = (\alpha\beta_2) (\delta_1\delta_0\delta_1)(\beta_2^{-1}\alpha^{-1})$ from Lemma \ref{lemma:stab of V on prodZ}.}
 \label{figure:d prime}
\end{figure}

\begin{figure}[ht]
\begin{tikzpicture}[->,>=stealth',level/.style={sibling distance = 2cm/#1,
  level distance = 1cm}] 
  \node [arn_n] at (0,0) {$\varepsilon$}
   child{ node [arn_n] {} 
       child{ node {10} }
       child{ node [arn_n] {} 
          child{ node {010} }
          child{ node {011} }
       }
    }
    child{ node {1} }
    ; 
  \node [arn_n] at (2, 0) {$\zeta$};
  \draw (3,-1.5) -- (4,-1.5);
 
  \node [arn_n] at (6,0) {$\varepsilon$}
   child{ node [arn_n] {} 
       child{ node [arn_n] {}
          child { node {10} } 
          child { node {010} }
       }
       child{ node {011} }
    }
    child{ node {1} }; 
    
   \node [arn_n] at (8, 0) {$\zeta$};
\end{tikzpicture}
 \caption{Element $\lambda = \alpha^2\beta^{-1}\alpha^{-1}$ from Lemma \ref{lemma:stab of V on prodZ}.}
 \label{figure:e}
\end{figure}

\begin{figure}[ht]
\begin{tikzpicture}[->,>=stealth',level/.style={sibling distance = 2cm/#1,
  level distance = 1cm}] 
  \node [arn_n] at (0,0) {$\varepsilon$}
    child{ node {0} }
    child{ node [arn_n] {}
       child{ node {10} }
       child{ node [arn_n] {} 
          child {node {110} }
          child {node {111} }
       }
    }; 
  \node [arn_n] at (2, 0) {$\zeta$};
  \draw (3,-1.5) -- (4,-1.5);
 
  \node [arn_n] at (6,0) {$\varepsilon$}
    child{ node {0} }
    child{ node [arn_n] {}
       child{ node [arn_n] {} 
          child {node {10} }
          child {node {110} }
          }
       child{ node {111} }
    }; 
 
   \node [arn_n] at (8, 0) {$\zeta$};
\end{tikzpicture}
 \caption{Element $\mu = \beta$ from Lemma \ref{lemma:stab of V on prodZ}.}
 \label{figure:f}
\end{figure}

Let $T$ be a rooted subtree of $\mathcal{T}_{2,c}$ and recall the definition of 
\[
 b_T : \operatorname{nodes}(T) \cup \{\zeta\} \longrightarrow \operatorname{leaves}(T)
\]
from Section \ref{subsection:QV}.  We define $l_\varepsilon(T)$ to be the word length in $\{0,1\}^*$ of the leaf $b_T(\varepsilon)$ and define $l_\zeta(T)$ to be the word length of the leaf $b_T(\zeta)$.  

\begin{Lemma}\label{lemma:conj with lambdamu}
 Let $v \in V$ satisfy $\iota(v)(\zeta) = \zeta$ and $\iota(v)(\varepsilon) = \varepsilon$.  There exist non-negative integers $a$, $b$, $c$, and $d$ such that 
 $\iota( \lambda^{-a}\mu^{-b}v\lambda^c \mu^d )$ fixes the subtrees under $01$ and $11$.
\end{Lemma}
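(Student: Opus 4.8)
The plan is to control \(v\) through the two non-negative integer quantities \(l_\varepsilon\) and \(l_\zeta\) just defined, applied to the domain and range trees of a tree diagram \((L,R,f)\) for \(v\). First I would record what these measure and what the hypotheses say combinatorially. Since \(b_T(\varepsilon)\) is the \(\le_{\lex}\)-largest leaf lying below \(\varepsilon\), it is the rightmost leaf \(01^{m}\) of the \(0\)-subtree of \(T\), so \(l_\varepsilon(T)=m+1\); and \(b_T(\zeta)\) is the \(\le_{\lex}\)-largest leaf of \(T\), namely the overall rightmost leaf \(1^{k}\), so \(l_\zeta(T)=k\). Because \(\iota(v)\) fixes \(\varepsilon\) and \(\zeta\) we have \(f(b_L(\varepsilon))=b_R(\varepsilon)\) and \(f(b_L(\zeta))=b_R(\zeta)\). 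The key observation is that if a diagram for an element \(w\) has \(l_\varepsilon(L)=l_\varepsilon(R)=l_\zeta(L)=l_\zeta(R)=2\), then \(b(\varepsilon)=01\) and \(b(\zeta)=11\) for both trees, so \(01\) and \(11\) are leaves of \(L\) and of \(R\); the fixing of \(\varepsilon\) and \(\zeta\) then forces \(f(01)=01\) and \(f(11)=11\), i.e.\ \(w\) fixes the subtrees under \(01\) and \(11\). So it suffices to drive all four quantities down to \(2\).

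Next I would compute the effect of \(\lambda\) and \(\mu\). A direct computation (as in the explicit identifications made above) shows that \(\mu=\beta\) is Thompson's generator \(B\), acting as \(A\) on the subtree under \(1\) and fixing everything else, while \(\lambda=\alpha^{2}\beta^{-1}\alpha^{-1}\) is the corresponding copy of \(A\) acting on the subtree under \(0\) and fixing everything else; in particular both fix \(\varepsilon\) and \(\zeta\). Since \(A\) carries its own rightmost leaf \(1^{j}\) to \(1^{j-1}\) and preserves \(\le_{\lex}\), post-composing with \(\lambda\) shortens the rightmost leaf of the \(0\)-subtree of the range: right multiplication \(v\mapsto v\lambda\) decreases \(l_\varepsilon(R)\) by \(1\) whenever \(l_\varepsilon(R)\ge 3\), leaving \(l_\zeta(R)\) and the entire domain untouched (as \(\lambda\) is supported on the \(0\)-subtree). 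Likewise \(v\mapsto v\mu\) decreases \(l_\zeta(R)\) by \(1\) whenever \(l_\zeta(R)\ge 3\), fixing the other data. Passing to inverses via \((\lambda^{-1}v)^{-1}=v^{-1}\lambda\) together with \(R_{v^{-1}}=L_v\) converts these into the statements that \(v\mapsto\lambda^{-1}v\) decreases \(l_\varepsilon(L)\) and \(v\mapsto\mu^{-1}v\) decreases \(l_\zeta(L)\), again by \(1\) and with the other three quantities unchanged.

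Finally I would assemble the argument. Starting from any diagram for \(v\), expand along the rightmost branch of the whole tree and of the \(0\)-subtree, on both sides (these expansions are coupled through \(f\) precisely at the leaves \(b_\bullet(\varepsilon)\) and \(b_\bullet(\zeta)\), so they respect the fixing of \(\varepsilon\) and \(\zeta\)), until \(l_\varepsilon(L),l_\varepsilon(R),l_\zeta(L),l_\zeta(R)\ge 2\). Now set \(a=l_\varepsilon(L)-2\), \(b=l_\zeta(L)-2\), \(c=l_\varepsilon(R)-2\) and \(d=l_\zeta(R)-2\); these are non-negative, and because \(\lambda\) acts only on the \(0\)-subtree and \(\mu\) only on the \(1\)-subtree the four reductions are mutually independent and can each be iterated down to exactly \(2\). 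Hence \(w=\lambda^{-a}\mu^{-b}v\lambda^{c}\mu^{d}\) has all four quantities equal to \(2\), and by the first paragraph \(w\) fixes the subtrees under \(01\) and \(11\), as required. I expect the main obstacle to be the explicit identification of \(\lambda\) and \(\mu\) as the copies of \(A\) on the \(0\)- and \(1\)-subtrees and the verification of their monotone, single-sided effect on \(l_\varepsilon\) and \(l_\zeta\); once this local computation is in hand, the bookkeeping with the integer profile \((l_\varepsilon(L),l_\zeta(L),l_\varepsilon(R),l_\zeta(R))\) is routine.
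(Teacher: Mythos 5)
Your proposal is correct and takes essentially the same route as the paper's proof: reduce to exhibiting a tree-pair representative with $l_\varepsilon(L)=l_\varepsilon(R)=l_\zeta(L)=l_\zeta(R)=2$, expand by carets (coupled through $f$ at $b_L(\varepsilon)$ and $b_L(\zeta)$) until all four quantities are at least $2$, and set $a=l_\varepsilon(L)-2$, $b=l_\zeta(L)-2$, $c=l_\varepsilon(R)-2$, $d=l_\zeta(R)-2$, exactly as the paper does. The only difference is expository: you spell out what the paper compresses into ``one calculates'', namely the identification of $b_T(\varepsilon)$ and $b_T(\zeta)$ with the rightmost leaves $01^m$ and $1^k$ and the one-sided, single-unit effect of multiplication by $\lambda$ and $\mu$ on the profile, and these verifications are accurate.
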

\begin{proof}
Note that $\iota(\lambda)(\varepsilon) = \iota(\mu)(\varepsilon) = \varepsilon$ and $\iota(\lambda)(\zeta) = \iota(\mu)(\zeta) = \zeta$.  To prove the lemma, it suffices to find non-negative integers $a$, $b$, $c$, and $d$ and tree diagram representative $(L, R, f)$ for $\lambda^{-a}\mu^{-b}v\lambda^c \mu^d $ such that 
\[
l_\varepsilon(L) = l_\varepsilon(R) = l_{\zeta}(L) = l_{\zeta}(R) = 2.
\]

Consider the element $\lambda^n$ for some non-negative integer $n$.  This element has tree diagram representative $(L, R, f)$, where $l_{\varepsilon}(L) = n+2$,  $l_{\varepsilon}(R) = 2$, $l_\zeta(L) = 1$, and $l_{\zeta}(R) =1$.
Similarly consider the element $\mu^n$ for some non-negative integer $n$.  This element has tree diagram representative $(L, R, f)$, where $l_{\zeta}(L) = n+2$, $l_{\zeta}(R) = 2$, $l_{\varepsilon}(L) = 1$, and $l_{\varepsilon}(R) = 1$.

Let $v \in V$ be such that $\iota(v)(\zeta) = \zeta$ and $\iota(v)(\varepsilon) = \varepsilon$ and let $(L, R, f)$ be a tree diagram representative for $v$.  If either $l_\zeta(L)$, $l_\zeta(R)$, $l_\varepsilon(L)$, or $l_{\varepsilon}(R)$ are strictly less than $2$ than expand $L$ and $R$ by adding carets until they are equal or greater than $2$.  

Let $a = l_\varepsilon(L) - 2$, $b = l_\zeta(L) - 2$, $c = l_\varepsilon(R) - 2$, and $d = l_\zeta(R) - 2$.  Then, one calculates that $\lambda^{-a}\mu^{-b}v\lambda^c \mu^d$ has tree diagram representative $(L, R, f)$ with 
\[
l_\varepsilon(L) = l_\varepsilon(R) = l_{\zeta}(L) = l_{\zeta}(R) = 2,
\]
as required.
\end{proof}

Modifying Lemma \ref{lemma:presentation of SymZ} slightly we obtain the following.
\begin{Lemma}\label{lemma:presentation of SymZ with duplicated generators}
The finite support symmetric group on $\Sym(Z)$ has presentation
 \[
  \Sym(Z) = \left\langle \sigma_{x,y} \,
  \begin{array}{c}
  \forall x ,y \in \Delta_2 
  \end{array}
   \,\left\vert 
  \begin{array}{ll}
    \sigma_{x,y} = \sigma_{y,x} & \forall (x,y) \in \Delta_2 \\\relax
    \sigma_{x,y}^2  & \forall (x,y) \in \Delta_2  \\\relax
    [\sigma_{x,y},\sigma_{z,w}] & \forall (x, y, z, w) \in \Delta_4 \\
    (\sigma_{x,y}\sigma_{y,w})^3 & \forall (x,y,z) \in \Delta_3 \\
    \sigma_{x,y}\sigma_{y,z}\sigma_{x,y}\sigma_{x,z} & \forall (x, y , z) \in \Delta_3 \\
  \end{array}
 \right.\right\rangle
 \]
\end{Lemma}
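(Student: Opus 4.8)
The plan is to obtain the claimed presentation from the one in Lemma \ref{lemma:presentation of SymZ} by a short sequence of Tietze transformations, the only genuinely new feature being that we record a separate generator $\sigma_{x,y}$ for \emph{every} ordered pair $(x,y)\in\Delta_2$ rather than one generator per unordered pair. Write $P_{\mathrm{old}}$ for the presentation of Lemma \ref{lemma:presentation of SymZ}, in which each transposition is recorded once, by its $\le_{\lex}$-canonical representative $\sigma_{x',y'}$ with $x'\lneq_{\lex} y'$. First I would adjoin, for each non-canonical ordered pair $(x,y)$, a fresh generator symbol $\sigma_{x,y}$ together with the defining relation $\sigma_{x,y}=\sigma_{y,x}$ identifying it with the canonical generator already present. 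Each such step is a single Tietze transformation (adding a generator and a relation that defines it in terms of the old ones), so the resulting presentation $P_1$ still presents $\Sym(Z)$, now carries a generator for every element of $\Delta_2$, and has all the symmetry relations $\sigma_{x,y}=\sigma_{y,x}$ among its relators.

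Next I would enlarge the relator set to the one in the statement, quantifying the quadratic, commutator, order-$3$, and four-letter relations over all of $\Delta_2$, $\Delta_4$, and $\Delta_3$ with no ordering constraint on the indices. For this one must check that every such relation is a consequence of $P_1$. Since $P_1$ already contains the corresponding relations for the cyclically/canonically ordered tuples inherited from Lemma \ref{lemma:presentation of SymZ}, the check reduces to a finite verification: after using the symmetry relations to rewrite each generator $\sigma_{u,v}$ as $\sigma_{u',v'}$ with $u'\lneq_{\lex} v'$, a relation indexed by an arbitrarily ordered tuple collapses onto one indexed by the canonically ordered tuple. For the quadratic and commutator relations this is immediate. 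For the order-$3$ and four-letter relations on a triple $\{x,y,z\}$ it amounts to the standard fact that the relations among the three transpositions of a $3$-element set are invariant under relabelling, so any ordering of $\{x,y,z\}$ yields a relation derivable from the canonical one together with $\sigma_{u,v}=\sigma_{v,u}$. This is the step I expect to require the most bookkeeping, although it is entirely routine.

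Finally I would delete the now-redundant relators. Every relation of $P_{\mathrm{old}}$ is the canonically ordered special case of a relation in the new set, and each defining relation introduced in the first step is itself one of the symmetry relators $\sigma_{x,y}=\sigma_{y,x}$ retained in the statement, so all of these may be removed by Tietze transformations. What remains is exactly the presentation asserted in the lemma. The only real content is the finite consequence-check of the second paragraph; everything else is formal manipulation of generators and relators, which is precisely why the result is described as a slight modification of Lemma \ref{lemma:presentation of SymZ}.
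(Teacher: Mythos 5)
Your proposal is correct and takes essentially the paper's (implicit) approach: the paper justifies this lemma only with the remark that it is obtained by ``modifying Lemma \ref{lemma:presentation of SymZ} slightly,'' and your Tietze-transformation argument --- adjoin a generator $\sigma_{x,y}$ for each non-canonical ordered pair with defining relation $\sigma_{x,y}=\sigma_{y,x}$, enlarge the relator set over $\Delta_2$, $\Delta_3$, $\Delta_4$, then delete redundancies --- is exactly that modification spelled out. One small caveat: for disjoint pairs in interleaved or nested position (e.g.\ $[\sigma_{x,z},\sigma_{y,w}]$ with $x\lneq_{\lex} y\lneq_{\lex} z\lneq_{\lex} w$) the relator does \emph{not} literally collapse onto a cyclically-ordered instance after symmetry rewriting, so your ``immediate'' is an overstatement there; the step is nevertheless sound because your stated criterion is the right one --- each added relator maps to the identity in $\Sym(Z)$, which $P_1$ presents, hence lies in the normal closure of the relators of $P_1$ and may be adjoined by a Tietze move without exhibiting an explicit derivation.
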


\begin{Prop}[{\cite[p.18]{CannonFloydParry-IntroNotesOnThompsons}}]\label{prop:presentation of V}Thompson's group $V$ has presentation
\[
 V = \left\langle  \alpha, \beta, \gamma, \delta : \begin{array}{l} [\alpha\beta^{-1},\, \beta_2],\, [\alpha\beta^{-1}, \beta_3],\,
      \beta \gamma_2\gamma_1^{-1},\, \beta \gamma_3 (\gamma_2 \beta_2)^{-1},\, \gamma_2^2(\gamma_1\alpha)^{-1} \\\relax
      \gamma_1^3,\, \delta_1^2,\, \delta_3\delta_1(\delta_1 \delta_3)^{-1},\, (\delta_2\delta_1)^3,\, \delta_1\beta_3(\beta_3\delta_1)^{-1}, \\\relax
      \beta \gamma_2 \gamma_1 (\gamma_1 \beta_2)^{-1},\, \beta\delta_3 (\delta_2 \beta)^{-1},\, \gamma_3\delta_2(\delta_1\gamma_3)^{-1},\, (\delta_1 \gamma_2)^3
      \end{array} \right\rangle.
\]
\end{Prop}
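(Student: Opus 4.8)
The plan is to read off this presentation from the standard finite presentation of $V$ in \cite[p.18]{CannonFloydParry-IntroNotesOnThompsons} by a pure change of notation, exactly as was done for $F$ and $T$ in Propositions \ref{prop:F presentation} and \ref{prop:T presentation}. No new group-theoretic content is required: by Remark \ref{remark:explicit description of alpha beta etc} the generators $\alpha, \beta, \gamma, \delta$ are the images under $\iota$ of the Cannon--Floyd--Parry generators $A, B, C, \pi_0$, and since $\iota$ embeds $V$ isomorphically, presenting $V$ on $\{\alpha, \beta, \gamma, \delta\}$ is literally the same problem as presenting it on $\{A, B, C, \pi_0\}$.

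First I would establish the dictionary between the auxiliary elements, the crux of which is already flagged in the text: the recursively defined $\beta_n$, $\gamma_n$, and $\delta_n$ coincide with the Cannon--Floyd--Parry elements $X_n$, $C_n$, and $\pi_n$ under the above correspondence. I would confirm this by induction on $n$, checking the base cases $\beta_1 = \beta$, $\gamma_1 = \gamma$, and $\delta_1 = \gamma_2^{-1}\delta\gamma_2$ against the CFP definitions of $X_1$, $C_1$, $\pi_1$, and then matching the conjugation recursions against those on \cite[pp.13--16]{CannonFloydParry-IntroNotesOnThompsons}.

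With the dictionary in hand, the remaining step is mechanical: substitute $X_n \mapsto \beta_n$, $C_n \mapsto \gamma_n$, $\pi_n \mapsto \delta_n$ into each CFP relator and verify, term by term, that the result is one of the fourteen relators displayed in the statement, and conversely that every displayed relator arises this way. I expect the only genuine obstacle to be bookkeeping: one must match the slightly asymmetric definition $\delta_1 = \gamma_2^{-1}\delta\gamma_2$ (a conjugation by $\gamma_2$ rather than by a power of $\alpha$, reflecting the CFP relation $\pi_1 = C_2^{-1}\pi_0 C_2$) and fix the convention $\delta_0 = \delta$ used implicitly in Figure \ref{figure:d prime}, so that the indexing of the $\delta_n$ aligns exactly with that of the $\pi_n$. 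Once the indices are aligned the relator-by-relator comparison is entirely routine.
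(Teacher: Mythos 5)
Your proposal is correct and coincides with what the paper does: Proposition \ref{prop:presentation of V} carries no proof beyond the citation to \cite[p.18]{CannonFloydParry-IntroNotesOnThompsons}, the intended justification being precisely the notational dictionary $\alpha = A$, $\beta = B$, $\gamma = C$, $\delta = \pi_0$, $\beta_n = X_n$, $\gamma_n = C_n$, $\delta_n = \pi_n$ that the paper flags when defining $\beta_n$, $\gamma_n$, $\delta_n$. Your relator-by-relator translation, including aligning the convention $\delta_1 = \gamma_2^{-1}\delta\gamma_2$ with $\pi_1 = C_2^{-1}\pi_0 C_2$ and $\delta_0 = \delta$, is exactly the routine verification the citation presumes.
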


\begin{Theorem}\label{theorem:QAutX presentation}
$\tilde{Q}V$ has presentation
 \[
  \tilde{Q}V = \left\langle \sigma, \alpha, \beta, \gamma, \delta \, \left\vert 
    \begin{array}{l}
      \sigma \sigma^{\alpha\delta\alpha^{-1}},\,\sigma^2,\, [\sigma, \sigma^{\alpha^2}],\, (\sigma \sigma^\alpha)^3,\, \sigma\sigma^\alpha \sigma \sigma^{\alpha\beta^{-1}\alpha^{-1}}, \\\relax
       [\alpha\beta^{-1},\, \beta_2],\, [\alpha\beta^{-1}, \beta_3],\,
      \beta \gamma_2\gamma_1^{-1},\, \beta \gamma_3 (\gamma_2 \beta_2)^{-1},\, \gamma_2^2(\gamma_1\alpha)^{-1} \\\relax
      \gamma_1^3,\, \delta_1^2,\, \delta_3\delta_1(\delta_1 \delta_3)^{-1},\, (\delta_2\delta_1)^3,\, \delta_1\beta_3(\beta_3\delta_1)^{-1}, \\\relax
      \beta \gamma_2 \gamma_1 (\gamma_1 \beta_2)^{-1},\, \beta\delta_3 (\delta_2 \beta)^{-1},\, \gamma_3\delta_2(\delta_1\gamma_3)^{-1},\, (\delta_1 \gamma_2)^3 \\\relax
      [\nu, \sigma] \text{ for all $\nu \in X$.}
    \end{array}
  \right.\right\rangle
 \]
 where
 \[
  X = \left\{ \begin{array}{l}
 (\alpha^2)  (\beta^{-1}\gamma^{-1}\alpha\delta\alpha^{-1}\gamma\beta)  (\alpha^{-1}\beta^{-1}), (\alpha\beta_2^2)(\beta_3^{-1}\beta_2^{-1}\alpha^{-1}), \\
 (\alpha\beta_2)(\delta\delta_2\delta_1\delta)(\beta_2^{-1}\alpha^{-1}),  (\alpha\beta_2) (\delta_1\delta_0\delta_1)(\beta_2^{-1}\alpha^{-1}), \alpha^2\beta^{-1}\alpha^{-1}, \beta   
  \end{array}
 \right\}
 \]
 and $\sigma = \sigma_{0, \varepsilon}$.
\end{Theorem}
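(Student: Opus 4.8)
The plan is to apply Lemma~\ref{lemma:MPMNlemmaA.1} to the split extension $\tilde{Q}V \cong \Sym(Z) \rtimes V$ of Lemma~\ref{lemma:QAutX SES is split}, in exactly the manner of Theorem~\ref{theorem:QF presentation}, but now presenting the kernel $\Sym(Z)$ by the duplicated-generator presentation of Lemma~\ref{lemma:presentation of SymZ with duplicated generators} and the quotient $V$ by Proposition~\ref{prop:presentation of V}. With the notation of Lemma~\ref{lemma:MPMNlemmaA.1} I would set $N = \Sym(Z)$, $Q = V$, $S_Q = \{\alpha, \beta, \gamma, \delta\}$, and $R_Q$ equal to the defining relators of Proposition~\ref{prop:presentation of V}; the generating set $S_N$ is $\{\sigma_{x,y} : (x,y) \in \Delta_2\}$, on which $V$ acts by carrying $\sigma_{x,y}$ to $\sigma_{v(x),v(y)}$, since conjugation by $\iota(v)$ sends the transposition $\sigma_{x,y}$ to $\sigma_{v(x),v(y)}$.

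The first task is the orbit bookkeeping. By Lemma~\ref{lemma:transitive action Deltan} the action of $V$ on $\Delta_n$ is transitive for every $n$, so $V$ is transitive on $S_N$ (a single orbit, with representative $\sigma = \sigma_{0,\varepsilon}$) and acts with exactly one orbit on each of the five families of relators in Lemma~\ref{lemma:presentation of SymZ with duplicated generators}: the symmetry relator $\sigma_{x,y}=\sigma_{y,x}$, the involution relator, the disjointness relator over $\Delta_4$, and the two braid-type relators over $\Delta_3$. One then selects a single representative of each orbit and rewrites the transpositions it involves as conjugates $\sigma^{g}$ of the chosen generator $\sigma$, producing the set $\widehat{R}_0 = \{\sigma\sigma^{\alpha\delta\alpha^{-1}},\, \sigma^2,\, [\sigma,\sigma^{\alpha^2}],\, (\sigma\sigma^\alpha)^3,\, \sigma\sigma^\alpha\sigma\sigma^{\alpha\beta^{-1}\alpha^{-1}}\}$. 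The genuinely new feature, absent from the $QF$ and $\tilde{Q}T$ cases, is the symmetry relator: because $V$ (unlike $F$, which preserves $\le_{\lex}$ by Remark~\ref{remark:explicit description of iota for F}) can reverse an ordered pair, the two formal generators $\sigma_{x,y}$ and $\sigma_{y,x}$ must be recorded separately and then identified, and the representative of this orbit is $\sigma \sigma^{\alpha\delta\alpha^{-1}}$ once one checks that $\alpha\delta\alpha^{-1}$ interchanges the two points in the support of $\sigma$.

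It remains to supply $X_\sigma$, a generating set for $\Stab_V(\sigma)$. Since $\sigma = \sigma_{0,\varepsilon}$ is fixed precisely by those $v$ with $(v(0),v(\varepsilon)) = (0,\varepsilon)$, its stabiliser is the point-stabiliser of an element of $\Delta_2$, which by transitivity (Lemma~\ref{lemma:transitive action Deltan}) is conjugate to $\Stab_V((\varepsilon,\zeta))$; Lemma~\ref{lemma:stab of V on prodZ} then provides the explicit six-element generating set $X = \{\alpha^\prime, \beta^\prime, \gamma^\prime, \delta^\prime, \lambda, \mu\}$. Feeding $S_0 = \{\sigma\}$, $S_Q$, $\widehat{R}_0$, $R_Q$, and the commutators $[\nu,\sigma]$ for $\nu \in X$ into Lemma~\ref{lemma:MPMNlemmaA.1} yields the displayed presentation. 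I expect the main obstacle to be precisely this stabiliser step: establishing Lemma~\ref{lemma:stab of V on prodZ} needs the normalising reduction of Lemma~\ref{lemma:conj with lambdamu} together with the identification of the subgroup $V^\prime$, after which every element of $X$ and every conjugating word $g$ appearing in $\widehat{R}_0$ must be written explicitly as a word in $\alpha,\beta,\gamma,\delta$ using the tree-diagram translation employed in the proof of Lemma~\ref{lemma:stab of V on prodZ}. These word computations, rather than any conceptual difficulty, constitute the bulk of the work.
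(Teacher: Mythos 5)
Your proposal is correct and follows essentially the same route as the paper: the paper's proof likewise feeds the split extension $\Sym(Z) \rtimes V$ into Lemma~\ref{lemma:MPMNlemmaA.1}, using Lemma~\ref{lemma:presentation of SymZ with duplicated generators} for the kernel, Proposition~\ref{prop:presentation of V} for the quotient, transitivity on $\Delta_n$ from Lemma~\ref{lemma:transitive action Deltan}, the stabiliser generators of Lemma~\ref{lemma:stab of V on prodZ} (resting on Lemma~\ref{lemma:conj with lambdamu}), and records exactly the same data $S_0$, $S_Q$, $\widehat{R}_0$, $R_Q$, $X_\sigma$. You also correctly isolate the one genuinely new ingredient relative to the $QF$ and $\tilde{Q}T$ cases, namely the extra orbit of relators $\sigma_{x,y} = \sigma_{y,x}$ yielding $\sigma\sigma^{\alpha\delta\alpha^{-1}}$, which is precisely the point the paper's proof singles out.
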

\begin{proof}
 As in Theorems \ref{theorem:QF presentation} and \ref{theorem:tQT presentation}, except using the presentations from Lemma \ref{lemma:presentation of SymZ with duplicated generators} and \ref{prop:presentation of V}.
 
 Explicitly, we have:
 \begin{align*}
  S_Q &= \{\alpha, \beta, \gamma, \delta\} \\
  R_Q &= \left\{ \begin{array}{l} [\alpha\beta^{-1},\, \beta_2],\, [\alpha\beta^{-1}, \beta_3],\,
      \beta \gamma_2\gamma_1^{-1},\, \beta \gamma_3 (\gamma_2 \beta_2)^{-1},\, \gamma_2^2(\gamma_1\alpha)^{-1} \\\relax
      \gamma_1^3,\, \delta_1^2,\, \delta_3\delta_1(\delta_1 \delta_3)^{-1},\, (\delta_2\delta_1)^3,\, \delta_1\beta_3(\beta_3\delta_1)^{-1}, \\\relax
      \beta \gamma_2 \gamma_1 (\gamma_1 \beta_2)^{-1},\, \beta\delta_3 (\delta_2 \beta)^{-1},\, \gamma_3\delta_2(\delta_1\gamma_3)^{-1},\, (\delta_1 \gamma_2)^3
      \end{array} \right\} \\
  S_0 &= \{\sigma\} = \{\sigma_{0,\varepsilon}\} \\
  \hat{R}_0 &= \{\sigma \sigma^{\alpha\delta\alpha^{-1}},\, \sigma^2,\, [\sigma, \sigma^{\alpha^2}],\, (\sigma \sigma^\alpha)^3,\, \sigma\sigma^\alpha \sigma \sigma^{\alpha\beta^{-1}\alpha^{-1}} \} \\
  X_\sigma &= \left\{ \begin{array}{l}
 (\alpha^2)  (\beta^{-1}\gamma^{-1}\alpha\delta\alpha^{-1}\gamma\beta)  (\alpha^{-1}\beta^{-1}), (\alpha\beta_2^2)(\beta_3^{-1}\beta_2^{-1}\alpha^{-1}), \\
 (\alpha\beta_2)(\delta\delta_2\delta_1\delta)(\beta_2^{-1}\alpha^{-1}),  (\alpha\beta_2) (\delta_1\delta_0\delta_1)(\beta_2^{-1}\alpha^{-1}), \alpha^2\beta^{-1}\alpha^{-1}, \beta.         
  \end{array}
 \right\}
 \end{align*}
 
 Compared to the calculation for $\tilde{Q}T$, there is one new element of $R_0$, namely $\sigma \sigma^{\alpha\delta\alpha^{-1}}$ which corresponds to the relation
 \[
 \{\sigma_{x,y} = \sigma_{y,x} \text{ for all $x,y \in \Delta_2$} \}.
 \]
\end{proof}

\begin{Cor}\label{cor:abelianisation wQV}
 \[
\tilde{Q}V/[\tilde{Q}V, \tilde{Q}V] \cong C_2 
 \]
\end{Cor}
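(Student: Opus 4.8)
The plan is to imitate the one-line proofs of Corollaries \ref{cor:abelianisation QF} and \ref{cor:abelianisation wQT}: start from the finite presentation of $\tilde{Q}V$ in Theorem \ref{theorem:QAutX presentation} and abelianise it directly, tracking which relators force which generators to die. Since the presentation is already in hand, the entire argument is bookkeeping on the relators, split according to whether they lie in $R_Q$, in $\hat{R}_0$, or among the commutators $[\nu,\sigma]$.

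First I would observe that the relators collected in $R_Q$ are precisely the defining relators of $V$ from Proposition \ref{prop:presentation of V}. Because $V$ is simple and non-abelian it is perfect, so $V/[V,V]$ is trivial, and hence abelianising $R_Q$ already kills all four generators $\alpha$, $\beta$, $\gamma$, and $\delta$. This is the only place where the internal structure of $V$ enters, and it reduces the whole computation to understanding the relators involving $\sigma$.

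Next I would abelianise the relators of $\hat{R}_0$. Under abelianisation every conjugate $\sigma^{g}$ becomes equal to $\sigma$, so $\sigma\sigma^{\alpha\delta\alpha^{-1}} \mapsto \sigma^{2}$, the relator $\sigma^{2}$ is unchanged, $[\sigma,\sigma^{\alpha^{2}}] \mapsto 1$, the relator $(\sigma\sigma^{\alpha})^{3} \mapsto \sigma^{6}$, and $\sigma\sigma^{\alpha}\sigma\sigma^{\alpha\beta^{-1}\alpha^{-1}} \mapsto \sigma^{4}$. The commutator relators $[\nu,\sigma]$ all become trivial. Thus in the abelianisation the only surviving relations are $\sigma^{2}=\sigma^{4}=\sigma^{6}=1$, which collapse to $\sigma^{2}=1$. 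Combining this with the previous paragraph, the abelianisation has presentation $\langle \sigma \mid \sigma^{2}\rangle$, giving $\tilde{Q}V/[\tilde{Q}V,\tilde{Q}V]\cong C_{2}$.

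There is no genuine obstacle here; the only substantive input is the perfectness of $V$, which is recorded in the introduction, and everything else is routine rewriting. As a sanity check one could instead compute the abelianisation of the semidirect product $\Sym(Z)\rtimes V$ of Lemma \ref{lemma:QAutX SES is split} as the $V$-coinvariants of $\Sym(Z)^{\mathrm{ab}}\cong C_{2}$; since $V$ acts by permutations it preserves the sign homomorphism and so acts trivially on the quotient, yielding the same answer $C_{2}$ and confirming the presentation-level calculation.
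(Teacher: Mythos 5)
Your proof is correct and is essentially the paper's own argument: the paper's proof of Corollary \ref{cor:abelianisation wQV} is exactly the one-line abelianisation of the presentation in Theorem \ref{theorem:QAutX presentation}, leaving only the generator $\sigma$ and the relator $\sigma^2$. You have merely made explicit the bookkeeping (perfectness of $V$ killing $\alpha,\beta,\gamma,\delta$, and the $\hat{R}_0$ relators collapsing to $\sigma^2$) that the paper leaves implicit, and your coinvariants sanity check is a harmless confirmation of the same computation.
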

\begin{proof}
 Abelianising the presentation leaves the generator $\sigma$ and the relator $\sigma^2$.
\end{proof}

In Corollary \ref{cor:abelianisations from normal subgroup structure} we show that the abelianisation of $QV$ is also isomorphic to $C_2$.

\begin{Question}[{\cite[p.31]{Lehnert-Thesis}}]
 Is $QV$ finitely presented?
\end{Question}

\section{Type \texorpdfstring{$\F_\infty$}{Finfty}}\label{section:type Finfty}

In this section we show that the groups $QF$, $\tilde{Q}T$ and $\tilde{Q}V$ are of type $\F_\infty$.  The idea of the proof for $QF$ is to consider $\Sym(\{0,1\}^*)$ as a countably generated Coxeter group then to form the Davis complex $\mathcal{U}$ (a certain contractible CW-complex on which $\Sym(\{0,1\}^*)$ acts properly), and then extend this to an action of $QF$ on $\mathcal{U}$ which has stabilisers of type $\F_\infty$.  For $\tilde{Q}T$ and $\tilde{Q}V$ we substitute $\Sym(Z)$ for $\Sym(\{0,1\}^*)$.

Let $(W, S)$ be a countably generated Coxeter group, so $W$ is generated by a countable set of involutions $S$.  We start by giving a quick overview of the construction of the Davis complex of a Coxeter group, for background see \cite[\S 5, \S 7]{Davis}.  After this we show that if $Q$ is a group acting by automorphisms on $W$ such that $q(S) = S$ for all $q \in Q$ then there is an action of $W \rtimes Q$ on the Davis complex, where the semi-direct product is formed using the given action of $Q$ on $W$.  This is already well-known, see for example \cite[\S 9.1]{Davis}, so we only give an overview.

For any subset $T$ of $S$ we denote by $W_T$ the subgroup of $W$ generated by $T$.  Recall that a \emph{spherical subset} is a finite subset $T$ of $S$ for which $W_T$ is finite.  The group $W_T$ is known as a \emph{spherical subgroup}.  Let $\mathcal{S}$ denote the poset of spherical subsets in $(W, S)$ and $\mathcal{C}$ the poset of cosets of spherical subgroups, thus elements of $\mathcal{C}$ may be written as $w W_T$ for $w \in W$ and $T \in \mathcal{S}$.  The poset $\mathcal{C}$ admits a left action by $W$,
\[
w^\prime \cdot wW_T = (w^\prime w) W_T.
\]
The \emph{Davis Complex} $\mathcal{U}$ is the geometric realisation of $\mathcal{C}$ and thus admits a left action by $W$ as well.  Note that the $W$-orbits of $n$-simplices in $\mathcal{U}$ are $(n+1)$-element subsets of $S$ generating a finite subgroup of $W$ \cite[p.2]{DicksLeary-SubgroupsOfCoxeterGroups}.

\begin{Prop}\cite[p.3]{DicksLeary-SubgroupsOfCoxeterGroups}
 The Davis complex $\mathcal{U}$ is contractible.
\end{Prop}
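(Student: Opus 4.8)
The plan is to realize $\mathcal{U}$ as an increasing union of subcomplexes indexed by word length in $(W,S)$, and to check that each stage is built from the previous one by gluing on chambers along contractible subcomplexes, so that every inclusion is a homotopy equivalence. First I would isolate the fundamental chamber $K$, the full subcomplex of $\mathcal{U}$ spanned by the cosets $W_T$ containing the identity; under $W_T \mapsto T$ this is the geometric realization of the poset $\mathcal{S}$ of spherical subsets. Since $\emptyset \in \mathcal{S}$ (because $W_\emptyset$ is trivial) is the unique minimal element, $|\mathcal{S}|$ is a cone, so $K$ is contractible. For each $s \in S$ the mirror $K_s$ realizes $\{T \in \mathcal{S} : s \in T\}$, a poset with minimum $\{s\}$, hence $K_s$ is contractible; more generally, for $U \subseteq S$ the intersection $\bigcap_{s \in U} K_s$ realizes $\{T \in \mathcal{S} : U \subseteq T\}$, which is a cone with apex $U$ when $U$ is spherical and is empty otherwise. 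Each $w \in W$ contributes a translate $wK$, and $\mathcal{U} = \bigcup_{w \in W} wK$.

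The key combinatorial input is the \emph{descent set} $\operatorname{In}(w) = \{ s \in S : \ell(ws) < \ell(w) \}$. I would first recall the standard Coxeter-theoretic fact that $\operatorname{In}(w)$ is always spherical. Granting this, every subset of $\operatorname{In}(w)$ is spherical, so all the intersections among the mirrors $\{ wK_s : s \in \operatorname{In}(w)\}$ are nonempty and contractible. By the Nerve Lemma the union $wK_{\operatorname{In}(w)} := \bigcup_{s \in \operatorname{In}(w)} wK_s$ is then homotopy equivalent to the nerve of this cover, which is the full simplex on $\operatorname{In}(w)$, and is therefore contractible.

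Now I would filter $\mathcal{U}$ by $\mathcal{U}^{(k)} = \bigcup_{\ell(w) \le k} wK$, so $\mathcal{U}^{(0)} = K$ and $\mathcal{U} = \bigcup_k \mathcal{U}^{(k)}$. The heart of the argument is to show, using the mirror structure of the basic construction, that for $\ell(w) = k$ one has $wK \cap \mathcal{U}^{(k-1)} = wK_{\operatorname{In}(w)}$: an earlier chamber meets $wK$ only along mirrors $wK_s$, and $\ell(w s) \le k-1$ forces $s \in \operatorname{In}(w)$. Since two distinct elements of equal length are never adjacent in the Cayley graph, the chambers added at level $k$ meet one another only inside $\mathcal{U}^{(k-1)}$. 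Thus $\mathcal{U}^{(k)}$ is obtained from $\mathcal{U}^{(k-1)}$ by simultaneously gluing the contractible chambers $wK$ along the contractible subcomplexes $wK_{\operatorname{In}(w)}$; each gluing is a pushout of a cofibration between contractible complexes, so $\mathcal{U}^{(k-1)} \hookrightarrow \mathcal{U}^{(k)}$ is a homotopy equivalence. Inductively every $\mathcal{U}^{(k)}$ is contractible, and since any map of a compact complex into $\mathcal{U}$ lands in some $\mathcal{U}^{(k)}$, the direct limit $\mathcal{U}$ is weakly contractible, hence contractible by Whitehead's theorem. (This filtration by length, rather than an enumeration of $W$, is what lets the argument cope with $S$ being infinite, since there are infinitely many generators and hence infinitely many elements of each length.)

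The main obstacle is the pair of genuinely Coxeter-theoretic facts: the sphericity of $\operatorname{In}(w)$ and the identification $wK \cap \mathcal{U}^{(k-1)} = wK_{\operatorname{In}(w)}$. Everything else is formal homotopy theory—cone and poset arguments, the Nerve Lemma, and gluing of cofibrations. I would establish sphericity of the descent set via the exchange condition and obtain the intersection formula from the standard description of how adjacent chambers of the basic construction share mirrors.
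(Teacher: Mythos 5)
The paper offers no proof of this proposition at all --- it is imported with a citation to Dicks--Leary --- so the comparison is really with the cited literature, and your argument is exactly the standard one (it is Davis's proof, Theorem 8.2.13 of his book, and in essence the proof in the cited source): filter $\mathcal{U}$ by word length, glue each chamber $wK$ along $wK_{\operatorname{In}(w)}$, and feed in sphericity of the descent set together with contractibility of $K$, the mirrors, and their intersections. At the level of detail given the outline is correct, including the nerve-lemma proof that $wK_{\operatorname{In}(w)}$ is contractible and your remark that filtering by length rather than enumerating $W$ is what copes with infinite $S$. One local repair: your justification that equal-length chambers meet only inside $\mathcal{U}^{(k-1)}$ (``equal-length elements are never Cayley-adjacent'') is too weak, since $wK \cap vK$ is nonempty whenever $w^{-1}v$ lies in some spherical $W_T$, not merely when $w^{-1}v \in S$. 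The fix is the same minimal-coset-representative fact you already invoke: if $w \neq v$ both have length $k$ and lie in a common coset $uW_T$ with $u$ the minimal-length representative, then $\ell(u) < k$ and the intersection lies in $uK \subseteq \mathcal{U}^{(k-1)}$. You should also note that $\operatorname{In}(w) \neq \emptyset$ for $w \neq 1$, so no chamber is ever glued along the empty subcomplex.

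A caveat on scope that is worth recording: your proof genuinely uses the exchange condition, i.e.\ a bona fide Coxeter system, but the pair the paper actually feeds into this proposition is $\Sym(Z)$ with $S$ the set of \emph{all} transpositions, which is not a Coxeter system in the strict sense --- already for three points the pairwise products of the three transpositions have order $3$, so the associated Coxeter group would be the infinite affine triangle group, and correspondingly the presentation in Lemma \ref{lemma:presentation of SymZ} contains the non-Coxeter relators $\sigma_{x,y}\sigma_{y,z}\sigma_{x,y}\sigma_{x,z}$. Fortunately, in that application no Coxeter theory is needed: $\Sym(Z)$ is locally finite, so \emph{every} finite $T \subseteq S$ is spherical, and the coset poset $\mathcal{C}$ is directed --- given finitely many cosets $w_iW_{T_i}$, take a finite $T$ containing all the $T_i$ together with enough transpositions to express each $w_1^{-1}w_i$; then $w_iW_{T_i} \subseteq w_1W_T$ for all $i$ --- and the geometric realization of a directed poset is contractible. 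So your argument proves the proposition as stated (for honest Coxeter systems), while a one-line directedness argument covers, and is strictly speaking required for, the case the paper actually uses.
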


Consider a group $Q$ acting by automorphisms on $W$ such that every $q \in Q$ satisfies $q(S) = S$.  We denote by $G$ the semi-direct product $W \rtimes Q$ formed using this action.

The action of $Q$ on $S$ extends to an action of $Q$ on $\mathcal{S}$, by setting 
\[
q \{s_1, \ldots, s_n\} = \{qs_1, \ldots, qs_n\}.
\]
We use this to define a $G$-action on $\mathcal{C}$ by
\[
 (h, q) \cdot wW_T = hw^{q^{-1}} W_{qT}.
\]
One checks that this is well-defined and preserves the poset structure.  The $G$-action on $\mathcal{C}$ induces a $G$-action on $\mathcal{U}$.

The next lemma appears in \cite[Propostion 9.1.9]{Davis}, as does the first part of Proposition \ref{prop:G is Finfty}.

\begin{Lemma}\label{lemma:isotropy of semi-direct construction}
 The $W \rtimes Q$-isotropy subgroup of $wW_T \in \mathcal{C}$ is $(W_{T})^{w^{-1}} \rtimes Q_T$, where $Q_T$ is the $Q$-isotropy of $T \in \mathcal{S}$.
\end{Lemma}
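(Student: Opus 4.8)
The plan is to reduce to the case $w = e$ by equivariance, compute the isotropy of the base coset $W_T$ directly, and then transport the answer along the orbit by conjugation. First I would observe that $W_T = eW_T$ is carried to $wW_T$ by $(w,1)$: indeed $(w,1)\cdot W_T = w\,e^{1}W_{1\cdot T} = wW_T$. Since $G = W\rtimes Q$ acts on $\mathcal C$, isotropy groups along a single orbit are conjugate, so $\operatorname{Stab}_G(wW_T) = (w,1)\,\operatorname{Stab}_G(W_T)\,(w,1)^{-1}$. Thus it suffices to identify $\operatorname{Stab}_G(W_T)$ and then conjugate by $(w,1)$.

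Next I would compute $\operatorname{Stab}_G(W_T)$. For $(h,q)\in G$ the action formula gives $(h,q)\cdot W_T = h\,e^{q^{-1}}W_{qT} = hW_{qT}$, using $e^{q^{-1}}=e$. A coset determines its subgroup (it is the set of ``differences'' of its elements), so $hW_{qT}=W_T$ as subsets of $W$ forces $W_{qT}=W_T$ and then $h\in W_T$. Here I invoke the one nonformal ingredient, the standard fact about Coxeter systems that distinct subsets of $S$ generate distinct standard parabolic subgroups, i.e. $W_{qT}=W_T\iff qT=T\iff q\in Q_T$ (see \cite{Davis}). Hence $\operatorname{Stab}_G(W_T)=\{(h,q): h\in W_T,\ q\in Q_T\}$. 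This set is an honest subgroup, and an internal semidirect product $W_T\rtimes Q_T$: if $q\in Q_T$ then $q$ carries $T$ to $T$, hence carries $W_T=\langle T\rangle$ to $W_{qT}=W_T$, so $Q_T$ normalises the $W_T$-factor inside $G$ and the multiplication closes up.

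Finally I would conjugate by $(w,1)$. Unwinding the semidirect-product multiplication, conjugation by $(w,1)$ sends $(h,q)\mapsto \bigl(w\,h\,(w^{q^{-1}})^{-1},\,q\bigr)$. In particular the normal factor $\{(h,1):h\in W_T\}$ maps isomorphically onto $\{(whw^{-1},1):h\in W_T\}$, which is exactly the conjugate $(W_T)^{w^{-1}}$ sitting inside the $W$-part of $G$, while the complementary $Q_T$ is carried to a complement of it. Since conjugation is a group isomorphism, the image of $W_T\rtimes Q_T$ is the internal semidirect product $(W_T)^{w^{-1}}\rtimes Q_T$, which is the claimed isotropy group. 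The main thing to keep honest is the twist $(w^{q^{-1}})^{-1}$ appearing in the conjugation formula: it disappears on the $W_T$-factor (where $q=1$) and merely reparametrises the $Q_T$-complement, so it does not affect the final identification; apart from that, everything is a direct unwinding of the definitions once the Coxeter parabolic fact is in hand.
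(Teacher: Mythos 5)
Your proof is correct, and in fact the paper gives no proof of this lemma at all: it is quoted from Davis \cite[Proposition 9.1.9]{Davis}, so your argument fills in a step the paper leaves to a citation. The route you take (compute $\Stab_G(W_T)$ at the base coset, then transport by conjugating with $(w,1)$) is the natural one, and each ingredient checks out against the paper's conventions: $(h,q)\cdot W_T = hW_{qT}$ since $e^{q^{-1}}=e$; a coset determines its subgroup; $W_{qT}=W_T \iff qT=T$ is legitimate here because $q(S)=S$ guarantees $qT\subseteq S$, so the standard parabolic fact $W_T\cap S=T$ applies; and your conjugation formula $(h,q)\mapsto\bigl(wh\,(w^{q^{-1}})^{-1},\,q\bigr)$ is what the multiplication $(h_1,q_1)(h_2,q_2)=(h_1h_2^{q_1^{-1}},q_1q_2)$ forced by the action formula yields. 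Your closing caveat is not just honest bookkeeping but essential: the literal standard copy $\{(1,q):q\in Q_T\}$ does \emph{not} stabilise $wW_T$ in general (one would need $w^{-1}w^{q^{-1}}\in W_T$), so the lemma's ``$(W_T)^{w^{-1}}\rtimes Q_T$'' must be read as the conjugate $(w,1)(W_T\rtimes Q_T)(w,1)^{-1}$, whose $Q_T$-complement is the twisted section $q\mapsto(w(w^{q^{-1}})^{-1},q)$. This reading is exactly what the paper uses downstream: in Proposition \ref{prop:G is Finfty} only the extension $1\to (W_T)^{w^{-1}}\to \Stab_G(wW_T)\to Q_T\to 1$ matters, and your proof establishes precisely that structure.
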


Let $\mathcal{S}_n$ denote the set of unordered $n$-element subsets of $S$, equivalently the spherical subsets of size $n$.  The set $\mathcal{S}_n$ admits a $Q$-action, the restriction of that on $\mathcal{S}$.

\begin{Prop}\label{prop:G is Finfty}
 If, for all positive integers $n$, $Q$ acts on $\mathcal{S}_n$ with finitely many $Q$-orbits and with stabilisers of type $\F_\infty$, then $G$ is of type $\F_\infty$.
\end{Prop}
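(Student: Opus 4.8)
The plan is to invoke the standard topological criterion for type $\F_\infty$: if a group acts on a contractible CW-complex with finitely many orbits of cells in each dimension and with every cell stabiliser of type $\F_\infty$, then the group itself is of type $\F_\infty$ (see \cite{Brown-FinitenessPropertiesOfGroups}). Since $\mathcal{U}$ is contractible and $G = W \rtimes Q$ acts on it, it suffices to verify these two hypotheses for the $G$-action on $\mathcal{U}$. I would work with the $G$-CW structure on $\mathcal{U}$ in which the cells are the elements of $\mathcal{C}$, passing to the barycentric subdivision if a rigid action is needed so that setwise cell-stabilisers fix cells pointwise; neither operation affects orbit-finiteness nor the finiteness type of the stabilisers.

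First I would count orbits. As recorded before the statement, the $W$-orbits of $n$-simplices of $\mathcal{U}$ are indexed by the spherical subsets of size $n+1$, that is, by $\mathcal{S}_{n+1}$. Passing from $W$ to $G = W \rtimes Q$, the residual action permuting these $W$-orbits is precisely the $Q$-action on $\mathcal{S}_{n+1}$, so the set of $G$-orbits of $n$-simplices is identified with the set of $Q$-orbits on $\mathcal{S}_{n+1}$. By hypothesis (applied with $n+1$ in place of $n$) the latter is finite, giving finitely many $G$-orbits of $n$-simplices for every $n$.

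Next I would identify the stabilisers. By Lemma \ref{lemma:isotropy of semi-direct construction} the $G$-stabiliser of the cell $wW_T$ is $(W_T)^{w^{-1}} \rtimes Q_T$, where $T \in \mathcal{S}_{n+1}$ for an $n$-cell and $Q_T$ is the $Q$-isotropy of $T$. Here $(W_T)^{w^{-1}}$ is a conjugate of the spherical subgroup $W_T$ and is therefore finite, while $Q_T$ is of type $\F_\infty$ by hypothesis. Since the class of $\F_\infty$ groups contains all finite groups and is closed under extensions, the finite-by-$\F_\infty$ group $(W_T)^{w^{-1}} \rtimes Q_T$ is itself of type $\F_\infty$. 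Thus every cell stabiliser is of type $\F_\infty$, and the criterion applies to conclude that $G$ is of type $\F_\infty$.

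The two verifications above are routine once the hypotheses and Lemma \ref{lemma:isotropy of semi-direct construction} are in hand, so the only genuine care is needed in the reduction. The main point to get right is that the topological criterion demands a genuine (rigid) $G$-CW action, so one must ensure that the simplicial $G$-action on the realisation $\mathcal{U}$ of $\mathcal{C}$ does not invert cells—handled by barycentric subdivision—and that, if one realises $\mathcal{C}$ as a flag complex on chains rather than on single cosets, the stabiliser of a chain is the intersection of the stabilisers of its vertices and hence again a finite-by-$\F_\infty$ group of the same type. Once this bookkeeping is in place, contractibility of $\mathcal{U}$ together with the orbit and stabiliser computations yields the result at once.
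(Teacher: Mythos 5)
Your argument is correct and takes essentially the same route as the paper's proof: both count $G$-orbits of $n$-simplices of $\mathcal{U}$ via the $Q$-orbits on $\mathcal{S}_{n+1}$, identify cell stabilisers through Lemma \ref{lemma:isotropy of semi-direct construction} as finite-by-$\F_\infty$ extensions, and conclude with the contractible-complex criterion (the paper's Theorem \ref{thm:contractible finite type with Finfty stab gives Finfty}, which plays the role of the criterion you cite from Brown). Your extra care about rigidity and chain stabilisers is sound bookkeeping that the paper also glosses over; note only that your claim that a chain stabiliser is again of type $\F_\infty$ is most cleanly justified by observing it contains, with finite index, the kernel of the top vertex stabiliser's action on the finite set $T_n$, hence is finite-index in an $\F_\infty$ group.
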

\begin{proof}
Since the set of $W$-orbits of $n$-simplices in $\mathcal{U}$ is exactly the set $\mathcal{S}_{n+1}$, there are as many $G$-orbits of $n$-simplicies in $\mathcal{U}$ as $Q$-orbits in $\mathcal{S}_{n+1}$.  Thus $G$ acts cocompactly on $\mathcal{U}$.

By construction, the $Q$-isotropy of any point in $\mathcal{U}$ is the $Q$-isotropy of some spherical subgroup $W_T$.  Thus assumption (2) implies that the $Q$-stabiliser of any point has type $\F_\infty$, so combining with Lemma \ref{lemma:isotropy of semi-direct construction} and \cite[Proposition 2.7]{Bieri-HomDimOfDiscreteGroups}, the $G$-stabiliser of any point in $\mathcal{U}$ is $\F_\infty$.  Theorem \ref{thm:contractible finite type with Finfty stab gives Finfty} below completes the proof.
\end{proof}

\begin{Theorem}\label{thm:contractible finite type with Finfty stab gives Finfty}\cite[Theorem 7.3.1]{Geoghegan}
 If there exists a contractible $G$-CW complex which is finite type mod $G$ and has stabilisers of type $\F_\infty$, then $G$ is of type $\F_\infty$.
\end{Theorem}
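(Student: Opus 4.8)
The plan is to deduce this topological finiteness statement from its algebraic shadow, type $\FP_\infty$, together with finite presentability, using the fact that $G$ is of type $\F_\infty$ if and only if $G$ is finitely presented and of type $\FP_\infty$. The contractible $G$-CW complex $X$ supplies, via its cellular chains, a resolution of the trivial module, and the two hypotheses (finite type mod $G$ and $\F_\infty$ stabilisers) are exactly what is needed to control that resolution.

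For the $\FP_\infty$ part, I would first observe that since $X$ is contractible the augmented cellular chain complex
\[ \cdots \to C_p(X) \to \cdots \to C_0(X) \to \ZZ \to 0 \]
is an exact sequence of $\ZZ G$-modules, i.e.\ a resolution of $\ZZ$. Because the action is finite type mod $G$, in each degree $p$ there are only finitely many $G$-orbits of $p$-cells, so
\[ C_p(X) \cong \bigoplus_{i=1}^{k_p} \ZZ[G/G_{\sigma_i}] \cong \bigoplus_{i=1}^{k_p} \Ind_{G_{\sigma_i}}^{G} \ZZ, \]
a finite sum over orbit representatives $\sigma_i$ with stabilisers $G_{\sigma_i}$. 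Since each $G_{\sigma_i}$ is of type $\F_\infty$, hence $\FP_\infty$, the trivial module $\ZZ$ is $\FP_\infty$ over $\ZZ G_{\sigma_i}$; as induction preserves $\FP_\infty$, each $C_p(X)$ is of type $\FP_\infty$ over $\ZZ G$. I would then invoke the standard principle that a resolution of $\ZZ$ by modules each of which is $\FP_\infty$ forces $\ZZ$ itself to be $\FP_\infty$: choose for each $p$ a projective resolution $P_{\bullet,p} \twoheadrightarrow C_p(X)$ finitely generated in every degree, assemble the double complex $P_{\bullet,\bullet}$, and pass to its total complex $\operatorname{Tot}(P)_\bullet$. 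A dimension count shows that in each total degree $n$ only finitely many $(p,q)$ with $p+q=n$ contribute, each a finitely generated projective, so $\operatorname{Tot}(P)_\bullet \to \ZZ$ is a projective resolution of $\ZZ$ finitely generated in every degree, whence $G$ is of type $\FP_\infty$.

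For finite presentability, I would use that $X$ is contractible, hence simply connected, and that $G$ acts with finitely many orbits of cells in dimensions $\le 2$ with finitely presented stabilisers; a standard presentation-assembly argument (generators and relators coming from the $1$- and $2$-cells together with presentations of the vertex stabilisers) then yields that $G$ is finitely presented, and combined with $\FP_\infty$ this gives $\F_\infty$. Alternatively, both halves can be unified topologically: since $X$ is contractible the Borel construction satisfies $EG \times_G X \simeq EG \times_G \pt = BG$, and one builds a finite-type model of $BG$ inductively over the skeleta of $X$, attaching a copy of $BG_\sigma \times D^p$ (with $BG_\sigma$ of finite skeleta, as $G_\sigma$ is $\F_\infty$) for each of the finitely many orbits of $p$-cells.

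I expect the crux to be the double-complex step: verifying that $\operatorname{Tot}(P)_\bullet$ is genuinely a finite-type projective resolution of $\ZZ$, which requires tracking both finite generation in each total degree and exactness of the total complex (equivalently, a spectral-sequence collapse from the contractibility of $X$). The passage from $\FP_\infty$ to finite presentation, and hence to full $\F_\infty$, is routine once simple connectivity and the finiteness of orbits and stabilisers in dimensions $\le 2$ are in hand.
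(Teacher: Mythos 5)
Your proposal is essentially correct, but note that the paper does not prove this statement at all: it is quoted directly from Geoghegan \cite[Theorem 7.3.1]{Geoghegan}, and the proof in that source is precisely the topological argument you relegate to an afterthought --- since $X$ is contractible the Borel construction $EG \times_G X$ is a $K(G,1)$, and one rebuilds it skeleton by skeleton, each of the finitely many orbits of $p$-cells contributing a copy of $BG_\sigma \times D^p$ with $BG_\sigma$ of finite type because $G_\sigma$ is $\F_\infty$. Your principal route --- splitting $\F_\infty$ as $\FP_\infty$ plus finite presentability --- is a genuinely different and equally standard decomposition, and it works, with two points needing care. First, the identification $C_p(X) \cong \bigoplus_i \ZZ[G/G_{\sigma_i}]$ with untwisted permutation modules uses the $G$-CW hypothesis (an element stabilising a cell fixes it pointwise); for a merely cellular action one would pick up orientation characters. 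Second, your double complex $P_{\bullet,\bullet}$ does not exist naively: lifts of the differentials $C_p(X) \to C_{p-1}(X)$ to the chosen resolutions commute only up to chain homotopy, so you must take a Cartan--Eilenberg resolution of the complex $C_\bullet(X)$ (or run the horseshoe lemma inductively up the skeleta); after that the dimension count and the spectral-sequence collapse give the finite-type resolution of $\ZZ$ exactly as you say. For finite presentability you are implicitly invoking Brown's presentation theorem for groups acting on simply connected complexes, whose hypotheses (finitely many orbits of cells in dimensions $\le 2$, finitely presented vertex stabilisers, finitely generated edge stabilisers) your assumptions supply. In comparing the two approaches: the topological rebuilding argument of the cited source delivers all the finiteness properties in one stroke and avoids the homological bookkeeping, while your algebraic route isolates the $\FP_\infty$ statement, which is more flexible --- it works over any coefficient ring and requires only homological hypotheses on the stabilisers --- at the cost of a separate appeal to Brown's theorem for the presentation.
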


At this point we specialise to the Coxeter group $\Sym(Z)$.  Let $S$ be the set of unordered $2$-element subsets of $Z$.  This is the set of generators of $\Sym(Z)$ given in Lemma \ref{lemma:presentation of SymZ}.  Recall from Section \ref{subsection:finite presentation for QAutX} that $\Delta_n$ is the set of ordered $n$-element subsets of $Z$, so there is a surjective $V$-map $\Delta_2 \to S$, given by projection.
Recall that $\mathcal{S}_n$ denotes the set of unordered $n$-element subsets of elements of $S$ (note this is not equivalent to the set of unordered $2n$-element subsets of $S$).  There is also a surjective $V$-map $\Delta_{2n} \to \mathcal{S}_n$.

\begin{Lemma}\label{lemma:V acts on Sn with Finfty stab}
  The $V$-stabiliser of $\{(x_1,y_1), \ldots, (x_n, y_n)\} \in \mathcal{S}_n$ is of type $\F_\infty$ for all $\{(x_1,y_1), \ldots, (x_n, y_n)\} \in \mathcal{S}_n$.  
\end{Lemma}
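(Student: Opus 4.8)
The plan is to reduce the statement to the known fact that Thompson's group $V$ is of type $\F_\infty$ (Brown, \cite{Brown-FinitenessPropertiesOfGroups}) by exhibiting the stabiliser in question as commensurable with a finite direct product of copies of Thompson's groups.

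First I would pass to a pointwise stabiliser. Write $P = \{x_1, y_1, \ldots, x_n, y_n\} \subseteq Z$ for the (finite) union of the supports of the $n$ transpositions, say $\lvert P \rvert = m \le 2n$, and let $V_{(P)} \le V$ denote the subgroup of all $v \in V$ for which $\iota(v)$ fixes every element of $P$. Any $v$ stabilising the set $\{(x_1,y_1), \ldots, (x_n,y_n)\}$ necessarily permutes $P$, so there is a homomorphism from $\Stab_V(\{(x_i,y_i)\})$ to $\Sym(P)$ with kernel $V_{(P)}$; as $\Sym(P)$ is finite this shows $V_{(P)}$ has finite index in $\Stab_V(\{(x_i,y_i)\})$. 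Since being of type $\F_\infty$ is a commensurability invariant, it suffices to prove that $V_{(P)}$ is of type $\F_\infty$. Moreover, by Lemma \ref{lemma:transitive action Deltan} the group $V$ acts transitively on $\Delta_m$, hence on the unordered $m$-element subsets of $Z$, so all the subgroups $V_{(P)}$ with $\lvert P \rvert = m$ are conjugate in $V$; we are therefore free to replace $P$ by any convenient $m$-element subset.

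Next I would determine the structure of $V_{(P)}$, generalising Lemma \ref{lemma:stab of V on prodZ}. Choosing a finite rooted subtree $T$ of $\mathcal{T}_{2,c}$ containing every element of $P \cap \{0,1\}^*$ among its nodes, the points of $P$ cut the ends of $\mathcal{T}_{2,c}$ into finitely many clopen regions. Let $V_{(P)}^0 \le V_{(P)}$ be the subgroup of those elements that additionally preserve each of these regions setwise. I claim $V_{(P)}^0$ is a direct product of finitely many copies of Thompson's groups $V$ and $F$: each clopen region of ends contributes a copy of $V$, while the local behaviour at the individual fixed points contributes copies of $F$ --- this is exactly how the copy $V^\prime$ together with the elements $\lambda$ and $\mu$ arise in the two-point computation of Lemma \ref{lemma:stab of V on prodZ}. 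Because there are only finitely many regions, the quotient $V_{(P)}/V_{(P)}^0$ embeds into a finite symmetric group, so $V_{(P)}^0$ has finite index in $V_{(P)}$.

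Finally, each factor of $V_{(P)}^0$ is isomorphic to Thompson's group $V$ or $F$, both of type $\F_\infty$ by \cite{Brown-FinitenessPropertiesOfGroups}, and a finite direct product of groups of type $\F_\infty$ is again of type $\F_\infty$. Hence $V_{(P)}^0$ is of type $\F_\infty$, and the two finite-index reductions above give that $V_{(P)}$, and therefore $\Stab_V(\{(x_i,y_i)\})$, is of type $\F_\infty$. The main obstacle is the middle step: verifying that $V_{(P)}^0$ really is the asserted direct product of Thompson groups, which requires carefully generalising the tree-pair bookkeeping of Lemma \ref{lemma:stab of V on prodZ} to an arbitrary finite configuration, paying particular attention to the distinguished vertex $\zeta$ and to the regions adjacent to the fixed points.
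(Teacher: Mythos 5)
Your opening reduction is sound and in fact coincides with the paper's own proof of this lemma: transitivity (Lemma \ref{lemma:transitive action Deltan}) plus the homomorphism from the setwise stabiliser to a finite symmetric group, whose kernel is the pointwise stabiliser of the $2n$ points, reduces everything to showing that $\Stab_V((0^{2n-2}, \ldots, 0, \varepsilon, \zeta))$, the stabiliser of a tuple in $\Delta_{2n}$, is of type $\F_\infty$ (this is Corollary \ref{cor:V acts on Deltan with Finfty stab}, whose proof occupies Section \ref{subsection:Stab is Finfty}). The genuine gap is your middle step. Fixing the vertices in $P$ does not force an element of $V$ to permute the complementary clopen regions: fixing a vertex $x$ essentially amounts to fixing the single end $x011\cdots$ (respectively $111\cdots$ for $\zeta$), and away from these finitely many ends an element of the pointwise stabiliser may fragment each region and redistribute the pieces among the others. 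Concretely, in the two-point case $P = \{\varepsilon, \zeta\}$ the copy $V^\prime \cong V$ of Lemma \ref{lemma:stab of V on prodZ} lies in $V_{(P)}$, being supported on the subtrees under $00$ and $10$, and its element $\gamma^\prime$ carries the subtree under $00$ (inside your region under $0$) to the subtree under $101$ (inside your region under $1$). So $V_{(P)}$ does not act on the set of regions at all, and worse: since $V^\prime$ is simple and $V^\prime \cap V_{(P)}^0$ is a proper nontrivial subgroup, $V^\prime \cap V_{(P)}^0$ has \emph{infinite} index in $V^\prime$, hence $V_{(P)}^0$ has infinite index in $V_{(P)}$. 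Your claim that $V_{(P)}/V_{(P)}^0$ embeds in a finite symmetric group fails, and the two-step finite-index reduction collapses.

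The asserted product structure is also wrong independently of the index claim: the ``local $F$'s at the fixed points'' do not commute with the region groups. The germ at a fixed end is carried by translation-like elements such as $\lambda$ and $\mu$ of Lemma \ref{lemma:stab of V on prodZ}, and conjugation by these maps the bulk group properly into itself rather than commuting with it. This is precisely why Section \ref{subsection:Stab is Finfty} identifies the pointwise stabiliser not as a direct product but as an iterated \emph{strictly ascending HNN extension} $L_n \ast_{\tilde\lambda_1, \theta_1} \cdots \ast_{\tilde\lambda_n, \theta_n}$, where $L_n \cong V_{2,n} \cong V$ is a \emph{single} copy of $V$ mixing all the regions (Higman's isomorphism, Lemma \ref{Lemma:Ln iso V}) and the stable letters are the elements $\lambda_{n,i}$; type $\F_\infty$ then follows from the fact that ascending HNN extensions of $\F_\infty$ groups are $\F_\infty$ \cite[Proposition 2.13(b)]{Bieri-HomDimOfDiscreteGroups}. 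Your direct-product intuition is the correct picture for $F$ (Lemma \ref{lemma:stab of F on sigma}), where preservation of the $\le_{\lex}$ order really does force each region to be preserved setwise, but that is exactly the feature that fails for $V$, and repairing your argument would amount to redoing the HNN analysis of Section \ref{subsection:Stab is Finfty}.
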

\begin{proof}
 Since $V$ acts transitively on $\mathcal{S}_n$ it is sufficient to check that the stabiliser $\Stab_V(\{(0^{2n-2}, 0^{2n-1}), \ldots, (\varepsilon, \zeta)\})$ is $\F_\infty$.
 
 Let $\{(0^{2n-2}, 0^{2n-1}), \ldots, (\varepsilon, \zeta)\} \in \mathcal{S}_n$, and let $A$ be the preimage of this element under the $V$-map $\Delta_{2n} \to \mathcal{S}_n$.  There is an homomorphism $\pi : \Stab_V(A) \to \Sym_{2n}$ which records the permutation on the elements $\{0^{2n-2}, \ldots, 0, \varepsilon, \zeta\}$.  The kernel of $\pi$ is exactly $\Stab_V(\{(0^{2n-2}, 0^{2n-1}), \ldots, (\varepsilon, \zeta)\})$ which is $\F_\infty$ by Corollary \ref{cor:V acts on Deltan with Finfty stab} so combining with \cite[Proposition 2.7]{Bieri-HomDimOfDiscreteGroups} and the fact that $\Sym_n$ is finite and hence $\F_\infty$, we deduce that $\Stab_V(A)$ is $\F_\infty$.
 
 Observing that $\Stab_V(A) = \Stab_V(\{(0^{2n-2}, 0^{2n-1}), \ldots, (\varepsilon, \zeta)\})$ completes the proof for the $V$-stabiliser. 
\end{proof}

\begin{Theorem}\label{theorem:QAut(X) is Finfty}
 The group $\tilde{Q}V$ is of type $\F_\infty$.
\end{Theorem}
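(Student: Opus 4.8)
The plan is to apply Proposition \ref{prop:G is Finfty} to the semi-direct product decomposition $\tilde{Q}V \cong \Sym(Z) \rtimes V$ furnished by Lemma \ref{lemma:QAutX SES is split}, taking the Coxeter group to be $W = \Sym(Z)$ with generating set $S$ the set of transpositions (the $2$-element subsets of $Z$, as in Lemma \ref{lemma:presentation of SymZ}), and taking $Q = V$. First I would check that this genuinely fits the framework set up above. The splitting $\iota : V \to \tilde{Q}V$ makes $V$ act on $\Sym(Z)$ by conjugation, and since $\iota(v)$ permutes $Z$, conjugation sends a transposition $\sigma_{x,y}$ to $\sigma_{\iota(v)x,\,\iota(v)y}$; hence this action carries $S$ bijectively onto itself. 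Thus $V$ acts on $W = \Sym(Z)$ by automorphisms preserving $S$, and the semi-direct product built from this action is exactly $\tilde{Q}V$.

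It then remains to verify the two hypotheses of Proposition \ref{prop:G is Finfty} for the action of $V$ on $\mathcal{S}_n$ for every positive integer $n$. For the first hypothesis (finitely many orbits), I would use the surjective $V$-map $\Delta_{2n} \to \mathcal{S}_n$ together with transitivity of $V$ on $\Delta_{2n}$ (Lemma \ref{lemma:transitive action Deltan}): the image of a transitive $V$-set under a $V$-equivariant surjection is again transitive, so $V$ acts on $\mathcal{S}_n$ with a single orbit, in particular finitely many. For the second hypothesis ($\F_\infty$ point stabilisers), the required statement is precisely Lemma \ref{lemma:V acts on Sn with Finfty stab}, which asserts that the $V$-stabiliser of any element of $\mathcal{S}_n$ is of type $\F_\infty$. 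With both hypotheses in hand, Proposition \ref{prop:G is Finfty} yields directly that $\tilde{Q}V = \Sym(Z) \rtimes V$ is of type $\F_\infty$.

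The real difficulty has already been front-loaded into the preparatory lemmas, so at this stage the argument is a clean assembly rather than a fresh computation; the expected obstacle is purely bookkeeping. The one point deserving care is matching the abstract $\mathcal{S}_n$ of the Coxeter construction (the spherical subsets of size $n$, equivalently the $W$-orbits of $(n-1)$-simplices of the Davis complex $\mathcal{U}$) with the concrete set of $n$-element collections of transpositions on which Lemmas \ref{lemma:transitive action Deltan} and \ref{lemma:V acts on Sn with Finfty stab} operate. Once that identification is made explicit, cocompactness of the $\tilde{Q}V$-action on $\mathcal{U}$ and the $\F_\infty$ property of its cell stabilisers are exactly the two inputs that Proposition \ref{prop:G is Finfty} feeds into Theorem \ref{thm:contractible finite type with Finfty stab gives Finfty} to conclude.
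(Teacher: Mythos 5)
Your proposal is correct and follows essentially the same route as the paper: the paper's proof of Theorem \ref{theorem:QAut(X) is Finfty} likewise applies Proposition \ref{prop:G is Finfty} to the splitting $\tilde{Q}V \cong \Sym(Z) \rtimes V$, deducing transitivity of $V$ on $\mathcal{S}_n$ from its transitivity on $\Delta_{2n}$ (Lemma \ref{lemma:transitive action Deltan}) and quoting Lemma \ref{lemma:V acts on Sn with Finfty stab} for the $\F_\infty$ stabilisers. Your write-up merely makes explicit some bookkeeping the paper leaves implicit (that conjugation preserves the generating set $S$ of transpositions, and the identification of the spherical subsets $\mathcal{S}_n$ with $n$-element collections of transpositions), which is a fair expansion rather than a different argument.
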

\begin{proof}
We prove the statement for $\tilde{Q}V$ first, using Proposition \ref{prop:G is Finfty}.  Since $V$ acts transitively on $\Delta_{2n}$, it acts transitively on $\mathcal{S}_n$ also.  The stabilisers are of type $\F_\infty$ is Lemma \ref{lemma:V acts on Sn with Finfty stab}.
\end{proof}

Next, we will prove the group $\tilde{Q}T$ is $\F_\infty$, for which we require the following technical lemma.

\begin{Lemma}\label{lemma:stab of T on Deltan}
 $T$ acts with finitely many orbits and stabilisers of type $\F_\infty$ on $\Delta_n$.
\end{Lemma}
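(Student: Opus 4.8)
The plan is to reduce the action on $\Delta_n$ to the transitive action on $\Lambda_n$ established in Lemma \ref{lemma:transitive action Lambdan}, using the observation that the only structural difference between $\Delta_n$ and $\Lambda_n$ is the cyclic-order constraint, and that this constraint is governed by a finite symmetric group acting on coordinates. First I would introduce the coordinate-permutation action of $\Sym_n$ on $\Delta_n$, namely $\pi \cdot (x_1, \ldots, x_n) = (x_{\pi^{-1}(1)}, \ldots, x_{\pi^{-1}(n)})$, and record the key fact that this commutes with the diagonal $T$-action $t \cdot (x_1, \ldots, x_n) = (\iota(t)x_1, \ldots, \iota(t)x_n)$. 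Given any $(x_1, \ldots, x_n) \in \Delta_n$, its entries are distinct, so sorting them into the $\le_{\lex}$ order (a linear, hence cyclic, order) produces a permutation $\pi \in \Sym_n$ with $\pi \cdot (x_1, \ldots, x_n) \in \Lambda_n$. This yields the covering $\Delta_n = \bigcup_{\pi \in \Sym_n} \pi \cdot \Lambda_n$.

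For the finitely-many-orbits statement, I would combine this covering with transitivity. Since $T$ acts transitively on $\Lambda_n$ and the two actions commute, each translate $\pi \cdot \Lambda_n$ is a single $T$-orbit, because $t \cdot (\pi \cdot \lambda) = \pi \cdot (t \cdot \lambda)$ shows $T$ carries $\pi \cdot \Lambda_n$ onto itself transitively. Hence $\Delta_n$ is a union of at most $n!$ distinct $T$-orbits, which is finite (one can check that rotations preserve $\Lambda_n$, giving in fact $(n-1)!$ orbits, but finiteness is all that is needed here).

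For the stabilisers, the same commutativity gives $\Stab_T(\pi \cdot \lambda) = \Stab_T(\lambda)$ for every $\lambda \in \Lambda_n$ and $\pi \in \Sym_n$, so every point-stabiliser of the $T$-action on $\Delta_n$ coincides with a point-stabiliser of the $T$-action on $\Lambda_n$. By Lemma \ref{lemma:stab of T on Lambda2}(1) such a stabiliser is isomorphic to $\prod_1^n F$. Since $F$ is of type $\F_\infty$ \cite{Brown-FinitenessPropertiesOfGroups}, and a finite direct product of $\F_\infty$ groups is again $\F_\infty$ (iterating \cite[Proposition 2.7]{Bieri-HomDimOfDiscreteGroups}, viewing each product as an extension of one factor by the other), every stabiliser is of type $\F_\infty$.

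I do not expect a serious obstacle here, since the prior lemmas do the real work: the crux is simply recognising that passing from $\Lambda_n$ to $\Delta_n$ amounts to forgetting the cyclic order, which is absorbed by the finite group $\Sym_n$ acting on coordinates. The one place requiring genuine care is verifying that the $\Sym_n$-action commutes with the diagonal $T$-action and that each translate $\pi \cdot \Lambda_n$ is therefore a \emph{single} $T$-orbit rather than several; this is exactly where transitivity on $\Lambda_n$ is used, and it simultaneously delivers both the orbit count and the identification of the stabilisers.
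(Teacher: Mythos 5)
Your proof is correct and takes essentially the same approach as the paper: the paper's proof decomposes $\Delta_n$ as a disjoint union of copies of $\Lambda_n$ indexed by coset representatives of $\Sym_n/\operatorname{Cyc}_n$ via the $T$-equivariant coordinate-permutation maps $i_\sigma$, which is precisely your covering $\Delta_n = \bigcup_{\pi \in \Sym_n} \pi \cdot \Lambda_n$ made disjoint (and matches your observation that rotations preserve $\Lambda_n$, giving $(n-1)!$ orbits). Both arguments then conclude in the same way, importing transitivity from Lemma \ref{lemma:transitive action Lambdan} and the $\prod_1^n F$ stabilisers from Lemma \ref{lemma:stab of T on Lambda2}.
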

\begin{proof}
Let $\operatorname{Cyc}_n$ denote the subgroup of $\Sym_n$ generated by the cyclic permutation $(1,2 \ldots, n)$.  Let $R$ be a set of representatives of the cosets $\Sym_n /\operatorname{Cyc}_n $.  For any $\sigma \in R$, we define a $T$-map 
 \begin{align*}
i_\sigma:\Lambda_n &\longrightarrow \Delta_n  \\
(x_1, \ldots, x_n) &\longmapsto (x_{\sigma(1)}, \ldots, x_{\sigma(n)}).
 \end{align*}
 Taking a product of these gives an $T$-map,
 \[
  \coprod_{\sigma \in R} \Lambda_n \stackrel{\coprod i_\sigma}{\longrightarrow} \Delta_n,
 \]
 which one checks is a bijection.  Since the $T$-stabiliser of any element of $\Lambda_n$ is $\F_\infty$ (Lemma \ref{lemma:stab of T on Lambda2}), the $T$-stabiliser of any element of $\Delta_n$ is also.
\end{proof}

\begin{Theorem}\label{theorem:tQT is Finfty}
 The group $\tilde{Q}T$ is of type $\F_\infty$.
\end{Theorem}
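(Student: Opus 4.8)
The plan is to apply Proposition \ref{prop:G is Finfty} to the semidirect decomposition $\tilde{Q}T \cong \Sym(Z) \rtimes T$, which is obtained by restricting the split short exact sequence of Lemma \ref{lemma:QAutX SES is split} to the preimage $\pi^{-1}(T)$ of $T \le V$. Here $W = \Sym(Z)$, with generating set $S$ the unordered $2$-element subsets of $Z$, and $Q = T$ acts on $W$ by permuting $S$. Exactly as in the proof of Theorem \ref{theorem:QAut(X) is Finfty}, the whole statement then reduces to checking, for every positive integer $n$, that $T$ acts on $\mathcal{S}_n$ with finitely many orbits and with stabilisers of type $\F_\infty$.

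For this I would exploit the surjective $T$-map $\Delta_{2n} \to \mathcal{S}_n$ together with the technical Lemma \ref{lemma:stab of T on Deltan}, which asserts that $T$ acts on $\Delta_{2n}$ with finitely many orbits and $\F_\infty$ stabilisers. Finitely many $T$-orbits on $\mathcal{S}_n$ is then immediate, since a surjection of $T$-sets carries finitely many orbits onto finitely many orbits. I would stress here the one structural difference from the $\tilde{Q}V$ argument: in the $V$-case transitivity on $\Delta_{2n}$ forced transitivity on $\mathcal{S}_n$, whereas for $T$ we obtain only finitely many orbits, which is all that Proposition \ref{prop:G is Finfty} actually requires.

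For the stabilisers I would run the argument of Lemma \ref{lemma:V acts on Sn with Finfty stab} with Lemma \ref{lemma:stab of T on Deltan} substituted for its $V$-counterpart. Fix $B = \{(x_1,y_1), \ldots, (x_n,y_n)\} \in \mathcal{S}_n$ and let $A \in \Delta_{2n}$ be an ordered tuple lying over $B$. Any element of $\Stab_T(B)$ permutes the underlying set of $2n$ vertices, yielding a homomorphism $\Stab_T(B) \to \Sym_{2n}$ whose kernel is precisely the pointwise stabiliser of those $2n$ vertices, namely $\Stab_T(A)$. The latter is of type $\F_\infty$ by Lemma \ref{lemma:stab of T on Deltan}, and $\Sym_{2n}$ is finite, so $\Stab_T(B)$ is an extension of a finite group by an $\F_\infty$ group and hence is itself $\F_\infty$ by \cite[Proposition 2.7]{Bieri-HomDimOfDiscreteGroups}.

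With both hypotheses of Proposition \ref{prop:G is Finfty} verified, the conclusion that $\tilde{Q}T = \Sym(Z) \rtimes T$ is of type $\F_\infty$ follows immediately. I do not expect a serious obstacle at this stage: the genuinely hard input is Lemma \ref{lemma:stab of T on Deltan}, already in hand, and the only point needing care is the passage from the ordered configurations in $\Delta_{2n}$ to the unordered pair-sets in $\mathcal{S}_n$, where one must absorb the loss of transitivity into the weaker ``finitely many orbits'' condition and check that the finite $\Sym_{2n}$-quotient argument survives that weakening.
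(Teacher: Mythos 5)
Your proposal is correct and takes essentially the same route as the paper: the paper's proof of Theorem \ref{theorem:tQT is Finfty} likewise invokes Proposition \ref{prop:G is Finfty} for $\Sym(Z) \rtimes T$, deducing finitely many $T$-orbits and $\F_\infty$ stabilisers on $\mathcal{S}_n$ from Lemma \ref{lemma:stab of T on Deltan} via ``the argument of Lemma \ref{lemma:V acts on Sn with Finfty stab}'', which is exactly the surjection $\Delta_{2n} \to \mathcal{S}_n$ plus the $\Stab_T(B) \to \Sym_{2n}$ extension argument you spell out. Your write-up simply makes explicit the two details the paper leaves implicit, namely that transitivity is weakened to finitely many orbits and that the finite-quotient argument survives this weakening.
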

\begin{proof}
Again, we let $S$ be the unordered $2$-element subsets of $Z$.  Using the argument of Lemma \ref{lemma:V acts on Sn with Finfty stab} we deduce from Lemma \ref{lemma:stab of T on Deltan} that $T$ acts with finitely many orbits and with stabilisers of type $\F_\infty$ on $\mathcal{S}_n$.  The statement follows from Proposition \ref{prop:G is Finfty}.  
\end{proof}

To prove that $QF$ is of type $\F_\infty$, we will again use Proposition \ref{prop:G is Finfty}, but now we are considering the $F$-action on the symmetric group $\Sym(\{0,1\}^*)$ so we let $S$ be the unordered $2$-element subsets of $\{0,1\}^*$.  Thus the correct analogue of Lemma \ref{lemma:stab of T on Deltan} is the following.

\begin{Lemma}\label{lemma:stab of F on notquiteDeltan}
 $F$ acts with finitely many orbits and stabilisers of type $\F_\infty$ on 
 \[
  X_n = \{ (x_1, \ldots, x_n) \in \prod_1^n \{0,1\}^* : x_i \neq x_j \,\forall i \neq j\}.
 \]
\end{Lemma}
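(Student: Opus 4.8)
The plan is to imitate the proof of Lemma \ref{lemma:stab of T on Deltan}, decomposing $X_n$ as a disjoint union of copies of the totally ordered configuration space $\Sigma_n$ from Lemma \ref{lemma:transitive action Sigman}. The crucial structural fact is that every $\iota(f)$ with $f \in F$ preserves the total order $\le_{\lex}$ on $\{0,1\}^*$ (Remark \ref{remark:explicit description of iota for F}); consequently the relative $\le_{\lex}$-ordering of the coordinates of a point of $X_n$ is an $F$-invariant. Where the $T$-argument could record only a cyclic order, and hence ranged over the coset representatives $\Sym_n/\operatorname{Cyc}_n$, here the full linear order is preserved, so the orbits will be indexed by all of $\Sym_n$.

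Concretely, for each $\sigma \in \Sym_n$ I would define the $F$-equivariant reindexing map
\begin{align*}
i_\sigma : \Sigma_n &\longrightarrow X_n \\
(x_1, \ldots, x_n) &\longmapsto (x_{\sigma(1)}, \ldots, x_{\sigma(n)}),
\end{align*}
and assemble these into
\[
\coprod_{\sigma \in \Sym_n} \Sigma_n \stackrel{\coprod i_\sigma}{\longrightarrow} X_n.
\]
I would then check that this is a bijection of $F$-sets: any tuple of distinct words sorts uniquely into a strictly increasing $\le_{\lex}$-tuple, which gives surjectivity and, together with distinctness of the coordinates, disjointness of the images for distinct $\sigma$. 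Since $F$ acts transitively on $\Sigma_n$ (Lemma \ref{lemma:transitive action Sigman}), it follows that $F$ acts on $X_n$ with exactly $n!$ orbits, and, because each $i_\sigma$ is an injective $F$-map, the $F$-stabiliser of any point of $X_n$ equals the stabiliser of the corresponding point of $\Sigma_n$, which by Lemma \ref{lemma:stab of F on sigma} is isomorphic to $\prod_1^{n+1} F$. As $F$ is of type $\F_\infty$ and a finite direct product of $\F_\infty$ groups is again $\F_\infty$, every such stabiliser is $\F_\infty$.

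The routine content is the verification that $\coprod i_\sigma$ is a bijection; the only genuine point requiring care is to confirm that the index set is all of $\Sym_n$ rather than a proper quotient. This is exactly where the difference from Lemma \ref{lemma:stab of T on Deltan} lives: a nontrivial permutation of a strictly $\le_{\lex}$-increasing tuple is never again strictly increasing, so no two distinct $\sigma$ produce the same image, in contrast to the $T$-case where cyclic permutations of a cyclically ordered tuple remain cyclically ordered. I do not anticipate any serious obstacle, since both the transitivity and the stabiliser computation have already been established in Lemmas \ref{lemma:transitive action Sigman} and \ref{lemma:stab of F on sigma}.
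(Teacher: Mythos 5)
Your proposal is correct and matches the paper's proof essentially verbatim: the paper constructs the same bijection of $F$-sets $\coprod_{\sigma \in \Sym_n} \Sigma_n \to X_n$ via the reindexing maps $i_\sigma$ and then invokes Lemmas \ref{lemma:transitive action Sigman} and \ref{lemma:stab of F on sigma} exactly as you do. Your added observation that the index set is all of $\Sym_n$ (rather than the cosets $\Sym_n/\operatorname{Cyc}_n$ used in the $T$-case of Lemma \ref{lemma:stab of T on Deltan}), because $\iota(f)$ preserves the full $\le_{\lex}$ order, is the correct justification, which the paper leaves implicit.
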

\begin{proof}
 The proof is similar to that of Lemma \ref{lemma:stab of T on Deltan}, we construct a bijection of $F$-sets
 \[
  \coprod_{\sigma \in \Sym_n} \Sigma_n \stackrel{\coprod i_\sigma}{\longrightarrow} X_n,
 \]
 where $i_\sigma$ is the map
 \begin{align*}
   i_\sigma : \Sigma_n &\longrightarrow X_n \\
   (x_1, \ldots, x_n) &\longmapsto (x_{\sigma(1)}, \ldots, x_{\sigma(n)}).
 \end{align*}
 Now, since the $F$-stabiliser of any element of $\Sigma_n$ is $\F_\infty$ (Lemma \ref{lemma:stab of F on sigma}), the $F$-stabiliser of any element of $X_n$ is also.
\end{proof}

\begin{Theorem}\label{theorem:QF is Finfty}
 The group $QF$ is of type $\F_\infty$.
\end{Theorem}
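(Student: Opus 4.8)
The plan is to mimic exactly the proofs of Theorems~\ref{theorem:QAut(X) is Finfty} and~\ref{theorem:tQT is Finfty}, feeding the semi-direct product decomposition $QF = \Sym(\{0,1\}^*) \rtimes F$ of Lemma~\ref{lemma:QF SES splits} into Proposition~\ref{prop:G is Finfty}. I take the Coxeter group to be $W = \Sym(\{0,1\}^*)$ with generating set $S$ the unordered $2$-element subsets of $\{0,1\}^*$ (Lemma~\ref{lemma:presentation of Sym01}), and the acting group to be $Q = F$. The first thing to check is that $F$ acts on $W$ by automorphisms fixing $S$ setwise; this is immediate from the splitting, since for $f \in F$ the conjugation formula $f \sigma_{x,y} f^{-1} = \sigma_{f(x), f(y)}$ (as in the proof of Theorem~\ref{theorem:QF presentation}) shows $f(S) = S$. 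Hence the Davis-complex machinery underlying Proposition~\ref{prop:G is Finfty} applies to $G = QF$.

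It then remains to verify, for every $n$, that $F$ acts on $\mathcal{S}_n$ (the $n$-element spherical subsets of $S$) with finitely many orbits and with stabilisers of type $\F_\infty$. I would use the surjective $F$-equivariant map $X_{2n} \to \mathcal{S}_n$ sending a tuple of $2n$ distinct words to the $n$ pairs it determines, the exact analogue of the map $\Delta_{2n} \to \mathcal{S}_n$ used for $\tilde{Q}V$ and $\tilde{Q}T$. Lemma~\ref{lemma:stab of F on notquiteDeltan} supplies that $F$ acts on $X_{2n}$ with finitely many orbits, so surjectivity of this map yields finitely many $F$-orbits on $\mathcal{S}_n$ at once. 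Unlike the $V$-case there is no transitivity to exploit, but finiteness of the orbit set is all that Proposition~\ref{prop:G is Finfty} demands.

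For the stabiliser condition I would reproduce verbatim the device of Lemma~\ref{lemma:V acts on Sn with Finfty stab}. Given an element of $\mathcal{S}_n$, let $A \subseteq X_{2n}$ be its full preimage; there is a homomorphism $\Stab_F(A) \to \Sym_{2n}$ recording the induced permutation of the $2n$ words, whose kernel is the $F$-stabiliser of a single ordered tuple in $X_{2n}$, hence of type $\F_\infty$ by Lemma~\ref{lemma:stab of F on notquiteDeltan}. As $\Sym_{2n}$ is finite, combining with~\cite[Proposition 2.7]{Bieri-HomDimOfDiscreteGroups} shows $\Stab_F(A)$ is of type $\F_\infty$; since $\Stab_F(A)$ is precisely the $F$-stabiliser of the original element of $\mathcal{S}_n$, the stabiliser hypothesis holds. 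Proposition~\ref{prop:G is Finfty} then concludes that $QF$ is of type $\F_\infty$.

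I do not expect a genuine obstacle here: all the substantive work—contractibility of the Davis complex, construction of the $W \rtimes Q$-action, and most importantly the $\F_\infty$-property of the point stabilisers of $F$ acting on tuples of distinct words—is already in place, with Lemma~\ref{lemma:stab of F on notquiteDeltan} carrying the decisive content. The only delicate bookkeeping is the passage from the stabiliser of an ordered tuple to that of the unordered spherical subset, controlled by the finite-by-$\F_\infty$ extension above, exactly as in the $\tilde{Q}T$ argument.
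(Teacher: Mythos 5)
Your proposal is correct and is essentially the paper's own proof: the paper likewise feeds the decomposition $QF = \Sym(\{0,1\}^*) \rtimes F$ into Proposition~\ref{prop:G is Finfty}, deducing from Lemma~\ref{lemma:stab of F on notquiteDeltan} via the argument of Lemma~\ref{lemma:V acts on Sn with Finfty stab} that $F$ acts on $\mathcal{S}_n$ with finitely many orbits and stabilisers of type $\F_\infty$. You have merely written out explicitly the ordered-to-unordered bookkeeping (the finite-kernel map to $\Sym_{2n}$ and \cite[Proposition 2.7]{Bieri-HomDimOfDiscreteGroups}) that the paper compresses into a citation of those two lemmas.
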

\begin{proof}
Using the argument of Lemma \ref{lemma:V acts on Sn with Finfty stab} we deduce from Lemma \ref{lemma:stab of F on notquiteDeltan} that $F$ acts with finitely many orbits and stabilisers of type $\F_\infty$ on $\mathcal{S}_n$.  The statement now follows from Proposition \ref{prop:G is Finfty}.
\end{proof}

\subsection{The group \texorpdfstring{$\Stab_V((0^{n-2}, \ldots, 0, \varepsilon, \zeta))$}{Stab} is of type \texorpdfstring{$\F_\infty$}{Finfty}}\label{subsection:Stab is Finfty}
In this section we study the action of $V$ on $\Delta_n$ and prove in Corollary \ref{cor:V acts on Deltan with Finfty stab} that it acts with stabilisers of type $\F_\infty$.

Let $T$ be a rooted subtree of $\mathcal{T}_{2,c}$ and recall the definition of 
\[
 b_T : \operatorname{nodes}(T) \cup \{\zeta\} \longrightarrow \operatorname{leaves}(T)
\]
from Section \ref{subsection:QV}.  For $x \in \{0,1\}^*$ we define $l_x(T)$ to be the word length in $\{0,1\}^*$ of the leaf $b_T(x)$.  This definition has already been seen in Section \ref{subsection:finite presentation for QAutX}, just before Lemma \ref{lemma:conj with lambdamu}.

Let $v \in V$ satisfy $\iota(v)(x) = x$ and let $(L, R, f)$ be a representative for $v$.  We define 
\[
 \tilde\chi_x (v) = l_x(L) - l_x(R).
\]

\begin{Lemma}\label{lemma:xi characters}
 For any $x \in Z$, the map $\tilde\chi_x$ is a group homomorphism $\Stab_V(x) \to \ZZ$.
\end{Lemma}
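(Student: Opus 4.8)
The plan is to reinterpret $\tilde\chi_x(v)$ as a ``contraction exponent'' of $\iota(v)$ along a fixed end of $\mathcal{T}_{2,c}$, a quantity which is manifestly independent of any tree-diagram representative and which telescopes under composition. First I would pin down the leaf $b_T(x)$. Because $b_T$ is the \emph{order-preserving} bijection of Remark~\ref{remark:order preserving bijection b_tau}, and because in the $\le_{\lex}$ order everything in the subtree rooted at $x0$ is $<_{\lex}x$ while $x<_{\lex}$ everything in the subtree rooted at $x1$, a short check shows $b_T(x)$ is the $\le_{\lex}$-largest leaf that is $<_{\lex}x$, namely the rightmost leaf $x01^{j}$ of the subtree rooted at $x0$ (and $b_T(\zeta)=1^{m}$, the rightmost leaf of $T$), for some $j,m\ge 0$ depending on $T$. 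The key point is that, although $j$ depends on $T$, the infinite word $b_T(x)\cdot 1^\infty = x01^\infty =: \xi_x$ (resp.\ $1^\infty$ for $\zeta$) does \emph{not}; this is the fixed end along which I will measure.

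Next I would record how $\iota(v)$ acts near $\xi_x$. If $(L,R,f)$ represents $v$ with $x\in\operatorname{nodes}(L)\cup\{\zeta\}$, then $\iota(v)(x)=x$ forces $f(b_L(x))=b_R(x)$, and by the construction of $\iota$ in Lemma~\ref{lemma:QAutX SES is split} the map $\iota(v)$ sends the full subtree below the leaf $b_L(x)$ onto the full subtree below $b_R(x)$ by the prefix substitution $b_L(x)\mapsto b_R(x)$. Hence for every $a$ below $b_L(x)$ — in particular for $a=x01^{N}$ with $N\ge j$ — one gets $|\iota(v)(a)|=|a|-(l_x(L)-l_x(R))$, so that
\[
 \tilde\chi_x(v)=|a|-|\iota(v)(a)| \qquad\text{for all sufficiently large } N,\ a=x01^{N}
\]
(and similarly $a=1^{N}$ when $x=\zeta$). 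The right-hand side refers only to $\iota(v)$ and $x$, not to the representative; since any two representatives admit a common $a=x01^{N}$ with $N$ large enough for both, this proves $\tilde\chi_x$ is well defined, and it is integer-valued because word lengths are.

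Finally, additivity would follow by telescoping. For $v,w\in\Stab_V(x)$, taking $N$ large and $a=x01^{N}$, the displayed formula for $w$ gives $\iota(w)(a)=x01^{N-\tilde\chi_x(w)}$, again a point of the form $x01^{N'}$ with $N'$ large; applying the formula for $v$ to this point and using $\iota(vw)=\iota(v)\circ\iota(w)$ from Lemma~\ref{lemma:QAutX SES is split} yields $|\iota(vw)(a)|=|a|-\tilde\chi_x(v)-\tilde\chi_x(w)$, whence $\tilde\chi_x(vw)=\tilde\chi_x(v)+\tilde\chi_x(w)$. The main obstacle is really the local description in the second step: one must check carefully that $\iota(v)$ acts below $b_L(x)$ by the stated prefix substitution and that $b_L(x)$ has the form $x01^{\ast}$, so that iterating along $\xi_x$ stays in the regime where the local formula applies; once this is in hand, well-definedness and additivity are bookkeeping. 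An equivalent route avoiding ends is to verify directly that a single caret expansion of $(L,R,f)$ changes $l_x(L)$ and $l_x(R)$ by the same amount (both by $1$ if the caret sits at $b_L(x)=f^{-1}(b_R(x))$, and by $0$ otherwise), and then to take a common middle tree, writing $v=(L,M,f)$ and $w=(M,N,g)$ so that $\tilde\chi_x$ telescopes as $(l_x(L)-l_x(M))+(l_x(M)-l_x(N))$.
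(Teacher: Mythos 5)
Your proposal is correct, but your primary argument takes a genuinely different route from the paper's. The paper's proof is precisely the ``equivalent route avoiding ends'' that you sketch in your final two sentences: well-definedness holds because $\tilde\chi_x$ is only defined on $\Stab_V(x)$, so a caret expansion of $(L,R,f)$ sits at $b_L(x)$ if and only if it sits at $b_R(x)=f(b_L(x))$, changing $l_x(L)$ and $l_x(R)$ by the same amount, and since any two representatives have a common expansion the value $l_x(L)-l_x(R)$ is independent of the representative; additivity then follows by choosing representatives $(L,R,f)$ for $v$ and $(R,S,g)$ for $w$ with common middle tree, so $\tilde\chi_x(vw)=l_x(L)-l_x(S)$ telescopes. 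Your main route instead establishes the representative-free formula $\tilde\chi_x(v)=\lvert a\rvert-\lvert\iota(v)(a)\rvert$ for $a=x01^{N}$ (resp.\ $a=1^{N}$ when $x=\zeta$) with $N$ sufficiently large, interpreting $\tilde\chi_x$ as a translation number of $\iota(v)$ along the fixed end $x01^\infty$ (resp.\ $1^\infty$). The supporting steps are all sound: since everything under $x0$ is $\lneq_{\lex}x\lneq_{\lex}$ everything under $x1$, the order-preserving bijection of Remark \ref{remark:order preserving bijection b_tau} does satisfy $b_T(x)=x01^{j}$, the rightmost leaf of the subtree at $x0$; the stabiliser condition forces $f(b_L(x))=b_R(x)$ via $\iota(v)(x)=b_R^{-1}\circ f\circ b_L(x)$; and the construction of $\iota$ in Lemma \ref{lemma:QAutX SES is split} acts below the leaves of $L$ by prefix substitution, which is what keeps the iteration along the end in the regime where the local formula applies. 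What your route buys: well-definedness and the homomorphism property drop out simultaneously from $\iota(vw)=\iota(v)\circ\iota(w)$, and you get a dynamical interpretation (germ of contraction at a fixed end) that meshes nicely with the paper's later observation that $\pi^*\chi_1=\tilde\chi_\zeta$. What the paper's route buys: it is two lines, needs no analysis of $b_T$ or of the $\le_{\lex}$ order, and never leaves the tree-diagram calculus. One small point your ``sufficiently large $N$'' device quietly absorbs, and which the paper also glosses over: $l_x(L)$ only makes sense once $x\in\operatorname{nodes}(L)\cup\{\zeta\}$, so representatives must first be expanded until this holds (note $x\in\operatorname{nodes}(L)\cup\{\zeta\}$ together with $\iota(v)(x)=x$ automatically puts $x$ in $\operatorname{nodes}(R)\cup\{\zeta\}$).
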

\begin{proof}
 Since $\tilde\chi_x$ is defined only on $\Stab_V(x)$, the values of $\tilde\chi_x$ are invariant under adding carets.  Thus, since any two tree diagram representatives of an element of $V$ have a common expansion, values taken by $\tilde\chi_x$ don't depend on the tree diagram representative.  Let $v,w $ be any two elements of $\Stab_V(x)$, let $(L, R, f)$ be a tree diagram representative for $v$ and let $(R, S, g)$ be a tree diagram representative for $w$.  Then,
 \[
  \tilde\chi_x(vw) = l_x(L) - l_x(S) = l_x(L) - l_x(R) + l_x(R) - l_x(S) = \tilde\chi_x(f) + \tilde\chi_x(g).
 \]

\end{proof}

Let $T_1$ be the tree with $2$ leaves and let $T_n$ be obtained from $T_{n-1}$ by adjoining carets to the first and second leaves (measuring from smallest to largest using $\le_{\lex}$).  In particular, $T_n$ has $2n$ leaves.  See Figure \ref{figure:Ln} for a picture of $T_3$.  Let $L_n$ be the subgroup of $V$ consisting of elements which fix the full subtrees with root the $2i^\text{th}$-leaf of $T_n$ for all $1 \le i \le n$.  For example, the group $L_3$ is the subgroup of $V$ fixing the full subtrees under $001$, $011$, and $11$.  One checks that $L_n \in \Stab_V((0^{n-2}, \ldots, 0, \varepsilon, \zeta))$.

\begin{figure}[ht]
 \begin{tikzpicture}[->,>=stealth',level/.style={sibling distance = 5cm/#1,
  level distance = 1cm}] 
  \node [arn_n] at (0,0) {$\varepsilon$}
    child{ node [arn_n] {0} 
       child{ node [arn_n] {00} 
          child{ node {000} }
          child{ node {001} }
       }
       child{ node [arn_n] {01}
          child{ node {010} }
          child{ node {011} }
       }
    }
    child{ node [arn_n] {1}
       child{ node {10} }
       child{ node {11} }
    }; 
  \node [arn_n] at (5, 0) {$\zeta$};
\end{tikzpicture}
 \caption{The tree $T_3$ from the proof of Lemma \ref{Lemma:Ln iso V}.}\label{figure:Ln}
\end{figure}

\begin{Lemma}\label{Lemma:Ln iso V}
 The group $L_n$ is isomorphic to $V$ for all $n \ge 2$.
\end{Lemma}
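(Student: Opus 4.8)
The plan is to recognise $L_n$ as a Higman--Thompson group on $n$ disjoint binary Cantor sets and then to invoke the classical isomorphism $V_{2,n}\cong V$. Write $U_1,\ldots,U_n$ for the full binary subtrees of $T_n$ rooted at the $n$ odd-indexed leaves (positions $1,3,\ldots,2n-1$), and let $F_n = U_1 \sqcup \cdots \sqcup U_n$ be the resulting forest; each $U_i$ is canonically identified with $\mathcal{T}_{2,c}$. The even-indexed subtrees are exactly those an element of $L_n$ must fix pointwise, so $F_n$ is precisely the ``support region'' on which $L_n$ is free to act.

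First I would make precise that $L_n$ is nothing but the group of tree-pair homeomorphisms of $\Ends(F_n)$. Given $g\in L_n$ with tree-pair representative $(L,R,f)$, I would expand $L$ and $R$ so that each refines $T_n$ (contains every leaf of $T_n$), and, using that $g$ is the identity on the even subtrees, pass to a common refinement on which $f$ fixes every leaf lying in an even subtree. Since $g$ is a bijection of $\Ends(\mathcal{T}_{2,c})$ fixing $\Ends$ of the even subtrees pointwise, it preserves the complementary set $\Ends(F_n)$ and restricts there to a tree-pair homeomorphism; conversely, any tree-pair homeomorphism of $\Ends(F_n)$ extends by the identity on the even subtrees to an element of $L_n$. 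Checking that $g\mapsto g|_{F_n}$ is a bijective group homomorphism is then routine, giving $L_n \cong V_{2,n}$, the group of tree-pair homeomorphisms of a forest of $n$ binary trees.

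It remains to show $V_{2,n}\cong V$. The base case $n=2$ is already in hand: there $L_2$ is exactly the subgroup $V^\prime$ of Lemma \ref{lemma:stab of V on prodZ} fixing the subtrees under $01$ and $11$, and that lemma supplies an explicit isomorphism $V\cong V^\prime$, i.e.\ the instance $V_{2,1}\cong V_{2,2}$. For the general statement I would either cite the classical fact (Higman, Brown) that $V_{2,n}\cong V_{2,1}=V$ for every $n$ (as $n\equiv 1 \pmod{2-1}$), or argue inductively: splitting the root of $U_1$ into its two children identifies $\Ends(F_{n-1})$ with $\Ends(F_n)$ and matches their tree-pair groups, yielding $V_{2,n}\cong V_{2,n-1}$, with the induction bottoming out at Lemma \ref{lemma:stab of V on prodZ}. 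The main obstacle I expect is the first step: verifying that every $g\in L_n$ admits a representative refining $T_n$ on which $f$ is confined to a forest homeomorphism of $F_n$, so that the identification with $V_{2,n}$ is genuinely a homomorphism. That is the careful leaf-bookkeeping; the final passage $V_{2,n}\cong V$ is standard.
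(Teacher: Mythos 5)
Your proposal is correct and follows essentially the same route as the paper, which identifies $L_n$ with $V_{2,n}$ acting on the forest of full subtrees rooted at the $(2i-1)^{\text{th}}$ leaves of $T_n$ and then cites Higman's theorem that $V_{2,n} \cong V$. The extra care you take with tree-pair refinements, and the alternative inductive reduction $V_{2,n} \cong V_{2,n-1}$, are elaborations of details the paper leaves implicit rather than a different argument.
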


\begin{proof}
The group $L_n$ is a copy of $V_{2,n}$, Thompson's group $V$ but on the disjoint union of $n$-trees.  These trees are exactly the full subtrees with roots the $(2i-1)^\text{th}$-leaves of $T_n$ for each $1 \le i \le n$.  There is a result of Higman that $V_{2,n}$ is isomorphic to $V$ \cite{Higman-FinitelyPresentedInfiniteSimpleGroups}.
\end{proof}

For $1 \le i \le n$, let $\lambda_{n,i}$ be the element of $F$ with tree diagram representative $(L_{n,i}, R_{n,i})$ where $L_{n,i}$ is obtained from $T_n$ by adjoining a caret to the $2i^\text{th}$-leaf, and $R_{n,i}$ is obtained from $T_n$ by adjoining a caret to the $(2i-1)^\text{th}$-leaf of $T_n$.  For example, $\lambda_{2,1}$ and $\lambda_{2,2}$ are the elements $\lambda$ and $\mu$ of Section \ref{subsection:finite presentation for QAutX} and $\lambda_{3,2}$ is shown in Figure \ref{figure:lambda32}.

One calculates that:
\begin{Lemma}\label{lemma:properties of tildechi}
For all positive integers $n$, all integers $i$ such that $1 \le i \le n-2$, and all integers $j$ with $1 \le j \le n$,
 \begin{align*}
\tilde\chi_\zeta : L_n &\longmapsto 0, \\\relax
\tilde\chi_\varepsilon : L_n &\longmapsto 0, \\\relax
\tilde\chi_{0^i} : L_n &\longmapsto 0, \\\relax
\tilde\chi_\zeta : \lambda_{n,j} &\longmapsto \left\{ \begin{array}{l l}
                                             1 & \text{if $j = n$,} \\
                                             0 & \text{else,}
                                            \end{array} \right. \\\relax
\tilde\chi_\varepsilon : \lambda_{n,j} &\longmapsto \left\{ \begin{array}{l l}
                                             1 & \text{if $j = n-1$,} \\
                                             0 & \text{else,}
                                            \end{array} \right.\\\relax
\tilde\chi_{0^i} : \lambda_{n,j} &\longmapsto \left\{ \begin{array}{l l}
                                             1 & \text{if $j = n-1-i$,} \\
                                             0 & \text{else.}
                                            \end{array} \right.
\end{align*}
\end{Lemma}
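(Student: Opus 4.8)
The plan is to compute everything directly from the definition $\tilde\chi_x(v)=l_x(L)-l_x(R)$, using that this value is independent of the chosen representative $(L,R,f)$ (Lemma \ref{lemma:xi characters}); this freedom lets me always work with representatives built by adding carets to the fixed tree $T_n$. The first step is to pin down $b_{T_n}$ on the three families of nodes. Listing the internal nodes of $T_n$ together with $\zeta$ in increasing $\le_{\lex}$-order against the leaves in increasing $\le_{\lex}$-order, $b_{T_n}$ is the rank-by-rank matching, and a direct induction on $n$ (passing from $T_{n-1}$ to $T_n$ only subdivides the two $\le_{\lex}$-smallest leaves) shows that $\zeta,\varepsilon,0^i$ land on the even-ranked leaves: $b_{T_n}(\zeta)$ is leaf $2n$, $b_{T_n}(\varepsilon)$ is leaf $2n-2$, and $b_{T_n}(0^i)$ is leaf $2(n-1-i)$. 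These even leaves are exactly the roots of the subtrees that $L_n$ fixes, which is what makes the whole computation go through.

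For the three vanishing statements on $L_n$, I would use that $L_n\cong V_{2,n}$ acts on the disjoint union of the subtrees rooted at the odd-ranked leaves (Lemma \ref{Lemma:Ln iso V}). Thus every $v\in L_n$ has a representative $(L,R,f)$ in which $L$ and $R$ arise from $T_n$ by adjoining carets only below odd-ranked leaves, leaving every even leaf intact. Iterating the second assertion of Remark \ref{remark:order preserving bijection b_tau}, the nodes $\zeta,\varepsilon,0^i$ still map to the same even leaves under both $b_L$ and $b_R$, so $l_x(L)=l_x(R)=l_x(T_n)$ and $\tilde\chi_x(v)=0$.

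For $\lambda_{n,j}$, represented by $(L_{n,j},R_{n,j})$, the tree $R_{n,j}$ is $T_n$ with one extra caret at the odd leaf $2j-1$, so every even leaf survives and $l_x(R_{n,j})=l_x(T_n)$ for all the relevant $x$. The tree $L_{n,j}$ is $T_n$ with a caret at the even leaf $2j$, and by the first step this leaf is precisely $b_{T_n}(x_0)$ with $x_0=\zeta$ when $j=n$, $x_0=\varepsilon$ when $j=n-1$, and $x_0=0^{n-1-j}$ when $1\le j\le n-2$. For every relevant $x\ne x_0$ the subdivided leaf is not the one assigned to $x$, so $l_x(L_{n,j})=l_x(T_n)$ and $\tilde\chi_x(\lambda_{n,j})=0$; reading the single surviving index $x_0$ back through the correspondence above yields exactly the case distinctions $j=n$, $j=n-1$, $j=n-1-i$ in the statement. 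Along the way one should also confirm $\lambda_{n,j}\in\Stab_V(x)$ so that $\tilde\chi_x(\lambda_{n,j})$ is defined; this comes from the same rank count, since the two inserted nodes (at leaves $2j-1$ and $2j$) shift the rank of each relevant $x$ by the same amount in $L_{n,j}$ and in $R_{n,j}$.

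The crux, and the step I expect to be the main obstacle, is the single nonzero value $\tilde\chi_{x_0}(\lambda_{n,j})=1$, i.e.\ showing the caret at $\ell:=b_{T_n}(x_0)$ raises the depth of $x_0$'s image by exactly one. By Remark \ref{remark:order preserving bijection b_tau} the map $b$ is unchanged on all leaves except the two new ones $\ell0,\ell1$; counting domain and range then forces the two leftover domain elements, namely $x_0$ and the new internal node $\ell$, to map order-preservingly onto $\{\ell0,\ell1\}$. Hence it suffices to verify $\ell\lneq_{\lex}x_0$, which sends $x_0\mapsto\ell1$ and gives the depth increase of one. This inequality is where the explicit shape of $T_n$ enters: for $x_0=\zeta$ it is immediate; for $x_0=\varepsilon$ the leaf $b_{T_n}(\varepsilon)$ begins with $0$ because the only leaves under $1$ are $10,11$, which occupy the top two ranks; and for $x_0=0^i$ the leaf $b_{T_n}(0^i)$ begins with $0^{i+1}$, whence $b_{T_n}(0^i)\lneq_{\lex}0^i$. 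The vanishing statements are routine once the adapted representatives are in place, so this order comparison together with the rank bookkeeping for $b_{T_n}$ is the only genuinely delicate point.
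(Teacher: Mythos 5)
Your argument is correct, and it is precisely the calculation the paper leaves implicit---the lemma appears there with no proof beyond the phrase ``One calculates that'', so there is nothing to diverge from: identifying $b_{T_n}$ by rank-matching (so that $\zeta$, $\varepsilon$, $0^i$ land on the even-ranked leaves $2n$, $2n-2$, $2(n-1-i)$, which are exactly the roots fixed by $L_n$), and then tracking the single caret via Remark \ref{remark:order preserving bijection b_tau} together with the order comparison $b_{T_n}(x_0) \lneq_{\lex} x_0$, is exactly the intended verification. One boundary point, inherited from the paper's statement rather than a flaw in your proof: for $n=1$ the node $\varepsilon$ lies strictly between the two leaves $0$ and $1$ of $T_1$, so your rank-shift criterion (correctly) fails there---$\iota(\lambda_{1,1})$ does not fix $\varepsilon$, and $\tilde\chi_\varepsilon(\lambda_{1,1})$ is undefined---hence the $\tilde\chi_\varepsilon$ and $\tilde\chi_{0^i}$ lines should be read with $n\ge 2$ (respectively $n\ge 3$), which is all that is ever used.
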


\begin{figure}[ht]
\begin{tikzpicture}[->,>=stealth',level/.style={sibling distance = 5cm/#1,
  level distance = 1cm}] 
  \node [arn_n] at (0,0) {$\varepsilon$}
    child{ node [arn_n] {} 
       child{ node [arn_n] {} 
          child{ node {000} }
          child{ node {001} }
       }
       child{ node [arn_n] {}
          child{ node {010} }
          child{ node [arn_n] {} 
              child{ node {0110} }
              child{ node {0111} }
          }
       }
    }
    child{ node [arn_n] {}
       child{ node {10} }
       child{ node {11} }
    }; 
  \node [arn_n] at (5, 0) {$\zeta$};
  
  \draw (0,-5) -- (0,-6);
  
  \node [arn_n] at (0,-7) {$\varepsilon$}
    child{ node [arn_n] {} 
       child{ node [arn_n] {} 
          child{ node {000} }
          child{ node {001} }
       }
       child{ node [arn_n] {}
          child{ node [arn_n] {}
              child{ node {010} }
              child{ node {0110} }
          }
          child{ node {0111} }
       }
    }
    child{ node [arn_n] {}
       child{ node {10} }
       child{ node {11} }
    }; 
  \node [arn_n] at (5, -7) {$\zeta$};
\end{tikzpicture}
 \caption{$\lambda_{3,2}$}\label{figure:lambda32}
\end{figure}

\begin{Lemma}\label{lemma:conj with lambda n i}
 Let $v \in \Stab_V((0^{n-2}, \ldots, \varepsilon, \zeta))$.  For $1 \le i \le n$, there exist non-negative integers $a_i$ and $b_i$ such that the element
 $\lambda_{n,1}^{-a_1}\cdots \lambda_{n,n}^{-a_n}v\lambda_{n,n}^{b_1} \lambda_{n,1}^{b_n}$ is contained in $L_n$.
\end{Lemma}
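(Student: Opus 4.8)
The plan is to run the argument of Lemma \ref{lemma:conj with lambdamu} ``coordinate by coordinate'', one coordinate for each node in the tuple $(0^{n-2}, \ldots, 0, \varepsilon, \zeta)$. Write $x_1 = \zeta$, $x_2 = \varepsilon$, and $x_{2+i} = 0^i$ for $1 \le i \le n-2$, so that the tuple is $(x_n, \ldots, x_1)$. Recall from Lemma \ref{lemma:properties of tildechi} that the characters $\tilde\chi_{x_1}, \ldots, \tilde\chi_{x_n}$ and the elements $\lambda_{n,1}, \ldots, \lambda_{n,n}$ are in ``diagonal'' duality: $\tilde\chi_{x_k}$ takes the value $1$ on the unique $\lambda_{n,j}$ paired with $x_k$ and $0$ on all the others, while every $\tilde\chi_{x_k}$ vanishes on $L_n$. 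The first thing I would record is the membership criterion we are aiming for. Since $v$ and each $\lambda_{n,j}$ fix every node $x_k$, the image $b_{T_n}(x_k)$ is one of the even-numbered leaves of $T_n$ (exactly those whose subtrees $L_n$ fixes), and these exhaust the even leaves as $k$ runs from $1$ to $n$. Hence an element $w \in \Stab_V((0^{n-2}, \ldots, 0, \varepsilon, \zeta))$ lies in $L_n$ as soon as it admits a representative $(L, R, f)$ with $l_{x_k}(L) = l_{x_k}(R) = l_{x_k}(T_n)$ for all $k$: order-preservation of $b_L$ and $b_R$ together with $f(b_L(x_k)) = b_R(x_k)$ then forces $L$ and $R$ to be expansions of $T_n$ obtained by expanding only the odd subtrees, so that $f$ fixes every even leaf and $w$ fixes each of the subtrees under the even-numbered leaves.

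Next I would pin down the effect of multiplication by a power of $\lambda_{n,j}$ on these leaf-lengths, exactly as the explicit representatives for $\lambda^n$ and $\mu^n$ were used in Lemma \ref{lemma:conj with lambdamu}. For the node $x_k$ paired with $\lambda_{n,j}$, the element $\lambda_{n,j}^{\,c}$ has a representative whose left tree has $l_{x_k} = l_{x_k}(T_n) + c$ and whose right tree has $l_{x_k} = l_{x_k}(T_n)$, while all the other $l_{x_m}$ are unchanged on both sides. This is forced by the values $\tilde\chi_{x_k}(\lambda_{n,j}) = 1$ and $\tilde\chi_{x_m}(\lambda_{n,j}) = 0$ for $m \neq k$ from Lemma \ref{lemma:properties of tildechi}, together with the fact that $\lambda_{n,j}$ is supported on the single leaf-pair under discussion. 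Consequently right-multiplying $v$ by $\lambda_{n,j}^{\,c}$ matches the right tree of $v$ against this left tree and replaces $l_{x_k}(R)$ by $l_{x_k}(R) - c$, leaving all left-hand lengths and all other right-hand lengths untouched; left-multiplying by $\lambda_{n,j}^{-c}$ does the same for $l_{x_k}(L)$. Because the various $\lambda_{n,j}$ have pairwise disjoint support in $\mathcal{T}_{2,c}$ they commute, so these single-coordinate adjustments can be carried out simultaneously and independently, which is also why the precise order of the factors in the displayed product is immaterial.

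Finally I would assemble the integers. Starting from any representative $(L, R, f)$ of $v$ and expanding at the even leaf $b_L(x_k)$ and its image $b_R(x_k)$ (which, since they are linked by $f$, raises $l_{x_k}(L)$ and $l_{x_k}(R)$ together by one), I arrange $l_{x_k}(L) \ge l_{x_k}(T_n)$ and $l_{x_k}(R) \ge l_{x_k}(T_n)$ for every $k$. Taking the left exponent of the $\lambda_{n,j}$ paired with $x_k$ to be $a = l_{x_k}(L) - l_{x_k}(T_n) \ge 0$ and its right exponent to be $b = l_{x_k}(R) - l_{x_k}(T_n) \ge 0$, the previous paragraph shows that the resulting conjugate has $l_{x_k}(L) = l_{x_k}(R) = l_{x_k}(T_n)$ for all $k$, hence lies in $L_n$ by the criterion of the first paragraph. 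I expect the main obstacle to be the second paragraph: making rigorous the claim that multiplication by $\lambda_{n,j}^{\,c}$ reduces precisely one leaf-length on one side and leaves all the others fixed. This is where the locality of $\lambda_{n,j}$ (disjointness of the leaf-pairs) and the explicit diagonal form of Lemma \ref{lemma:properties of tildechi} must be combined carefully, generalizing the bookkeeping with $l_\varepsilon$ and $l_\zeta$ in the proof of Lemma \ref{lemma:conj with lambdamu}; by comparison the membership criterion and the non-negativity of the exponents are routine.
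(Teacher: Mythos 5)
Your proposal is correct and takes essentially the same approach as the paper: the paper's own proof is the single line ``In the case $n=2$ this is Lemma \ref{lemma:conj with lambdamu}, for $n>2$ the proof is similar,'' and your argument is precisely that generalization --- the leaf-length membership criterion $l_{x_k}(L) = l_{x_k}(R) = l_{x_k}(T_n)$ for $L_n$, the effect of powers of $\lambda_{n,j}$ on a single $l_{x_k}$, and the exponents $a = l_{x_k}(L) - l_{x_k}(T_n)$, $b = l_{x_k}(R) - l_{x_k}(T_n)$, mirroring the bookkeeping with $l_\varepsilon$ and $l_\zeta$ in Lemma \ref{lemma:conj with lambdamu}. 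If anything, your write-up (the explicit membership criterion, and commutativity of the $\lambda_{n,j}$ via disjoint supports, which also resolves the ordering of the factors in the statement) supplies details the paper leaves implicit.
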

\begin{proof}
 In the case $n = 2$ this is Lemma \ref{lemma:conj with lambdamu}, for $n > 2$ the proof is similar.
\end{proof}

\begin{Lemma}\label{lemma:Ln and lambdas generate stab}
 The group $\Stab_V((0^{n-2}, 0^{n-1}, \ldots, \varepsilon, \zeta))$ is generated by $L_n$ and $\lambda_{n,i}$ for $i = 1, \ldots, n$.
\end{Lemma}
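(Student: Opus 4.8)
The plan is to derive this statement as a short consequence of Lemma \ref{lemma:conj with lambda n i}, so that the entire group-theoretic content is packaged into the conjugation argument already established there. Write $H = \langle L_n, \lambda_{n,1}, \ldots, \lambda_{n,n}\rangle$ and let $K = \Stab_V((0^{n-2}, \ldots, 0, \varepsilon, \zeta))$ denote the stabiliser in question. There are two inclusions to verify, $H \subseteq K$ and $K \subseteq H$, and only the first requires any independent input.

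For $H \subseteq K$, I would check that each generator lies in $K$. The subgroup $L_n$ fixes all of $0^{n-2}, \ldots, \varepsilon, \zeta$ by construction, as recorded just before Lemma \ref{Lemma:Ln iso V}. Each $\lambda_{n,i}$ fixes these same points as well: this is precisely the assertion that the homomorphisms $\tilde\chi_{0^i}$, $\tilde\chi_\varepsilon$, and $\tilde\chi_\zeta$ are \emph{defined} on $\lambda_{n,i}$ in Lemma \ref{lemma:properties of tildechi}, since $\tilde\chi_x$ has domain $\Stab_V(x)$; alternatively it can be read off directly from the tree diagrams defining $\lambda_{n,i}$. Hence $H \le K$.

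For the reverse inclusion, take $v \in K$. Lemma \ref{lemma:conj with lambda n i} supplies non-negative integers $a_i, b_i$ such that
\[
 w \;=\; \lambda_{n,1}^{-a_1}\cdots \lambda_{n,n}^{-a_n}\, v\, \lambda_{n,n}^{b_n}\cdots \lambda_{n,1}^{b_1}
\]
lies in $L_n$. Solving for $v$ gives
\[
 v \;=\; \lambda_{n,n}^{a_n}\cdots \lambda_{n,1}^{a_1}\, w\, \lambda_{n,1}^{-b_1}\cdots \lambda_{n,n}^{-b_n},
\]
which exhibits $v$ as a product of integer powers of the $\lambda_{n,i}$ with an element $w \in L_n$, so that $v \in H$. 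Combining the two inclusions yields $K = H$, as claimed.

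The step I expect to carry all the weight is Lemma \ref{lemma:conj with lambda n i}, which is assumed here; once it is in hand the present statement is purely formal, and the genuine obstacle has been moved upstream to the caret-counting bookkeeping behind that lemma (with base case $n = 2$ given by Lemma \ref{lemma:conj with lambdamu} and mechanism the character computation of Lemma \ref{lemma:properties of tildechi}). The only point demanding care in the deduction itself is the ordering of the $\lambda_{n,i}$ in the two displayed products, which must be taken exactly as the inverse of the expression produced by Lemma \ref{lemma:conj with lambda n i}, so that the inversion above is literal.
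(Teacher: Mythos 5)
Your proposal is correct and follows exactly the paper's route: the paper's entire proof reads ``This follows from Lemma \ref{lemma:conj with lambda n i}'', and you have simply spelled out the two inclusions implicit in that one-liner, including the (correct) reading of the product ordering, where the paper's displayed expression $\lambda_{n,n}^{b_1}\lambda_{n,1}^{b_n}$ contains an index typo that your version $\lambda_{n,n}^{b_n}\cdots\lambda_{n,1}^{b_1}$ silently repairs.
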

\begin{proof}
 This follows from Lemma \ref{lemma:conj with lambda n i}.
\end{proof}

If $G$ is a group, $\theta : G \to G$ an endomorphism, and $\tilde\lambda$ any symbol, then we denote by $G \ast_{\tilde\lambda, \theta} $ the corresponding HNN-extension.  Recall that an HNN-extension is said to be \emph{ascending} if $\theta$ is injective.

\begin{Prop}
\[
\Stab_V((0^{n-2}, 0^{n-2}, \ldots, \varepsilon, \zeta)) \cong L_n \ast_{\tilde\lambda_1, \theta_1} \ast_{\tilde\lambda_2, \theta_2} \cdots \ast_{\tilde\lambda_n, \theta_n},
\]
where the injective homomorphisms $\theta_i$ correspond to conjugation by $\lambda_{n,i}^{-1}$.
\end{Prop}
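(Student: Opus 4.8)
The plan is to realise $\Stab_V((0^{n-2},\ldots,0,\varepsilon,\zeta))$ as the stated iterated ascending HNN extension by running the argument of Lemma \ref{lemma:QF is an HNN extension}, but now with $n$ stable letters instead of one. I set $G = L_n \ast_{\tilde\lambda_1,\theta_1}\ast\cdots\ast_{\tilde\lambda_n,\theta_n}$ and define $\psi : G \longrightarrow \Stab_V((0^{n-2},\ldots,0,\varepsilon,\zeta))$ to be the inclusion on $L_n$ and $\psi(\tilde\lambda_i) = \lambda_{n,i}$ for each $i$. Before this is meaningful I must check that the HNN data is valid: each $\theta_i$ is conjugation by $\lambda_{n,i}^{-1}$, hence automatically injective, so the extension is ascending provided each $\theta_i$ carries its base group into itself. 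This ``into-the-base'' condition is the geometric heart of the statement, and it is exactly what Lemma \ref{lemma:conj with lambda n i} supplies: conjugating an element of the stabiliser by suitable non-negative powers of the $\lambda_{n,i}$ returns it to $L_n$, which is what lets the $\theta_i$ be organised as ascending self-maps of the tower $L_n \le G_1 \le \cdots \le G_n = G$.

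Surjectivity of $\psi$ is immediate from Lemma \ref{lemma:Ln and lambdas generate stab}, since $L_n$ together with $\lambda_{n,1},\ldots,\lambda_{n,n}$ generate the stabiliser and all of these lie in the image of $\psi$.

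For injectivity I mirror Lemma \ref{lemma:QF is an HNN extension}, replacing its single exponent-counting character $\chi$ by the vector of characters of Lemma \ref{lemma:properties of tildechi}. Assembling the homomorphisms of Lemma \ref{lemma:xi characters} gives
\[
 X = (\tilde\chi_{0^{n-2}},\ldots,\tilde\chi_{0^1},\tilde\chi_\varepsilon,\tilde\chi_\zeta) : \Stab_V((0^{n-2},\ldots,0,\varepsilon,\zeta)) \longrightarrow \ZZ^n,
\]
and by Lemma \ref{lemma:properties of tildechi} the map $X$ vanishes on $L_n$ and sends each $\lambda_{n,j}$ to a distinct standard basis vector of $\ZZ^n$. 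Consequently $X\circ\psi$ coincides with the homomorphism $X' : G \to \ZZ^n$ that kills $L_n$ and records, for each $i$, the total $\tilde\lambda_i$-exponent; in particular $X'$ factors through $\psi$, generalising the role of $\chi$ in Lemma \ref{lemma:QF is an HNN extension}. Given $g\in\Ker\psi$, I write $g$ in the iterated ascending normal form coming from the tower $L_n \le G_1 \le \cdots \le G_n$; applying $X'$, which vanishes on $\Ker\psi$, forces the total exponent of every $\tilde\lambda_i$ to be zero. Exactly as in the single-letter case this collapses $g$ to an element $h\in L_n$ with $\psi(h)=1$, and since $\psi$ restricts to the inclusion $L_n\hookrightarrow V$, which is injective with $L_n\cong V$ by Lemma \ref{Lemma:Ln iso V}, we conclude $h=1$ and hence $g=1$.

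The genuine work is twofold, and this is where I expect the obstacles. First, the setup of the HNN data: one must verify the ascending condition that conjugation by each $\lambda_{n,i}^{-1}$ carries the appropriate base group into itself, for which Lemma \ref{lemma:conj with lambda n i} is the essential input. Second, the normal-form bookkeeping is subtler than in the single-letter case because there are $n$ stable letters interacting; the characters of Lemma \ref{lemma:properties of tildechi} are engineered precisely so that each $\lambda_{n,j}$ is detected by exactly one coordinate of $X$, which is what keeps the exponent-balancing clean and lets the reduction to $L_n$ proceed one letter at a time.
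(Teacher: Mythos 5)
Your proposal is correct and follows essentially the same route as the paper's proof: surjectivity from Lemma \ref{lemma:Ln and lambdas generate stab}, then injectivity by writing a kernel element in the ascending normal form $\tilde\lambda_1^{-a_1}\cdots\tilde\lambda_n^{-a_n}\,w\,\tilde\lambda_n^{b_n}\cdots\tilde\lambda_1^{b_1}$ and forcing $a_i=b_i$ via the exponent-counting homomorphisms built from Lemmas \ref{lemma:xi characters} and \ref{lemma:properties of tildechi} (the paper uses $n$ separate $\ZZ$-valued maps where you assemble one $\ZZ^n$-valued map $X'$, and specialises $\tilde\lambda_i$ to $\lambda_{n,i}^{-1}$ where you use $\lambda_{n,i}$ --- both cosmetic). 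The one slip is attributional: the ascending condition $\theta_i(L_n)\subseteq L_n$ is a direct check that the paper makes implicitly when defining $\theta_i$, whereas Lemma \ref{lemma:conj with lambda n i} is the input for the generation statement (Lemma \ref{lemma:Ln and lambdas generate stab}), not for the into-the-base condition.
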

\begin{proof}
Let $\theta_i : L_n \to L_n$ be the injective homomorphism $v \mapsto \lambda_{n,i}v\lambda_{n,i}^{-1}$, and let
 \[
  \Phi : (V^\prime \ast_{\tilde\mu, \theta_1}) \ast_{\tilde\lambda_i, \theta_2} \longrightarrow \Stab_V((0^{n-2}, 0^{n-1}, \ldots, \varepsilon, \zeta)),
 \]
be the map specialising $\tilde\lambda_i$ to $\lambda_{n,i}^{-1}$.  Since $L_n$ together with the $\lambda_{n,i}$ generate $\Stab_V((0^{n-2}, \ldots, \varepsilon, \zeta))$ (Lemma \ref{lemma:Ln and lambdas generate stab}), the map $\Phi$ is a surjection. Let $v \in \Ker \Phi$, since the HNN-extensions are ascending, we can write 
\[
v = \lambda_1^{-a_1}\lambda_2^{-a_2}\cdots\lambda_n^{-a_n} w \lambda_n^{b_n} \cdots \lambda_1^{b_1},
\]
for some $w \in L_n$ and some non-negative integers $a_i$ and $b_i$ for $1 \le i \le n$.
 
 For any fixed $i$, the homomorphism
 \begin{align*}
  L_n \ast_{\tilde\lambda_1, \theta_1} \ast_{\tilde\lambda_2, \theta_2} \cdots \ast_{\tilde\lambda_n, \theta_n} &\longrightarrow \ZZ \\
  L_n &\longmapsto 0 \\
  \tilde\lambda_j &\longmapsto \left\{\begin{array}{l l}
                           1 & \text{if $i = j$,} \\
                           0 & \text{else,}
                          \end{array}\right.
 \end{align*}
 factors through $\Stab_V((0^{n-2}, \ldots, \varepsilon, \zeta))$ using Lemma \ref{lemma:properties of tildechi}, thus $a_i = b_i$ for all $i$.  In particular, $w \in \Ker \Phi$, thus $w = 1$, since $\Phi$ is the identity when restricted to $L_n$.
\end{proof}

\begin{Cor}\label{cor:V acts on Deltan with Finfty stab}
 Given $(x_1, \ldots, x_n) \in \Delta_n$, the group $\Stab_V((x_1, \ldots, x_n))$ is of type $\F_\infty$.
\end{Cor}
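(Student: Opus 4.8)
The plan is to reduce to one conveniently chosen point of $\Delta_n$ and then read off the finiteness property from the HNN decomposition established in the preceding Proposition. First, since $V$ acts transitively on $\Delta_n$ (Lemma \ref{lemma:transitive action Deltan}), all point stabilisers are conjugate in $V$ and hence mutually isomorphic; it therefore suffices to prove that the single group $\Stab_V((0^{n-2}, \ldots, 0, \varepsilon, \zeta))$ is of type $\F_\infty$.

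By the preceding Proposition this stabiliser is an iterated ascending HNN-extension
\[
L_n \ast_{\tilde\lambda_1, \theta_1} \ast_{\tilde\lambda_2, \theta_2} \cdots \ast_{\tilde\lambda_n, \theta_n}
\]
with base group $L_n$, where each $\theta_i$ is injective with domain all of $L_n$. By Lemma \ref{Lemma:Ln iso V} we have $L_n \cong V$, and $V$ is of type $\F_\infty$ by Brown's theorem \cite{Brown-FinitenessPropertiesOfGroups}. Viewing this multiple HNN-extension as the fundamental group of a graph of groups whose underlying graph is a single vertex with $n$ loops, every vertex group is $L_n \cong V$ and every edge group is the (domain, respectively image) copy of $L_n$, again isomorphic to $V$; thus all vertex and edge groups are of type $\F_\infty$.

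Finally, I would invoke Bass--Serre theory: the group acts on the associated Bass--Serre tree, which is contractible, with finitely many orbits of vertices and of edges (so the action is cocompact, i.e.\ the tree is of finite type mod the group) and with every cell stabiliser conjugate to $V$ and hence of type $\F_\infty$. The tree is then a contractible $G$-CW-complex of finite type mod $G$ with stabilisers of type $\F_\infty$, so Theorem \ref{thm:contractible finite type with Finfty stab gives Finfty} yields that the stabiliser is of type $\F_\infty$. Transporting back along the transitive $V$-action of Lemma \ref{lemma:transitive action Deltan} completes the proof for an arbitrary $(x_1, \ldots, x_n) \in \Delta_n$.

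I expect the only point requiring genuine care to be confirming that the edge groups of the HNN decomposition are of type $\F_\infty$, so that the Bass--Serre tree is admissible input for Theorem \ref{thm:contractible finite type with Finfty stab gives Finfty}; everything else is bookkeeping built on results already established. An equivalent route, avoiding the explicit tree, is the standard fact that an ascending HNN-extension of an $\F_\infty$ group whose associated subgroup is $\F_\infty$ is again $\F_\infty$, applied $n$ times along the factors $\tilde\lambda_1, \ldots, \tilde\lambda_n$.
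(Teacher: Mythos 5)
Your proof is correct, and its skeleton is exactly the paper's: reduce by transitivity (Lemma \ref{lemma:transitive action Deltan}) to the single stabiliser $\Stab_V((0^{n-2}, \ldots, \varepsilon, \zeta))$, feed in the iterated HNN decomposition from the preceding Proposition, and use $L_n \cong V$ (Lemma \ref{Lemma:Ln iso V}) so that all base and associated subgroups are of type $\F_\infty$. Where you differ is only the final step: the paper concludes in one line by citing \cite[Proposition 2.13(b)]{Bieri-HomDimOfDiscreteGroups}, the algebraic fact that HNN extensions with $\F_\infty$ base and associated subgroups are $\F_\infty$, whereas you unwind that fact geometrically, realising the multiple HNN extension as the fundamental group of a graph of groups with one vertex and $n$ loops and applying Theorem \ref{thm:contractible finite type with Finfty stab gives Finfty} to the cocompact action on the Bass--Serre tree. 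Your route is self-contained within machinery the paper already deploys (it is essentially the standard proof of the cited result) and makes explicit why the edge groups must be checked; the paper's citation is simply shorter. One terminological caution about your closing alternative: only the first of the $n$ steps is literally an \emph{ascending} HNN extension --- once $G_1 = L_n \ast_{\tilde\lambda_1, \theta_1}$ has been formed, the next stable letter conjugates the proper subgroup $L_n \lneq G_1$ to $\theta_2(L_n)$, so the subsequent extensions are general, not ascending, HNN extensions of the previously built group. This is harmless, since neither your tree argument nor Bieri's proposition needs ascent, only that every associated subgroup is a copy of $L_n \cong V$ and hence of type $\F_\infty$, which you verified.
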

\begin{proof}
Since $V$ acts transitively on $\Delta_n$ it is sufficient to check that the stabiliser $\Stab_V((0^{n-2}, \ldots, \varepsilon, \zeta))$ is $\F_\infty$.  This follows since by Lemma \ref{Lemma:Ln iso V} $L_n$ is isomorphic to $V$ and hence $\F_\infty$ and HNN-extensions of groups of type $\F_\infty$ are themselves $\F_\infty$ \cite[Proposition 2.13(b)]{Bieri-HomDimOfDiscreteGroups}.
\end{proof}

\section{Normal subgroups}\label{section:normal subgroups}

In this section we give a complete description of the normal subgroups of $QF$, $QT$, $QV$, $\tilde{Q}T$, and $\tilde{Q}V$.  

Recall that $\Sym(\{0,1\}^*)$ has only one non-trivial proper normal subgroup, this is the finite support alternating group $\Alt(\{0,1\}^*)$, the subgroup of all permutations with even parity.  Moreover, $\Sym(\{0,1\}^*)$ and $\Alt(\{0,1\}^*)$ are normal in the infinite support symmetric group on $\{0,1\}^*$, because conjugating any finite permutation by any bijection $\{0,1\}^* \to \{0,1\}^*$ preserves the cycle type of the finite permutation.  

Recall also that $F/[F,F] \cong \ZZ \oplus \ZZ$ but, since $T$ and $V$ are simple and non-abelian, $[T,T] \cong T$ and $[V,V] \cong V$ \cite[Theorems 4.1,5.8,6.9]{CannonFloydParry-IntroNotesOnThompsons}.

\begin{Theorem}[Normal subgroups of $QF$]\label{theorem:alternative for normal subgroups}
~
\begin{enumerate}
 \item A non-trivial normal subgroup of $QF$ is either $\Alt(\{0,1\}^*)$, $\Sym(\{0,1\}^*)$, or contains 
 \[
 [QF,QF] = \Alt(\{0,1\}^*) \rtimes [F,F].
 \]
 \item A proper non-trivial normal subgroup of $\tilde{Q}T$ is either $\Alt(Z)$, $\Sym(Z)$, or 
 \[
 [\tilde{Q}T, \tilde{Q}T] = \Alt(Z) \rtimes T.
 \]
 \item A proper non-trivial normal subgroup of $\tilde{Q}V$ is either $\Alt(Z)$, $\Sym(Z)$, or 
 \[
 [\tilde{Q}V, \tilde{Q}V] = \Alt(Z) \rtimes V.
 \]
 \item A proper non-trivial normal subgroup of $QT$ is one of either $\Alt(\{0,1\}^*)$, $\Sym(\{0,1\}^*)$, or 
 \[
 [QT,QT] = (\Alt(Z) \rtimes T) \cap QT.
 \]  
 Moreover, $(\Alt(Z) \rtimes T) \cap QT$ is an extension of $T$ by $\Sym(\{0,1\}^*)$.
 \item A proper non-trivial normal subgroup of $QV$ is one of either $\Alt(\{0,1\}^*)$, $\Sym(\{0,1\}^*)$, or 
 \[
 [QV, QV] = (\Alt(Z) \rtimes V) \cap QV.
 \]  
 Moreover, $(\Alt(Z) \rtimes V) \cap QV$ is an extension of $V$ by $\Sym(\{0,1\}^*)$.
\end{enumerate}
\end{Theorem}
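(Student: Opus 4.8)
The plan is to treat all five groups uniformly. Write $G$ for one of them, $K\trianglelefteq G$ for its finitary symmetric normal subgroup ($\Sym(\{0,1\}^*)$ or $\Sym(Z)$), $A\trianglelefteq G$ for the corresponding finitary alternating group, and $\pi\colon G\to Q$ for the projection onto the relevant Thompson group $Q\in\{F,T,V\}$. Given a non-trivial normal subgroup $N\trianglelefteq G$, I would first pin down $N\cap K$. It is a $G$-invariant normal subgroup of $K$, and since the finitary symmetric group on a countable set has only the normal subgroups $1$, $A$, $K$ (all visibly $G$-invariant, as conjugation preserves cycle type), we get $N\cap K\in\{1,A,K\}$. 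The case $N\cap K=1$ is killed at once: a non-identity $g\in N$ is a non-trivial bijection of the underlying set, so it moves some point $x$; picking $z\notin\{x,g(x),g^{-1}(x)\}$ and setting $\tau=\sigma_{x,z}$ gives $g\tau g^{-1}=\sigma_{g(x),g(z)}\neq\tau$, hence $[g,\tau]\neq 1$, while $[g,\tau]\in\ker\pi\cap N=N\cap K=1$, a contradiction. Thus for non-trivial $N$ we always have $N\cap K\supseteq A$.

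Next I would analyse $\pi(N)\trianglelefteq Q$. For $Q=T,V$ simplicity forces $\pi(N)\in\{1,Q\}$; for $Q=F$ I would invoke the standard fact that $[F,F]$ is simple and $F$ is centreless, so that every non-trivial normal subgroup of $F$ contains $[F,F]$, giving $\pi(N)=1$ or $\pi(N)\supseteq[F,F]$. If $\pi(N)=1$ then $N\subseteq K$, so $N=A$ or $N=K$, which are the first two alternatives in each part of the theorem. The substantive case is $\pi(N)\neq 1$, where I must show that $N$ swallows the derived subgroup. I would first identify the derived subgroups by a direct generator computation: commutators of two elements of $K$ generate $A$, while mixed commutators $[\kappa,\iota(q)]$ and commutators lifted from the quotient have even $K$-part, so in the split cases (Lemmas~\ref{lemma:QF SES splits} and~\ref{lemma:QAutX SES is split}) one gets $[QF,QF]=\Alt(\{0,1\}^*)\rtimes[F,F]$, $[\tilde{Q}T,\tilde{Q}T]=\Alt(Z)\rtimes T$, and $[\tilde{Q}V,\tilde{Q}V]=\Alt(Z)\rtimes V$.

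To show $N$ contains these, note $N\cdot K=\pi^{-1}(\pi(N))$. When $N\cap K=A$ the quotient $\pi^{-1}(\pi(N))/N\cong K/A\cong C_2$ is abelian, forcing all relevant commutators into $N$; for $\tilde{Q}T$ and $\tilde{Q}V$ this immediately gives $N=[G,G]$ (both being index $2$), while $N\cap K=K$ would give $N=G$, excluded by properness. For $QF$, where $\pi(N)$ may be a proper normal subgroup strictly between $[F,F]$ and $F$, I would instead use the splitting $\iota$ to define a parity-defect homomorphism $\pi(N)\to K/A\cong C_2$ (sending $q$ to the $A$-coset of the $K$-part of any $N$-lift of $q$), and observe that it must vanish on the perfect group $[F,F]$; hence $\iota([F,F])\subseteq N$ and $[QF,QF]\subseteq N$, matching the ``contains $[QF,QF]$'' alternative of part~(1). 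The many normal subgroups of $QF$ above $[QF,QF]$ arise exactly because $\pi(N)$ ranges over the normal subgroups of $F$ containing $[F,F]$.

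Finally I would handle the non-split groups $QT$ and $QV$ (Proposition~\ref{prop:QT extension doesnt split}) by realising them as the stabilisers of $\zeta$ inside $\tilde{Q}T$ and $\tilde{Q}V$, so that $K=\Sym(\{0,1\}^*)=\Stab_{\Sym(Z)}(\zeta)$ and $A=\Alt(\{0,1\}^*)$. The classification runs exactly as above, using the abelian-quotient argument, which needs no splitting, and produces the third alternative $[QV,QV]=(\Alt(Z)\rtimes V)\cap QV$ once I check that this intersection is perfect (its $\pi$-image is the perfect group $V$ and its intersection with $K$ is the simple group $A$), so that it is sandwiched between $[QV,QV]$ and itself; the case of $QT$ is identical. \emph{The main obstacle, and the only place non-splitness genuinely matters, is showing that this third normal subgroup is proper}, equivalently that $QT$ and $QV$ are not perfect. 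I would settle this by restricting the sign homomorphism $\tilde{Q}V\to C_2$ (which kills $V$ and records parity on $\Sym(Z)$) to the $\zeta$-stabiliser: it is non-trivial on the transpositions of $\Sym(\{0,1\}^*)$, hence gives a surjection $QV\to C_2$ whose kernel is precisely $(\Alt(Z)\rtimes V)\cap QV$. This simultaneously yields properness, the identification $[QV,QV]\cap K=A$, and the computation of the kernel of $\pi\big|_{[QV,QV]}$ that describes the extension in the statement.
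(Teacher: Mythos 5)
Your argument is correct, but it runs in the opposite direction to the paper's and is logically more self-contained. The paper first pushes a normal subgroup $N \not\le \Sym$ into the Thompson part: Lemma \ref{lemma:buildup} manufactures $h \in \iota F$ commuting with the permutation part but not the $F$-part of an element of $N$, Lemma \ref{lemma:normal closure of element in iF} then gives $N \cap \iota F \neq 1$, simplicity of $[\iota F,\iota F]$ gives $\llangle [\iota F,\iota F]\rrangle \le N$, and Proposition \ref{prop:normal closure of iF commutator} identifies this normal closure with the derived subgroup via an explicit computation producing a $3$-cycle together with the abelianisation corollaries (\ref{cor:abelianisation QF}, \ref{cor:abelianisation wQT}, \ref{cor:abelianisation wQV}), which depend on the finite presentations of Section \ref{section:finite presentations}. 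You instead first force $N \cap K \supseteq A$ by the elementary transposition trick (your choice $z \notin \{x, g(x), g^{-1}(x)\}$ does guarantee $g\tau g^{-1} \neq \tau$, so the $N\cap K = 1$ case dies correctly), then classify $\pi(N)$, and identify the derived subgroups intrinsically via sign homomorphisms; your parity-defect map $\pi(N) \to K/A$ vanishing on the perfect group $[F,F]$ replaces the paper's appeal to Proposition \ref{prop:normal closure of iF commutator}(1), and realising $QT$, $QV$ as $\zeta$-stabilisers with the restricted sign character is a cleaner packaging of the paper's comparison of $\llangle \cdot \rrangle_{QV}$ with $\llangle \cdot \rrangle_{\tilde{Q}V} \cap QV$. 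What the paper's route buys is the sharper statement that the normal closure of $[\iota F,\iota F]$ alone is the full derived subgroup; what yours buys is independence from the presentation machinery (you recover the abelianisations as a byproduct) and a uniform case analysis. Two small points to tighten: your parenthetical principle ``$\pi$-image perfect and intersection with $K$ simple implies perfect'' is not valid for abelian kernels, but here $A$ is nonabelian simple, hence perfect, so $A = [A,A] \le [H,H]$ and surjectivity onto $V = [V,V]$ with $\ker(\pi|_H) = A$ finishes it; and you should state explicitly why $\pi$ maps $(\Alt(Z)\rtimes V)\cap QV$ \emph{onto} $V$ (correct $\iota(v)$ by a $3$-cycle through $\zeta$ rather than a transposition), since non-splitness makes this the one surjectivity that is not automatic.

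One substantive flag: your computation gives $\ker\bigl(\pi|_{[QV,QV]}\bigr) = \Alt(Z) \cap QV = \Alt(\{0,1\}^*)$, whereas the printed ``Moreover'' clauses of parts (4) and (5) assert an extension with kernel $\Sym(\{0,1\}^*)$. Your computation is the correct one: a transposition of $\{0,1\}^*$ is an odd permutation of $Z$, hence does not lie in $\Alt(Z) \rtimes \iota V$, and kernel $\Sym(\{0,1\}^*)$ together with image $V$ would force $[QV,QV] = \pi^{-1}(V) = QV$, contradicting $(QV)_{ab} \cong C_2$. Indeed the paper's own proof of Proposition \ref{prop:normal closure of iF commutator}(5) twice describes the subgroup as an extension of $V$ by $\Alt(\{0,1\}^*)$, so the statement contains a slip. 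You are therefore proving the corrected assertion, and your last sentence, which claims your kernel computation ``describes the extension in the statement,'' should instead note the discrepancy explicitly.
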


This theorem is similar to results for the braided Thompson groups $F_{\text{br}}$ and $V_{\text{br}}$.  The braided Thompson group $F_{\text{br}}$ is a split extension with kernel the group $P_{\text{br}}$, a direct limit of finite pure braid groups, and quotient $F$.  The braided Thompson group $V_{\text{br}}$ is a non-split extension with kernel $P_{\text{br}}$ and quotient $V$.  Zaremsky shows that for any normal subgroup $N$ of $F_{\text{br}}$, either $N \le P_{\text{br}}$ or $[F_{\text{br}}, F_{\text{br}}] \le N$ \cite[Theorem 2.1]{Zaremsky-NormalSubgroupsOfBraidedThompsonsGroups}, and a proper normal subgroup $N$ of $V_{\text{br}}$ is necessarily contained in $P_{ \text{br}}$ \cite[Corollary 2.8]{Zaremsky-NormalSubgroupsOfBraidedThompsonsGroups}.  There is no braided version of Thompson's group $T$.  We borrow some of the methods in this section from \cite[\S 2]{Zaremsky-NormalSubgroupsOfBraidedThompsonsGroups}.

The next proposition should be compared with \cite[Theorem 5]{Ore-SomeRemarksOnCommutators}, that every element of the full permutation group on an infinite set is a commutator, so in particular the commutator subgroup of the full permutation group on $\{0,1\}^*$ is itself.

We write $\llangle H \rrangle_G$ for the normal closure in $G$ of a subgroup $H \le G$, and for an element $g \in G$ we write $\llangle g \rrangle_G$ for the normal subgroup generated by $g$.

\begin{Prop}\label{prop:normal closure of iF commutator}
~
\begin{enumerate}
 \item
 $
    \llangle [\iota F,\iota F] \rrangle_{QF} = [QF,QF] = \Alt(\{0,1\}^*) \rtimes [F,F].
 $
 \item 
 $
    \llangle [\iota F,\iota F] \rrangle_{\tilde{Q}T} = [\tilde{Q}T,\tilde{Q}T] = \Alt(Z) \rtimes T.
 $
 \item 
 $
    \llangle [\iota F,\iota F] \rrangle_{\tilde{Q}V} = [\tilde{Q}V,\tilde{Q}V] = \Alt(Z) \rtimes V. 
 $
 \item $\llangle [\iota F,\iota F] \rrangle_{QT} = [QT,QT] = (\Alt(Z) \rtimes T) \cap QT$, moreover this subgroup is an extension with kernel $\Sym(\{0,1\}^*)$ and quotient $T$.
 \item 
 $\llangle [\iota F,\iota F] \rrangle_{QV} = [QV,QV] = (\Alt(Z) \rtimes V) \cap QV$, moreover this subgroup is an extension with kernel $\Sym(\{0,1\}^*)$ and quotient $V$.
\end{enumerate}
\end{Prop}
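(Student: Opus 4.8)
The plan is to prove, for each of the five groups $G$, a chain of inclusions
\[
\llangle [\iota F,\iota F]\rrangle_G \;\subseteq\; [G,G] \;\subseteq\; M_G \;\subseteq\; \llangle [\iota F,\iota F]\rrangle_G,
\]
where $M_G$ is the explicit subgroup named in the statement. The first inclusion is formal, since $[\iota F,\iota F]\subseteq[G,G]$ and $[G,G]$ is normal. The second inclusion is read off from abelianisations: in each case $M_G$ is the kernel of a homomorphism from $G$ to an abelian group. For the split groups this homomorphism is $\sign\times(\text{projection to }F^{\mathrm{ab}})$, matching Corollaries \ref{cor:abelianisation QF}, \ref{cor:abelianisation wQT}, \ref{cor:abelianisation wQV}; for $QT$ and $QV$ it is the sign homomorphism discussed below. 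Hence $[G,G]\subseteq M_G$, and all the real work lies in the last inclusion $M_G\subseteq N$, where $N:=\llangle [\iota F,\iota F]\rrangle_G$.

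Two facts drive the last inclusion. First, $N$ meets the relevant finitary symmetric group ($\Sym(\{0,1\}^*)$ or $\Sym(Z)$) in at least its alternating subgroup. Indeed, choose $g\in[\iota F,\iota F]=\iota([F,F])$ with $g\neq 1$; since $QV$ acts faithfully, $g$ moves some $a$, and for a suitably chosen transposition $\sigma$ the commutator $[\sigma,g]=\sigma\,(g\sigma g^{-1})=\sigma_{a,b}\,\sigma_{g(a),g(b)}$ is a nontrivial product of two transpositions, so a nontrivial element of $N\cap\Sym$. As $\Sym$ is normal in $G$, so is $N\cap\Sym$; since the only normal subgroups of an infinite finitary symmetric group are $1$, its alternating subgroup and itself, we get $N\cap\Sym\supseteq\Alt$. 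Second, applying $\pi$ gives $\pi(N)=\llangle [F,F]\rrangle_{\pi(G)}$, which is $[F,F]$ when $\pi(G)=F$ and, because $T$ and $V$ are simple, is all of $T$ or $V$ in the remaining cases.

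For the split cases $QF$, $\tilde{Q}T$, $\tilde{Q}V$ (Lemmas \ref{lemma:QF SES splits}, \ref{lemma:QAutX SES is split}) these two facts finish the argument by a coset computation. For $\tilde{Q}T$, say: $N\supseteq\Alt(Z)$ and $\pi(N)=T$, so given $m=\tau\iota(t)\in\Alt(Z)\rtimes T$ pick $n\in N$ with $\pi(n)=t$; then $n\in[\tilde{Q}T,\tilde{Q}T]\subseteq\Alt(Z)\rtimes T$ forces the $\Sym(Z)$-part of $n$ into $\Alt(Z)$, whence $m\in\Alt(Z)\cdot N=N$. The identical argument with $[F,F]$ in place of $T$ handles $QF$, and with $V$ handles $\tilde{Q}V$, proving parts (1)--(3).

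The cases $QT$ and $QV$ are the crux, because $\pi$ does not split (Proposition \ref{prop:QT extension doesnt split}) and, under the natural inclusion $QT\hookrightarrow\tilde{Q}T$ fixing $\zeta$, the subgroup $QT$ is the $\zeta$-stabiliser. The key device is the sign homomorphism $\sign:\tilde{Q}T\to C_2$ recording the parity of the $\Sym(Z)$-component, whose kernel is exactly $\Alt(Z)\rtimes T$. Restricting to $QT$ identifies $[QT,QT]$ with $\ker(\sign)\cap QT=(\Alt(Z)\rtimes T)\cap QT$: one checks $\sign|_{QT}$ is onto $C_2$, so its kernel has index two and contains $N$, and the coset argument above — now using that $N$ contains the finitary alternating group and surjects onto $T$ — shows the kernel lies in $N$, giving $N=[QT,QT]=(\Alt(Z)\rtimes T)\cap QT$; the case of $QV$ is verbatim with $V$. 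Finally one restricts $\pi$ to this subgroup to exhibit the claimed extension over $T$ (resp.\ $V$). The one genuinely delicate step, and the main obstacle, is the precise bookkeeping of the finitary kernel of $\pi$ on the intersection: one must track how the parity condition cutting out $\Alt(Z)$ interacts with the constraint of fixing $\zeta$, and it is exactly here that the interplay between the finitary symmetric group on $\{0,1\}^*$, the group $\Alt(Z)$, and the non-splitting of $\pi$ must be handled carefully to read off the kernel of the extension.
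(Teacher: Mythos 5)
Your proof of the displayed equalities is correct and runs essentially parallel to the paper's own argument: the paper also obtains $\Alt \le \llangle [\iota F,\iota F]\rrangle$ from a commutator of an element of $[\iota F,\iota F]$ with a transposition (it computes the explicit $3$-cycle $\sigma_{1,11,1110}$ where you invoke the normal subgroup structure of the finitary symmetric group --- both work), uses simplicity of $T$ and $V$ to see the normal closure surjects onto the quotient, and does the same parity bookkeeping, packaged there as the quotient map $\pi^\prime : QF/(\Alt(\{0,1\}^*)\rtimes[F,F]) \to F/[F,F]$ with kernel inside $C_2$ together with the abelianisation corollaries, where you use the sign homomorphism directly (a slightly tidier, self-contained rearrangement: you prove $N \subseteq [G,G] \subseteq M_G \subseteq N$ where the paper proves $M_G \subseteq N \subseteq [G,G]$ and then $[G,G]=M_G$). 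For $QT$ and $QV$ your coset argument via $N \supseteq \Alt(\{0,1\}^*)$, $\pi(N)=T$ (resp.\ $V$), and $[QT,QT]\subseteq[\tilde{Q}T,\tilde{Q}T]=\Alt(Z)\rtimes T$ is also sound and matches the paper's sandwich $\llangle [\iota F,\iota F]\rrangle_{QV} \le (\Alt(Z)\rtimes V)\cap QV$.

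The genuine gap is the ``moreover'' clause of (4) and (5), which you explicitly defer (``the precise bookkeeping of the finitary kernel'') and never carry out. It is in fact a two-line computation, and performing it would have revealed that the clause as printed cannot be proved: if $\sigma_{x,y}$ with $x,y \in \{0,1\}^*$ lay in $\Alt(Z)\rtimes T$, writing $\sigma_{x,y}=\tau\iota(t)$ would force $\iota(t)=\tau^{-1}\sigma_{x,y}$ to be finitary, hence $t=\pi(\iota(t))=1$ and $\sigma_{x,y}=\tau\in\Alt(Z)$, contradicting oddness; so $\Sym(\{0,1\}^*)\cap(\Alt(Z)\rtimes T)=\Alt(\{0,1\}^*)$ and the kernel of $\pi$ restricted to $(\Alt(Z)\rtimes T)\cap QT$ is $\Alt(\{0,1\}^*)$, not $\Sym(\{0,1\}^*)$. (Surjectivity onto $T$ also needs a word you don't supply: $\iota(t)$ may move $\zeta$ --- e.g.\ $\iota(C)$ does --- and one corrects by an even permutation such as a $3$-cycle through $\zeta$.) Note that the paper's own proof of Proposition \ref{prop:normal closure of iF commutator}(5) concludes that $(\Alt(Z)\rtimes V)\cap QV$ ``is also an extension of $V$ by $\Alt(\{0,1\}^*)$'', so the $\Sym(\{0,1\}^*)$ appearing in the statement is a slip in the statement itself (this is also consistent with $[QV,QV]$ having index $2$: were $\Sym(\{0,1\}^*)$ contained in it, simplicity of $V$ would force $[QV,QV]=QV$). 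Your instinct that this was the delicate point was right, but flagging it as ``the main obstacle'' without resolving it leaves the one nontrivial clause of (4)--(5) unproven in your write-up --- and, taken literally, it is false, so the resolution is to correct the kernel to $\Alt(\{0,1\}^*)$, not to handle $\Sym(\{0,1\}^*)$ more carefully.
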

\begin{proof}For part (1), consider $[\alpha, \beta] \in [\iota F, \iota F]$ then for $\sigma_{1,11} \in \Sym(\{0,1\}^*)$, we have 
\[
\sigma_{1,11} [\alpha,\beta]^{-1} \sigma_{1,11} [\alpha,\beta] \in  \llangle [\iota F,\iota F] \rrangle. 
\]
Calculating explicitly,
\[
\sigma_{1,11} [\alpha,\beta]^{-1} \sigma_{1,11} [\alpha,\beta] = \sigma_{1,11,1110}.
\]
The smallest normal subgroup of $\Sym(\{0,1\}^*)$ containing a $3$-cycle is $\Alt(\{0,1\}^*)$ so $\Ker \pi \vert_{\llangle [\iota F, \iota ,F] \rrangle} = \Alt(\{0,1\}^*) $.  Combining this with the fact that $[F,F] \le \pi (\llangle [\iota F, \iota F] \rrangle )$ gives  
\[
 \Alt(\{0,1\}^*) \rtimes [F,F] \le \llangle [\iota F,\iota F] \rrangle.
\]

Since $\llangle [\iota F,\iota F] \rrangle_{QF} \le [QF, QF]$, we know that $\Alt(\{0,1\}^*) \rtimes [F,F] \le [QF, QF]$ and we now show they are equal.

Consider the following map induced by $\pi$,
\[
 \pi^\prime: QF / (\Alt(\{0,1\}^*) \rtimes [F,F]) \longrightarrow F/[F,F].
\]
 Applying $\pi^\prime$ to the quotient $[QF, QF] / (\Alt(\{0,1\}^*) \rtimes [F,F])$ gives the trivial group.  Since the kernel of $\pi^\prime $ is contained in $\Sym(\{0,1\}^*) / \Alt(\{0,1\}^*) = C_2$, the quotient $[QF, QF] / (\Alt(\{0,1\}^*) \rtimes [F,F])$ is either trivial or $C_2$.  However, if the quotient $[QF, QF] / (\Alt(\{0,1\}^*) \rtimes [F,F])$ was $C_2$ then $[QF, QF] = \Sym(\{0,1\}^*) \rtimes [F,F]$ which would contradict Corollary \ref{cor:abelianisation QF}.  Hence $[QF, QF] = \Alt(\{0,1\}^*) \rtimes [F,F]$.  This completes the proof of (1).
 
For parts (2) and (3) substitute Corollary \ref{cor:abelianisation wQT} or Corollary \ref{cor:abelianisation wQV} for Corollary \ref{cor:abelianisation QF} and use that $[T,T] \cong T$ and $[V,V] \cong V$ \cite[Theorems 5.8,6.9]{CannonFloydParry-IntroNotesOnThompsons}.

For part (5), by the argument at the beginning of this proof, $\Alt(\{0,1\})^*$ is contained in $\llangle [\iota F, \iota F] \rrangle_{QV}$ and $\llangle [\iota F, \iota F] \rrangle_{QV}$ cannot contain any finite cycle with odd parity, else $\llangle [\iota F, \iota F] \rrangle_{\tilde{Q}V}$ would also, and we already know that it doesn't.  Also, $\llangle [\iota F , \iota F] \rrangle_{QV}$ projects onto a non-trivial normal subgroup of $V$, so it must project onto $V$.  We conclude that $
\llangle [\iota F , \iota F] \rrangle_{QV}$ is an extension of $V$ by $\Alt(\{0,1\}^*)$.  Observe that, 
\[
\llangle [\iota F , \iota F] \rrangle_{QV} \le \left( \llangle [\iota F , \iota F] \rrangle_{\tilde{Q}V} \right) \cap QV = \left( \Alt(Z) \rtimes V \right) \cap QV.
\]
Since $\left( \Alt(Z) \rtimes V \right) \cap QV$ is also an extension of $V$ by $\Alt(\{0,1\}^*)$, we necessarily have 
\[
 \llangle [\iota F , \iota F] \rrangle_{QV} = \left( \Alt(Z) \rtimes V \right) \cap QV.
\]

The proof of (4) is similar to that of (5).
\end{proof}

\begin{Cor}\label{cor:abelianisations from normal subgroup structure}
The abelianisation of $QF$ is isomorphic $\ZZ \oplus \ZZ \oplus C_2$ and the abeliansations of $QT$, $QV$, $\tilde{Q}T$, and $\tilde{Q}V$ are all isomorphic to $C_2$.
\end{Cor}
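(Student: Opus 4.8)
The plan is to treat the five groups in two batches. For $QF$, $\tilde{Q}T$, and $\tilde{Q}V$ the abelianisations were already read off from the finite presentations in Corollaries \ref{cor:abelianisation QF}, \ref{cor:abelianisation wQT}, and \ref{cor:abelianisation wQV}, where they were found to be $\ZZ \oplus \ZZ \oplus C_2$, $C_2$, and $C_2$; so for these three nothing remains but to quote those results. The only genuinely new content is the computation of the abelianisations of $QT$ and $QV$, which could not be obtained this way since no finite presentation for either group is available.

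For $QT$ and $QV$ I would argue uniformly. Write $G$ for either $QT$ or $QV$ and $\bar{G}$ for its image $T$ or $V$ under $\pi$, so that $\pi:G \to \bar{G}$ is surjective with kernel $\Sym(\{0,1\}^*)$. The computation rests on two facts drawn from Proposition \ref{prop:normal closure of iF commutator}: first, that $\pi$ restricts to a surjection $[G,G]\to \bar{G}$ — this is immediate because $\bar{G}$ is simple and non-abelian, so $\pi([G,G]) = [\bar{G},\bar{G}] = \bar{G}$; and second, that the kernel of this restriction is exactly $\Alt(\{0,1\}^*)$, that is, $[G,G]\cap \Sym(\{0,1\}^*) = \Alt(\{0,1\}^*)$.

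Granting these, the corollary follows from the second isomorphism theorem. Since $\pi([G,G]) = \pi(G) = \bar{G}$ and $\ker \pi = \Sym(\{0,1\}^*)$, we have $G = [G,G]\cdot \Sym(\{0,1\}^*)$, whence
\[
G/[G,G] \cong \Sym(\{0,1\}^*)/(\Sym(\{0,1\}^*)\cap [G,G]) = \Sym(\{0,1\}^*)/\Alt(\{0,1\}^*) \cong C_2.
\]

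The one point requiring care is the second fact: one must know that $[G,G]$ contains all of $\Alt(\{0,1\}^*)$ yet no odd permutation of $\{0,1\}^*$. Both halves are already established inside the proof of Proposition \ref{prop:normal closure of iF commutator}: the explicit three-cycle computation shows $\Alt(\{0,1\}^*) \le [G,G]$, while the identification $[G,G] = (\Alt(Z)\rtimes \bar{G})\cap G$ inside the split group $\Sym(Z)\rtimes \bar{G}$ forces $[G,G]\cap \Sym(Z) = \Alt(Z)$, and intersecting with the permutations fixing $\zeta$ yields $\Alt(\{0,1\}^*)$. Thus no new argument beyond correctly assembling these facts is needed; the same reasoning incidentally reproves the $\tilde{Q}T$ and $\tilde{Q}V$ cases with $\Sym(Z)$ in place of $\Sym(\{0,1\}^*)$.
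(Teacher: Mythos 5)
Your proposal is correct and matches the paper's (implicit) argument: the corollary is stated without proof precisely because it follows by quoting Corollaries \ref{cor:abelianisation QF}, \ref{cor:abelianisation wQT}, and \ref{cor:abelianisation wQV} for $QF$, $\tilde{Q}T$, $\tilde{Q}V$, and, for $QT$ and $QV$, by combining Proposition \ref{prop:normal closure of iF commutator}(4),(5) with $\pi([G,G])=[\bar{G},\bar{G}]=\bar{G}$ and the second isomorphism theorem, exactly as you do. One point in your favour: your identification $[G,G]\cap\Sym(\{0,1\}^*)=\Alt(\{0,1\}^*)$ is the correct reading --- the phrase ``extension of $V$ by $\Sym(\{0,1\}^*)$'' in the statements of Proposition \ref{prop:normal closure of iF commutator}(4),(5) is a slip, since the proofs given there establish kernel $\Alt(\{0,1\}^*)$, and kernel $\Sym(\{0,1\}^*)$ would make the abelianisations of $QT$ and $QV$ trivial, contradicting the corollary itself.
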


\begin{Lemma}\label{lemma:buildup}
~
\begin{enumerate}
 \item Let $\sigma \in \Sym(\{0,1\}^*)$ and $1 \neq f \in \iota F$ be arbitrary, then there exists $h \in \iota F$ such that $[h, \sigma] = 1$ but $[h,f] \neq 1$. 
 \item Let $\sigma \in \Sym(Z)$ and $1 \neq v \in \iota V$ be arbitrary, then there exists $h \in \iota F$ such that $[h, \sigma] = 1$ by $[h,f] \neq 1$.
\end{enumerate}
\end{Lemma}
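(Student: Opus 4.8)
The plan is to handle both parts by a single mechanism: I will produce a nontrivial element $h=\iota(\bar h)\in\iota F$ whose support (as a permutation of $\{0,1\}^*$) is confined to a subtree $U=c\{0,1\}^*$ chosen so that (i) $U$ is disjoint from $\supp\sigma$, and (ii) $f(U)\cap U=\emptyset$ in part (1), respectively $v(U)\cap U=\emptyset$ in part (2). Everything then reduces to two elementary facts plus the construction of a suitable node $c$.

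The two facts are as follows. If $h$ fixes $\supp\sigma$ pointwise then $[h,\sigma]=1$: the set $\supp\sigma$ is $\sigma$-invariant, a bijection fixing it pointwise permutes its complement, and checking the cases $x\in\supp\sigma$ and $x\notin\supp\sigma$ gives $h\sigma h^{-1}=\sigma$. Since $h$ is supported on $U$ with $U\cap\supp\sigma=\emptyset$, and since in part (2) $h\in\iota F$ fixes $\zeta$ by Lemma \ref{lemma:QF SES splits} (covering the case $\zeta\in\supp\sigma$), condition (i) yields $[h,\sigma]=1$ in both parts. Conversely, if $f(U)\cap U=\emptyset$ and $1\neq h$ is supported on $U$, then $[h,f]\neq1$: choose $p\in U$ with $h(p)\neq p$; then $f(p)\notin U$ forces $h(f(p))=f(p)$, while $h(p)\in U$ with $h(p)\neq p$ forces $f(h(p))\neq f(p)$ by injectivity of $f$, so $hf(p)\neq fh(p)$. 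This computation uses only that $f$ is a bijection, so it applies verbatim to $v$ in part (2), giving $[h,v]\neq1$.

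It remains to construct $U$, where I use that a nontrivial element of $F$ (resp. $V$) acts nontrivially on the Cantor set of ends $\{0,1\}^{\mathbb N}$. Writing $f=\iota(\bar f)$ (resp. $v=\iota(\bar v)$), since $\bar f\neq1$ (resp. $\bar v\neq1$) there is an end $\xi$ with $\bar f(\xi)\neq\xi$; separating $\xi$ and its image by disjoint clopen sets and using continuity of $\bar f$, I obtain a cylinder $C=c\{0,1\}^{\mathbb N}$ with $\xi\in C$ and $\bar f(C)\cap C=\emptyset$. Passing to a sub-cylinder only shrinks the image, so I may refine $C$ (lengthen $c$) while keeping $\bar f(C)\cap C=\emptyset$. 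Because elements of $F$ (resp. $V$) act by a prefix substitution $c\cdot t\mapsto c'\cdot t$ on the subtree below any leaf of a defining tree pair $(L,R)$, refining so that $|c|$ is at least the leaf-depth of $L$ makes $f$ act on $U=c\{0,1\}^*$ by $c\cdot t\mapsto c'\cdot t$ with $c'\{0,1\}^{\mathbb N}=\bar f(C)$; disjointness of the two cylinders forces $c,c'$ incomparable, whence $f(U)=c'\{0,1\}^*$ is disjoint from $U$, giving (ii). Refining once more so that $|c|$ exceeds the length of every element of the finite set $\supp\sigma\cap\{0,1\}^*$ gives $U\cap\supp\sigma=\emptyset$, hence (i). Finally, the subgroup of $\iota F$ supported on the subtree below $c$ is the $\iota$-image of the copy of $F$ on the dyadic interval corresponding to $c$, so it contains a nontrivial $h$; this is the desired element.

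The main obstacle is condition (ii). A naive approach using the tree pair $(L,R)$ of $\bar f$ directly produces distinct leaves $x=l_i\neq r_i=y$ with $f(x\cdot t)=y\cdot t$, but $x$ and $y$ may be comparable in the prefix order, so $c\{0,1\}^*$ need not be disjoint from its image and one is forced into a case analysis. Passing to the action on ends dissolves this difficulty: separating $\xi$ from $\bar f(\xi)$ by disjoint clopen sets is exactly what guarantees that a sufficiently deep cylinder is disjoint from its image. Part (2) is then formally identical; the only point to note is that $h$ is still required to lie in $\iota F$ (not $\iota V$), which is automatic since $U\subseteq\{0,1\}^*$ and the construction of $h$ never leaves $\iota F$.
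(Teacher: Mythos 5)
Your proof is correct, and it reaches the harder half of the statement by a genuinely different mechanism than the paper. Both arguments open the same way: confine $h$ to a subtree $U=c\{0,1\}^*$ avoiding the finite set $\supp\sigma$ (with $\zeta$ handled because $\iota F$ fixes $\zeta$, Lemma \ref{lemma:QF SES splits}), so that disjointness of supports gives $[h,\sigma]=1$. The paper then requires only that $U$ meet $\supp f$ and proceeds by a dichotomy: if $f$ restricts to an action on $U$ it invokes centrelessness of the copy $F^\prime$ of $F$ supported there, and otherwise it runs an orbit-intersection argument with the generator $\alpha^\prime$ of $F^\prime$ (a step that is slightly rough as printed --- the conclusion should be that $\alpha^\prime$ and $f$ fail to commute, with $h=\alpha^\prime$). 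You instead strengthen the choice of $U$: using faithfulness of the $F$- and $V$-actions on the space of ends, you pick a cylinder displaced off itself by $\bar f$, refine $c$ below a leaf of a defining tree pair so that $f$ acts on $U$ by the prefix substitution $c\cdot t\mapsto c^\prime\cdot t$ of Remark \ref{remark:explicit description of iota for F} with $c$, $c^\prime$ incomparable, and obtain $f(U)\cap U=\emptyset$; then \emph{every} nontrivial $h\in\iota F$ supported on $U$ satisfies $[h,f]\neq 1$ by the two-line displacement computation, and part (2) is verbatim identical since only bijectivity of $v$ is used (you also correctly read the misprint ``$[h,f]\neq1$'' in (2) as $[h,v]\neq 1$). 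The trade-off: the paper stays entirely inside the vertex action and needs no analysis of the ends action, at the cost of a case split and structural input about $F$ (centrelessness, orbit structure of $\alpha^\prime$); your version needs the standard but citable facts that nontrivial elements of $F$ and $V$ move some end and that $\iota$ acts by subtree transport below leaves, and in exchange is uniform across (1) and (2), eliminates the case analysis, and proves the stronger statement that the choice of $h$ is arbitrary within the supported copy of $\iota F$.
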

\begin{proof}
Let $T$ be the full sub-tree under some $x \in \{0,1\}^*$, where $T$ is chosen to intersect the support of $f$ but not the support of $\sigma$, this is possible since the support of $f$ is infinite but the support of $\sigma$ is finite.  Thus any $h$ with $\supp(h) \subseteq T$ satisfies $[h,\sigma] = 1$.

Let $F^\prime$ denote the copy of $F$ acting on $T$, with the usual generators denoted $\alpha^\prime$ and $\beta^\prime$.  If $f$ restricts to an action on $T$ then, since $F^\prime$ is centreless, there exists some $h \in F^\prime$ which doesn't commute with $f$.  If $f$ doesn't restrict to an action on $T$ then there is necessarily an orbit $\{x_i\}_{i \in \ZZ}$ which is neither contained in $T$ nor its complement.  Since $\alpha^\prime$ acts non-trivially on all of $T$, one of the $\alpha^\prime$ orbits intersects $\{x_i\}_{i \in \ZZ}$ but is not equal to $\{x_i\}_{i \in \ZZ}$, thus $\alpha^\prime$ and $h$ do not commute.

The proof of (2) is analagous.
\end{proof}

\begin{Lemma}\label{lemma:normal closure of element in iF}
Let $G$ be one of $QF$, $QT$, $QV$, $\tilde{Q}T$, and $\tilde{Q}V$, and let $f \in G$ such that $\pi(f) \neq 1$, then $\llangle f \rrangle_G \cap \iota F \neq \{\id\}$.
\end{Lemma}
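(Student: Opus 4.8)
The plan is to produce the required nontrivial element of $\iota F$ as a single commutator $[f,h]$ with $h\in\iota F$. Since $[f,h]=f\cdot(h f^{-1}h^{-1})$ is the product of $f$ with a conjugate of $f^{-1}$, it always lies in $\llangle f\rrangle_G$, so the whole problem reduces to choosing $h$ so that $[f,h]$ is a \emph{nontrivial} element of $\iota F$. The idea is to make $h$ and its $f$-conjugate live on disjoint cones: writing the same element as $[f,h]=(fhf^{-1})h^{-1}$, I will arrange that $fhf^{-1}$ is an element of $\iota F$ supported on a cone disjoint from the support of $h$. Concretely, since $\pi(f)=v_f\neq 1$, the homeomorphism $v_f$ moves some end of $\mathcal{T}_{2,c}$, hence carries some cone off itself; refining the corresponding tree-pair representative (adding carets, and if necessary passing to a descendant whose first letter forces incomparability) I can find nodes $x,y$ with $C_x\cap C_y=\emptyset$ and $v_f(C_x)=C_y$ given by the suffix-preserving rule $x\cdot s\mapsto y\cdot s$.

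Next I pass deep enough to remove the finitely many points at which $f$ disagrees with $v_f$: recall from Section \ref{subsection:QV} that $f(a)=b_f(x_a)\cdot s_a$ for all but finitely many $a$, so replacing $x$ by a sufficiently long descendant (retaining the first letter that guarantees disjointness) I arrange $f(x\cdot t)=y\cdot t$ for \emph{all} $t$, with $C_x\cap C_y=\emptyset$ still holding. I then take any nontrivial $h\in\iota F$ supported in $C_x$; such an $h$ exists because $C_x$ is convex for $\le_{\lex}$ and $\iota F\cong F$ contains a copy of $F$ supported on any such convex cone. A direct computation then gives that $fhf^{-1}$ acts by $y\cdot t\mapsto y\cdot(\text{$h$-image of the suffix})$ on $C_y$ and trivially elsewhere. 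Being order-preserving and fixing $\zeta$, this bijection must coincide with the element of $\iota F$ transporting $h$ to $C_y$; here I use the characterisation of $\iota F$ in Remark \ref{remark:explicit description of iota for F}, namely that an order-preserving element whose finite part is trivial lies in $\iota F$. Hence $fhf^{-1}\in\iota F$, so $[f,h]=(fhf^{-1})h^{-1}\in\iota F$, and it is nontrivial because $fhf^{-1}$ and $h^{-1}$ are supported on the disjoint cones $C_y$ and $C_x$.

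The argument is uniform across the five groups: it uses only that $f\in QV$ (respectively $\tilde{Q}V$) has $\pi(f)\neq 1$ and that $\iota F$ sits inside each of $QF,QT,QV,\tilde{Q}T,\tilde{Q}V$, which holds because every element of $\iota F$ fixes $\zeta$. In the $\tilde{Q}$-cases one checks in addition that $fhf^{-1}$ fixes $\zeta$, which is immediate since $f^{-1}(\zeta)\notin C_x$ and $h$ is supported in $C_x$. The main obstacle is the bookkeeping in the first two steps: guaranteeing simultaneously that the two cones are disjoint and that the finite discrepancy between $f$ and $v_f$ has been pushed entirely outside $C_x$. Once that is in place, the identification $fhf^{-1}\in\iota F$ and the disjoint-support nontriviality are routine. (Alternatively, one could run the same commutator idea abstractly via the buildup Lemma \ref{lemma:buildup}, supplying an $h\in\iota F$ that commutes with the finite part of $f$ but not with its image in $V$, at the cost of an extra step to clear the residual finite permutation.)
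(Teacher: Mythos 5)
Your proposal is correct, but it takes a genuinely different route from the paper's. The paper first writes the given element as $\sigma f_0$ with $\sigma$ a finite permutation and $f_0$ in the image of $\iota$ (using the semidirect structure), then invokes Lemma \ref{lemma:buildup} to produce $h \in \iota F$ with $[h,\sigma]=1$ but $[h,f_0]\neq 1$, so that $(\sigma f_0)^{-1}h(\sigma f_0)h^{-1} = [f_0,h]$ lies in $\llangle f \rrangle_G$ and is nontrivial; for $G=QF$ it lies in $\iota F$ simply because $f_0,h\in\iota F$. You instead keep $f$ intact: you locate incomparable $x,y$ with $f$ carrying the cone $C_x$ onto $C_y$ by the suffix rule (after deepening $x$ to clear the finitely many points where $f$ deviates from $v_f$ --- this bookkeeping is fine, since incomparability persists under passing to descendants $x\cdot w$, $y\cdot w$, and a long enough $w$ pushes the finite exceptional set out of $C_{x\cdot w}$), take $h\in\iota F$ nontrivial with $\supp(h)\subseteq C_x$, and observe that $fhf^{-1}$ is the suffix-transport of $h$ to $C_y$, so $[f,h]=(fhf^{-1})h^{-1}$ is a product of two elements of $\iota F$ with disjoint supports, hence nontrivial. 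What each approach buys: the paper's is shorter once Lemma \ref{lemma:buildup} is in hand, whereas yours gets nontriviality for free from disjointness of supports rather than from a non-commuting analysis, and --- more significantly --- it places the commutator in $\iota F$ \emph{regardless} of $f$. This matters because in the paper's treatment of $QT$, $QV$, $\tilde{Q}T$, $\tilde{Q}V$ the part $f_0$ lies only in $\iota T$ or $\iota V$, and since $F$ is not normal in $V$ the commutator $[f_0,h]$ lies a priori only in $\iota T$ or $\iota V$, so the paper's ``the proof is as for $QF$'' is delicate exactly where your transport argument is genuinely uniform over all five groups. One small repair: Remark \ref{remark:explicit description of iota for F} states only the forward direction (elements of $\iota F$ preserve $\le_{\lex}$), not the converse you invoke; the converse does hold and is a one-liner --- if $\tau\in QF$ preserves $\le_{\lex}$ then $\tau\,\iota(\pi(\tau))^{-1}$ is a finite-support order-preserving permutation of a totally ordered set, hence trivial --- or you can bypass it altogether by choosing $e\in F$ with $\iota(e)$ mapping $C_x$ to $C_y$ suffix-preservingly and noting $fhf^{-1}=\iota(e)\,h\,\iota(e)^{-1}\in\iota F$.
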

\begin{proof}
Let $\sigma f \in QF$, so that $\sigma \in \Sym(\{0,1\}^*)$ and $f \in \iota F$ with $\pi(f) \neq 1$.  Let $h \in \iota F$ be as in Lemma \ref{lemma:buildup}(1), then 
\[
 \sigma h f h^{-1} = h\sigma f h^{-1} \in \llangle \sigma f \rrangle.
\]
So,
\[
 [f,h] = (\sigma f)^{-1}\sigma h f h^{-1} \in \llangle \sigma f \rrangle,
\]
since $[f,h]$ is a non-trivial element of $\iota F$, this is sufficient.  This proves the statement for $QF$.

For $G = \tilde{Q}T$ or $\tilde{Q}V$, start with an element $\sigma t \in \tilde{Q}T$ (respectively $\sigma v \in \tilde{Q}V$), so that $\sigma \in \Sym(Z)$ and $t \in \iota T$ (resp. $t \in \iota V$).  Let $h \in \iota F$ be as in Lemma \ref{lemma:buildup}(2), then the proof is as for $QF$.

For $G = QT$ or $QV$, start with $\sigma t \in QT$ (respectively $\sigma v \in QV$), so that $\sigma \in \Sym(\{0,1\}^*)$ and $t \in \iota T$ (resp. $t \in \iota V$).  Let $h \in \iota F$ be as in Lemma \ref{lemma:buildup}(1), then the proof is as for $QF$.
\end{proof}

\begin{proof}[Proof of Theorem \ref{theorem:alternative for normal subgroups}]
Let $N$ be a normal subgroup of $QF$ with $N \not\le \Sym(\{0,1\}^*)$, then using Lemma \ref{lemma:normal closure of element in iF} $N$ contains an element $f \in \iota F$.  For any $g \in \iota F$, we have $g^{-1}f^{-1}g \in N$ and hence 
\[
 [g,f] = g^{-1}f^{-1}gf \in N.
\]
Without loss of generality, $f \in [\iota F, \iota F]$.  

Since $[\iota F, \iota F]$ is simple \cite[Theorem 4.5]{CannonFloydParry-IntroNotesOnThompsons}, we deduce that $[\iota F, \iota F] \le N$, and thus $\llangle [\iota F, \iota F] \rrangle \le N$.  Proposition \ref{prop:normal closure of iF commutator}(1) completes the proof.

For the other parts, start with an element $f$ of either $\tilde{Q}T$, $\tilde{Q}V$, $QT$, or $QV$ and then use the appropriate parts of Lemmas \ref{lemma:normal closure of element in iF} and Proposition \ref{prop:normal closure of iF commutator}. 
\end{proof}

\section{Bieri--Neumann--Strebel--Renz invariants}\label{section:BNSR invariants}
In this section we compute the Bieri--Neumann--Strebel--Renz invariants $\Sigma^i(QF)$ and $\Sigma^i(QF, R)$ for any commutative ring $R$.  The invariant $\Sigma^1(G)$ was introduced by Bieri, Neumann, and Strebel in \cite{BieriNeumannStrebel-AGeometricInvariantOfDiscreteGroups} and the higher invariants $\Sigma^i(QF)$ for $i \ge 2$ were introduced in \cite{BieriRenz-HigherGeometricInvariants}.  In general, for any group $G$, there is a hierachy of invariants
\[
 \Sigma^1(G) \supseteq \Sigma^2(G) \supseteq \cdots \supseteq \Sigma^i(G) \supseteq \cdots,
\]
furthermore we set $\Sigma^\infty(G) = \bigcap_{i = 1}^\infty \Sigma^i(G)$.  There are also homological versions $\Sigma^i(G, R)$ for any commutative ring $R$ fitting into a similar hierachy.  Furthermore, $\Sigma^i(G) \subseteq \Sigma^i(G, R)$ for all $i$ all commutative rings $R$ and also $\Sigma^1(G, R) = \Sigma^1(G)$ for all commutative rings $R$.

Some of the interest in the Bieri--Neumann--Strebel--Renz invariants comes from the following theorem, which classifies the finiteness length of normal subgroups $N$ of a group $G$ which contain the commutator subgroup $[G,G]$.
\begin{Theorem}[{\cite[Theorem B]{BieriRenz-HigherGeometricInvariants}\cite{Renz-GeometricInvariantsAndHNNExtensions}}]\label{theorem:class of normal subgroups via BNSR}
 Let $G$ be a group of type $\F_n$ (respectively $\FP_n$ over $R$) and let $N$ be a normal subgroup containing the commutator $[G,G]$.  Then $N$ is $\F_n$ (resp. $\FP_n$ over $R$) if and only if every non-zero character $\chi$ of $G$ such that $\chi(N) = 0$ satisfies $[\chi] \in \Sigma^n(G)$ (resp. $[\chi] \in \Sigma^n(G, R)$).
\end{Theorem}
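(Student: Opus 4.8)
The plan is to reconstruct the Bieri--Renz argument, which translates the global finiteness of $N$ into a family of directional connectivity conditions indexed by the characters vanishing on $N$. First I would set up the geometry: since $G$ is of type $\F_n$ (resp. $\FP_n$ over $R$), choose a contractible free $G$-CW-complex $X$ (the universal cover of a $K(G,1)$ with finite $n$-skeleton mod $G$, resp. a free resolution of the trivial module $R$ by finitely generated $RG$-modules through degree $n$). Because $[G,G]\le N$, the quotient $Q=G/N$ is a finitely generated abelian group, and the non-zero characters $\chi$ with $\chi(N)=0$ are exactly the pullbacks of characters of $Q$; their classes form a subsphere $S(G,N)\subseteq S(G)$, namely the sphere of the subspace $\Hom(Q,\RR)\hookrightarrow\Hom(G,\RR)$. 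Writing $Q\cong\ZZ^k\oplus T$ with $T$ finite, this subsphere is a copy of $S^{k-1}$.

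Next I would reformulate both sides. On one hand, $N$ is $\F_n$ (resp. $\FP_n$) exactly when the cover $Y=N\backslash X$, a $K(N,1)$, carries the appropriate finiteness; $Y$ maps to the compact quotient $G\backslash X$ with deck group $Q$, so after discarding the finite part $T$ (which does not affect $\FP_n$, since these properties are insensitive to finite index) one may regard $Y$ as filtered by the $\ZZ^k$-coordinate. On the other hand, the very definition of the Sigma invariants says that $[\chi]\in\Sigma^n(G)$ (resp. $\Sigma^n(G,R)$) precisely when the $\chi$-superlevel filtration of $X$ is essentially $(n-1)$-connected (resp. $(n-1)$-acyclic over $R$); equivalently, writing $G_\chi=\chi^{-1}([0,\infty))$ for the submonoid, when the trivial module $R$ is of type $\FP_n$ over $RG_\chi$. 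The crux is then the equivalence between the finiteness of $N$ and the assertion that this directional connectivity holds for every direction in $S(G,N)$.

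For the forward implication I would argue that if $N$ is $\F_n$ (resp. $\FP_n$), then in each direction $\chi\in S(G,N)$ the filtration of $X$ inherits enough connectivity from the finiteness of $Y$, giving $[\chi]\in\Sigma^n(G)$ (resp. $\Sigma^n(G,R)$); this is the easier half and follows by a Brown-type comparison argument applied to the height function $\chi$. The reverse implication is where the real work lies: assuming $S(G,N)\subseteq\Sigma^n(G)$, one must assemble the individual directional conditions into the single global finiteness statement for $N$. Here I would use that each $\Sigma^n$ is an \emph{open} subset of $S(G)$ together with the \emph{compactness} of the subsphere $S(G,N)\cong S^{k-1}$: a finite subcover produces a uniform connectivity bound valid in all directions simultaneously, and feeding this into Brown's criterion for the $\ZZ^k$-filtered complex $Y$ yields that $N$ is of type $\F_n$ (resp. $\FP_n$). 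The homotopical ($\F_n$) statement is then obtained from the homological one together with the $\pi_1$-level input packaged in $\Sigma^2(G)$, following Renz.

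The main obstacle is precisely this compactness-and-gluing step: converting the pointwise, direction-by-direction vanishing data encoded by $S(G,N)\subseteq\Sigma^n$ into a global equivariant complex (or resolution) for $N$ that is finitely generated through degree $n$. Controlling how the finitely many local bounds patch together along the $\ZZ^k$-filtration, and verifying that the essential $(n-1)$-connectivity survives in all directions at once, is the technical heart of the theorem; the remainder is bookkeeping with the extension $1\to N\to G\to Q\to 1$ and the standard reductions to the case $Q\cong\ZZ^k$.
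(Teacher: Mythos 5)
The first thing to say is that the paper contains no proof of this statement: it is imported verbatim from the literature, with the homological version attributed to Bieri--Renz (Theorem B of their paper) and the homotopical version to Renz's thesis. So there is no in-paper argument to compare against; your proposal can only be measured against the cited Bieri--Renz--Renz proof, and as an outline it does track that proof's architecture correctly --- the reduction of $Q = G/N$ to $\ZZ^k$ (torsion being harmless by finite-index invariance of $\F_n$ and $\FP_n$, and because characters kill torsion), the identification of the relevant characters with the subsphere $S(G,N) \cong S^{k-1}$, the reformulation of $[\chi] \in \Sigma^n$ via essential $(n-1)$-connectivity of superlevel sets or via $R$ being $\FP_n$ over the monoid ring of $G_\chi = \chi^{-1}([0,\infty))$, and the recovery of the homotopical statement from the homological one plus low-dimensional ($\Sigma^2$, finite presentability) input.

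However, as a proof the proposal has a genuine gap, and you name it yourself: the implication $S(G,N) \subseteq \Sigma^n(G)$ $\Rightarrow$ $N$ of type $\F_n$ is asserted to follow from ``openness of $\Sigma^n$ plus compactness of $S(G,N)$ plus Brown's criterion,'' but openness and compactness only give you finitely many directions with uniform constants; they do not by themselves produce a free resolution (or $N$-cocompact $n$-skeleton) for $N$. In Bieri--Renz this is exactly where the valuation machinery enters: one needs chain contractions of uniformly bounded valuation defect in each of the finitely many directions, and a nontrivial algebraic assembly of these into finite generation of a resolution over $\ZZ N$ --- the ``patching along the $\ZZ^k$-filtration'' you defer is the theorem, not bookkeeping. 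Note also that openness of $\Sigma^n$ is itself a hard theorem of the same paper, so invoking it is circular unless treated as an independent citation. A second, smaller soft spot: you call the forward direction ($N$ of type $\F_n$ $\Rightarrow$ directional connectivity) ``the easier half \ldots by a Brown-type comparison argument,'' but even this requires constructing from an $N$-finite resolution a $G$-resolution with controlled $\chi$-valuation (or comparing $N$-cocompact and $G$-cocompact filtrations), which deserves more than one sentence. In short: correct map of the terrain, but the two implications are sketched at a level where the actual content of Bieri--Renz and Renz is assumed rather than proved --- which is defensible here only because the paper itself treats the theorem as a black box.
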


Recall that a \emph{character} of a group $G$ is a group homomorphism $G \to \RR$, where $\RR$ is viewed as a group under addition, and the \emph{character sphere} $S(G)$ is the set of equivalence classes of non-zero characters modulo multiplication by a positive real number.  Thus $S(G)$ is isomorphic to a sphere of dimension $n-1$ where $n$ is the torsion-free rank of $G / [G,G]$.

Since $F / [F,F] \cong \ZZ \oplus \ZZ$, the character sphere $S(F)$ is isomorphic to $S^1$.  We denote by $\chi_0$ and $\chi_1$ the two characters of $F$ given by $\chi_0(A) = -1$, $\chi_0(B) = 0$, $\chi_1(A) = 1$, and $\chi_1(B) = 1$, these two characters are linearly independent and hence do not represent antipodal points on $S(F)$.  

\begin{Theorem}[The Bieri--Neumann--Strebel--Renz invariants of $F$ {\cite{BieriGeogheganKochloukova-SigmaInvariantsOfF}}]\label{theorem:BNSR invariants of F}
The character sphere $S(F)$ is isomorphic to $S^1$ and, for any commutative ring $R$,
\begin{enumerate}
 \item $\Sigma^1(F, R) = \Sigma^1(F) = S(F) \setminus \{ [\chi_0], [\chi_1]\}$.
 \item $\Sigma^i(F, R) = \Sigma^i(F) = S(F) \setminus \{ [a \chi_0 + b \chi_1] : a, b \ge 0 \}$ for all $i \ge 2$.
\end{enumerate}
\end{Theorem}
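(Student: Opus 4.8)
This is the theorem of Bieri--Geoghegan--Kochloukova \cite{BieriGeogheganKochloukova-SigmaInvariantsOfF}, so the plan is to reconstruct its proof rather than to find a new argument. The organising principle is Theorem \ref{theorem:class of normal subgroups via BNSR}: since any character $\chi: F \to \RR$ vanishes on $[F,F]$, each kernel $\ker\chi$ is a normal subgroup containing the commutator subgroup, so $[\chi] \in \Sigma^n(F)$ (respectively $\Sigma^n(F,R)$) if and only if $\ker\chi$ is of type $\F_n$ (respectively $\FP_n$ over $R$). First I would record that $H^1(F;\RR) \cong \RR^2$, with $\chi_0, \chi_1$ a basis, so that $S(F) \cong S^1$, and identify $\chi_0$ and $\chi_1$ with the two ``germ'' characters coming from the standard action of $F$ on $[0,1]$ by piecewise-linear homeomorphisms, namely the base-$2$ logarithm of the slope at the endpoints $0$ and $1$.

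For part (1) I would compute $\Sigma^1(F)$ using the Bieri--Neumann--Strebel criterion together with the structure of $F$ as a group of piecewise-linear homeomorphisms. The two discrete characters $[\chi_0]$ and $[\chi_1]$ are exactly those whose kernel fails to be finitely generated, while every other character has finitely generated kernel; this places $[\chi_0],[\chi_1]$ outside $\Sigma^1(F)$ and everything else inside. Since $\Sigma^1(F,R) = \Sigma^1(F)$ in general, this settles the homological case simultaneously.

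The substance is part (2), which I would approach by Morse theory on a contractible complex. I would let $F$ act cocompactly on a suitable contractible CW-complex---the Stein--Farley cube complex is the standard choice---and, for a character $\chi$, use the induced height function to run Bestvina--Brady--Brown Morse theory, estimating the connectivity of the ascending and descending links. For $[\chi]$ lying outside the closed arc $\{[a\chi_0 + b\chi_1] : a, b \ge 0\}$ these links are highly connected in every dimension, giving $[\chi] \in \Sigma^\infty(F)$; for $[\chi]$ inside the arc the connectivity fails at the level governing $\F_2$ and $\FP_2$, so $[\chi] \notin \Sigma^2(F)$. A convexity and openness argument then forces $\Sigma^n(F) = \Sigma^2(F)$ for all $n \ge 2$ and shows the homotopical and homological invariants agree here.

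The hard part will be the negative direction: showing that every interior character $[a\chi_0 + b\chi_1]$ with $a, b > 0$ lies outside $\Sigma^2(F)$, equivalently that the associated kernel is not $\FP_2$. This is not a soft connectivity estimate but requires exhibiting a genuine homological obstruction---an essential cycle in the relevant covering space that cannot be filled---and it is precisely where \cite{BieriGeogheganKochloukova-SigmaInvariantsOfF} concentrates its effort. By contrast, the positive Morse-theoretic direction is comparatively routine once the complex and height function are in place, being of a similar finiteness-property flavour to the arguments of Section \ref{section:type Finfty}.
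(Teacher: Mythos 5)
First, a point of comparison you could not have known: the paper does not prove this theorem at all---it is imported verbatim from Bieri--Geoghegan--Kochloukova (note the citation in the theorem header), and the only related internal content is that the paper later quotes two of BGK's positive results on ascending HNN-extensions (Theorem \ref{theorem:BGK thms on Sigma}) as black boxes to transfer the computation to $QF$. So your sketch is being measured against the literature rather than against anything in this paper. On that score your outline is broadly the right shape: $\Sigma^1(F)$ is the original Bieri--Neumann--Strebel computation, and your Morse-theoretic plan for part (2) via the Stein--Farley complex and ascending/descending links is a legitimate route (it is essentially how later proofs, e.g.\ Zaremsky's, proceed), whereas BGK's original argument is more algebraic, exploiting the ascending HNN-decomposition $F = F(1)\ast_{\theta,t}$---exactly the mechanism this paper reuses in Section \ref{section:BNSR invariants}. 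You also correctly locate the hard content in the negative direction for interior points of the arc.

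There is, however, a genuine error in your organising principle. You assert that $[\chi] \in \Sigma^n(F)$ if and only if $\Ker\chi$ is of type $\F_n$. That is not what Theorem \ref{theorem:class of normal subgroups via BNSR} says: for $N = \Ker\chi$ with $\chi$ discrete, the characters vanishing on $N$ are the positive multiples of \emph{both} $\chi$ and $-\chi$, so the correct statement is that $\Ker\chi$ is $\F_n$ if and only if both $[\chi]$ \emph{and} $[-\chi]$ lie in $\Sigma^n(F)$ (and for a character with non-discrete image of rank at least $2$, even more characters vanish on the kernel, so the criterion involves a whole subsphere). Since $\Ker\chi = \Ker(-\chi)$, any computation phrased purely in terms of finiteness properties of kernels is blind to the antipodal asymmetry of the answer---and the answer here is genuinely asymmetric: $[-\chi_0] \in \Sigma^\infty(F)$ while $[\chi_0] \notin \Sigma^1(F)$, with the same kernel, and the excluded set in part (2) is a closed quarter-arc, not an antipodally symmetric set. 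The same conflation infects your part (1): it is false that $[\chi_0],[\chi_1]$ are ``exactly those whose kernel fails to be finitely generated'' in a way that separates them from their negatives. To repair the plan you must work with the asymmetric definition of $\Sigma^n$ directly (via the geometric criterion on a $\chi$-equivariant height function, or via BGK's HNN-extension theorems, which are intrinsically one-sided in $\chi(t)$), and use the kernel criterion only at the end, e.g.\ to deduce $[\chi]\notin\Sigma^2$ for rational interior $\chi$ from ``$[-\chi]\in\Sigma^2$ but $\Ker\chi$ not $\FP_2$,'' followed by a closedness argument for irrational directions.
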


Since the abelianisation of $QF$ also has torsion-free rank $2$, the character sphere $S(QF)$ is again isomorphic to $S^1$.  By pre-composing $\chi_0$ and $\chi_1$ with $\pi : QF \to F$ we obtain characters $\pi^*\chi_0$ and $\pi^*\chi_1$.  Once again these are linearly independent.  In this section we prove the following theorem.

\begin{Theorem}[The Bieri--Neumann--Strebel--Renz invariants of $QF$]\label{theorem:BNSR invariants of QF}
The character sphere $S(QF)$ is isomorphic to $S^1$ and, for any ring $R$,
\begin{enumerate}
 \item $\Sigma^1(QF, R) = \Sigma^1(QF) = S(QF) \setminus \{ [\pi^*\chi_0], [\pi^*\chi_1]\}$.
 \item $\Sigma^i(QF, R) = \Sigma^i(QF) = S(QF) \setminus \{ [a \pi^*\chi_0 + b \pi^*\chi_1] : a, b \ge 0 \}$ for all $i \ge 2$.
\end{enumerate}
\end{Theorem}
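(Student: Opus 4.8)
The plan is to show that $\pi^*$ transports the entire $\Sigma$-theory of $F$ isomorphically onto that of $QF$, so that the statement becomes a restatement of Theorem \ref{theorem:BNSR invariants of F}. The first step is to record that every character of $QF$ factors through $\pi$. Since $\RR$ is torsion-free, any character kills torsion elements, and $\Sym(\{0,1\}^*)=\ker\pi$ is generated by transpositions, which are torsion; hence every character vanishes on $\ker\pi$ and has the form $\pi^*\chi$ for a unique character $\chi$ of $F$. Together with Corollary \ref{cor:abelianisation QF} this shows $\pi^*\colon S(F)\to S(QF)$ is a homeomorphism of circles and reduces the theorem to the single equivalence
\[
[\pi^*\chi]\in\Sigma^n(QF,R)\iff[\chi]\in\Sigma^n(F,R)
\]
for all $n$, all commutative rings $R$, all nonzero $\chi$, and for the homotopical invariants as well; feeding this into Theorem \ref{theorem:BNSR invariants of F} yields the stated description.

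For the forward implication I would use that $F$ is a \emph{retract} of $QF$: the section $\iota$ of Lemma \ref{lemma:QF SES splits} satisfies $\pi\circ\iota=\id_F$, and the standard monotonicity of the $\Sigma$-invariants under a retraction gives $[\pi^*\chi]\in\Sigma^n(QF,R)\Rightarrow[\chi]\in\Sigma^n(F,R)$. Equivalently, $\pi^*$ sends the complement $\Sigma^n(F,R)^c$ into $\Sigma^n(QF,R)^c$, so $QF$ excludes at least the pullbacks of the characters excluded for $F$.

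The substantial half is the converse, $[\chi]\in\Sigma^n(F,R)\Rightarrow[\pi^*\chi]\in\Sigma^n(QF,R)$, i.e.\ that passing to $QF$ creates no new excluded characters. Here I would exploit the cocompact action of $QF$ on the contractible Davis complex $\mathcal{U}$ constructed in Section \ref{section:type Finfty}, whose cell stabilisers are of type $\F_\infty$ and which has finitely many $QF$-orbits of cells in each dimension. Because $\chi$ vanishes on $\Sym(\{0,1\}^*)$, the Morse function induced by $\pi^*\chi$ is constant along the Coxeter directions coming from $\Sym(\{0,1\}^*)$ and varies only in the $F$-direction; one then compares the essential $(n-1)$-connectivity of the $\pi^*\chi$-filtration of $\mathcal{U}$ with the $\chi$-filtration of a cocompact model for $F$ and argues that they agree. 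An equivalent route, matching the tools emphasised just before this section, is through the Bieri--Renz criterion (Theorem \ref{theorem:class of normal subgroups via BNSR}): for a rational $\chi$ the coabelian kernel $\pi^{-1}(\ker\chi)=\Sym(\{0,1\}^*)\rtimes\ker\chi$ should be of type $\FP_n$ over $R$ precisely when $\ker\chi$ is, after which the known shape of $\Sigma^\bullet(F)$ and the retract of the previous paragraph are combined over the dense set of rational directions.

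\textbf{The main obstacle} is exactly this converse, and it is the point where the non--finitely-generated kernel $\Sym(\{0,1\}^*)$ must be handled directly: one cannot invoke a black-box extension theorem, because $\Sym(\{0,1\}^*)$ is locally finite and not of type $\F_\infty$. The content is that inside $\mathcal{U}$ this kernel contributes only $\F_\infty$, $\chi$-constant stabiliser data, so it produces no new descending-link obstructions beyond those already present for $F$. A secondary delicacy, if one argues via Theorem \ref{theorem:class of normal subgroups via BNSR}, is that the criterion only detects an antipodal pair $\{[\chi],[-\chi]\}$ at once; the retraction step is what breaks this symmetry and pins down which member of each pair is excluded, so the two halves must be used in tandem rather than in isolation.
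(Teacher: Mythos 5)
Your set-up is fine, and your exclusion direction is exactly the paper's: characters of $QF$ kill the torsion-generated kernel $\Sym(\{0,1\}^*)$, so $\pi^*$ identifies $S(F)$ with $S(QF)$, and Meinert's lemma for split surjections (Lemma \ref{lemma:sigma inv under quotients}) transfers the exclusions of Theorem \ref{theorem:BNSR invariants of F} to $QF$. The genuine gap is the membership half, which you correctly flag as ``the main obstacle'' but do not resolve, and neither of your two proposed routes can close it as stated. The Morse-theoretic comparison on the Davis complex is pure assertion: the entire content of the converse is the claim that the $\pi^*\chi$-filtration of $\mathcal{U}$ has the same essential connectivity as the $\chi$-filtration of a model for $F$, and ``argues that they agree'' is precisely what would have to be proved --- one needs an actual descending-link connectivity computation involving the Coxeter cells coming from $\Sym(\{0,1\}^*)$, and the facts that the stabilisers are of type $\F_\infty$ and are $\chi$-constant do not by themselves yield it. Worse, the Bieri--Renz route fails structurally, not just in detail: Theorem \ref{theorem:class of normal subgroups via BNSR} only ever sees the antipodal pair $\{[\chi],[-\chi]\}$, so for any character whose antipode is excluded --- e.g.\ $[-\pi^*\chi_0]$, and indeed the whole arc $\{[a\pi^*\chi_0 + b\pi^*\chi_1] : a, b \le 0\}$ --- the kernel $\Ker(\pi^*\chi)$ is not even finitely generated and the criterion yields no information. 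Your proposed fix, that ``the retraction step breaks the symmetry,'' cannot work: Meinert's lemma moves in one direction only (membership in $\Sigma^n(QF)$ implies membership in $\Sigma^n(F)$), i.e.\ it transfers exclusions from $F$ to $QF$ and can never certify that a class lies in $\Sigma^n(QF)$.

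The paper closes exactly this hole with a mechanism absent from your outline: the ascending HNN decomposition $QF = QF(1) \ast_{\theta, t}$ with $QF(1) \cong QF$ (Lemma \ref{lemma:QF is an HNN extension}). Applying the Bieri--Geoghegan--Kochloukova HNN criteria (Theorem \ref{theorem:BGK thms on Sigma}) puts $[-\pi^*\chi_0]$ in $\Sigma^\infty(QF)$ directly, and, since $QF(1) \cong QF$ is of type $\F_\infty$ (Theorem \ref{theorem:QF is Finfty}), shows $[\chi] \in \Sigma^\infty(QF)$ whenever $\chi(\beta) < 0$; the $\nu$-symmetry of Remark \ref{remark:nu-symmetry} then covers everything off the positive cone, settling $\Sigma^i$ for all $i \ge 2$. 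Only for the remaining $\Sigma^1$-membership (characters $a\pi^*\chi_0 + b\pi^*\chi_1$ with $a, b > 0$, where both members of the antipodal pair survive) does the paper argue via kernels, and there it actually proves finite generation: $\pi(N)$ acts transitively on the generating set $\Sigma_2$ of $\Sym(\{0,1\}^*)$ (Lemmas \ref{lemma:KerChi acts with finitely many orbits on Sigma2} and \ref{lemma:arbitrary chi0 and chi1 while fixing points}), which is exactly the orbit computation your sketch postpones, and even this step only works where it is applied. So as written your plan establishes the upper bounds but leaves all of the lower bounds --- in particular every direction in the third quadrant --- out of reach of your tools; you would need either a genuine connectivity argument on $\mathcal{U}$ or the HNN input to complete it.
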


\begin{Remark}[$\nu$-symmetry]\label{remark:nu-symmetry}
For $x \in \{0,1\}^*$, let $\bar{x}$ be the element obtained by swapping $0$ and $1$ in the word $x$.  The map $x \mapsto \bar{x}$ induces an automorphism $\nu : QV \to QV$, which projects to the automorphism of $F$ denoted $\nu$ in \cite[\S 1.4]{BieriGeogheganKochloukova-SigmaInvariantsOfF}.  Furthermore, $\nu$ induces an automorphism $\nu^*$ of $\Sigma^i(QF)$ for all $i$, and one checks that $\nu^* [\chi_0] = [\chi_1]$.  Following Bieri, Geoghegan, and Kochloukova we will call this the \emph{$\nu$-symmetry} of $\Sigma^i(QF)$.  Recall that for any group $G$ the invariants $\Sigma^i(G)$ and $\Sigma^i(G, R)$ are invariant under automorphisms of $G$.
\end{Remark}

\begin{Lemma}[{\cite[Corollary 2.8]{Meinert-ActionsOn2ComplexesAndSigma2}\cite[Corollary 3.12]{Meinert-HomologicalInvariantsForMetabelianGroups}}]\label{lemma:sigma inv under quotients}
 Let $\pi:G \longtwoheadrightarrow Q$ be an split surjection and $\chi$ a character of $Q$.  Then if $[\pi^*\chi] \in \Sigma^i(G)$ (respectively $\Sigma^i(G, R)$) then $[\chi] \in \Sigma^i(Q)$ (resp. $\Sigma^i(Q, R)$). 
\end{Lemma}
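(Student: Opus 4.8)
The plan is to reduce everything to the homological definition of the invariants and to a statement about retracts of monoid rings. Write $\psi := \pi^*\chi = \chi\circ\pi$. By the Bieri--Renz characterisation, $[\psi]\in\Sigma^i(G,R)$ exactly when the trivial module $R$ is of type $\FP_i$ over the monoid ring $R[G_\psi]$, where $G_\psi=\{g\in G:\psi(g)\ge 0\}$; likewise $[\chi]\in\Sigma^i(Q,R)$ iff $R$ is $\FP_i$ over $R[Q_\chi]$. (Here $Q$ is itself of type $\FP_i$, being a retract of $G$; in our application $Q=F$ and $G=QF$ are both $\F_\infty$.) Since $\psi$ factors through $\pi$ one has the clean identity $G_\psi=\pi^{-1}(Q_\chi)$, and since $\ker\pi=:N$ lies in $G_\psi$ this exhibits $G_\psi=N\rtimes Q_\chi$ as a semidirect product of the group $N$ with the submonoid $Q_\chi$. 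First I would record that the section $s$ restricts to a monoid section $s\colon Q_\chi\to G_\psi$ of $\pi\colon G_\psi\to Q_\chi$ (using $\psi(s(q))=\chi(q)$ and $\chi(\pi(g))=\psi(g)$), so that $Q_\chi$ is a retract of the monoid $G_\psi$.

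Passing to monoid rings over $R$, this yields ring homomorphisms $j\colon R[Q_\chi]\to R[G_\psi]$ and $p\colon R[G_\psi]\to R[Q_\chi]$ with $p\circ j=\id$, both compatible with the augmentations onto the trivial module $R$. The statement then comes down to the following purely homological lemma: if $S\xrightarrow{j}T\xrightarrow{p}S$ is a retract of rings ($pj=\id_S$) compatible with augmentations $S\to R\leftarrow T$, and $R$ is $\FP_i$ over $T$, then $R$ is $\FP_i$ over $S$; here $S=R[Q_\chi]$ and $T=R[G_\psi]$. The homotopical statement (for $\Sigma^i(G)$ and $\Sigma^i(Q)$) I would treat in parallel, either by applying the same monoid-module argument to the homotopical finiteness criterion, or geometrically: the section $s$ turns a cocompact free $G$-CW model for $\Sigma^i(G)$ into a free (no longer cocompact) $Q$-CW complex carrying the correct height function, reducing to the same finiteness transfer.

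The hard part is exactly this descent of $\FP_i$ across the retract, and the main obstacle is that the kernel $N$ is infinite, so $S$ is very far from being finitely generated or flat over $T$. The naive idea of applying $S\otimes_T-$ (or restriction along $j$) to a finite-type $T$-resolution of $R$ fails: since $T$ is free over $RN$ with basis $s(Q_\chi)$ one computes $\Tor^T_*(S,R)\cong H_*(N;R)$, which does not vanish, and $\mathrm{Res}_j(T)$ is not finitely generated over $S$. The correct route, following Meinert, is a change-of-rings argument governed by $G_\psi=N\rtimes Q_\chi$ together with Bieri's homological criterion, namely that a finitely generated module is $\FP_i$ precisely when the functors $\Tor$ against it commute with direct products in degrees up to $i$. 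I expect the cleanest implementation to show that this product-commutation property for $R$ over $T$ is inherited by $R$ over $S$ through the retract maps $p$ and $j$, the finite generation of $R$ over $S$ being immediate via $p$; the delicate point to get right is that the comparison isomorphisms are the natural ones, so that $p$ and $j$ genuinely intertwine the commutation statements over the two rings.
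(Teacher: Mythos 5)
First, a point of comparison: the paper does not actually prove this lemma --- it is imported verbatim from Meinert's work (the two cited corollaries), so there is no internal proof to measure you against; your proposal has to stand on its own.

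Your setup is correct and matches what Meinert-style arguments do: the Bieri--Renz characterisation ($[\psi]\in\Sigma^i(G,R)$ iff $R$ is $\FP_i$ over $T=R[G_\psi]$), the identity $G_\psi=\pi^{-1}(Q_\chi)$, the monoid retract $S=R[Q_\chi]\xrightarrow{j}T\xrightarrow{p}S$, and the correct diagnosis that naive base change fails because $\Tor^T_*(S_p,R)\cong H_*(N;R)\neq 0$ (here $S_p=T/\ker p$ is $S$ as a right $T$-module via $p$). The genuine gap is at the crux you defer: the proposed finish via Bieri's direct-product criterion does not go through as stated. To invoke the hypothesis you must relate $\Tor^S_k\bigl(\prod_I S,R\bigr)$ to something over $T$, and restriction along $p$ turns $\prod_I S$ into $\prod_I S_p$, which is \emph{not} a $T$-module summand of $\prod_I T$ (the image $j(S)$ is not a right ideal of $T$), so the vanishing hypothesis $\Tor^T_k\bigl(\prod_I T,R\bigr)=0$ never directly applies. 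What one actually gets, using that $R$ is $\FP_i$ over $T$, is $\Tor^T_k\bigl(\prod_I S_p,R\bigr)\cong\prod_I H_k(N;R)\neq 0$ --- and this comparison isomorphism is only available for $k\le i-1$, with merely an epimorphism at $k=i$. So you are exhibiting $\Tor^S_k\bigl(\prod_I S,R\bigr)$ as a summand of something nonvanishing; to conclude it is zero you need the further observation that the idempotent cutting it out is natural in the coefficient module and hence is the product of the single-factor idempotents, each of which is zero --- and even with this, the top degree $k=i$ is out of reach because the product comparison is only surjective there. The ``delicate point'' is therefore not just naturality of the comparison maps; the tool itself stalls one degree short.

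Both defects are repaired by switching to Brown's filtered-colimit criterion: $R$ is $\FP_i$ over $S$ iff $\varinjlim\Ext^k_S(R,M_\lambda)=0$ for all $0\le k\le i$ and every directed system $(M_\lambda)$ of $S$-modules with $\varinjlim M_\lambda=0$. Since $p^*$ is exact, preserves colimits, and satisfies $p^*R=R$ (compatibility of augmentations), and since chain maps $Q_*\to j^*P_*$ and $P_*\to p^*Q_*$ between projective resolutions compose to a self-map of $Q_*=j^*p^*Q_*$ lifting the identity (hence homotopic to it), $\Ext^k_S(R,M)$ is a \emph{natural} direct summand of $\Ext^k_T(R,p^*M)$; the hypothesis kills $\varinjlim\Ext^k_T(R,p^*M_\lambda)$ in all degrees $k\le i$ simultaneously, with no top-degree loss. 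Finally, your treatment of the homotopical invariant $\Sigma^i(G)$ is a second gap: the geometric alternative you sketch (a free but non-cocompact $Q$-CW complex) proves nothing, since the invariant is defined through cocompact, finite-type data --- exactly what the section destroys. The standard reduction is $\Sigma^1(G)=\Sigma^1(G,\ZZ)$ and Renz's theorem $\Sigma^i(G)=\Sigma^2(G)\cap\Sigma^i(G,\ZZ)$ for $i\ge 2$, so the homotopical statement needs an honest $\Sigma^2$ argument; that is precisely the content of the first Meinert reference (actions on $2$-complexes), and your proposal does not supply a substitute for it.
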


\begin{Theorem}[{\cite[Theorems 2.1(1), 2.3]{BieriGeogheganKochloukova-SigmaInvariantsOfF}}]\label{theorem:BGK thms on Sigma}
Let $H$ be a group of type $F_\infty$ and let $G = H\ast_{\theta, t}$ be an ascending HNN-extension such that $\theta $ is not surjective.
\begin{enumerate}
 \item Let $\chi : G \to \RR$ be the character given by $\chi(H) = 1$ and $\chi(t) = 1$, then $[\chi] \in \Sigma^\infty(G)$.
 \item Let $\chi : G \to \RR$ be a character such that $\chi\vert_H \neq 0$ and $[\chi\vert_H] \in \Sigma^\infty(H)$ then $[\chi] \in \Sigma^\infty(G)$, for any $i \ge 0$.
\end{enumerate}
\end{Theorem}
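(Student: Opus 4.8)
The plan is to deduce both statements from the standard topological characterization of the invariants. For a group $G$ of type $\F_\infty$ and a contractible $G$-CW-complex $X$ of finite type mod $G$ carrying a $G$-equivariant height function $h:X\to\RR$ refining the character $\chi$ (so $h(g\cdot x)=h(x)+\chi(g)$), one has $[\chi]\in\Sigma^n(G)$ exactly when the superlevel filtration $X^{\ge s}:=h^{-1}([s,\infty))$ is \emph{essentially $(n-1)$-connected}: for every $s$ there is $s'\le s$ so that the inclusion $X^{\ge s}\hookrightarrow X^{\ge s'}$ induces the zero map on $\widetilde\pi_k$ for all $k\le n-1$; then $[\chi]\in\Sigma^\infty(G)$ means this holds for every $n$. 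To build $X$ I exploit the ascending HNN structure: choose a $K(H,1)$-complex $Y$ with finitely many cells in each dimension (possible since $H$ is $\F_\infty$), realize the monomorphism $\theta$ by a cellular map $f:Y\to Y$, and let $M=M_f$ be its mapping torus, so $\pi_1 M\cong G$ and $M$ is aspherical of finite type (this already recovers that $G$ is $\F_\infty$). Let $X=\widetilde M$ be the universal cover and $h$ the lift of the projection $M\to S^1$; this $h$ is precisely the height function of the distinguished character $\chi_0$ with $\chi_0|_H=0$ and $\chi_0(t)=1$.

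For part (1) I take $\chi=\chi_0=h$. The complex $X$ is filtered by the copies of $\widetilde Y$ at integer heights, consecutive layers being glued by a lift $\widetilde f$ of $f$. The key consequence of \emph{ascending} ($\theta$ injective) is that translation by $t^{-1}$ carries $X^{\ge s}$ homeomorphically onto $X^{\ge s-1}$, and that, after this translation, the inclusion $X^{\ge s}\hookrightarrow X^{\ge s-1}$ is modeled by the self-map $\widetilde f$. I would then run a Bestvina--Brady/Morse-theoretic analysis of $h$: essential $(n-1)$-connectivity of the superlevel sets reduces to $(n-1)$-connectivity of the descending links, and in the ascending direction these links are assembled from copies of the $\F_\infty$-complex $Y$, hence are $(n-1)$-connected once enough skeleta are present. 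Equivalently, the inclusions between superlevel sets are highly connected because $\widetilde f$ is $\pi_1$-injective and behaves like a high-connectivity map on skeleta. As this works for every $n$, we get $[\chi_0]\in\Sigma^\infty(G)$.

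For part (2) let $\chi$ be any character with $\chi|_H\ne 0$ and $[\chi|_H]\in\Sigma^\infty(H)$; being a character of the HNN extension forces $\chi|_H=\chi|_H\circ\theta$. I keep the same $X$ but replace $h$ by the height function $h_\chi$ refining $\chi$. Its restriction to each integer-height layer $\widetilde Y$ is a translate of the height function on $\widetilde Y$ refining $\chi|_H$, whose superlevel sets are essentially $n$-connected for every $n$ since $[\chi|_H]\in\Sigma^\infty(H)$. The plan is to assemble $X^{\ge s}$ from these layers as a telescope in the $t$-direction, using the ascending gluing $\widetilde f$ to pass between layers, and to conclude that a colimit of essentially $n$-connected layer filtrations, patched along highly connected maps, is again essentially $n$-connected. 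Because $\chi|_H\ne 0$, the layer filtration genuinely exhausts in the $H$-direction, so no flat direction survives to obstruct connectivity, and carrying this out for every $n$ gives $[\chi]\in\Sigma^\infty(G)$.

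The main obstacle is the bookkeeping in part (2): the superlevel sets of the mixed character $\chi$ are filtered simultaneously by $\chi|_H$ inside each layer and by the mapping-torus layers in the $t$-direction, and one must show these two controls combine. Making precise that the essentially $n$-connected layer filtrations patch through the $\widetilde f$-gluings, so that the total superlevel set inherits essential $n$-connectivity, is the technical heart. A clean route is a Morse-theoretic double filtration, or a Mayer--Vietoris/spectral-sequence comparison of the layer inclusions, controlling the connectivity of the ascending links uniformly in $n$ from the $\F_\infty$-hypothesis on $H$ together with the hypothesis $[\chi|_H]\in\Sigma^\infty(H)$.
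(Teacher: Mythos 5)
You should first note that the paper offers no proof of this statement at all: it is imported verbatim (up to two typos) from Bieri--Geoghegan--Kochloukova, so there is no ``paper's own proof'' to match; your attempt has to be judged against the truth of the statement. You correctly and silently repaired one typo (the quoted $\chi(H)=1$ must be $\chi(H)=0$, since no homomorphism to $\RR$ is constantly $1$ on a subgroup), and your general frame --- Morse theory on the universal cover $X=\widetilde{M}_f$ of the mapping torus of a cellular realisation $f\colon Y\to Y$ of $\theta$, with the essential-connectivity characterisation of $\Sigma^n$ --- is a legitimate route to both parts. But in both parts the decisive step is missing or wrong.

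For part (1) your justification at the key point is garbled. Sweeping the superlevel filtration downward, the Morse lemma attaches, at each vertex $v$ crossed, the cone on its \emph{ascending} link (the link in the direction of increasing $h$), not the descending link; and the connectivity of these links has nothing to do with ``copies of $Y$'' or ``enough skeleta''. The correct observation is that in a mapping torus the forward direction is locally a product: just above the fiber, $X$ looks like $\operatorname{Star}(v)\times[0,\varepsilon]$, so the ascending link of $v$ is the cone on $\operatorname{Lk}_Y(v)$ and is \emph{contractible}. Hence every inclusion $h^{-1}([s,\infty))\hookrightarrow h^{-1}([s',\infty))$ is a homotopy equivalence, and since $X$ is contractible every superlevel set is contractible, giving $[\chi_0]\in\Sigma^\infty(G)$ outright. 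Your stated reasons do not deliver this: ``$\widetilde f$ is $\pi_1$-injective'' is vacuous on universal covers, and the fact that $\widetilde f$ between contractible layers is a homotopy equivalence says nothing about the links. Note also that if $f$ is not surjective the \emph{descending} links can be empty --- which is exactly why $[-\chi_0]$ fails to be in $\Sigma^1$ --- so conflating the two directions is not a cosmetic slip. (Observe too that your argument, once repaired, never uses the hypothesis that $\theta$ is non-surjective; you should flag that rather than carry it silently.)

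For part (2) you have candidly deferred what you yourself call the technical heart, and that deferral is the gap. Concretely: (i) the restricted gluing maps $\widetilde f\colon(\text{layer})^{\chi\ge s}\to(\text{layer})^{\chi\ge s-c}$ are not ``highly connected maps''; the hypothesis $[\chi\vert_H]\in\Sigma^\infty(H)$ only gives, for each $n$, a lag $d(n)$ such that inclusions of layer-superlevel sets are $\pi_k$-trivial for $k\le n$, and you must prove this lag can be taken uniform over all layers and heights --- which requires bounding the $\chi$-height discrepancy of $\widetilde f$, available only skeleton-by-skeleton via cocompactness (since $Y$ may be infinite-dimensional, $\F_\infty$ gives finitely many cells per dimension, not finitely many cells) together with the relation $\chi\vert_H\circ\theta=\chi\vert_H$; and (ii) your assembly principle --- ``a colimit of essentially $n$-connected layer filtrations, patched along highly connected maps, is again essentially $n$-connected'' --- is not a standard lemma and is false without such uniform control; one must actually run the induction over the horoball combinatorics of the Bass--Serre tree (or an explicit double filtration) with the lags tracked. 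As it stands, part (2) is a plausible plan with the proof outsourced to exactly the statements that need proving.
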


Recall that the characters $\pi^*\chi_i$ are completely determined by the values they take on $\alpha$ and $\beta$ (defined in Remark \ref{remark:explicit description of alpha beta etc}) and satisfy $\pi^*\chi_i(\alpha) = \chi_i(A)$ and $\pi^*\chi_i(\beta) = \chi_i(B) $ for all $i$, where $A$ and $B$ are the standard generators of $F$.

\medskip\noindent We now show that $QF$ can be written as an ascending HNN-extension similarly to those of  $F$  \cite[Proposition 1.7]{BrownGeoghegan-AnInfiniteDimensionTFFPinftyGroup} and  $BF$ \cite[Lemma 1.4]{Zaremsky-NormalSubgroupsOfBraidedThompsonsGroups}.
 
Let $QF(1)$ be the subgroup of $QF$ which fixes all of $\{0,1\}^*$ except the subtree with root $1$.  It is easy to see that $QF(1)$ is isomorphic to $QF$. Using Theorem \ref{theorem:QF presentation} we see that $QF$ is generated by $\{\alpha,\beta, \sigma_{\varepsilon, 1}\}.$ This shows that  $QF(1)$ is generated by $\beta$, $\beta^\alpha$ and $\sigma_{1,11}=\sigma_{\varepsilon, 1}^{\alpha}$.  Moreover, conjugating $QF(1)$ by $\alpha$ maps $QF(1)$ isomorphically onto the subgroup of $QF$ which fixes all but the subtree with root $11$.

\begin{Lemma}\label{lemma:QF is an HNN extension}
 The group $QF$ can be written as an ascending HNN-extension $QF = QF(1) \ast_{\theta, t}$, where $QF(1) \cong QF$.
\end{Lemma}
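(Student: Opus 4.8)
The plan is to realise $QF$ as the ascending HNN extension built from the base $QF(1)$, the stable letter $\alpha$, and the injective endomorphism $\theta\colon QF(1)\to QF(1)$ given by conjugation by $\alpha$. The preceding discussion already supplies the two facts I need about $\theta$: conjugation by $\alpha$ carries $QF(1)$ isomorphically onto the subgroup $QF(11)$ of elements supported on the subtree rooted at $11$, so $\theta$ is injective with image $QF(11)\subsetneq QF(1)$; and $QF(1)\cong QF$. Consequently there is a well-defined homomorphism $\Phi\colon QF(1)\ast_{\theta,\alpha}\to QF$ which is the identity on $QF(1)$ and sends the stable letter to $\alpha$, and the entire content of the lemma is that $\Phi$ is an isomorphism.

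First I would prove surjectivity, i.e.\ $QF=\langle QF(1),\alpha\rangle$. Using the splitting of Lemma \ref{lemma:QF SES splits}, write $QF=\Sym(\{0,1\}^*)\rtimes\iota(F)$. Since $F=\langle A,B\rangle$ and $\iota$ is a homomorphism, $\iota(F)=\langle\alpha,\beta\rangle$; as $\beta\in QF(1)$ this already gives $\iota(F)\subseteq\langle QF(1),\alpha\rangle$. For the finite permutations it suffices to capture every transposition $(a,b)$ of vertices, and here I would combine the fact that $QF(1)$ contains $\Sym(1\cdot\{0,1\}^*)$ with the dynamics of $\alpha$: iterating $\alpha$ forwards eventually maps every vertex into the subtree rooted at $1$ and keeps it there (for instance $\alpha(0^{k})=0^{k-1}$, $\alpha(\varepsilon)=1$, and $\alpha(1w)=11w$, all read off the description of $\iota$ in Remark \ref{remark:explicit description of iota for F}). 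Hence for $n$ large $\alpha^{n}(a)$ and $\alpha^{n}(b)$ both lie in the subtree rooted at $1$, so $\alpha^{n}(a,b)\alpha^{-n}\in\Sym(1\cdot\{0,1\}^*)\subseteq QF(1)$ and therefore $(a,b)=\alpha^{-n}\bigl(\alpha^{n}(a,b)\alpha^{-n}\bigr)\alpha^{n}\in\langle QF(1),\alpha\rangle$. Together these show $\Phi$ is onto.

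The main obstacle is injectivity, that is, verifying that no relations beyond the HNN relations hold. I would argue through the standard structure of an ascending HNN extension. Let $\hat N$ be the normal closure of the base in $\hat G=QF(1)\ast_{\theta,\alpha}$; then $\hat N=\bigcup_{n\ge0}\alpha^{n}QF(1)\alpha^{-n}$ is a directed union of isomorphic copies $\hat N_n\cong QF(1)$, and $\hat G/\hat N\cong\ZZ$ is generated by the image of $\alpha$. The image $\Phi(\hat N)=\bigcup_{n}\alpha^{n}QF(1)\alpha^{-n}\subseteq QF$ is an increasing union of subgroups, hence a subgroup, and it is normalised by both $QF(1)$ and $\alpha$, so it is normal in $QF$. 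The restriction of $\Phi$ to $\hat N$ is injective: any element of the directed union lies in some $\hat N_n$, on which $\Phi$ is conjugation by $\alpha^{n}$ followed by the inclusion $QF(1)\hookrightarrow QF$, which is injective. It remains to check that the $\ZZ$-quotient survives, i.e.\ $\alpha^{n}\notin\Phi(\hat N)$ for $n\ne0$; applying $\pi\colon QF\to F$ sends $\Phi(\hat N)$ into $N_F=\bigcup_n A^{n}F(1)A^{-n}$ and $\alpha^{n}$ to $A^{n}$, and the already-known decomposition $F=F(1)\ast_{\theta,A}$ says precisely that $A^{n}\notin N_F$ for $n\ne0$. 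A short chase over $1\to\hat N\to\hat G\to\ZZ\to1$ and its image then forces $\ker\Phi=1$: if $\Phi(g)=1$ then $g$ maps to $0$ in $\ZZ$, so $g\in\hat N$, whence $g=1$ by injectivity on $\hat N$.

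I expect the two delicate points to be, first, pinning down the dynamics of $\alpha$ precisely enough to guarantee that every finite-support transposition is absorbed into $QF(1)$ after finitely many conjugations, and second, the injectivity step, whose essential inputs are the established ascending HNN structure of $F$ and the identification $QF(1)\cong QF$. Everything else is the routine bookkeeping of ascending HNN extensions.
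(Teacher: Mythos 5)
Your proposal is correct, and its skeleton---define $\Phi\colon QF(1)\ast_{\theta,t}\to QF$ sending the stable letter to $\alpha$, prove surjectivity, then kill the kernel using the $\ZZ$-quotient of the ascending HNN extension---is the same as the paper's. The differences lie in how the two halves are executed, and in one place your version is genuinely better. For surjectivity the paper simply asserts that ``$\alpha$ and $\beta$ generate $QF$''; taken literally this is a slip, since $\langle\alpha,\beta\rangle=\iota(F)$ meets $\Sym(\{0,1\}^*)$ trivially (compare Theorem \ref{theorem:QF presentation}, whose presentation needs the extra generator $\sigma$). What is actually needed is exactly what you prove: that $\langle QF(1),\alpha\rangle$ absorbs every finite transposition, which your conjugation-dynamics argument supplies (one can also get it in one line from $\sigma=\sigma_{0,\varepsilon}=\alpha^{2}\sigma_{1,11}\alpha^{-2}$ with $\sigma_{1,11}\in QF(1)$, as the relator $[\sigma,\sigma^{\alpha^{2}}]$ in Theorem \ref{theorem:QF presentation} already suggests). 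One small caveat: your explicit formulas describe $\alpha^{-1}$ in the convention of Figure \ref{figure:ia} and Remark \ref{remark:explicit description of iota for F}, where $\alpha\colon\varepsilon\mapsto 0$, $1\mapsto\varepsilon$, $11w\mapsto 1w$; this merely swaps $\alpha^{n}$ for $\alpha^{-n}$ throughout and is harmless. For injectivity the paper writes $g\in\Ker\psi$ in normal form $t^{n}ht^{-m}$ and applies the character $QF\to\ZZ$, $\alpha\mapsto 1$, $QF(1)\mapsto 0$, to force $n=m$, then uses that $\psi$ restricts to the identity on the base; your route---injectivity of $\Phi$ on the directed union $\hat N$ of conjugates of the base, plus survival of the $\ZZ$-quotient detected through $\pi$ and the known splitting $F=F(1)\ast_{\theta,A}$---is the same mechanism in different packaging, since the composite of $\pi$ with $F\twoheadrightarrow F/\llangle F(1)\rrangle_F\cong\ZZ$ is precisely the paper's character. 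Net effect: your write-up is somewhat longer than the paper's, but it patches the one genuinely incomplete step (surjectivity) in the published argument.
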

The proof is similar to \cite[Lemma 1.4]{Zaremsky-NormalSubgroupsOfBraidedThompsonsGroups}.
\begin{proof}
 Let $\theta : QF(1) \to QF(1)$ be the monomorphism $\tau \mapsto \alpha^{-1} \tau \alpha$ and let $\psi : QF(1) \ast_{\theta, t} \to QF$ be the map given by setting $t$ to be $\alpha$.  The map $\psi$ is surjective since $\psi(\beta) = \beta$, $\psi( t ) = \alpha$,  and $\psi(\sigma_{1,11}^{t^{-1}})=\sigma_{\varepsilon, 1}.$ 
 
 Let $g \in \Ker \psi$, since $QF(1) \ast_{\theta, t}$ is an ascending HNN-extension we can write $g$ in normal form as $g = t^nht^{-m}$ where $n, m \ge 0$ and $h \in QF(1)$.  Let $\chi : QF(1) \ast_{\theta, t} \longrightarrow \ZZ$ be the homomorphism sending $t$ to $1$ and sending $QF(1)$ to $0$.  The homomorphism $\chi$ factors through $\psi$---one can check there is a homomorphism $QF \to \ZZ$ sending $\alpha$ to $1$ and $\beta, \sigma_{\varepsilon,1}$ to $0$.  Thus, if $g \in \Ker \psi$ then $g \in \Ker \chi $ and so $n = m$.  In particular, $\alpha^n \psi(h) \alpha^{-n} = \id_{QF}$ so $h \in \Ker \psi$.  However $\psi$ is the identity on $QF(1)$, thus $h =1$ and $\psi$ in injective.
\end{proof}

\begin{proof}[Proof of Theorem \ref{theorem:BNSR invariants of QF}]
  Lemma \ref{lemma:sigma inv under quotients} implies that for all $i$, we have $\Sigma^i(QF, R) \subseteq \Sigma^i(F, R)$.  In particular, 
  \[
   \Sigma^2(QF, R) \subseteq S(QF) \setminus \{ [a \pi^*\chi_0 + b \pi^*\chi_1] : a, b \ge 0 \}
  \]

  The character which takes $1$ on $\alpha$ and $0$ elsewhere is exactly $-\pi^*\chi_0$ so using Theorem \ref{theorem:BGK thms on Sigma}(1) we find that $[-\pi^* \chi_0] \in \Sigma^\infty(QF)$ and by Remark \ref{remark:nu-symmetry}, $[-\pi^* \chi_1] \in \Sigma^\infty(QF)$ as well. 
  
  Next we claim that
 \[
  \{[\chi] \in S(QF) : \chi(\beta) < 0 \} \subseteq \Sigma^\infty(QF),
 \]
 the argument is similar to that of \cite[Corollary 2.4]{BieriGeogheganKochloukova-SigmaInvariantsOfF}.  Let $\chi: QF \to \RR$ be a character with $\chi(\beta) < 0$, then
 \[
  \psi^* [\chi\vert_{QF(1)}] (\alpha) = \chi(\beta) < 0,
 \]
\[
  \psi^* [\chi\vert_{QF(1)}] (\beta) = \chi(\beta^\alpha) = \chi(\beta) < 0.
 \]
 So $\psi^* [\chi\vert_{QF(1)}] = -[\chi_1]$ and thus $[\chi\vert_{QF(1)}] \in \Sigma^\infty(QF(1))$.  Since $QF(1)$ is isomorphic to $QF$ and hence of type $\F_\infty$ (Theorem \ref{theorem:QF is Finfty}), applying Theorem \ref{theorem:BGK thms on Sigma}(2) completes the claim. 
 
 The above implies that $[a\chi_0 + b \chi_1] \in \Sigma^\infty(QF)$ if $b < 0$, so by $\nu$ symmetry $[a\chi_0 + b \chi_1] \in \Sigma^\infty (QF)$ if $a < 0$ also.  Thus,
 \[
   \{ [a \pi^*\chi_0 + b \pi^*\chi_1] : a, b \ge 0 \} \subseteq \Sigma^\infty(QF)^c
 \]
 which completes the descriptions of $\Sigma^i(QF)$ and $\Sigma^i(QF, R)$ for all $i \ge 2$.

 It remains to complete the description of $\Sigma^1(QF)$, we use Theorem \ref{theorem:class of normal subgroups via BNSR} for this.  Let $\chi \in S(QF) \setminus \{[\pi^*\chi_0], [\pi^*\chi_1]\}$ and consider the normal subgroup $N = \Ker \pi^*\chi$.  We will show that $N$ is finitely generated, so necessarily $\chi \in \Sigma^1$.
 
 From Theorem \ref{theorem:BNSR invariants of F} we know that $\pi(N)$ is finitely generated.  Since $\Ker \pi\vert_N = \Sym(\{0,1\}^*)$ we have a group extension
 \[
  1 \longrightarrow \Sym(\{0,1\}^*) \longrightarrow N \longrightarrow \pi(N) \longrightarrow 1.
 \]
 Thus, using the method of Section \ref{section:finite presentations} it is sufficient to find a generating set of $\Sym(\{0,1\}^*)$ on which $\pi(N)$ acts by permutations and with finitely many orbits, this is given by Lemma \ref{lemma:KerChi acts with finitely many orbits on Sigma2} below, since a generating set for $\Sym(\{0,1\}^*)$ is given by the set $\Sigma_2$, which we recall below.
\end{proof}

Recall from Section \ref{subsection:finite presentation for QF} that for all $n$,
\[
  \Sigma_n = \left\{ (x_1, \ldots, x_n) \in \prod_1^n \{0,1\}^* : x_1 \lneq_{\lex} x_2 \lneq_{\lex} \cdots \lneq_{\lex} x_n\right\},
\]
and $\Sigma_n$ admits an action of $F$ via the splitting $\iota:F \longrightarrow QF$, the inclusion $QF \longrightarrow QV$, and the usual action of $QV$ on $\{0,1\}^*$.

Recall the definition of $\tilde\chi_\zeta$ from Section \ref{subsection:Stab is Finfty}.  By comparing the values they take on $\alpha$ and $\beta$, one checks that $\pi^* \chi_1 = \tilde\chi_\zeta$.

\begin{Lemma}\label{lemma:KerChi acts with finitely many orbits on Sigma2}
 Let $\chi \in S(QF)$ and $N = \Ker \chi$ then $\pi(N)$ acts transitively on $\Sigma_2$.
\end{Lemma}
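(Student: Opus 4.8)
The plan is to reduce the statement to a transitivity question for a genuine character of $F$. First I would observe that any character $\chi$ of $QF$ must vanish on $\Sym(\{0,1\}^*)$: the group $\Sym(\{0,1\}^*)$ has commutator subgroup $\Alt(\{0,1\}^*)$ and hence abelianisation $C_2$, so every homomorphism from it to the torsion-free group $\RR$ is trivial. Consequently $\chi$ descends through $\pi$ as $\chi = \pi^*\bar\chi$ for a unique character $\bar\chi$ of $F$, and since $\Ker\pi = \Sym(\{0,1\}^*) \le N$ we obtain $\pi(N) = \pi\bigl(\pi^{-1}(\Ker\bar\chi)\bigr) = \Ker\bar\chi$. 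So it suffices to prove that $\Ker\bar\chi \le F$ acts transitively on $\Sigma_2$ for every character $\bar\chi$ of $F$.

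Next I would invoke the standard transitivity criterion. By Lemma \ref{lemma:transitive action Sigman} the full group $F$ already acts transitively on $\Sigma_2$, so fixing the base point $(0,\varepsilon)$ identifies $\Sigma_2$ with $F/K$, where $K = \Stab_F((0,\varepsilon))$. A subgroup $H \le F$ acts transitively on $F/K$ if and only if $F = HK$, and with $H = \Ker\bar\chi$ this holds precisely when $\bar\chi(K) = \bar\chi(F)$ as subgroups of $\RR$: given $g \in F$ one can solve $g = hk$ with $h \in \Ker\bar\chi$ and $k \in K$ exactly when $\bar\chi(g) \in \bar\chi(K)$, since then $gk^{-1} \in \Ker\bar\chi$. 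Thus the entire lemma reduces to the single equality $\bar\chi(K) = \bar\chi(F)$.

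Finally I would compute $\bar\chi(K)$ from the explicit generators of $K$ recorded in Lemma \ref{lemma:stab of F on sigma}(2). Writing $a = \bar\chi(\alpha)$ and $b = \bar\chi(\beta)$, the value of $\bar\chi$ on any word equals $a$ times its $\alpha$-exponent sum plus $b$ times its $\beta$-exponent sum, so $\bar\chi(F) = a\ZZ + b\ZZ$. Reading off exponent sums, the generator $\beta$ of the factor fixing everything outside the subtree under $1$ gives $\bar\chi = b$, while $\alpha^3\beta^{-1}\alpha^{-2}$ from the factor fixing everything outside the subtree under $00$ gives $\bar\chi = a-b$; the remaining four generators all have vanishing exponent sums and contribute $0$. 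Hence $\bar\chi(K) \supseteq b\ZZ + (a-b)\ZZ = a\ZZ + b\ZZ = \bar\chi(F)$, and since the reverse inclusion is automatic we get $\bar\chi(K) = \bar\chi(F)$, completing the argument.

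I do not expect a serious obstacle: the only computational content is the exponent-sum bookkeeping for the stabiliser generators, and the decisive point is simply that the stabiliser factors attached to the subtrees under $1$ and under $00$ already realise $b$ and $a-b$, whose span is the full image $a\ZZ + b\ZZ$. The step requiring the most care is the initial reduction, namely confirming that every character of $QF$ annihilates $\Sym(\{0,1\}^*)$ and therefore factors through $\pi$, for it is this that lets us rewrite $\pi(N)$ as the kernel of a character of $F$ and apply Lemma \ref{lemma:stab of F on sigma}.
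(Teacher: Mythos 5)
Your argument is correct, but it takes a genuinely different route from the paper's. The paper argues pointwise: given $(x_1,x_2) \in \Sigma_2$ it picks $f \in F$ carrying $(x_1,x_2)$ to $(0,\varepsilon)$ via Lemma \ref{lemma:transitive action Sigman}, and then corrects $f$ using the explicit tree-pair elements of Lemma \ref{lemma:arbitrary chi0 and chi1 while fixing points} (which fix $\varepsilon$, $0$, and $1$ and realise any prescribed $\chi_1$-value while killing $\chi_0$) together with the $\nu$-symmetry of Remark \ref{remark:nu-symmetry}; the resulting element $\nu(f_0)f_1f$ still maps $(x_1,x_2)$ to $(0,\varepsilon)$ and satisfies $\chi_0 = \chi_1 = 0$, hence lies in the kernel of \emph{every} character of $QF$ simultaneously. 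So the paper in effect proves the stronger statement that the image of $[F,F]$ already acts transitively on $\Sigma_2$. You instead fix one character, make the (correct) reduction that any character of $QF$ kills $\Sym(\{0,1\}^*)$ --- its abelianisation is $C_2$ and $\RR$ is torsion-free --- so that $\chi = \pi^*\bar\chi$ and $\pi(N) = \Ker\bar\chi$, and then apply the orbit criterion: $\Ker\bar\chi$ is transitive on $\Sigma_2 \cong F/K$, $K = \Stab_F\bigl((0,\varepsilon)\bigr)$, if and only if $F = (\Ker\bar\chi)K$, i.e.\ $\bar\chi(K) = \bar\chi(F)$, which you verify by exponent sums on the generators of $K$ from Lemma \ref{lemma:stab of F on sigma}(2). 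What your route buys is economy: no construction of auxiliary tree diagrams, no use of Lemma \ref{lemma:arbitrary chi0 and chi1 while fixing points} or of the automorphism $\nu$, and a transparent reason why the lemma holds (the stabiliser already sees the full character image, since $b$ and $a-b$ span $a\ZZ + b\ZZ$). What the paper's route buys is the uniform conclusion (one transversal inside $[F,F]$ working for all characters at once), though only the per-character statement is needed in the proof of Theorem \ref{theorem:BNSR invariants of QF}.

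One bookkeeping slip, with no effect on the conclusion: $\beta^\alpha = \alpha^{-1}\beta\alpha$ does \emph{not} have vanishing exponent sums --- its $\beta$-exponent sum is $1$, so $\bar\chi(\beta^\alpha) = b$. Only the three remaining generators of $X$ have both exponent sums zero. Since $b$ already lies in $b\ZZ + (a-b)\ZZ$, the computation $\bar\chi(K) = b\ZZ + (a-b)\ZZ = a\ZZ + b\ZZ = \bar\chi(F)$ stands.
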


\begin{proof}
 Let $(x_1, x_2) \in \Sigma_2$ and let $(L, R)$ be a representative for an element $f$ of $F$ such that $\iota(f)(x_1) = 0$ and $\iota(f)(x_2) = \varepsilon$ (use, for example, Lemma \ref{lemma:transitive action Sigman}).  Applying Lemma \ref{lemma:arbitrary chi0 and chi1 while fixing points} below with $a = -\chi_1 (f)$ gives an element $f_1$, and applying the lemma a second time with $a = -\chi_0 (f)$ gives a second element $f_0$.
 
 We claim that $\nu(f_0) f_1 f$ is the required element of $F$.  Note that since $f_0$ fixes $\varepsilon$ and $1$ by construction, the element $\nu(f_0)$ fixes $\bar 1 = 0$ and $\bar \varepsilon = \varepsilon$.  Finally,
 \[
  \chi_0 (\nu(f_0) f_1 f) = \chi_0(\nu(f_0)) + \chi_0(f_1) + \chi_0(f) = \chi_1(f_0) + \chi_0(f) = 0,
 \]
 and similarly one shows that $\chi_1 (\nu(f_0) f_1 f) = 0$.
 
 Let $\tau = \iota(\nu(f_0) f_1 f)$.  Since $\chi_1(\nu(f_0) f_1 f) = \chi_0(\nu(f_0) f_1 f) = 0$, also
 $  \pi^*\chi_1(\tau) = \pi^*\chi_0(\tau) = 0$, so the element $\tau$ is in the kernel of any character in $S(QF)$.
\end{proof}

\begin{Lemma}\label{lemma:arbitrary chi0 and chi1 while fixing points}
For any integer $a$ there exists an element $f \in F$ such that $\iota(f) \cdot \varepsilon = \varepsilon$, $\iota(f) \cdot 0  = 0 $, $\iota(f) \cdot 1 = 1$, $\chi_0\iota(f) = 0$, and $\chi_1 \iota(f) = a$.
\end{Lemma}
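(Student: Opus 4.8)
The plan is to realise the prescribed values of $\chi_0$ and $\chi_1$ by a single element of $F$ whose image under $\iota$ is supported as far to the right as possible, namely on the subtree with root $11$. There are three reasons to look there. The character $\chi_1$ records the behaviour at the right-hand end of the tree, so an element supported near that end can be made to take any value under $\chi_1$; the subtree under $11$ is disjoint from the left-hand end, which forces $\chi_0$ to vanish; and, crucially, every node of the subtree under $11$ is strictly larger than $1$ in the $\le_{\lex}$ order, so any element supported there automatically fixes $0$, $\varepsilon$ and $1$.

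Concretely, I would take $f$ to be the $a$-th power of $A^{-1}BA$, so that $\iota(f) = (\beta^\alpha)^a$. First I would check that $\beta^\alpha$ is supported on the subtree with root $11$: since $\beta = \iota(B)$ is supported on the subtree with root $1$, conjugation by $\alpha$ pushes its support one step to the right, onto the subtree with root $11$. This is precisely the observation used in Lemma~\ref{lemma:QF is an HNN extension}, that $\alpha^{-1}QF(1)\alpha$ fixes everything outside the subtree with root $11$. A short lexicographic check then shows that every node beginning with $11$ exceeds $1$, so $\iota(f) = (\beta^\alpha)^a$ fixes $0$, $\varepsilon$ and $1$, as required. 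Finally, since characters are invariant under conjugation, $\chi_0(f) = a\,\chi_0(B) = 0$ and $\chi_1(f) = a\,\chi_1(B) = a$, using the defining values $\chi_0(B)=0$ and $\chi_1(B)=1$; note that $a$ ranges over all of $\ZZ$ precisely because $\chi_1(\beta^\alpha)=1$.

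The one real subtlety, and the step I would be most careful about, is that $f$ must fix the node $1$ while still moving $\chi_1$. Any element genuinely affecting $\chi_1$ must act non-trivially near the right-hand end, and the most naive choice, something supported on the whole subtree under $1$ such as $\beta$ itself, will in general move the node $1$. Passing to the subtree under $11$ is exactly what repairs this, at the cost only of the small verification that this subtree lies entirely above $1$ in the $\le_{\lex}$ order. Everything else reduces to a routine character computation via conjugation invariance, so I do not expect any further difficulty.
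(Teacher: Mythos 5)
Your proof is correct, and it establishes exactly what the paper needs, but it packages the construction differently. The paper builds the witness by hand as a tree pair: starting from the smallest tree containing $\varepsilon$, $1$, $11$ as nodes, it attaches $a$ carets iteratively to the largest leaf to form $L$ and to the second largest leaf to form $R$, leaves the verifications that $\chi_1(f)=a$, $\chi_0(f)=0$ and that $\varepsilon$, $0$, $1$ are fixed as a ``one checks,'' and treats $a<0$ by a separate inversion step. You instead produce the closed-form element $f=(A^{-1}BA)^a$, so that $\iota(f)=(\beta^\alpha)^a$; the support claim is then precisely the paper's own observation from Section \ref{subsection:QF} (used in Lemma \ref{lemma:QF is an HNN extension}) that conjugating $QF(1)$ by $\alpha$ yields the subgroup fixing everything outside the subtree with root $11$, and the character values $\chi_0(f)=a\chi_0(B)=0$, $\chi_1(f)=a\chi_1(B)=a$ follow from conjugation-invariance of homomorphisms into the abelian group $\RR$, with no case split on the sign of $a$ since negative exponents are handled by inverse powers. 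The underlying geometric idea is the same in both arguments --- an element supported in the subtree rooted at $11$ automatically fixes $\varepsilon$, $0$, $1$ and is $\chi_0$-trivial while $\chi_1$ can be arbitrary --- and indeed the paper's hand-built tree pair represents essentially your element; what your route buys is that all verifications become formal (homomorphism property of $\iota$ plus invariance of characters under conjugation) rather than leaf-depth bookkeeping on explicit trees. One small simplification: the $\le_{\lex}$ comparison you flag as the ``real subtlety'' is not actually needed, since $\varepsilon$, $0$ and $1$ are fixed for the immediate reason that none of them lies in the subtree rooted at $11$, hence none lies in the support of $(\beta^\alpha)^a$; no ordering argument is required.
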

\begin{proof}
 Assume for now that $a > 0$.  Let $T$ be the smallest tree containing $\varepsilon$, $1$, and $11$ as nodes.  Let $L$ be obtained from $T$ by attaching $a$ carets iteratively to the largest leaf of $T$ (using the $\le_{\lex}$ ordering).  Let $R$ be obtained from $T$ by attaching $a$ carets iteratively to the second largest caret of $T$.  Denote by $f$ the element of $F$ with tree diagram representative $(L, R)$.  For example when $a = 2$ we obtain the element of Figure \ref{figure:f when a is 2}.  One checks that $\chi_1(f) = a$ and that $\iota(f)$ fixes $\varepsilon$, $0$, and $1$.  
 
 If $a<0$ then perform the steps above for $-a$ and then replace $f_0$ by $f_0^{-1}$.
\end{proof}

\begin{figure}[ht]
\begin{tikzpicture}[->,>=stealth',level/.style={sibling distance = 2cm/#1,
  level distance = 1cm}] 
  \node [arn_n] at (0,0) {}
    child{ node {} }
    child{ node [arn_n] {}
       child{ node {} }
       child{ node [arn_n] {} 
          child {node {} }
          child {node [arn_n] {} 
             child {node {} }
             child {node [arn_n] {} 
               child { node {} }
               child { node {} }
             }
          }
       }
    }; 
  \draw (3,-1.5) -- (4,-1.5);
 
  \node [arn_n] at (6,0) {}
    child{ node {} }
    child{ node [arn_n] {}
       child{ node {} }
       child{ node [arn_n] {} 
          child {node [arn_n] {} 
            child { node {} }
            child { node [arn_n] {}
               child { node {} } 
               child { node {} }
            }
          }
          child {node {} }
       }
    }; 
\end{tikzpicture}
\caption{Element $f$ from Lemma \ref{lemma:arbitrary chi0 and chi1 while fixing points} when $a = 2$.}
\label{figure:f when a is 2}
\end{figure}

\section{Relationships with other generalisations of Thompson's groups}\label{section:relationship of QF to others}

In \cite{Thumann-FinitenessForOperadGroups} Thumann studies the class of \emph{Operad groups}, this class contains the generalised Thompson groups $F_{n,r}$ and $V_{n,r}$, the higher dimensional Thompson groups $nV$, and the braided Thompson groups $BV$.  Moreover, the class of \emph{diagram groups} appearing in \cite{GubaSapir-DiagramGroups} may be described as Operad groups, as may the class of \emph{self-similarity groups} described by Hughes \cite{Hughes-LocalSimilaritiesAndTheHaagerupProperty}---for a proof of these facts see \cite[\S 3.5]{Thumann-FinitenessForOperadGroups}.  For each of these classes of groups there exist results showing that certain groups in the class are of type $\F_\infty$ \cite[Theorem 1.3]{Farley-FinitenessAndCAT0PropertiesOfDiagramGroups}\cite[Theorem 1.1]{FarleyHughes-FinitenessPropertiesOfSomeGroupsOfLocalSimilarities}\cite[Theorem 4.3]{Thumann-FinitenessForOperadGroups}.  We show in Section \ref{subsection:Thumann} that while $QV$ admits an easy description in all three of these classes, it cannot be proved to be of type $\F_\infty$ using these descriptions and the results listed above.

There is a generalisation of Thompson's groups due to Mart\'{\i}nez-P\'erez and Nucinkis, describing a family of automorphism groups of Cantor algebras \cite{MartinezNucinkis-GeneralizedThompsonGroups}.  We show in Proposition \ref{prop:QF is not a MartinezPerezNucinkis Group} that none of the groups studied in this paper appear in this family of groups.

Witzel and Zaremsky have introduced \emph{Thompson groups for systems of groups} \cite{WitzelZaremsky-ThompsonSystems}, this class contains the Thompson groups $F$ and $V$ and also the braided Thompson groups $BF$ and $BV$.  They study the finiteness lengths of some groups in this class.  We do not know of a way to express any of the groups studied here as a group in this class and we are unsure of the relationship between Witzel--Zaremsky's class of Thompson groups for systems of groups and either Thumann's class of Operad groups or the class of groups of Mart\'{\i}nez-P\'erez and Nucinkis.
 
\subsection{Operad groups}\label{subsection:Thumann}

Following \cite{Thumann-FinitenessForOperadGroups} we denote by 
\[
\operatorname{End}: \mathbf{MON} \rightarrow \mathbf{OP}
 \]
the endomorphism functor taking a strict monoidal category $\mathcal{C}$ to the endomorphism operad $\End(\mathcal{C})$.  Let $\mathcal{S}$ denote the left adjoint of $\End$.  Given a category $\mathcal{C}$ and an object $X \in \mathcal{C}$ we write $\pi_1(\mathcal{C}, X)$ for the fundamental group of $\mathcal{C}$ based at $X$.  The \emph{operad group} associated to an operad $\mathcal{O}$ and object $X \in \mathcal{S}(\mathcal{O})$ is the group $\pi_1(\mathcal{S}(\mathcal{O}), X)$.

Let $\mathcal{O}_{QV}$ be the operad with colours $l$ and $n$ (one should think of these as representing a leaf and a node of a finite subtree of $\mathcal{T}_{2,c}$) and with a single operation $\mathcal{O}(l,n,l;l) = \{\varphi\}$ (one should think of this operation as adding a caret).  
\begin{Lemma}
There is an isomorphism
 \[
  QV \cong \pi_1(\mathcal{S}(\mathcal{O}_{QV}), l).
 \]
\end{Lemma}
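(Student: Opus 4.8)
The plan is to unwind Thumann's constructions of $\mathcal{S}(\mathcal{O}_{QF})$ and of $\pi_1$, and then match the resulting group of fractions with the semidirect product description $QF \cong \Sym(\{0,1\}^*) \rtimes F$ coming from Lemma \ref{lemma:QF SES splits}.

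First I would describe $\mathcal{S}(\mathcal{O}_{QF})$ concretely. Its objects are the words on the colour set $\{l, n\}$, and its morphisms are generated, under $\otimes$ and composition, by the single operation $\varphi$ together with the symmetry isomorphisms of the symmetric monoidal structure. The operation $\varphi$ sends $l$ to $l\,n\,l$, so a composite of copies of $\varphi$ applied to the initial object $l$ is exactly the process of repeatedly adding carets and is therefore recorded by a finite binary subtree $T$ of $\mathcal{T}_{2,c}$. The target word of such a composite is the in-order list of the leaves and internal nodes of $T$; since a tree with $k$ leaves has $k-1$ internal nodes, this word is $l(n\,l)^{k-1}$ and depends only on $k$. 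Thus a positive morphism out of $l$ is the data of a finite binary tree $T$ together with a bijective labelling of its internal nodes by the occurrences of $n$ in the target word, and the symmetry isomorphisms act by permuting these $n$-labels while the planar structure keeps the leaves $l$ in their left-to-right order.

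Next I would invoke Thumann's description of operad groups as groups of fractions \cite{Thumann-OperadGroups}, which presents $\pi_1(\mathcal{S}(\mathcal{O}_{QF}), l)$ as the equivalence classes of pairs $l \xrightarrow{L} W \xleftarrow{R} l$ of positive morphisms with a common target $W$, modulo simultaneous expansion (adding a caret to matching leaves of both legs) and the symmetry relations. By the previous paragraph such a class is encoded by a pair of trees $(L, R)$ with the same number of leaves, carrying the induced order-preserving bijection on leaves, together with a permutation of the internal nodes coming from the symmetry part. I would then define a map to $QF$ sending the leaf data of $(L, R)$ to the element of $F$ with tree-pair diagram $(L, R)$ and sending the node-permutation to the corresponding finite-support permutation of the vertex set of $\mathcal{T}_{2,c}$; well-definedness under simultaneous expansion is governed by Remark \ref{remark:order preserving bijection b_tau}. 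Surjectivity follows because the tree-pairs surject onto $F$ while, in the direct limit over all finite subtrees, the internal-node labels exhaust $\{0,1\}^*$, so every finite-support permutation is realised, and $\Sym(\{0,1\}^*)$ together with $F$ generate $QF$. Injectivity and multiplicativity then come from comparing composition of fractions with the semidirect product multiplication in $\Sym(\{0,1\}^*) \rtimes F$.

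I expect the main obstacle to be the bookkeeping in the symmetry part: one must verify that the symmetric monoidal structure of $\mathcal{S}(\mathcal{O}_{QF})$ acts on precisely the node colours $n$ and not on the leaf colours $l$, so that the leaves reproduce the planar group $F$ while the nodes contribute the full finite-support symmetric group on the vertices, and that in the direct limit these assemble to $\Sym(\{0,1\}^*)$ with the correct semidirect action of $F$ by relabelling. Pinning this down cleanly is essentially the content of checking that $\mathcal{O}_{QF}$ has been set up correctly, and is where the real work lies.
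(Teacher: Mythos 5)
Your overall route is genuinely different from the paper's: the paper gives essentially no computation, instead citing \cite[Example 4.4]{FarleyHughes-BraidedDiagramGroupsAndLocalSimilarityGroups}, where the group is described as a braided diagram group over the semigroup presentation $\langle l, n : (l, lnl) \rangle$, and then invoking the translation \cite[Theorem 4.12]{FarleyHughes-BraidedDiagramGroupsAndLocalSimilarityGroups} to a self-similarity group and \cite[\S 4.3]{Thumann-OperadGroups} to an operad group, so that the correctness of the set-up is \emph{inherited} from the cited example rather than verified. You instead try to verify it directly by unwinding $\mathcal{S}(\mathcal{O}_{QF})$ and its fraction calculus. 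Much of your unwinding is fine: positive morphisms out of $l$ are finite binary trees, the target word of a tree with $k$ leaves is $l(nl)^{k-1}$, spans modulo simultaneous expansion encode tree pairs, and the direct-limit argument identifying the node permutations with $\Sym(\{0,1\}^*)$ and matching composition with the semidirect product $\Sym(\{0,1\}^*) \rtimes F$ is the right endgame.

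The genuine gap is exactly the point you flag and then defer as bookkeeping: the claim that the symmetries of $\mathcal{S}(\mathcal{O}_{QF})$ permute only the $n$-letters ``while the planar structure keeps the leaves $l$ in their left-to-right order.'' In Thumann's framework the planar, symmetric, and braided structures are global, not per-colour: in the symmetric monoidal category $\mathcal{S}(\mathcal{O})$ of a symmetric operad the braiding exists for \emph{every} pair of tensor factors, so a span $l \rightarrow l(nl)^{k-1} \leftarrow l$ carries an arbitrary colour-preserving bijection of letters, including an arbitrary permutation of the $k$ letters $l$. Concretely, the span whose two legs are both the single caret $\varphi$, composed with the symmetry swapping the two $l$-letters of $lnl$, is a legitimate fraction; it represents a quasi-automorphism exchanging the full subtrees below $0$ and $1$, whose induced homeomorphism of the ends lies in $V \setminus F$. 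Thus with the symmetric structure your fraction calculus computes the larger $QV$-type group, while with the planar structure there are no node permutations at all and the calculus collapses to $F$; in neither case does ``checking that $\mathcal{O}_{QF}$ has been set up correctly'' produce the mixed behaviour (leaves planar, nodes symmetric) that your homomorphism to $QF$ requires. Your map is well defined only on the subgroup of spans whose leaf matching is order-preserving, and you give no mechanism forcing every class in $\pi_1(\mathcal{S}(\mathcal{O}_{QF}), l)$ to have such a representative --- indeed the caret-swap example shows it does not for the symmetric operad. To close the gap you would have to either work with the operad actually produced by the chain of translations and verify which crossings it permits, or do as the paper does and lean on the braided-diagram-group description, where the specification of which strands may cross is part of the cited example rather than something recoverable from the two-colour, one-operation operad alone.
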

\begin{proof}
This is essentially \cite[Example 4.4]{FarleyHughes-BraidedDiagramGroupsAndLocalSimilarityGroups}, where Farley and Hughes describe $QV$ as a braided diagram group over the semi-group presentation  
\[
  \mathcal{P} = \langle l, n : (l, lnl) \rangle.
 \]    
 One can then convert this to a description of $QV$ as a group acting on a compact ultrametric space via a small similarity structure \cite[Theorem 4.12]{FarleyHughes-BraidedDiagramGroupsAndLocalSimilarityGroups} and then in turn to a description as an operad group \cite[\S 3.5]{Thumann-FinitenessForOperadGroups}.  Following this method one obtains the operad $\mathcal{O}_{QV}$ given above.
\end{proof}

 The next lemma demonstrates that one cannot use the result of Thumann  \cite[Theorem 4.3]{Thumann-FinitenessForOperadGroups}) to show that $QV$ is $\F_\infty$ because $\mathcal{O}_{QV}$ is not colour-tame, see \cite[Definition 4.2]{Thumann-FinitenessForOperadGroups}.   Recall that a colour word is said to be \emph{reduced} if no subword is in the domain of a non-identity operation.
 \begin{Lemma}
 $\mathcal{O}_{QV}$ is not colour-tame.
 \end{Lemma}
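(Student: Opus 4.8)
The plan is to verify that $\mathcal{O}_{QF}$ fails the boundedness condition in the definition of colour-tame by exhibiting arbitrarily long reduced colour words. Since colour-tameness requires three things---plainness, a finite colour set, and a uniform bound on the length of reduced colour words---it suffices to violate any one of them. The colour set $\{l, n\}$ is finite, and the single operation $\varphi \in \mathcal{O}_{QF}(l,n,l;l)$ is of degree three (it has three inputs), so the operad is plain. Hence the only candidate for failure is the length bound on reduced colour words, and **this is exactly the step I would target.**

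Let me sketch the key computation. Recall a colour word over $\{l,n\}$ is reduced if no subword lies in the domain of a non-identity operation; here the only non-identity operation has input profile $(l, n, l)$, so a colour word is reduced precisely when it contains no occurrence of the consecutive substring $lnl$. The plan is therefore to produce, for each $k$, a reduced colour word of length at least $k$. The most transparent choice is the word $l^k = \underbrace{ll\cdots l}_{k}$ consisting only of the colour $l$: it has no occurrence of $lnl$ (indeed no occurrence of $n$ at all), so it is reduced, and it has length $k$. Since $k$ is arbitrary, the lengths of reduced colour words are unbounded.

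**The one subtlety to check** is that the words I exhibit are genuinely reduced in the technical sense---that no \emph{subword} (not merely no prefix or suffix) lies in the domain of $\varphi$. For the family $l^k$ this is immediate because the domain profile $(l,n,l)$ requires the colour $n$, which never appears; so no subword of $l^k$ can match. This sidesteps any bookkeeping about which internal substrings must be examined. Thus I would simply observe that $\{l^k : k \in \NN\}$ is an infinite family of reduced colour words of unbounded length, so the boundedness condition of \cite[Definition 6.5]{Thumann-OperadGroups} fails.

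**I do not expect any real obstacle here**; the argument is a one-line exhibition of a witness. The only care needed is to invoke the correct clause of the colour-tameness definition---confirming that plainness and finiteness of the colour set both hold so that the failure is unambiguously located in the length bound---and to match the precise meaning of ``reduced'' used in the cited definition. Consequently the results \cite[Theorem 6.6]{Thumann-OperadGroups} and \cite[Theorem 5.7]{Thumann-FinitenessForOperadGroups}, both of which assume colour-tameness, cannot be applied to $\mathcal{O}_{QF}$ to deduce that $QF$ is of type $\F_\infty$.
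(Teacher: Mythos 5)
Your proposal is correct and takes essentially the same approach as the paper: both check that $\mathcal{O}_{QF}$ is plain with a finite colour set and then exhibit an unbounded family of one-letter reduced colour words, which trivially avoid the unique input word $lnl$ of $\varphi$. The only (immaterial) difference is the witness---the paper uses the words $n^i$ where you use $l^k$.
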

 \begin{proof}
  The operad $\mathcal{O}_{QV}$ is planar and has a finite set of colours, however the colour word 
  \[
   \overbrace{n \cdot n \cdots n}^{\text{$i$ times}},
  \]
  is reduced for all $i \ge 0$.
 \end{proof}

Using \cite[Theorem 4.12]{FarleyHughes-BraidedDiagramGroupsAndLocalSimilarityGroups} to convert the description of $QV$ as a braided diagram group over the semi-group presentation  
$\mathcal{P}$ to a description of $QV$ as a self-similarity group gives the following.  Let $\mathcal{T}_{\mathcal{P}}$ be the tree obtained from $\mathcal{T}_{2,c}$ by forgetting the colouring and adding to every node a single child.  Let $X$ be the ultrametric space $X = \Ends(\mathcal{T}_{\mathcal{P}})$ obtained by setting $d_X(p, p^\prime ) = e^{-i}$ if $p$ and $p^\prime$ are any two paths (without backtracking) which contain exactly $i$ edges in common.  Balls in $X$ are all of the form 
\[
 B_v = \{ p \in \Ends(\mathcal{T}_{\mathcal{P}}) : \text{$v$ lies on $p$}\},
\]
for some vertex in $v$.  If $v$ and $w$ are both leaves or both nodes then $\Sim_X (B_v, B_w)$ contains a single element and otherwise $\Sim_X(B_v, B_w)$ is empty.

The next lemma demonstrates that one cannot use the result of Farley and Hughes \cite[Theorem 1.1]{FarleyHughes-FinitenessPropertiesOfSomeGroupsOfLocalSimilarities} together with the description of $QV$ as the self-similarity group associated to the ultrametric space $X$ and the similarity structure $\Sim_X$ to prove that $QV$ is $\F_\infty$.  This is because  \cite[Theorem 1.1]{FarleyHughes-FinitenessPropertiesOfSomeGroupsOfLocalSimilarities} requires that $\Sim_X$ is rich in simple contractions \cite[Definition 5.11]{FarleyHughes-FinitenessPropertiesOfSomeGroupsOfLocalSimilarities}.

\begin{Lemma}
 $\Sim_X$ is not rich in simple contractions.
 \end{Lemma}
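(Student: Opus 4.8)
The plan is to recall the definition of \emph{rich in simple contractions} from \cite[Definition 5.11]{FarleyHughes-FinitenessPropertiesOfSomeGroupsOfLocalSimilarities} and then exhibit a single ball whose children already violate it. Recall that $\Sim_X$ is rich in simple contractions precisely when, for every ball $B$ with children $B_1, \ldots, B_n$ and every index $i$, there is some $j \neq i$ with $\Sim_X(B_i, B_j) \neq \emptyset$; equivalently, every child of every ball must be $\Sim_X$-equivalent to at least one of its siblings. I will show that this fails.

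First I would identify the children of a leaf-type ball. For any $v \in \{0,1\}^*$ the vertex $v$ has three children in $\mathcal{T}_{\mathcal{P}}$: its two original children $v0$ and $v1$, together with the single child added to $v$ in passing from $\mathcal{T}_{2,c}$ to $\mathcal{T}_{\mathcal{P}}$. The balls $B_{v0}$ and $B_{v1}$ are again of leaf type, whereas the ball at the added child is of node type --- it is the singleton ball that does not subdivide further. Thus the children of $B_v$ consist of exactly two leaf-type balls and one node-type ball.

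The key observation is then immediate from the description of $\Sim_X$: since $\Sim_X(B_p, B_q) = \emptyset$ whenever one of $p, q$ is a leaf and the other is a node, the unique node-type child of $B_v$ is not $\Sim_X$-equivalent to either of its two leaf-type siblings. Taking $B = B_\varepsilon$ and letting $i$ be its node-type child, there is no sibling $j$ with $\Sim_X(B_i, B_j) \neq \emptyset$, so the defining condition fails already for this one ball. Hence $\Sim_X$ is not rich in simple contractions.

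I expect the only delicate point to be matching the precise wording of \cite[Definition 5.11]{FarleyHughes-FinitenessPropertiesOfSomeGroupsOfLocalSimilarities}; once that is pinned down, the verification is a one-line check. The conceptual obstruction is structural and unavoidable: every leaf-type ball has exactly one child of node type, so that child can never be paired with an equivalent sibling, which is exactly what the richness condition requires.
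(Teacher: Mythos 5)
There is a genuine gap, and it is exactly at the point you flagged as delicate: the definition you recall is not \cite[Definition 5.11]{FarleyHughes-FinitenessPropertiesOfSomeGroupsOfLocalSimilarities}. Being \emph{rich in simple contractions} is not a condition on single balls and sibling-equivalence; it is a condition on \emph{pseudo-vertices}: there must exist a constant $k \in \NN_{>0}$ such that every pseudo-vertex $v$ of height at least $k$ admits a simple contraction at some sub-pseudo-vertex $w \subseteq v$ with $\lVert w \rVert > 1$, where a simple contraction at $w$ requires $w$ to be the \emph{entire} family of children of some ball, equipped with compatible restrictions of a single similarity (i.e.\ $w$ is the output of a simple expansion). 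Negating this has the quantifier structure ``for every $k$ there is a height-$k$ pseudo-vertex admitting no simple contraction,'' which a one-ball check cannot deliver. Your proposed criterion also proves the wrong thing on its own terms: $\Sim_X$ admits plenty of simple contractions --- for instance the pseudo-vertex $\{[\operatorname{incl}, B_{00}], [\operatorname{incl}, B_{c_0}], [\operatorname{incl}, B_{01}]\}$, consisting of the full child family of the leaf-type ball $B_0$ (with $c_0$ the added node-type child), contracts to $[\operatorname{incl}, B_0]$ even though the node-type child is inequivalent to its two leaf-type siblings. So ``some child has no equivalent sibling'' simply does not obstruct contractions, and the failure of richness cannot be read off from the children of $B_\varepsilon$.

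The paper's proof uses the leaf/node dichotomy in a different and essential way: given an arbitrary $k$, choose $k$ of the added dead-end vertices $n_1, \ldots, n_k$, whose balls $B_{n_i}$ are node-type singletons, and form the pseudo-vertex $v = \{[\operatorname{incl}_{B_{n_i}}, B_{n_i}] : i = 1, \ldots, k\}$ of height $k$. Since every simple expansion of a pair $[g, A]$ necessarily produces leaf-type balls among the children (every expandable ball is of leaf type and its child family is of type $l, n, l$), no subset $w \subseteq v$ of height $> 1$ can be a full child family, because $v$ contains only node-type singletons; hence $v$ admits no simple contraction at all, for every $k$. Your structural observation --- that each leaf-type ball has exactly one node-type child, and node-type balls are never $\Sim_X$-equivalent to leaf-type balls --- is the right raw material, but it must be deployed at the level of pseudo-vertices built entirely from node-type balls, not at the level of a single ball and its siblings. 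As written, your argument does not establish the lemma.
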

 \begin{proof}
  Given an arbitrary constant $k \in \NN_{>0}$ it suffices to exhibit a pseudo-vertex $v$ of height $k$ such that for every pseudo vertex $w \subseteq v$, either $\lVert w \rVert = 1$ or there is no simple contraction of $v$ at $w$.
  
  Let $k \in \NN_{>0}$ be an arbitrary constant and choose $k$ leaves $n_1, \ldots, n_k \in \Ends(\mathcal{T}_{\mathcal{P}})$ with no children.  Each of these leaves is a ball $B_{n_i}$ and so
  \[
  v = \{ [\operatorname{incl}_{B_{n_i}}, B_{n_i}] : i = 1, \ldots, k\}
  \]
  is a pseudo-vertex of height $k$.  Since any simple expansion of any pseudo-vertex necessarily introduces balls containing leaves with children, there can be no simple contraction of $v$.
 \end{proof}

 \subsection{Automorphisms of Cantor algebras}
  
  In \cite{MartinezNucinkis-GeneralizedThompsonGroups}, Mart\'{\i}nez-P\'erez and Nucinkis study certain automorphism groups $G_r(\Sigma)$, $T_r(\Sigma)$, and $F_r(\Sigma)$ of certain Cantor algebras. 
 
 \begin{Prop}\label{prop:QF is not a MartinezPerezNucinkis Group}
   None of $QV$, $\tilde{Q}V$, $QT$, $\tilde{Q}T$, and $QF$ are isomorphic to either $G_r(\Sigma)$, $T_r(\Sigma)$, or $F_r(\Sigma)$ for any $\Sigma$ and $r$.
 \end{Prop}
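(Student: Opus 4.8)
The plan is to distinguish the groups by the isomorphism type of their minimal nontrivial normal subgroup. First I would extract from Theorem~\ref{theorem:alternative for normal subgroups} that each of $QF$, $QT$, $QV$, $\tilde{Q}T$, $\tilde{Q}V$ has a \emph{unique} minimal nontrivial normal subgroup, namely the finitary alternating group $\Alt(Y)$ on the relevant countably infinite set $Y$ (either $\{0,1\}^*$ or $Z$). Indeed, every nontrivial normal subgroup listed in that theorem is $\Alt(Y)$, is $\Sym(Y)$, or contains the commutator subgroup, and in all five cases both $\Sym(Y)$ and the commutator subgroup contain $\Alt(Y)$; hence $\Alt(Y)$ is contained in every nontrivial normal subgroup and is the unique minimal one. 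The structural feature to record is that $\Alt(Y)$ is an \emph{infinite, locally finite} group: any finitely generated subgroup is supported on a finite subset of $Y$ and so lies in a finite alternating group. In particular it is an infinite torsion group.

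Next I would invoke the structure of the Martínez-Pérez--Nucinkis groups, the point being that none of $G_r(\Sigma)$, $T_r(\Sigma)$, $F_r(\Sigma)$ admits an infinite locally finite minimal normal subgroup. For the $F$-type groups $F_r(\Sigma)$ this is immediate once one knows they are torsion-free: a nontrivial torsion-free group contains a copy of $\ZZ$, so no nontrivial subgroup of it is locally finite. For the $T$- and $V$-type groups $T_r(\Sigma)$ and $G_r(\Sigma)$ I would use the normal subgroup structure established in \cite{MartinezNucinkis-GeneralizedThompsonGroups}: these groups are (virtually) simple with finitely generated infinite simple part $S$, and a short argument shows any infinite minimal normal subgroup must equal $S$. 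Being finitely generated and infinite, $S$ is not locally finite, and it contains elements of infinite order so is not even a torsion group.

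Finally I would conclude. An isomorphism between one of our five groups $G$ and some Martínez-Pérez--Nucinkis group $H$ would carry the unique minimal normal subgroup $\Alt(Y)\trianglelefteq G$ onto a minimal normal subgroup of $H$. Since local finiteness is an isomorphism invariant, $H$ would then possess an infinite locally finite minimal normal subgroup, contradicting the previous paragraph. Hence no such isomorphism exists, which proves the proposition for all three families $G_r(\Sigma)$, $T_r(\Sigma)$, $F_r(\Sigma)$ and all admissible $\Sigma$, $r$ simultaneously.

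The main obstacle is the middle step: one must read off the normal subgroup structure of the Martínez-Pérez--Nucinkis groups uniformly across all admissible $\Sigma$ and $r$ and across all three families, confirming in particular that $F_r(\Sigma)$ is always torsion-free and that $T_r(\Sigma)$ and $G_r(\Sigma)$ are finitely generated with finitely generated simple part, so that their infinite minimal normal subgroups cannot be locally finite. By contrast, all of the input on our side is already packaged in Theorem~\ref{theorem:alternative for normal subgroups}, so the remaining work lies entirely on the Cantor-algebra side and amounts to quoting the relevant results of \cite{MartinezNucinkis-GeneralizedThompsonGroups}.
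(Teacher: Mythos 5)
Your reduction on the quasi-automorphism side is sound: Theorem \ref{theorem:alternative for normal subgroups} does give that each of the five groups has $\Alt(Y)$ as its unique minimal non-trivial normal subgroup (every normal subgroup listed there contains it), and $\Alt(Y)$ is indeed infinite and locally finite. Your handling of $F_r(\Sigma)$ also works, since torsion-freeness is genuinely available in the reference (\cite[Remark 2.17]{MartinezNucinkis-GeneralizedThompsonGroups}); in fact you need less than you use --- it is enough that the five groups contain any torsion at all, so the minimal-normal-subgroup packaging is superfluous for that family.

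The genuine gap is your middle step for $G_r(\Sigma)$ and $T_r(\Sigma)$. You assert that these groups are (virtually) simple with finitely generated infinite simple part, and you propose to finish ``entirely by quoting the relevant results of \cite{MartinezNucinkis-GeneralizedThompsonGroups}.'' No such results exist in that paper, and they are not available in the generality the proposition requires: the statement quantifies over \emph{all} admissible $\Sigma$ and $r$, and for a general valid bounded $\Sigma$ neither simplicity (even virtual), nor finite generation, nor any description of the normal subgroup lattice of $G_r(\Sigma)$ or $T_r(\Sigma)$ is established there --- the cited paper concerns finiteness conditions, centralisers, and conjugacy classes of finite subgroups, not normal subgroup structure. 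So the step you yourself flag as ``the main obstacle'' is not a quotable fact but an open input, and your argument stalls exactly there. The paper's proof sidesteps this by using an invariant that \emph{is} proved in the reference: by \cite[Theorems 4.3, 4.8]{MartinezNucinkis-GeneralizedThompsonGroups}, $G_r(\Sigma)$ and $T_r(\Sigma)$ have only finitely many conjugacy classes of finite subgroups of any fixed isomorphism type, whereas each of the five groups contains infinitely many pairwise non-conjugate copies of $C_2$ (take the involution transposing $x_j$ and $y_j$ for $j \le i$, for disjoint infinite families $\{x_j\}$, $\{y_j\}$; these have distinct cycle types, and conjugation by an arbitrary bijection of the underlying set preserves cycle type). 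If you want to salvage your approach, you would have to replace the unavailable structural claim by an invariant of this kind that is actually established for all $\Sigma$ and $r$.
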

 \begin{proof}
  By \cite[Theorems 4.3, 4.8]{MartinezNucinkis-GeneralizedThompsonGroups}, any group $G_r(\Sigma)$ or $T_r(\Sigma)$ has at most finitely many conjugacy classes of finite subgroups isomorphic to a given finite subgroup. This is false however for $QV$ and $QT$ as one can find infintely many non-conjugate subgroups isomorphic to the cyclic group of order 2:  let $\{x_1, \ldots, x_n, \ldots\}$ and $\{y_1, \ldots, y_n, \ldots \}$ be two disjoint countably infinite subsets of $\{0,1\}^*$ and let $G_i$ be the subgroup of $QF$ which tranposes $x_j$ and $y_j$ for all $j \le i$.  Clearly $G_i \cong C_2$ for all $i$, but the $G_i$ are all pairwise non-conjugate (this is because conjugation preserves cycle type in infinite support permutation groups).  Thus none of $QV$, $\tilde{Q}V$, $QT$, $\tilde{Q}T$, and $QF$ may be isomorphic to $G_r(\Sigma)$ or $T_r(\Sigma)$.
  
  Since $F_r(\Sigma)$ is always torsion-free \cite[Remark 2.17]{MartinezNucinkis-GeneralizedThompsonGroups}, none of $QV$, $\tilde{Q}V$, $QT$, $\tilde{Q}T$, or $QF$ may be isomorphic to $F_r(\Sigma)$.
 \end{proof}  

\bibliographystyle{amsalpha}
\bibliography{main.bib}

\newcommand{\etalchar}[1]{$^{#1}$}
\providecommand{\bysame}{\leavevmode\hbox to3em{\hrulefill}\thinspace}
\providecommand{\MR}{\relax\ifhmode\unskip\space\fi MR }
\providecommand{\MRhref}[2]{%
  \href{http://www.ams.org/mathscinet-getitem?mr=#1}{#2}
}
\providecommand{\href}[2]{#2}
\begin{thebibliography}{MPMN14}

\bibitem[Bel04]{Belk-Thesis}
James~M. Belk, \emph{{T}hompson's group ${F}$}, Ph.D. thesis, 2004.

\bibitem[BFS{\etalchar{+}}12]{BuxEtAl-BraidedThompsonAreFinfty}
Kai-Uwe Bux, Martin Fluch, Marco Schwandt, Stefan Witzel, and Matthew C.~B.
  Zaremsky, \emph{The braided {T}hompson's groups are of type
  $\mathrm{F}_\infty$}, preprint, arXiv:1210.2931 (2012).

\bibitem[BG84]{BrownGeoghegan-AnInfiniteDimensionTFFPinftyGroup}
Kenneth~S. Brown and Ross Geoghegan, \emph{An infinite-dimensional torsion-free
  {${\rm FP}_{\infty }$} group}, Invent. Math. \textbf{77} (1984), no.~2,
  367--381. \MR{752825 (85m:20073)}

\bibitem[BGK10]{BieriGeogheganKochloukova-SigmaInvariantsOfF}
Robert Bieri, Ross Geoghegan, and Dessislava~H. Kochloukova, \emph{The sigma
  invariants of {T}hompson's group {$F$}}, Groups Geom. Dyn. \textbf{4} (2010),
  no.~2, 263--273. \MR{2595092 (2011h:20113)}

\bibitem[Bie81]{Bieri-HomDimOfDiscreteGroups}
Robert Bieri, \emph{Homological dimension of discrete groups}, second ed.,
  Queen Mary College Mathematical Notes, Queen Mary College Department of Pure
  Mathematics, London, 1981. \MR{715779 (84h:20047)}

\bibitem[BMN13]{BleakMatucciNeunhoffer-EmbeddingsIntoVAndCocf}
Collin Bleak, Francesco Matucci, and Max Neunh\"offer, \emph{Embeddings into
  {T}hompson's group {$V$} and $co\mathcal{CF}$ groups}, preprint,
  arXiv:1312.1855 (2013).

\bibitem[BNS87]{BieriNeumannStrebel-AGeometricInvariantOfDiscreteGroups}
Robert Bieri, Walter~D. Neumann, and Ralph Strebel, \emph{A geometric invariant
  of discrete groups}, Invent. Math. \textbf{90} (1987), no.~3, 451--477.
  \MR{914846 (89b:20108)}

\bibitem[BR88]{BieriRenz-HigherGeometricInvariants}
Robert Bieri and Burkhardt Renz, \emph{Valuations on free resolutions and
  higher geometric invariants of groups}, Comment. Math. Helv. \textbf{63}
  (1988), no.~3, 464--497. \MR{960770 (90a:20106)}

\bibitem[Bri04]{Brin-HigherDimensionalThompsonGroups}
Matthew~G. Brin, \emph{Higher dimensional {T}hompson groups}, Geom. Dedicata
  \textbf{108} (2004), 163--192. \MR{2112673 (2005m:20008)}

\bibitem[Bri07]{Brin-AlgebraOfStrandSplittingI}
\bysame, \emph{The algebra of strand splitting. {I}. {A} braided version of
  {T}hompson's group {$V$}}, J. Group Theory \textbf{10} (2007), no.~6,
  757--788. \MR{2364825 (2009g:20092)}

\bibitem[Bro87]{Brown-FinitenessPropertiesOfGroups}
Kenneth~S. Brown, \emph{Finiteness properties of groups}, Proceedings of the
  {N}orthwestern conference on cohomology of groups ({E}vanston, {I}ll., 1985),
  vol.~44, 1987, pp.~45--75. \MR{885095 (88m:20110)}

\bibitem[BS08]{BuxSonkin-SomeRemarksOnBV}
Kai-Uwe Bux and Dmitriy Sonkin, \emph{Some remarks on the braided {T}hompson
  group {$BV$}}, preprint, arXiv:0807.0061 (2008).

\bibitem[CFP96]{CannonFloydParry-IntroNotesOnThompsons}
J.~W. Cannon, W.~J. Floyd, and W.~R. Parry, \emph{Introductory notes on
  {R}ichard {T}hompson's groups}, Enseign. Math. (2) \textbf{42} (1996),
  no.~3-4, 215--256. \MR{1426438 (98g:20058)}

\bibitem[Dav08]{Davis}
Michael~W. Davis, \emph{The geometry and topology of {C}oxeter groups}, London
  Mathematical Society Monographs Series, vol.~32, Princeton University Press,
  Princeton, NJ, 2008. \MR{2360474 (2008k:20091)}

\bibitem[DL98]{DicksLeary-SubgroupsOfCoxeterGroups}
Warren Dicks and Ian~J. Leary, \emph{On subgroups of {C}oxeter groups},
  Geometry and cohomology in group theory ({D}urham, 1994), London Math. Soc.
  Lecture Note Ser., vol. 252, Cambridge Univ. Press, Cambridge, 1998,
  pp.~124--160. \MR{1709956 (2000g:20067)}

\bibitem[Far03]{Farley-FinitenessAndCAT0PropertiesOfDiagramGroups}
Daniel~S. Farley, \emph{Finiteness and {$\rm CAT(0)$} properties of diagram
  groups}, Topology \textbf{42} (2003), no.~5, 1065--1082. \MR{1978047
  (2004b:20057)}

\bibitem[FH12]{FarleyHughes-FinitenessPropertiesOfSomeGroupsOfLocalSimilarities}
Daniel Farley and Bruce Hughes, \emph{Finiteness properties of some groups of
  local similarities}, preprint, arXiv:1206.2692 (2012).

\bibitem[FH14]{FarleyHughes-BraidedDiagramGroupsAndLocalSimilarityGroups}
\bysame, \emph{Braided diagram groups and local similarity groups}, preprint,
  arXiv:1406.4587 (2014).

\bibitem[FMWZ13]{FluchMarschlerWitzelZaremsky-sVisFinfty}
Martin~G. Fluch, Marco Marschler, Stefan Witzel, and Matthew C.~B. Zaremsky,
  \emph{The {B}rin-{T}hompson groups {$sV$} are of type {$\text{F}_\infty$}},
  Pacific J. Math. \textbf{266} (2013), no.~2, 283--295. \MR{3130623}

\bibitem[Geo08]{Geoghegan}
Ross Geoghegan, \emph{Topological methods in group theory}, Graduate Texts in
  Mathematics, vol. 243, Springer, New York, 2008. \MR{2365352 (2008j:57002)}

\bibitem[GS97]{GubaSapir-DiagramGroups}
Victor Guba and Mark Sapir, \emph{Diagram groups}, Mem. Amer. Math. Soc.
  \textbf{130} (1997), no.~620, viii+117. \MR{1396957 (98f:20013)}

\bibitem[Hig74]{Higman-FinitelyPresentedInfiniteSimpleGroups}
Graham Higman, \emph{Finitely presented infinite simple groups}, Department of
  Pure Mathematics, Department of Mathematics, I.A.S. Australian National
  University, Canberra, 1974, Notes on Pure Mathematics, No. 8 (1974).
  \MR{0376874 (51 \#13049)}

\bibitem[Hug09]{Hughes-LocalSimilaritiesAndTheHaagerupProperty}
Bruce Hughes, \emph{Local similarities and the {H}aagerup property}, Groups
  Geom. Dyn. \textbf{3} (2009), no.~2, 299--315, With an appendix by Daniel S.
  Farley. \MR{2486801 (2009m:20066)}

\bibitem[Kog08]{Kogan-nVtreesApplet}
Roman Kogan, \emph{${nV}$ {T}rees {A}pplet},
  \url{http://www.math.tamu.edu/~romwell/nvTrees/} (2008).

\bibitem[Leh08]{Lehnert-Thesis}
J{\"o}rg Lehnert, \emph{Gruppen von quasi-automorphismen}, Ph.D. thesis, 2008.

\bibitem[Mei96]{Meinert-HomologicalInvariantsForMetabelianGroups}
Holger Meinert, \emph{The homological invariants for metabelian groups of
  finite {P}r\"ufer rank: a proof of the {$\Sigma^m$}-conjecture}, Proc. London
  Math. Soc. (3) \textbf{72} (1996), no.~2, 385--424. \MR{1367084 (98b:20082)}

\bibitem[Mei97]{Meinert-ActionsOn2ComplexesAndSigma2}
\bysame, \emph{Actions on {$2$}-complexes and the homotopical invariant
  {$\Sigma^2$} of a group}, J. Pure Appl. Algebra \textbf{119} (1997), no.~3,
  297--317. \MR{1453518 (98g:20084)}

\bibitem[MPMN14]{MartinezPerezMatucciNucinkis-PresentationsForV}
Conchita Mart{\'{\i}}nez-P{\'e}rez, Francesco Matucci, and Brita E.~A.
  Nucinkis, \emph{Presentations for centralisers in generalisations of
  {T}hompson's group ${V}$}, preprint (2014).

\bibitem[MPN13]{MartinezNucinkis-GeneralizedThompsonGroups}
Conchita Mart{\'{\i}}nez-P{\'e}rez and Brita E.~A. Nucinkis, \emph{Bredon
  cohomological finiteness conditions for generalisations of {T}hompson
  groups}, Groups Geom. Dyn. \textbf{7} (2013), no.~4, 931--959. \MR{3134031}

\bibitem[MT73]{McKenzieThompson-AnElementaryConstructionOfUnsolvableWordProblems}
Ralph McKenzie and Richard~J. Thompson, \emph{An elementary construction of
  unsolvable word problems in group theory}, Word problems: decision problems
  and the {B}urnside problem in group theory ({C}onf., {U}niv. {C}alifornia,
  {I}rvine, {C}alif. 1969; dedicated to {H}anna {N}eumann), Studies in Logic
  and the Foundations of Math., vol.~71, North-Holland, Amsterdam, 1973,
  pp.~457--478. \MR{0396769 (53 \#629)}

\bibitem[Ore51]{Ore-SomeRemarksOnCommutators}
Oystein Ore, \emph{Some remarks on commutators}, Proc. Amer. Math. Soc.
  \textbf{2} (1951), 307--314. \MR{0040298 (12,671e)}

\bibitem[Ren89]{Renz-GeometricInvariantsAndHNNExtensions}
Burkhardt Renz, \emph{Geometric invariants and {HNN}-extensions}, Group theory
  ({S}ingapore, 1987), de Gruyter, Berlin, 1989, pp.~465--484. \MR{981863
  (90a:20075)}

\bibitem[Tho80]{Thompson-EmbeddingIntoFinitelyGeneratedSimpleGroups}
Richard~J. Thompson, \emph{Embeddings into finitely generated simple groups
  which preserve the word problem}, Word problems, {II} ({C}onf. on {D}ecision
  {P}roblems in {A}lgebra, {O}xford, 1976), Stud. Logic Foundations Math.,
  vol.~95, North-Holland, Amsterdam-New York, 1980, pp.~401--441. \MR{579955
  (81k:20050)}

\bibitem[Thu16]{Thumann-FinitenessForOperadGroups}
Werner Thumann, \emph{Operad groups and their finiteness properties}, preprint,
  arXiv:1409.1085 (2016).

\bibitem[WZ14]{WitzelZaremsky-ThompsonSystems}
Stefan Witzel and Matthew C.~B. Zaremsky, \emph{Thompson groups for systems of
  groups, and their finiteness properties}, preprint, arXiv:1405.5491 (2014).

\bibitem[Zar14]{Zaremsky-NormalSubgroupsOfBraidedThompsonsGroups}
Matthew Zaremsky, \emph{On normal subgroups of the braided {T}hompson groups},
  preprint, arXiv:1403.8132 (2014).

\end{thebibliography}
\end{document}